\documentclass{amsart}

\usepackage{amsmath,amssymb}
\usepackage{mathrsfs}
\usepackage{mathtools}
\usepackage{stmaryrd}
\usepackage{enumerate}
\usepackage{tikz-cd}
\usepackage[all]{xy}
\usepackage{aliascnt}
\usepackage[colorlinks=true, linkcolor=blue, citecolor=blue]{hyperref}

\usepackage{enumitem}

\usepackage{combelow}

\newtheorem{maintheorem}{Theorem}

\newaliascnt{maincorollary}{maintheorem}

\aliascntresetthe{maincorollary}

\newtheorem{theorem}{Theorem}

\newaliascnt{lemma}{theorem}
\newtheorem{lemma}[lemma]{Lemma}
\aliascntresetthe{lemma}

\newaliascnt{corollary}{theorem}
\newtheorem{corollary}[corollary]{Corollary}
\aliascntresetthe{corollary}

\newaliascnt{proposition}{theorem}
\newtheorem{proposition}[proposition]{Proposition}
\aliascntresetthe{proposition}

\newaliascnt{conjecture}{theorem}

\aliascntresetthe{conjecture}

\newaliascnt{question}{theorem}

\aliascntresetthe{question}

\theoremstyle{definition}

\newaliascnt{definition}{theorem}
\newtheorem{definition}[definition]{Definition}
\aliascntresetthe{definition}

\newaliascnt{remark}{theorem}
\newtheorem{remark}[remark]{Remark}
\aliascntresetthe{remark}

\newaliascnt{example}{theorem}
\newtheorem{example}[example]{Example}
\aliascntresetthe{example}

\newaliascnt{notation}{theorem}
\newtheorem{notation}[notation]{Notation}
\aliascntresetthe{notation}

\newif\ifhascomments \hascommentstrue
\ifhascomments
  \newcommand{\matt}[1]{{\color{red}[[\ensuremath{\spadesuit\spadesuit\spadesuit} #1]]}}
  \newcommand{\jeremy}[1]{{\color{red}[[\ensuremath{\clubsuit\clubsuit\clubsuit} #1]]}}
\else
  \newcommand{\matt}[1]{}
  \newcommand{\jeremy}[1]{}
\fi

\renewcommand{\setminus}{\smallsetminus}

\newcommand{\Z}{\mathbb{Z}}
\newcommand{\bZ}{\mathbb{Z}}
\newcommand{\ZZ}{\mathbb{Z}}

\newcommand{\QQ}{\mathbb{Q}}
\newcommand{\Q}{\mathbb{Q}}
\newcommand{\bQ}{\mathbb{Q}}
\newcommand{\C}{\mathbb{C}}
\newcommand{\R}{\mathbb{R}}
\newcommand{\bF}{\mathbb{F}}

\newcommand{\g}{\mathfrak{g}}

\newcommand{\cT}{\mathscr{T}}
\newcommand{\cX}{\mathcal{X}}
\newcommand{\cY}{\mathcal{Y}}
\newcommand{\cZ}{\mathcal{Z}}

\newcommand{\cI}{\mathcal{I}}

\newcommand{\cC}{\mathcal{C}}
\newcommand{\cD}{\mathcal{D}}
\newcommand{\cU}{\mathcal{U}}
\newcommand{\cO}{\mathcal{O}}

\newcommand{\cG}{\mathcal{G}}
\newcommand{\cH}{\mathcal{H}}
\newcommand{\cJ}{\mathscr{J}}
\newcommand{\mcJ}{\mathcal{J}}

\newcommand{\cE}{\mathcal{E}}

\newcommand{\cL}{\mathcal{L}}
\newcommand{\cM}{\mathcal{M}}

\newcommand{\sL}{\mathscr{L}}
\newcommand{\sM}{\mathscr{M}}

\newcommand{\sJ}{\mathscr{J}}

\newcommand{\sW}{\mathscr{W}}

\newcommand{\bL}{\mathbb{L}}

\newcommand{\bG}{\mathbb{G}}
\newcommand{\bA}{\mathbb{A}}

\newcommand{\diff}{\mathrm{d}}
\newcommand{\red}{\mathrm{red}}
\newcommand{\id}{\mathrm{id}}
\newcommand{\Gor}{\mathrm{Gor}}

\newcommand{\str}{\mathrm{str}}

\newcommand{\synt}{\mathrm{synt}}
\newcommand{\inj}{\mathrm{inj}}

\DeclareMathOperator{\rep}{rep}

\DeclareMathOperator{\EP}{EP}

\DeclareMathOperator{\age}{age}

\DeclareMathOperator{\Res}{Res}

\DeclareMathOperator{\het}{ht}

\DeclareMathOperator{\e}{e}
\DeclareMathOperator{\Spec}{Spec}
\DeclareMathOperator{\Ext}{Ext}
\DeclareMathOperator{\ord}{ord}
\DeclareMathOperator{\Hom}{Hom}
\DeclareMathOperator{\Isom}{Isom}
\DeclareMathOperator{\Aut}{Aut}
\DeclareMathOperator{\GL}{GL}

\DeclareMathOperator{\coh}{coh}
\DeclareMathOperator{\sm}{sm}

\DeclareMathOperator{\coker}{coker}

\DeclareMathOperator{\SL}{SL}

\DeclareMathOperator{\uHom}{\underline{\Hom}}
\DeclareMathOperator{\uIsom}{\underline{\Isom}}
\DeclareMathOperator{\uAut}{\underline{\Aut}}

\DeclareMathOperator{\wt}{wt}
\DeclareMathOperator{\shft}{shft}

\DeclareMathOperator{\Gal}{Gal}
\DeclareMathOperator{\cha}{char}
\DeclareMathOperator{\Ann}{Ann}
\DeclareMathOperator{\Conj}{Conj}

\DeclareMathOperator{\Tr}{Tr}
\DeclareMathOperator{\Supp}{Supp}
\DeclareMathOperator{\Irrep}{Irrep}
\DeclareMathOperator{\gr}{gr}
\DeclareMathOperator{\codim}{codim}
\DeclareMathOperator{\Proj}{Proj}
\DeclareMathOperator{\colim}{colim}
\DeclareMathOperator{\Stab}{Stab}

\newcommand{\grp}{\mathrm{grp}}

\tikzset{cong/.style={draw=none,edge node={node [sloped, allow upside down, auto=false]{$\cong$}}},
         Isom/.style={above,every to/.append style={edge node={node [sloped, allow upside down, auto=false]{$\sim$}}}}}

\title[McKay correspondence in positive characteristic]{McKay correspondence for linearly reductive finite group schemes in positive characteristic}

\author{Matthew Satriano and Jeremy Usatine}

\thanks{MS was partially supported by an NSERC Discovery Grant. JU was partially supported by Simons Foundation MPS-TSM-00007918 and NSF DMS-2502347.}

\address{Matthew Satriano, Department of Pure Mathematics, University of Waterloo}
\email{msatriano@uwaterloo.ca}

\address{Jeremy Usatine, Department of Mathematics, Florida State University}
\email{jusatine@fsu.edu}

\begin{document}

\begin{abstract}
We obtain a motivic and a cohomological McKay correspondence for finite linearly reductive group schemes in arbitrary characteristic. In particular, we prove that if $V$ is a finite dimensional vector space and $G$ is a finite linearly reductive subgroup scheme of $\mathrm{SL}(V)$, then the Euler number of any crepant resolution of $V/G$ is equal to the number of irreducible algebraic representations of $G$. We obtain these McKay correspondences as a consequence of a motivic change of variables formula applied to $[V/G] \to V/G$. If $G$ is non-reduced, as can happen in positive characteristic, the stack quotient $[V/G]$ is not Deligne-Mumford. Therefore in order to prove this change of variables formula and the resulting McKay correspondences, we generalize the authors' theory of motivic integration for Artin stacks to arbitrary characteristic, which may be of independent interest.
\end{abstract}

\maketitle

\setcounter{tocdepth}{1}

\tableofcontents

\section{Introduction}

\numberwithin{theorem}{section}
\numberwithin{lemma}{section}
\numberwithin{corollary}{section}
\numberwithin{proposition}{section}
\numberwithin{conjecture}{section}
\numberwithin{question}{section}
\numberwithin{remark}{section}
\numberwithin{definition}{section}
\numberwithin{example}{section}
\numberwithin{notation}{section}

A version of the McKay correspondence (henceforth referred to as the Euler number McKay correspondence), conjectured by Reid \cite{Reid} and proved by Batyrev \cite{BatyrevNonArch}, states that if $G$ is a finite subgroup of $\SL_n(\C)$, then the Euler number of any crepant resolution of $\C^n / G$ is equal to the number of irreducible $\C$-representations of $G$ (or equivalently, the number of conjugacy classes of $G$). This beautiful theorem is a vast (partial) generalization of the ``classical McKay correspondence'', which considers the special case where $n = 2$, and has inspired many interesting avenues since then (far more than we could adequately survey here). In particular, Batyrev proved a cohomological refinement of the Euler number McKay correspondence \cite{BatyrevNonArch} (conjectured in \cite{BatyrevDais}, see also \cite{ItoReid}) and Denef and Loeser, as one of the early seminal applications of motivic integration, proved a motivic refinement of the Euler number McKay correspondence \cite{DenefLoeser}.

The Euler number McKay correspondence (and its motivic refinement) works just as well if $\C$ is replaced with an arbitrary field $k$, as long as one additionally assumes that $\#G$ is prime to the characteristic of $k$, i.e., $G$ is a tame finite group. But if the finite group $G$ is not tame, the situation becomes far more complicated. This is the subject of Yasuda's recent ``wild motivic McKay correspondence'' \cite{Yasuda2024}, which for example, implies a motivic version of Bhargava's mass formula \cite{Bhargava}. Needless to say, the McKay correspondence outside the case of tame finite groups is a deep subject with many aspects that have yet to be explored (see \cite{Yasuda2023} for a survey of some recent developments and remaining questions). In that vein, another avenue is to consider the McKay correspondence for $G$ that are finite group \emph{schemes}, e.g., as called for in \cite[Problem 10.2]{Yasuda2023}. In positive characteristic, even over algebraically closed fields, finite group schemes need not be constant groups: they can be non-reduced, such as $\alpha_p$ and $\mu_p$ in characteristic $p$. 

In this paper we obtain an Euler number McKay correspondence, as well as motivic and cohomological refinements, for finite group schemes that are \emph{linearly reductive}. In particular, we show that if $V$ is a finite dimensional vector space over a field $k$ and $G$ is a finite linearly reductive subgroup scheme of $\SL(V)$, then the Euler number of any crepant resolution of $V/G$ is equal to the number of irreducible algebraic $\overline{k}$-representations of $G \otimes_k \overline{k}$, where $\overline{k}$ is an algebraic closure of $k$.

Over $\C$, finite linearly reductive group schemes are simply constant group schemes, i.e., finite groups. However, in characteristic $p$, they have a richer structure. They include group schemes that are connected and non-reduced such as $\mu_p$. Although this non-reduced structure may be unsightly at first, such group schemes are fundamental in positive characteristic. They are the key objects used in the theory of tame stacks developed by Abramovich--Olsson--Vistoli \cite{AOV}. Additionally, the first author proved that the classical Chevalley--Shephard--Todd Theorem \cite{SatrianoCST} holds for such group schemes, and Liedtke--Martin--Matsumoto \cite{LiedtkeMartinMatsumoto} showed that quotients by such group schemes retain many of the central features of classical quotient singularities.

Let us illustrate with a concrete example one reason why finite linearly reductive group schemes are necessary in the study of singularities in positive characteristic. Consider the $A_1$-singularity given by $xy=z^2$. For characteristics other than $2$, this variety is obtained as the quotient $\bA^2/(\bZ/2)$ where $\bZ/2$ acts as $(x,y)\mapsto(-x,-y)$. In characteristic $2$, however, this action is trivial so this quotient does not recover the $A_1$-singularity. In fact, one can prove that the $A_1$-singularity is not the quotient $\bA^2/G$ for any finite group $G$. On the other hand, the non-reduced group scheme $\mu_2$ acts on $\bA^2$ with weights $(1,1)$, and a direct calculation shows that $\bA^2/\mu_2$ yields the $A_1$-singularity. In a similar vein, many rational double point surface singularities, which are particularly relevant to the McKay correspondence in dimension 2, cannot be obtained in positive characteristic as quotients of $\bA^2$ by finite groups. Yet all rational double point surface singularities arise in positive characteristic (when the characteristic is at least 7) as quotients of $\bA^2$ by finite linearly reductive group schemes \cite{Hashimoto, LiedtkeSatrianoBirational, LiedtkeSatriano25}.

To our knowledge, the first paper to consider the McKay correspondence for finite linearly reductive group schemes in positive characteristic is \cite{Liedtke}, in which Liedtke obtains a classical McKay correspondence, i.e., the special case where the dimension is $n=2$. See \autoref{subsectionSurfaces} below for some discussion on how our results, specialized to dimension 2, compare to \cite{Liedtke}.

\subsection{Main results}

If $k$ is a field, we will let $\widehat{\sM}_k$ denote the ring obtained by inverting the class of $\bA^1_k$ in the Grothendieck ring of $k$-varieties and then completing with respect to the dimension filtration. For any finite type Artin stack $\cY$ over $k$ with affine geometric stabilizers, we will let $\e(\cY) \in \widehat{\sM}_k$ denote its class in $\widehat{\sM}_k$ (see e.g., \cite[Section 2.2]{SatrianoUsatine1}), and we will set $\bL = \e(\bA^1_k)$. If $Y$ is a $\Q$-Gorenstein variety over $k$, Denef and Loeser introduced \cite{DenefLoeser} a certain function on the arc scheme $\sL(Y)$ of $Y$ that we will denote by $\ord^\Gor_Y: \sL(Y) \to \Q \cup \{\infty\}$ (see \autoref{subsectionGorensteinMeasure} below for details). When $\bL^{\ord^\Gor_Y}$ is integrable on $\sL(Y)$ in the sense of motivic integration, they used it to define a class
\[
	\e_{\str}(Y) = \bL^{\dim Y} \int_{\sL(Y)} \bL^{\ord^\Gor_Y} \diff\mu_Y \in \widehat{\sM}_k[\bL^{1/m}],
\]
where $m$ is such that $Y$ is $m$-Gorenstein. This construction has some important properties that we recall.

\begin{itemize}

\item If $Y$ admits a crepant resolution of singularities $X \to Y$ by a scheme $X$, then $\bL^{\ord^\Gor_Y}$ is integrable on $\sL(Y)$, and $\e_{\str}(Y) = \e(X)$.

\item When $k$ has characteristic 0, $\bL^{\ord^\Gor_Y}$ is integrable on $\sL(Y)$ if and only if $Y$ has log-terminal singularities. In that case, $\e_{\str}(Y)$ specializes to the stringy $E$-function introduced by Batyrev in \cite{Batyrev}.

\end{itemize}

Our first main result is a motivic McKay correspondence that gives a representation theoretic description for $\e_{\str}(V/G)$ when $V$ is a finite dimensional vector space over $k$ and $G$ is a finite linearly reductive subgroup scheme of $\SL(V)$. In characteristic 0, such a description has traditionally been written as a sum over conjugacy classes of $G$. In our setting, we need the following adaptation, which is essentially to replace points of $G$ with maps from $\mu_r$. While this use of maps from $\mu_r$ (up to conjugacy) falls out directly from the methods of this paper, we also note that in characteristic 0, the idea to consider the McKay correspondence in terms of maps from $\mu_r$ (up to conjugacy) goes back at least as far \cite{ItoReid}.

\begin{notation}
Let $G$ be a finite linearly reductive group scheme over an algebraically closed field $k$. For any $r \in \Z_{>0}$, there exists a finite scheme $\uHom_k^{\grp, \inj}(\mu_r, G)$ over $k$ parametrizing group scheme monomorphisms from $\mu_r$ to $G$ by \cite[Proposition 1.1]{Sala}. The conjugation action of $G$ on itself induces an action of $G$ on $\uHom_k^{\grp, \inj}(\mu_r, G)$, and we set
\[
	\Conj_{\mu_r}(G) = (\uHom_k^{\grp, \inj}(\mu_r, G) / G)(k)
\]
and
\[
	\Conj_\mu(G) = \bigsqcup_{r \in \Z_{>0}} \Conj_{\mu_r}(G).
\]
If $\phi \in \Conj_\mu(G)$, we will let $Z_G(\phi)$ denote its centralizer in $G$, i.e., $Z_G(\phi)$ is the centralizer of $\widetilde{\phi}: \mu_r \to G$ for any choice of $\widetilde{\phi}$ whose image in $\Conj_\mu(G)$ is $\phi$. Note that as a group scheme up to isomorphism, $Z_G(\phi)$ does not depend on the choice of $\widetilde{\phi}$.
\end{notation}

We will also need the notion of \emph{age}, as we now define. This is just as defined in \cite[Proof of Theorem 1.3]{ItoReid}, except we have written the definition to also make sense when $\mu_r$ is not reduced, as can be the case in positive characteristic.

\begin{definition}
Let $V$ be a finite dimensional vector space over an algebraically closed field $k$, let $G$ be a finite linearly reductive subgroup scheme of $\GL(V)$, and let $\phi \in \Conj_\mu(G)$. We will define the \emph{age} of $\phi$ as follows. Choose some $\widetilde{\phi}:  \mu_r \to G$ whose image in $\Conj_\mu(G)$ is $\phi$. Then the restriction along $\widetilde{\phi}$ of the $G$-action on $V$ gives a linear $\mu_r$-action on $V$. Equivalently, we have a $\Z/r$-grading $V = \bigoplus_{w = 1}^{r} V_w$. Then we set
\[
	\age(\phi) = (1/r)\sum_{w = 1}^r w \dim_k V_w,
\]
and we note this does not depend on the choice of $\widetilde{\phi}$. Furthermore, if $G$ is a subgroup scheme of $\SL(V)$, then $\age(\phi)$ is an integer by \autoref{corollaryAgeIsInteger} below.
\end{definition}

We are now prepared to state the first main result of this paper.

\begin{maintheorem}[Motivic McKay correspondence]\label{maintheoremMotivicMcKay}
Let $V$ be a finite dimensional vector space over a field $k$, and let $G$ be a finite linearly reductive subgroup scheme of $\SL(V)$. Then $V/G$ is Gorenstein, and $\bL^{\ord_{V/G}^\Gor}$ is integrable on $\sL(V/G)$. Furthermore if $k$ is algebraically closed, then
\[
	\e_{\str}(V/G) = \sum_{\phi \in \Conj_{\mu}(G)} \bL^{\age(\phi)} \e(B Z_G(\phi) ).
\]
\end{maintheorem}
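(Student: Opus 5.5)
We will deduce the theorem from a motivic change of variables formula for the good moduli space map $\pi\colon \cX = [V/G] \to V/G$, combined with an explicit computation of the motivic measure on the arcs of the quotient stack via twisted arcs.

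\emph{Gorenstein property and integrability.} Since $G$ is linearly reductive, $[V/G]\to V/G$ is a good moduli space map and $V/G$ is normal, being the spectrum of an invariant ring of a polynomial ring. Because $G\subset \SL(V)$, the generator $dx_1\wedge\cdots\wedge dx_n$ is $G$-invariant. First I would show that $\pi$ is an isomorphism over an open subset whose complement has codimension $\geq 2$, namely the locus where the stabilizer is trivial. In positive characteristic this requires checking that the free locus of a linearly reductive group scheme in $\SL(V)$ has complement of codimension at least $2$; this should follow because nontrivial subgroup schemes $\mu_p$ or $\Z/\ell$ of $\SL(V)$ fix linear subspaces of codimension $\geq 2$. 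It then follows that the invariant top form descends to a nowhere-vanishing section of $\omega_{V/G}$, so $V/G$ is Gorenstein. Integrability will come out of the change of variables formula below, since that formula exhibits the integral as a convergent sum.

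\emph{Change of variables.} Using the paper's motivic integration for Artin stacks in arbitrary characteristic, I would prove that
\[
\int_{\sL(V/G)}\bL^{\ord^\Gor_{V/G}}\,\diff\mu_{V/G}=\int_{|\sL(\cX)|}\bL^{-\mathrm{ht}_{L_{\cX/(V/G)}}}\,\diff\mu_{\cX},
\]
or rather a variant in which $\sL(\cX)$ is replaced by twisted arcs, so that every arc of $V/G$ lifts. Since $\pi$ is crepant, the Gorenstein order term cancels against the Jacobian term on the representable locus. Twisted arcs over $\Spec k[[t]]$ are classified by a choice of $\phi\in\Conj_\mu(G)$ together with a $Z_G(\phi)$-equivariant $\mu_r$-twisted arc in $V$. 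Concretely, after the substitution $t=s^r$, these are $\mu_r$-equivariant maps $\Spec k[[s]]\to V$ modulo $Z_G(\phi)$. For each sector $\phi$, the space of such equivariant arcs is isomorphic to $\sL(V)$ via the weight decomposition $V=\bigoplus V_w$, in which a coordinate of weight $w$ is $s^w$ times a power series in $t$.

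\emph{Summing the sectors.} The key steps are then the following. First, the measure of the sector-$\phi$ contribution, with the height function included, is $\bL^{-\dim V}\bL^{\age(\phi)}\e(BZ_G(\phi))$: the shift by $\age(\phi)$ comes from the Jacobian of $s^w$-rescaling, and the factor $\e(BZ_G(\phi))$ comes from the quotient by the centralizer. Second, multiplying by $\bL^{\dim V}$ and summing over $\phi$ gives the stated formula, and the sum is finite since $G$ is finite.

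The main obstacle is the twisted-arc lifting and change of variables for a non-Deligne--Mumford quotient $[V/G]$ when $G$ is non-reduced. There we must show that every arc of $V/G$ with generic point in the free locus lifts uniquely to a twisted arc up to the correct automorphisms, with automorphism group scheme $Z_G(\phi)$, and that the fibers are measured by $\e(BZ_G(\phi))$ rather than by a point count. To do this I would first reduce to the case where $G$ is an extension of a tame étale group by a diagonalizable group, using the structure theory of \cite{AOV}. I would then treat $\uHom(\mu_r,G)$ via \cite{Sala}, and verify the lifting property through the local structure of tame stacks.
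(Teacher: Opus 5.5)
There is a genuine gap. Your overall architecture matches the paper's: a crepant change of variables over twisted arcs of $[V/G]$, summed over sectors indexed by $\Conj_\mu(G)$. But your evaluation of each sector breaks down in exactly the non-reduced case the theorem is about.

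You claim that a twisted arc of $[V/G]$ is a class $\phi$ together with a $\widetilde{\phi}$-equivariant arc $\Spec k'[[s]]\to V$ modulo $Z_G(\phi)$, so that each sector is a copy of $\sL(V)$. This presupposes that every $G$-torsor on $\cD^r_{k'}$ is pushed out from the standard $\mu_r$-torsor. For tame \'etale $G$ and $k'$ separably closed this holds, because \'etale torsors over $k'[[s]]$ are trivial. Torsors under infinitesimal group schemes are not trivial, even for $k'$ algebraically closed. Already for $r=1$, $\mu_p$-torsors over $k[[t]]$ are classified by $k[[t]]^*/(k[[t]]^*)^p$ (e.g.\ $y^p=1+t$).

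Concretely, let $\mu_p$ act on $\bA^2$ with weights $(1,-1)$, so $V/G=\{uv=w^p\}$. Your sector for $\zeta\mapsto\zeta^a$ ($1\le a<p$) sends $(f,g)$ to $(t^af^p,t^{p-a}g^p,tfg)$, and the untwisted sector sends it to $(f^p,g^p,fg)$. These maps have identically vanishing Jacobian in the local coordinates $(u,w)$, so no ``Jacobian of $s^w$-rescaling'' can produce $\bL^{\age(\phi)}$. Moreover, the union of their images only contains arcs with $u=0$ or $u/t^{\ord_t u}$ a $p$-th power, which misses almost all of $\sL(V/G)$. Relatedly, a twisted arc whose generic point lies in the free locus has trivial automorphisms; $Z_G(\phi)$ only appears on jets and inertia.

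The paper never describes twisted arcs explicitly:
\begin{enumerate}
\item \autoref{corollaryMotivicIntegralCrepantResolution} reduces the Gorenstein measure to $\int_{\sJ(\cX)}\bL^{-\wt_\cX}\diff\nu_\cX$.
\item The volume of a sector is $\bL^{-\dim\cX}\e(\sJ_0(\cX)\times_{I_\mu(\cX)}\cY)$. \autoref{propositionTwistedJetsToInertiaComponents} computes this class by filtering through $\uHom^{\rep}_k(\cC^r_{n,k},\cX)$, whose fibres are $\bA^{d^0}\times B\bG_a^{d^1}$ (\autoref{propSubTruncationFiberComputation-charp}). The $B\bG_a$-factors come from $H^1$ of the two-term complex $L_\cX$, and they offset in the motivic class the extra deformation directions, i.e.\ the non-standard torsors.
\item \autoref{propositionCyclotomicInertiaOfQuotient} identifies the components of $I_\mu[V/G]$ with $[V^{\widetilde{\phi}}/Z_G(\widetilde{\phi})]$, giving $\e(\cY)=\bL^{\dim\cY}\e(BZ_G(\phi))$.
\item \autoref{propositionShiftOfInertiaComponentAge} gives $\shft_{[V/G]}(\cY)+\dim\cY=\age(\phi)-\overline{\wt}_{BG}(x)$, where $x\colon B\mu_r\to BG$ is induced by $\widetilde{\phi}$.
\end{enumerate}

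Your Gorenstein argument has the same defect. Over the free locus, $V\to V/G$ is a $G$-torsor that is not \'etale when $G$ is non-reduced; in the example, $d(x^p)\wedge d(xy)=0$. So the invariant form $dx_1\wedge\cdots\wedge dx_n$ does not descend by pullback. What is needed is $\det L_{[V/G]}\cong\cO$, which reduces to $\det L_{BG}\cong\cO_{BG}$ (\autoref{prop:LBG}). This is not formal, since $H=G/G^0$ can act nontrivially on $\mathrm{Lie}(G^0)$; for $\mu_3\rtimes\Z/2$ in characteristic $3$ it acts by a sign. The paper shows the two twists cancel by proving $M\otimes k\cong M'\otimes k$ as $H$-representations, using the presentation $BG\cong[T'/(T\rtimes H)]$. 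The same fact gives $\overline{\wt}_{BG}=0$, which is precisely what turns the weight of a sector into $\age(\phi)$ above.

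Finally, Gorenstein also requires Cohen--Macaulayness (Hochster--Roberts), which you do not address. Your codimension-two freeness argument via minimal subgroups $\mu_p$ and $\Z/\ell$ is fine.
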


\begin{remark}
Consider the special case where $k$ has characteristic 0. While the equality in \autoref{maintheoremMotivicMcKay} occurs in $\widehat{\sM}_k$, the motivic McKay correspondence of Denef and Loeser \cite[Theorem 3.6]{DenefLoeser} occurs in a modified ring $\widehat{\sM}_/$ (see 1.10 in loc. cit.). By \cite[Proposition 3.1(ii)]{Ekedahl2}, the canonical map $\widehat{\sM}_k \to \widehat{\sM}_/$ sends each $\e(B Z_G(\phi) )$ to $1$. On the other hand by \cite[Corollary 5.2]{Ekedahl}, there are finite groups whose classifying space have nontrivial class in $\widehat{\sM}_k$.
\end{remark}

The next main result is an Euler number McKay correspondence in our setting. In the following, $\chi_{\str}(V/G)$ denotes Batyrev's \emph{stringy Euler number} of $V/G$, which is obtained by applying the Euler-Poincar\'{e} specialization $\EP: \widehat{\sM}_k \to \Z\llparenthesis t^{-1} \rrparenthesis$ to $\e_{\str}(V/G)$ and then taking the limit\footnote{In characteristic 0, this limit always exists. This limit also exists under the hypotheses of \autoref{maincorollaryEulerMcKay} by \autoref{maintheoremMotivicMcKay}.} as $t \to 1$. In particular, if $V/G$ admits a crepant resolution of singularities $X \to V/G$ by a scheme $X$, then $\chi_{\str}(V/G)$ equals the (compactly supported) Euler number of $X$.

\begin{maintheorem}[Euler number McKay correspondence]\label{maincorollaryEulerMcKay}
Let $V$ be a finite dimensional vector space over a field $k$, let $\overline{k}$ be an algebraic closure of $k$, and let $G$ be a finite linearly reductive subgroup scheme of $\SL(V)$. Then
\[
	\chi_{\str}(V/G) = \#\Irrep_{\overline{k}}(G \otimes_k \overline{k}) =  \#\Conj_{\mu}(G \otimes_k \overline{k}),
\]
where $\Irrep_{\overline{k}}(G \otimes_k \overline{k})$ denotes the set of irreducible algebraic $\overline{k}$-representations (up to isomorphism) of $G \otimes_k \overline{k}$.
\end{maintheorem}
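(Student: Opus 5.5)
The plan is to deduce the statement from \autoref{maintheoremMotivicMcKay}, with two reductions before it. The first reduces to the case $k = \overline{k}$. Both $\e_{\str}(V/G)$ and its Euler--Poincar\'e specialization are compatible with base change to $\overline{k}$: the arc space of $(V/G)\otimes_k \overline{k}$ is the base change of $\sL(V/G)$, and $\ord^\Gor$ commutes with flat base change of the Gorenstein quotient. The formation of $V/G$ also commutes with this flat base change. Hence $\chi_{\str}(V/G) = \chi_{\str}(V_{\overline{k}}/G_{\overline{k}})$, and we may assume $k$ is algebraically closed.

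With $k$ algebraically closed, apply $\EP$ to the formula of \autoref{maintheoremMotivicMcKay} and let $t \to 1$. Each $\bL^{\age(\phi)}$ specializes to $t^{2\age(\phi)}$, whose limit is $1$. It remains to see that $\EP(\e(BZ_G(\phi)))$ has limit $1$ for every $\phi$, and that $\Conj_\mu(G)$ is finite. Then $\chi_{\str}(V/G) = \#\Conj_\mu(G)$.

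For $\e(BH)$ with $H$ finite linearly reductive, I would use the structure theorem of Abramovich--Olsson--Vistoli: $H \cong \Delta \rtimes H_{\mathrm{et}}$, where $\Delta$ is diagonalizable of the form $\prod \mu_{p^{a_i}}$ and $H_{\mathrm{et}}$ is a constant group of order prime to $p$. The classifying stack $B\Delta$ has class $1$, since $\e(B\mu_n) = 1$ for every $n$ (use $\mathbb{G}_m \to \mathbb{G}_m$). For the tame étale part, the Euler characteristic of $BH_{\mathrm{et}}$ is $1$; a point-counting or $\ell$-adic cohomology argument gives this, because $H^*(BH_{\mathrm{et}},\Q_\ell)=\Q_\ell$ in degree $0$. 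I expect this to be the subtle step. Rather than compute the motivic class, I would argue at the level of $\EP$ via compactly supported $\ell$-adic Euler characteristics. These are multiplicative in fibrations $BH \to BH_{\mathrm{et}}$ with fibre $B\Delta$, and $B\Delta$ has trivial rational cohomology because $\Delta$ is infinitesimal or tame. Finiteness of $\Conj_\mu(G)$ follows because $\mu_r \hookrightarrow G$ forces $r \mid \#G$ (order of group scheme), and each $\uHom^{\grp,\inj}$ is a finite scheme.

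The last step is to show $\#\Conj_\mu(G) = \#\Irrep_k(G)$. Again write $G = \Delta \rtimes H$. Monomorphisms $\mu_r \to G$ correspond to cyclic diagonalizable subgroups with a chosen generator. By Cartier duality and Clifford theory, I would compare both sides with the count of pairs (an $H$-orbit of characters $\chi \in \Delta^\vee$, an irreducible representation of $\Stab_H(\chi)$) on the representation side. On the $\Conj_\mu$ side I would use the analogous pairs. An alternative, likely cleaner, route is to lift $G$ to characteristic $0$, where linearly reductive finite group schemes lift to finite groups with the same representation theory (as in work of Liedtke--Martin--Matsumoto). Under this lift, maps from $\mu_r$ up to conjugacy correspond to elements up to conjugacy, and the classical count (conjugacy classes equal irreducibles) transfers. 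This comparison is the main obstacle, and I would first attempt the lifting argument.
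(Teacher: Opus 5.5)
Your reduction to $k=\overline{k}$, the specialization of \autoref{maintheoremMotivicMcKay} under $\EP$, and the finiteness of $\Conj_\mu(G)$ all match what the paper does. One side remark on $\EP(\e(BH))$: since $\e(BH)=\e(\GL_n/H)/\e(\GL_n)$ and $\EP(\GL_n)$ vanishes at $t=1$, what you need is the identity $\EP(\e(BH))=1$, not just an Euler characteristic. The paper takes this identity from \cite[Proposition 3.1(iii)]{Ekedahl2}, which holds for all finite group schemes. Your fibration argument would instead require a compactly supported $\ell$-adic formalism for Artin stacks that is compatible with $\e$ and with weights.

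The genuine gap is the last equality, $\#\Conj_\mu(G)=\#\Irrep_k(G)$. This is the real content of the theorem beyond \autoref{maintheoremMotivicMcKay}, and your proposal only asserts it. Neither of your two suggested routes goes through as stated.
\begin{itemize}
\item In the lifting route, ``maps from $\mu_r$ up to conjugacy correspond to elements up to conjugacy'' cannot come from lifting individual monomorphisms. The scheme $\uHom^{\grp,\inj}_k(\mu_r,G)$ can be non-reduced and have fewer $k$-points than the corresponding set of elements in characteristic $0$. For $G=\mu_p\rtimes\Z/2$ with $p\neq 2$, there is a single monomorphism $\mu_2\to G$ over $k$, but $p$ involutions in the dihedral lift. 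So a bijection exists only on conjugacy classes and has to be proved there.
\item In the Clifford route, the two sides are not indexed by the same $H$-set. Monomorphisms $\mu_{p^e}\to\Delta$ are elements of $\Hom(A,\Q/\Z)$. Clifford theory, by contrast, indexes irreducibles by $H$-orbits in the character group $A$. Your ``analogous pairs'' therefore hides a comparison of $\sum_{\phi\in\Hom(A,\Q/\Z)/H}\#\Conj(H_\phi)$ with $\sum_{a\in A/H}\#\Conj(H_a)$, and that comparison needs its own argument.
\end{itemize}

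What is missing is the paper's argument, which has three steps.
\begin{enumerate}
\item Write $r=p^em$ with $p\nmid m$. A monomorphism $\mu_r\to G$ is the same as a pair $(\phi,h)$ with $\phi\in\Hom(A,\Q/\Z)$ of order $p^e$ and $h\in H=G_{\red}$ of order $m$, satisfying $h\phi=\phi$ (this is commutativity of the images). On $k$-points, the quotient by $G$ is the quotient by $G(k)=H$. Hence $\Conj_\mu(G)$ is the set of $H$-orbits of such commuting pairs.
\item Consider the constant group $G^{\C}=\Hom(A,\Q/\Z)\rtimes H$. The coprime-action normal form \autoref{l:p-group-normalized-form} shows that every element is conjugate to some $(\phi,h)$ with $h\phi=\phi$. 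It also shows that two such elements are conjugate if and only if they are $H$-conjugate. Hence $\#\Conj_\mu(G)=\#\Conj(G^{\C})$.
\item Classically $\#\Conj(G^{\C})=\#\Irrep_{\C}(G^{\C})$. Then \autoref{l:Clifford-theory-indep-k} gives $\#\Irrep_{\C}(G^{\C})=\#\Irrep_k(G)$, because the character group of $\Hom(A,\C^*)$ is $A$ again. This is exactly where the $A$ versus $\Hom(A,\Q/\Z)$ mismatch is absorbed.
\end{enumerate}
Your instinct to pass to characteristic $0$ is the right one: $G^{\C}$ is the complex fibre of $D_{\Z}(A)\rtimes H$. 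But without steps 1 and 2 the equality $\#\Conj_\mu(G)=\#\Irrep_k(G)$ is not yet proved.
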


\begin{remark}
Although each $\age(\phi)$, and thus the expression for $\e_{\str}(V/G)$ in \autoref{maintheoremMotivicMcKay}, depends on the embedding $G \hookrightarrow \SL(V)$, the sets $\Conj_\mu(G \otimes_k \overline{k})$ and $\Irrep_{\overline{k}}(G \otimes_k \overline{k})$ only depend on the algebraic group $G$. Therefore \autoref{maincorollaryEulerMcKay} shows that $\chi_\str(V/G)$ depends only on $G$ as a group scheme and not on how it is embedded in $\SL(V)$.
\end{remark}

We also obtain the following cohomological version of the McKay correspondence in our setting.

\begin{maintheorem}[Cohomological McKay correspondence]\label{maintheoremCohomologicalMcKay}
Let $k$ be a field, let $k^s$ be a separable closure of $k$, let $\overline{k}$ be an algebraic closure of $k$, and let $\ell$ be a prime number invertible in $k$. Let $V$ be a finite dimensional vector space over $k$, and let $G$ be a finite linearly reductive subgroup scheme of $\SL(V)$. If $X \to V/G$ is a crepant resolution of singularities by a scheme $X$, then for all $i \in \Z$,
\[
	\dim_{\Q_\ell} H_{\mathrm{\acute{e}t,c}}^i(X \otimes_k k^s, \Q_\ell) = \#\{\phi \in \Conj_{\mu}(G \otimes_k \overline{k}) \, | \, \age(\phi) = i/2\}.
\]
\end{maintheorem}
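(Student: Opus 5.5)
Our plan is to deduce the statement from \autoref{maintheoremMotivicMcKay} together with a weight argument for $\ell$-adic cohomology, as in Batyrev's proof over $\C$. First we reduce to $k$ algebraically closed. Étale cohomology with compact supports of $X \otimes_k k^s$ is unchanged under the purely inseparable extension $k^s \subset \overline{k}$. The right-hand side is already defined over $\overline{k}$, and crepancy is preserved by base change to $\overline{k}$: $V/G$ commutes with flat base change, and $X\otimes\overline{k}$ stays smooth because $X \to V/G$ is a resolution by a smooth scheme (we interpret ``resolution'' with smooth source). So assume $k=\overline{k}$. By the first property of $\e_\str$ and \autoref{maintheoremMotivicMcKay},
\[
\e(X) = \e_\str(V/G) = \sum_{\phi \in \Conj_\mu(G)} \bL^{\age(\phi)} \e(BZ_G(\phi)).
\]
Each $\age(\phi)$ is an integer by \autoref{corollaryAgeIsInteger}.

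Next we apply a weight (Hodge--Deligne type) realization $\widehat{\sM}_k \to \Z\llparenthesis t^{-1}\rrparenthesis$. It sends the class of a variety $Y$ to $\sum_{i,w}(-1)^i \dim \gr^W_w H^i_c(Y,\Q_\ell)\, t^{w}$, computed after spreading out to a finitely generated base and using Deligne's weights over finite fields, and it sends $\bL$ to $t^2$. We need the following inputs.
\begin{enumerate}
\item $X$ is smooth and $X \to V/G$ is proper. We would first use the $\mathbb{G}_m$-scaling action on $V/G$, which is contracting to the origin, to show that $X$ carries a compatible action after possibly replacing $X$; alternatively we would argue directly. The goal is to show that $H^i_c(X)$ is pure of weight $i$. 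The standard argument: $X$ retracts onto the proper fiber $E$ over $0$, so $H^i(X)\cong H^i(E)$ has weights $\le i$. Poincaré duality then gives $H^i_c(X)$ weights $\ge i$, and compact supports always give weights $\le i$, hence $H^i_c$ is pure of weight $i$. Consequently the weight polynomial of $X$ equals $\sum_i (-1)^i \dim H^i_c(X)\, t^i$, with no cancellation.
\item $\e(BZ_G(\phi))$ maps to $1$ under this realization. We expect this to follow because $H^*(BH,\Q_\ell)=\Q_\ell$ for a finite linearly reductive $H$; this would be proven via $BH \to \Spec k$ being a good moduli space with $H^0$ only, or by showing $\e(BH)$ specializes like the class of a point. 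Concretely, $H = H^0 \rtimes$ (étale part) with $H^0$ diagonalizable. Then $\e(B\mu_n)=1$ and $\e(BH)=\e(\mathrm{GL}_N/H)\,\e(B\mathrm{GL}_N)$, and we compare cohomology of $\mathrm{GL}_N/H$ with that of $\mathrm{GL}_N$ modulo the finite étale part, whose invariants are trivial on cohomology since $\mathrm{GL}_N$ is connected.
\end{enumerate}
Combining these, $\sum_i (-1)^i\dim H^i_c(X)\,t^i = \sum_\phi t^{2\age(\phi)}$. All exponents on the right are even, so comparing coefficients gives $H^i_c=0$ for odd $i$, and for even $i$ gives the claimed count.

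The main obstacle is the purity step. We cannot simply cite Hodge theory in positive characteristic, and $X$ need not a priori be $\mathbb{G}_m$-equivariant. What we would try first is to avoid equivariance: the fiber $E$ over the origin is proper, and proper base change together with the conical structure of $V/G$ gives $H^*(X)\cong H^*(E)$. That isomorphism is where care is needed; we would use an equivariant compactification or a spreading-out argument. We also need that the realization map is well defined on the completed ring and compatible with the integral. For this we would use the fact that the dimension filtration bounds weights from above, so the map extends continuously to $\widehat{\sM}_k$.
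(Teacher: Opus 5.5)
Your overall architecture matches the paper's: prove purity of $H^i_{\mathrm{\acute{e}t,c}}(X\otimes_k k^s,\Q_\ell)$, apply the Euler--Poincar\'e specialization to $\e(X)=\e_{\str}(V/G)=\sum_\phi \bL^{\age(\phi)}\e(BZ_G(\phi))$, and use $\EP(\e(BH))=1$. The paper cites Ekedahl for the last fact; your sketch via $\GL_N/H$ is a reasonable substitute.

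\textbf{The gap is step (1).} Purity is the real content of this theorem, and you do not supply an argument for it. Neither route you mention works as stated.
\begin{itemize}
\item ``Replacing $X$'' is not allowed. Purity is a property of the \emph{given} resolution, and the equality $\e(X)=\e(X')$ for two crepant resolutions says nothing about their individual cohomology groups.
\item ``Avoiding equivariance'' fails in characteristic $p$. Proper base change gives $(R\pi_*\Q_\ell)_0\simeq R\Gamma(E,\Q_\ell)$. But the restriction $H^*(V/G,R\pi_*\Q_\ell)\to(R\pi_*\Q_\ell)_0$ is an isomorphism (the contraction lemma) only when $R\pi_*\Q_\ell$ is $\bG_m$-equivariant, or at least monodromic. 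A conical base is not enough: an Artin--Schreier sheaf $\mathcal{L}_\psi$ on $\bA^1$ has $H^*(\bA^1,\mathcal{L}_\psi)=0$, although its stalk at $0$ is $\Q_\ell$.
\end{itemize}
Equivariance of $R\pi_*\Q_\ell$ is exactly what a $\bG_m$-action on $X$ lifting the scaling action would provide. An equivariant compactification also presupposes such an action.

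\textbf{The missing idea} is lifting the $\bG_m$-action from $V/G$ to the given crepant resolution $X$. This is the bulk of the paper's proof of \autoref{thm:crepant-res-VmodG->pure}; the paper notes that Batyrev's characteristic-$0$ argument for this step does not adapt. The paper's argument (\autoref{l:lift-alg-gp-actions-crepant-res}) runs as follows.
\begin{itemize}
\item The Bergh--Rydh functorial resolution $Y'\to V/G$ commutes with smooth base change, so it is $\bG_m$-equivariant.
\item \autoref{l:codim2-setup} produces a $\bG_m$-invariant open $U'\subset Y'$ and an open $W$ embedded in both $X$ and $U'$, with complement of codimension $\geq 2$ in each. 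The key input is that every crepant divisorial valuation must appear as a divisor on every resolution, since otherwise its discrepancy would be $\geq 1$.
\item \autoref{prop:T-action-extends-codim2} then identifies $H^0(X,\mathcal{L}^{\otimes n})$ with $H^0(U',(\mathcal{L}')^{\otimes n})$ for a very ample $\mathcal{L}$ on $X$. It linearizes $\mathcal{L}'$ by Sumihiro's theorem, which yields a compatible action on $X=\Proj\bigoplus_n H^0(X,\mathcal{L}^{\otimes n})$.
\end{itemize}

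With the action in hand, the paper uses the Bia\l{}ynicki-Birula decomposition. The fixed locus lies over the unique fixed point $0\in V/G$, so it is smooth and proper. The strata are Zariski-locally trivial affine fibrations over its components, and filterability gives purity by induction. Your retraction-plus-Poincar\'e-duality argument would also finish once the action exists, but without it your proof is incomplete.
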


\begin{remark}
Since each $\age(\phi)$ is an integer, \autoref{maintheoremCohomologicalMcKay} implies that $X$ only has nonvanishing (compactly supported) cohomology in even degree.
\end{remark}

\subsection{Motivic integration for Artin stacks}

A key result of this paper that is used to prove our McKay correspondences is the following formula for $\e_{\str}(Y)$ in terms of a crepant resolution by a tame Artin stack. It can be thought of as a global version of the McKay correspondence, and in the special case of tame \emph{Deligne--Mumford} stacks (for example, in characteristic 0), versions of it have been proved in \cite{LupercioPoddar, Yasuda2004, Yasuda}. Our main results involve applying this global McKay correspondence to the stack $[V/G]$. We emphasize that when $G$ is non-reduced as can happen in positive characteristic, $[V/G]$ is a tame Artin stack that is \emph{not} Deligne--Mumford, so the previously known results in \cite{LupercioPoddar, Yasuda2004, Yasuda} are not sufficient for the applications in this paper.  

In the following, $K_{\cX/Y}$ denotes the relative canonical divisor of $\cX$ over $Y$ as defined in \cite[Definition 1.6]{SatrianoUsatine3}. See \autoref{subsectionWeightAndShift} below for the definitions of $\shft_\cX(\cY)$ and $I_{\mu}(\cX)$.

\begin{maintheorem}\label{maintheoremGorensteinMeasureCrepantResolution}
Let $k$ be a field, let $\cX$ be a smooth finite type irreducible Artin stack over $k$, and let $Y$ be a $\Q$-Gorenstein finite type irreducible scheme over $k$. If $\cX \to Y$ is a tame proper birational map with $K_{\cX/Y} = 0$, then $\bL^{\ord_Y^\Gor}$ is integrable on $\sL(Y)$, and
\[
	\e_{\str}(Y) = \sum_{\cY} \bL^{\shft_\cX(\cY)} \e(\cY),
\]
where the sum varies over all connected components $\cY$ of $I_\mu(\cX)$. Furthermore if $Y$ is $1$-Gorenstein, then $\shft_\cX(\cY) \in \Z$ for all connected components $\cY$ of $I_\mu(\cX)$.
\end{maintheorem}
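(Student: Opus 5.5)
We will deduce the theorem from a motivic change of variables formula for the tame proper birational map $\pi: \cX \to Y$, generalizing to arbitrary characteristic the change of variables formula from the authors' earlier work on motivic integration for Artin stacks. The starting point is the definition
\[
	\e_{\str}(Y) = \bL^{\dim Y}\int_{\sL(Y)} \bL^{\ord^\Gor_Y}\diff\mu_Y .
\]
Let $\cX \to Y$ be as in the statement. First we will show that arcs of $Y$ outside a measure-zero set (arcs contained in the locus where $\pi$ is not an isomorphism) lift to twisted arcs of $\cX$. Here a twisted arc means a representable map $[\Spec k'[[t^{1/r}]]/\mu_r] \to \cX$. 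The lift is unique up to the appropriate notion of isomorphism, using properness of $\pi$ together with tameness, which lets us uniformize generic points via $\mu_r$-covers (Abhyankar-type lemma for tame stacks). This decomposes $\sL(Y)$, up to measure zero, according to the type of twisted arc. The types are indexed by $\mu_r \hookrightarrow$ stabilizer data, i.e.\ by the connected components $\cY$ of the cyclotomic inertia $I_\mu(\cX)$.

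Second, for each component $\cY$ we will compute the contribution. The space of twisted arcs of type $\cY$ is, after the standard truncation argument, a bundle over $\cY$. Concretely, twisted jets of order $n$ map to $\cY$ with fibers affine spaces, stack-theoretically, of dimension $n\dim\cX - \text{(weights)}$. This follows from smoothness of $\cX$ and a local computation: étale (or smooth) locally $\cX$ looks like $[U/G_x]$ with $G_x$ linearly reductive, and we diagonalize the $\mu_r$-action on the tangent space. The Jacobian order of $\pi$ on twisted arcs is governed by $K_{\cX/Y}$. Since $K_{\cX/Y}=0$, we get $\ord^\Gor_Y\circ\pi = $ a correction equal to the weight term coming from the ramified base change $t \mapsto t^{1/r}$. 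Combining the fiber dimensions with this correction yields $\bL^{\shft_\cX(\cY)}\e(\cY)$ after multiplying by $\bL^{\dim Y}$. This matches the definition of $\shft$ in \autoref{subsectionWeightAndShift}, which is presumably the age-type weight minus the relevant normalization. Integrability follows because there are finitely many components (quasi-compactness of $I_\mu(\cX)$, tameness bounding $r$) and each contributes a convergent term.

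For the integrality claim when $Y$ is $1$-Gorenstein, we observe that $\ord^\Gor_Y$ is then integer-valued. The change of variables expresses $\ord^\Gor_Y$ on arcs of type $\cY$ as $-\shft_\cX(\cY)$ plus an integer, so $\shft_\cX(\cY)\in\Z$. Equivalently, $K_{\cX/Y}=0$ and $K_Y$ Cartier force $\omega_\cX \cong \pi^*\omega_Y$. Then $\mu_r$ acts trivially on the fiber of $\omega_\cX$ at points of $\cY$, so the sum of weights is divisible by $r$.

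The main obstacle will be the first two steps in positive characteristic with non-reduced stabilizers. We must establish the lifting and truncation results for twisted arcs when $\cX$ is not Deligne--Mumford. Specifically, we need that the stack of twisted jets of type $\cY$ is an affine-space bundle over $\cY$ in the motivic sense, and that the change of variables Jacobian is computed correctly. I would attack this by reducing to the local structure $[\Spec A/G]$ with $G$ finite linearly reductive, available for tame stacks by Abramovich--Olsson--Vistoli. There I would use linear reductivity to split the $\mu_r$-equivariant tangent data, and verify the fiber computation by hand.
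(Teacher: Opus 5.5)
Your outline has the same architecture as the paper: a twisted change of variables for $\pi\colon\cX\to Y$, followed by a computation over each component $\cY$ of $I_\mu(\cX)$. But Step~2, which is where the content of this theorem lies, is wrong as stated, in two ways.

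\emph{The weights are misplaced.} $K_{\cX/Y}=0$ does not turn $\ord^\Gor_Y\circ\pi$ into a weight term. On twisted arcs not contained in $\cX\setminus\cU$ it gives $\ord^\Gor_Y\circ\pi=\het_{\cX/Y}$, which is not constant on arcs of a given type. This exactly cancels the Jacobian factor $\bL^{-\het_{\cX/Y}}$ in \autoref{theoremChangeOfVariablesForTwistedArcs}, leaving $\mu^\Gor_Y(\sL(Y))=\int_{\sJ(\cX)}\bL^{-\wt_\cX}\diff\nu_\cX$ (\autoref{corollaryMotivicIntegralCrepantResolution}). So the weight enters exactly once, as the integrand, and the jet count for $\nu_\cX$ is weight-free. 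The needed statement is $\e(\sJ_0(\cX)\times_{I_\mu(\cX)}\cY)=\bL^{\dim\cX-\dim\cY}\e(\cY)$, which with $\dim Y=\dim\cX$ gives exactly $\shft_\cX(\cY)=\dim\cX-\dim\cY-\overline{\wt}_\cX(\cY)$. Your account puts weights both in the fiber dimension and in the correction, and has no source for the $-\dim\cY$. So the match with $\shft_\cX(\cY)$ that you leave as ``presumably'' cannot come out as written.

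\emph{The fibers are not affine spaces.} Diagonalizing $\mu_r$ on a tangent space of a local model $[U/G_x]$ does not give a bundle. Filter $B\mu_r\subset\cD^r_0$ by the stacks $[\Spec k[t^{1/r}]/(t^{n/r})/\mu_r]$. The fiber of each step over a point $\gamma$ is then $\bA^{d^0_{-n}(\gamma)}\times B\bG_a^{d^1_{-n}(\gamma)}$, where $d^i_a(\gamma)$ is the multiplicity of weight $a$ in $\cH^i$ of the pullback of $L_\cX$ to $B\mu_r$. The $d^1$ come from Lie algebras of non-reduced stabilizers. Both $d^0_{-n}$ and $d^1_{-n}$ jump along $\cY$; only their difference is locally constant. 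For instance, in the example at the end of the paper, over the component given by $\mu_2\cong\Z/2\subset G$, the fiber of $\sJ_0(\cX)\times_{I_\mu(\cX)}\cY\to\cY$ is $\bA^2$ away from the origin of the fixed line and $\bA^3\times B\bG_a$ at the origin. The missing ingredients are:
\begin{enumerate}
\item computing these fibers via $\Ext^i(LF_\gamma^*L_\cX,\cO(-n))$ on $B\mu_r$;
\item the fact that a map with fibers $\bA^a\times B\bG_a^b$ and $a-b$ constant multiplies classes by $\bL^{a-b}$;
\item pinning the total exponent at $\dim\cX-\dim\cY$; the paper does this via $\Vert\e(\cdot)\Vert=\exp(\dim)$ and equidimensionality of $\sJ_0(\cX)$ of dimension $\dim\cX$.
\end{enumerate}
Together these are \autoref{propositionTwistedJetsToInertiaComponents}.

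\textbf{Integrability.} The cancellation above is the real reason for integrability, not finiteness of the set of components. Since $\bL^{-\wt_\cX}$ takes finitely many values, one transfers to $\sL(Y)$ by exhausting $\sL(Y)\setminus\sL(Y\setminus V)$ by cylinders, where $\pi$ is an isomorphism over the open $V\subset Y$ and $\cU=\pi^{-1}(V)$. One then uses that $|\sJ(\cX\setminus\cU)|$ has measure zero (\autoref{theoremTwistedThinSubsets}).

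\textbf{Integrality, first argument.} Once corrected, this is the paper's argument. It needs part (a) of \autoref{theoremChangeOfVariablesForTwistedArcs}, that $\het_{\cX/Y}+\wt_\cX$ is integer valued, so that $\ord^\Gor_Y\circ\pi=\het_{\cX/Y}\equiv-\wt_\cX\pmod{\Z}$. It also needs thin subsets and local constancy of $\overline{\wt}_\cX$ to reach every component.

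\textbf{Integrality, ``equivalently'' argument.} This is a genuinely different and more direct route, and it does work, provided ``sum of weights'' is read as a signed sum over the two-term complex.
\begin{enumerate}
\item $K_{\cX/Y}=0$ with $\omega_Y$ invertible gives $\det L_\cX\cong\pi^*\omega_Y$ on the normal stack $\cX$.
\item For a representable $\varphi\colon B\mu_{r,k'}\to\cX$, the composite to $Y$ factors through the coarse space $\Spec k'$. Hence $\det(L\varphi^*L_\cX)$ is the trivial character.
\item Writing $d^i_w=\dim_{k'}H^i((L\varphi^*L_\cX)(-w))$, this says $\sum_{w=1}^r w\,(d^0_w-d^1_w)\equiv 0\pmod r$, which is precisely $\overline{\wt}_\cX(\varphi)\in\Z$.
\end{enumerate}
This gives integrality pointwise on all of $I_\mu(\cX)$ with no measure theory. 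The paper's route instead reuses the change of variables machinery it has already built.
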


\begin{remark}
Each $\cY$ is a finite type Artin stack over $k$ with affine geometric stabilizers by \autoref{propositionCyclotomicInertiaFiniteTypeAffineDiagonalSmooth} below. In particular, the classes $\e(\cY)$ in the statement of \autoref{maintheoremGorensteinMeasureCrepantResolution} are well defined.
\end{remark}

We obtain \autoref{maintheoremGorensteinMeasureCrepantResolution} as a consequence of the authors' theory of motivic integration for Artin stacks, as developed in \cite{SatrianoUsatine1, SatrianoUsatine2, SatrianoUsatine3, SatrianoUsatine4, SatrianoUsatine5, SatrianoUsatine6}. Much of the effort, which is necessary for the main results above and may also be of independent interest, is to extend this theory to the setting of positive characteristic. We note that since tame Artin stacks in many ways behave much better than Artin stacks in general (for example, they admit a coarse moduli space, and the canonical map to their coarse moduli space is separated), one might expect that the full theory of motivic integration for Artin stacks would not be necessary to obtain the main results of this paper. Along those lines, one might expect that the theory of motivic integration for (tame) Deligne--Mumford stacks (such as developed in \cite{Yasuda}) could be pushed to the needed level of generality with less effort. However, we believe this is unlikely to be possible for the following reasons. The essential reason why motivic integration for Artin stacks requires additional techniques beyond the Deligne--Mumford case is that unlike smooth Deligne--Mumford stacks, smooth Artin stacks may have a cotangent complex that is not concentrated in degree 0. There are many relevant downstream consequences of this. For example in the (smooth) Deligne--Mumford case, the jet stack truncation morphisms have fibers that are isomorphic to affine spaces, while in the (smooth) Artin case, these fibers vary and are not all even schemes. Because of this, even defining the motivic measure for untwisted arcs of Artin stacks requires additional care \cite{SatrianoUsatine1} not needed in the Deligne--Mumford case. Similar downstream consequences affect much of the theory of motivic integration for Artin stacks, especially the authors' proofs of various change of variables formulas. Smooth tame Artin stacks, like their non-tame brethren, may have a cotangent complex that is not concentrated in degree 0. In the setting of the McKay correspondence for finite linearly reductive group schemes, this corresponds to the fact that non-reduced group schemes like $\mu_p$ have a positive dimensional Lie algebra. For this reason, even if one only cares about the McKay correspondence for finite linearly reductive group schemes in positive characteristic, we see no way to avoid the full force of techniques needed in developing motivic integration for Artin stacks.

\subsection{The special case of surfaces}\label{subsectionSurfaces} In \cite{Liedtke}, Liedtke considers the case where $\dim V = 2$ and proves a classical McKay correspondence. In particular, using very different techniques than in this paper, Liedtke proves \cite[Theorem 1.4]{Liedtke}, which implies the first equality in \autoref{maincorollaryEulerMcKay} in the special case where $\dim V = 2$. Also, Liedtke proposes that the notion of conjugacy classes for finite group schemes is an interesting avenue in its own right (see \cite[Section 1.11]{Liedtke}) and explores various versions in loc. cit. We believe the results of this paper demonstrate that $\Conj_{\mu}(G)$ is one good such notion.

\subsection*{Acknowledgments.} We thank Nathan Ilten, Elana Kalashnikov, Christian Liedtke, Sid Mathur, David McKinnon, Rahim Moosa, Martin Olsson, and Mihnea Popa for helpful discussions. We especially wish to thank Dori Bejleri, Jason Bell, Michel Brion, Patrick Brosnan, Brian Conrad, Karl Schwede, and Yash Singh for enlightening conversations about lifting group actions to resolutions of singularities.

\section{Preliminaries}

Throughout this paper, let $k$ be a field. For any stack $\cY$ over $k$, we will let $|\cY|$ denote the associated topological space. If $\cC \subset |\cY|$ and $k'$ is a field extension of $k$, we will let $\cC(k')$ denote the category of $k'$-points of $\cY$ whose equivalence class is in $\cC$, and we will let $\overline{\cC}(k')$ denote the set of isomorphism classes of $\cC(k')$. If $\cC$ is a quasi-compact locally constructible subset of $|\cY|$ for some locally finite type Artin stack $\cY$ over $k$ with affine geometric stabilizers, we will let $\e(\cC)$ denote its class in $\widehat{\sM}_k$ (see e.g., \cite[Section 2.2]{SatrianoUsatine1} for the case where $\cY$ is finite type, and e.g., \cite[Remark 2.5]{SatrianoUsatine5} for the generalization to the case where $\cY$ is only locally finite type).

\subsection{The Gorenstein measure of Denef and Loeser}\label{subsectionGorensteinMeasure} If $Y$ is an $m$-Gorenstein finite type irreducible scheme over $k$, we will let $\omega_{Y,m}$ denote the $m$-th canonical sheaf of $Y$, we will let $\sJ_{Y,m}$ denote the unique ideal sheaf on $Y$ such that the image of $(\Omega^{\dim Y}_Y)^{\otimes m} \to \omega_{Y,m}$ is $\sJ_{Y, m} \omega_{Y,m}$, and we will set
\[
	\ord^\Gor_Y = (1/m) \ord_{\sJ_{Y,m}}: \sL(Y) \to \Q \cup \{\infty\}.
\]
If $A \subset \sL(Y)$ is such that $\bL^{\ord^\Gor_Y}$ is integrable on $A$, then the \emph{Gorenstein measure} of $A$ is defined as
\[
	\mu_Y^\Gor(A) = \int_A \ord^\Gor_Y \diff\mu_Y,
\]
where $\mu_Y$ is the motivic measure on $\sL(Y)$. In particular when $\bL^{\ord^\Gor_Y}$ is integrable on $\sL(Y)$,
\[
	\e_{\str}(Y) = \bL^{\dim Y} \mu_Y^\Gor(\sL(Y)).
\]

\subsection{Weight and shift functions}\label{subsectionWeightAndShift}

The following weight function was introduced in \cite[Definition 4.17]{SatrianoUsatine6}.

\begin{notation}
Let $\cX$ be a finite type Artin stack over $k$. Assume that $\cX$ admits a cover by open substacks with affine diagonal (for example, if $\cX$ is tame and proper over a scheme), and let $r \in \Z_{> 0}$. We will let $I_{\mu_r}(\cX)$ denote the \emph{cyclotomic inertia stack} of $\cX$ of order $r$, i.e., $I_{\mu_r}(\cX)$ is the Hom stack parametrizing representable $k$-morphisms from $B\mu_r$ to $\cX$. We also set $I_\mu(\cX) = \bigsqcup_{r \in \Z_{>0}} I_{\mu_r}(\cX)$.
\end{notation}

\begin{proposition}\label{propositionCyclotomicInertiaFiniteTypeAffineDiagonalSmooth}
Let $\cX$ be a finite type Artin stack over $k$. Assume that $\cX$ admits a cover by open substacks with affine diagonal (for example, if $\cX$ is tame and proper over a scheme), and let $r \in \Z_{> 0}$. Then $I_{\mu_r}(\cX)$ is a finite type Artin stack over $k$ with affine geometric stabilizers. Furthermore if $\cX$ is smooth over $k$, then $I_{\mu_r}(\cX)$ is smooth over $k$.
\end{proposition}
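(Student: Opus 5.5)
Since each assertion is local on $\cX$, the plan is to reduce to the case where $\cX$ has affine diagonal and then treat the Hom stack directly. Let $\{\cX_i\}$ be a finite cover of $\cX$ by open substacks with affine diagonal; finitely many suffice because $\cX$ is quasi-compact. Since $B\mu_r$ has a single point, every morphism $B\mu_r \to \cX$ factors through some $\cX_i$. Hence the open substacks $I_{\mu_r}(\cX_i) \subset I_{\mu_r}(\cX)$ cover it, and it is enough to prove finite type, affine geometric stabilizers and smoothness for each $I_{\mu_r}(\cX_i)$. From now on $\cX$ has affine diagonal.

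\textbf{Algebraicity and finite type.} $B\mu_r$ is the quotient of $\Spec k$ by a finite flat linearly reductive group scheme, so it is tame, and it is proper and flat over $k$. With affine diagonal on the target, algebraicity and local finite presentation of $\uHom_k(B\mu_r, \cX)$ follow from Olsson's results on Hom stacks, or from the more general Hall--Rydh results. Representability of a morphism is an open and closed condition on the Hom stack, because it amounts to the induced map $\mu_r \to \mathrm{Aut}$ being a monomorphism, and monomorphisms of finite flat group schemes are detected fiberwise. So $I_{\mu_r}(\cX)$ is an open and closed substack.

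For quasi-compactness I would use the description $I_{\mu_r}(\cX) \cong [\uHom^{\grp,\inj}(\mu_r, I\cX)/\text{conjugation}]$ relative to $\cX$. Concretely, a representable map $B\mu_r \to \cX$ is the same as a point $x$ of $\cX$ together with a monomorphism $\mu_r \to \Aut(x)$, modulo $\Aut(x)$-conjugation; this uses that $\mu_r$ is the automorphism group of the trivial torsor and that $H^1(\mu_r)$ contributes only twists. The inertia $I\cX \to \cX$ is affine and of finite type because the diagonal is. Monomorphisms $\mu_r \to I\cX$ correspond to Hopf algebra maps $\cO(I\cX) \to \cO(\mu_r)$ satisfying closed conditions, so the scheme of them is affine and of finite type over $\cX$. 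This is the same argument as \cite[Proposition 1.1]{Sala}, done in families. The resulting map from this quotient to $\cX$ is of finite type, and therefore so is $I_{\mu_r}(\cX)$.

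\textbf{Affine geometric stabilizers.} The stabilizer of a point $\phi\colon B\mu_{r,K} \to \cX_K$ is the centralizer of $\mu_r$ in $\Aut(x)$. This is a closed subgroup of $\Aut(x)$, which is affine because the diagonal is affine. The centralizer is closed because it is cut out as the fixed points of a finite linearly reductive group acting on an affine scheme.

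\textbf{Smoothness.} I would use the infinitesimal lifting criterion for the Hom stack. Given a square-zero extension $A' \to A$ of local Artinian $k$-algebras with ideal $J$ and a map $B\mu_{r,A} \to \cX$, the obstruction to lifting lies in $\Ext^1(L\phi^*L_{\cX}, \cO \otimes J)$ on $B\mu_{r,A}$. Since $\cX$ is smooth, $L_\cX$ has perfect amplitude $[0,1]$, so this group is $H^1$ of a complex on $B\mu_r$ built from a two-term complex of vector bundles. Cohomology on $B\mu_r$ is the functor of $\mu_r$-invariants, which is exact because $\mu_r$ is linearly reductive. Therefore the hypercohomology reduces to the degree-$1$ cohomology of the invariant part of the dual two-term complex $[\,T^0 \to T^1\,]$. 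That group can be nonzero, so smoothness needs more than the vanishing of higher cohomology on $B\mu_r$.

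This is the step I expect to be the main obstacle. My plan is to compare against the fiber over $\cX$. Smoothness of $\cX$ makes the obstruction map to the obstruction space of $\cX$ at $x$ itself vanish, and the obstruction for the Hom stack is the image of that vanishing class under the exact invariants functor. Alternatively, I would work smooth-locally: near $x$, write $\cX$ étale-locally as $[U/\mathrm{GL}_n]$ with $U$ smooth, or use the local structure theorem of Alper--Hall--Rydh to get $[U/\Aut(x)]$ with $\Aut(x)$ linearly reductive. Then $I_{\mu_r}$ becomes a union of quotients of fixed loci $U^{\mu_r}$ by centralizers. Fixed loci of linearly reductive group schemes on smooth schemes are smooth, and I would reduce to this case.
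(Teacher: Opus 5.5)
Your reduction to affine diagonal and your treatment of finite type and affine stabilizers are fine. They are also more hands-on than the paper, which simply reruns \cite[Proposition 4.16]{SatrianoUsatine6}: $I_{\mu_r}(\cX)\to\cX$ is representable with fiber $\uHom^{\grp,\inj}(\mu_r,\Aut(x))$, and the stabilizers are centralizers.

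The gap is smoothness. There a degree error leads you to an obstacle that does not exist, and what follows is only a sketch of workarounds. For smooth $\cX$, $L\phi^*L_\cX$ has perfect amplitude in cohomological degrees $[0,1]$ (e.g.\ $L_{BG}=\g^\vee[-1]$ for smooth $G$). Its derived dual therefore sits in degrees $[-1,0]$: infinitesimal automorphisms in degree $-1$, tangent directions in degree $0$. It does not sit in degrees $[0,1]$ as in your $[T^0\to T^1]$. Since $\Gamma(B\mu_{r,A},-)$ is exact on quasi-coherent sheaves, $\Ext^1(L\phi^*L_\cX,J\otimes\cO)$ is the $H^1$ of the invariants of a complex concentrated in degrees $[-1,0]$, hence is zero. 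So every obstruction vanishes, and smoothness follows at once from the infinitesimal lifting criterion plus local finite presentation. Compare the proof of \autoref{propSubTruncationFiberComputation-charp}, where likewise only $\Ext^{-1}$ and $\Ext^0$ survive on $B\mu_r$. The paper itself gets smoothness by citation, via \cite[Theorem 3.12(v)]{Rydh}.

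Your first fallback can be turned into the same argument, but its key step is missing. Pulling back along $\Spec A\to B\mu_{r,A}$ identifies $\Ext^1_{B\mu_{r,A}}(L\phi^*L_\cX,J)$ with the $\mu_r$-invariants of $\Ext^1_A(Lx^*L_\cX,J)$, so it injects there. The obstruction class of $\phi$ maps to that of $x$, which is zero because $\cX$ is smooth. This injectivity is the whole point and is not in your sketch.

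Your second fallback does not work as stated. The proposition does not assume $\cX$ tame, so the Alper--Hall--Rydh local structure theorem, which needs linearly reductive stabilizers, is unavailable. A smooth chart $[U/\GL_n]\to\cX$ that is not stabilizer-preserving does not induce a smooth cover of $I_{\mu_r}(\cX)$. Moreover, even for $[U/G]$ with $G$ finite linearly reductive, identifying $I_{\mu_r}$ scheme-theoretically with $\bigsqcup [U^{\widetilde\phi}/Z_G(\widetilde\phi)]$ is delicate, because $\uHom^{\grp,\inj}(\mu_r,G)$ can be non-reduced (\autoref{ex:singularHominj}). The paper establishes that decomposition in \autoref{propositionCyclotomicInertiaOfQuotient} by testing on reduced schemes, and only after it already knows smoothness of $I_{\mu_r}[V/G]$ from the present proposition. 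So this route would be circular.
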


\begin{proof}
By \cite[Proposition 4.16]{SatrianoUsatine6}, whose proof never uses that $k$ is algebraically closed or characteristic 0, $I_{\mu_r}(\cX)$ is a finite type Artin stack over $k$ with affine geometric stabilizers. The same argument, with \cite[Theorem 3.12(v)]{Rydh} in place of \cite[Theorem 3.12(xv, xxi)]{Rydh}, gives the last statement.
\end{proof}

\begin{definition}\label{definitionCyclotomicInertiaAndWeightFunction}
Let $\cX$ be a finite type Artin stack over $k$. Assume that $\cX$ admits a cover by open substacks with affine diagonal (for example, if $\cX$ is tame and proper over a scheme), and let $r \in \Z_{> 0}$. If $\varphi: B\mu_{r, k'} \to \cX$ is a $k'$-point of $I_{\mu_r}(\cX)$, we set
\begin{align*}
	&\overline{\wt}_\cX(\varphi) =\\
	&\dim\cX + (1/r)\sum_{w = 1}^r w[ \dim_{k'} H^1((L\varphi^* L_\cX(-w)) - \dim_{k'}H^0((L\varphi^*L_\cX)(-w))].
\end{align*}
We get an induced \emph{weight function}
\[
	\overline{\wt}_\cX: |I_\mu(\cX)| \to \Q.
\]
\end{definition}

A key feature of $\overline{\wt}_\cX$ is the following.

\begin{proposition}\label{propositionWeightFunctionLocallyConstant}
Let $\cX$ be a smooth finite type equidimensional Artin stack over $k$. Assume that $\cX$ admits a cover by open substacks each with affine diagonal and a good moduli space (for example, if $\cX$ is tame and proper over a scheme, or if $\cX$ has affine diagonal and is tame over $k$). Then $\overline{\wt}_\cX: |I_\mu(\cX)| \to \Q$ is locally constant.
\end{proposition}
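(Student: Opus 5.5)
The plan is to reduce to a local statement near a point of $I_\mu(\cX)$, and then show the weight is constant along deformations of a map $B\mu_r \to \cX$. Local constancy can be checked after an étale or smooth base change that preserves the relevant data. So fix a $k'$-point $\varphi: B\mu_{r,k'} \to \cX$. By the hypothesis and the local structure theorem for stacks with good moduli spaces (Alper--Hall--Rydh, in the form valid over arbitrary fields for linearly reductive stabilizers), we may replace $\cX$ near the image of $\varphi$ by a quotient $[U/H]$. Here $U$ is affine and smooth, $H$ is linearly reductive (containing the stabilizer), and the map to $\cX$ is étale and stabilizer preserving. An étale stabilizer-preserving map induces an étale map on cyclotomic inertia and an isomorphism on $L\varphi^*L_\cX$. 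The weight function is therefore unchanged, and it suffices to treat $\cX = [U/H]$.

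For $\cX = [U/H]$ the cotangent complex is the two-term complex $\Omega_U \to \mathfrak{h}^\vee \otimes \cO_U$ in degrees $0,1$. For a point $\varphi$, the pullback $L\varphi^* L_\cX$ is a $\mu_r$-equivariant complex $T^\vee_u U \to \mathfrak{h}^\vee$, where $\mu_r$ acts through a homomorphism $\mu_r \to H_u$. Since $\mu_r$ is linearly reductive, taking weight-$w$ parts is exact. Hence
\[
\dim H^1(-w) - \dim H^0(-w) = \dim (\mathfrak{h}^\vee)_{-w} - \dim (T_u^\vee U)_{-w},
\]
i.e.\ it is the Euler characteristic of the weight-$(-w)$ piece of the complex. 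The weight $\overline{\wt}_\cX(\varphi)$ is thus determined by the $\mu_r$-representations $T_uU$ and $\mathfrak{h}$ via the restricted action. Next I use that $I_{\mu_r}([U/H])$ is a disjoint union over $H$-conjugacy classes of homomorphisms $\psi:\mu_r\to H$ of stacks $[U^{\psi}/Z_H(\psi)]$, up to the usual identification. Homomorphisms from $\mu_r$ into a linearly reductive (indeed any affine) group form a scheme on which $H$ acts with open orbits, by rigidity of diagonalizable groups (SGA3). So on a connected component the class of $\psi$ is locally constant, and the $\mu_r$-representation $\mathfrak{h}$ is constant there.

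It then remains to show the $\mu_r$-representation $T_uU$ is locally constant as $u$ varies in the fixed locus $U^{\psi}$. The locus $U^\psi$ is smooth because $\mu_r$ is linearly reductive and $U$ is smooth (\autoref{propositionCyclotomicInertiaFiniteTypeAffineDiagonalSmooth}). Moreover, $T U|_{U^\psi}$ is a $\mu_r$-equivariant vector bundle on a scheme with trivial $\mu_r$-action. Such a bundle splits into weight subbundles whose ranks are locally constant. The ranks of the weight pieces are therefore constant on connected components, which gives local constancy of $\overline{\wt}_\cX$. Equidimensionality of $\cX$ makes the $\dim\cX$ term constant.

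I expect the main obstacle to be the first step: the local structure theorem in positive characteristic over a non-closed field, with possibly non-reduced stabilizers. I would handle it by working with $k'$-points after base change to $\overline{k}$, since the weight is compatible with field extension. I would then invoke Alper--Hall--Rydh's étale slice theorem for linearly reductive stabilizers, which holds in arbitrary characteristic. The hypothesis that $\cX$ is covered by opens with affine diagonal and good moduli spaces is exactly what makes this theorem available.
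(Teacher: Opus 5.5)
Your approach is correct in outline but takes a different route from the paper, which does no local analysis. Local constancy is Zariski-local, so the paper reduces to $\cX$ having affine diagonal and a good moduli space. It then cites \cite[Proposition 4.19]{SatrianoUsatine6}, noting that its proof never uses that $k$ is algebraically closed or of characteristic $0$. As the paper remarks in the proof of \autoref{propSubTruncationFiberComputation-charp}, that cited proof actually shows each graded difference $d^0_{-w}(\gamma)-d^1_{-w}(\gamma)$ is locally constant.

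You instead re-derive the result from three ingredients: étale local models $[U/H]$, the decomposition of $I_{\mu_r}([U/H])$ by conjugacy classes of $\psi\colon\mu_r\to H$, and the weight splitting of $TU|_{U^\psi}$. This yields the same graded statement and is self-contained. The cost is real, though:
\begin{itemize}
\item the Alper--Hall--Rydh slice theorem over $\overline{k}$ with possibly non-reduced stabilizers;
\item making the chart stabilizer preserving near the point (e.g.\ strongly étale, using the good moduli space) so that $\varphi$ lifts;
\item rigidity for $\uHom(\mu_r,H)$.
\end{itemize}
A slice-free way to get the graded statement is to take a smooth atlas $T\to I_{\mu_r}(\cX)$ with universal map $F\colon B\mu_r\times T\to\cX$. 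The complex $LF^*L_\cX$ is perfect of amplitude $[0,1]$ and, since $\mu_r$ is diagonalizable, splits by weight into perfect complexes $E_w$ on $T$. Because $B\mu_r$ is cohomologically affine, the graded differences are the fibrewise Euler characteristics of the $E_w$, which are locally constant. Two side remarks: smoothness of $U^\psi$ is not needed, since weight pieces of an equivariant vector bundle on a scheme with trivial $\mu_r$-action are direct summands and hence locally free. Also, \autoref{propositionCyclotomicInertiaFiniteTypeAffineDiagonalSmooth} concerns $I_{\mu_r}(\cX)$, not fixed loci.

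One step is wrong as written and must be repaired. The formula $L_{[U/H]}|_U\simeq[\Omega_U\to\mathfrak h^\vee\otimes\cO_U]$ requires $H$ to be smooth. The slice theorem gives $H=G_x$, which in characteristic $p$ is non-smooth precisely in the cases this paper is about, e.g.\ $\mu_p$. In general the group contributes $e^*L_H[-1]$, not $\mathfrak h^\vee[-1]$. For $H=\mu_p$ this complex has one-dimensional $H^0$ and one-dimensional $H^1$. Your formula would instead give $k[-1]$ at the point, of rank $-1$, although $B\mu_p$ is smooth of dimension $0$.

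The fix is straightforward, and either option works:
\begin{itemize}
\item Induce up to a smooth group, writing $[U/H]\cong[(U\times^H H')/H']$ with $H'\supset H$ smooth, e.g.\ $\GL_n$ or $T\rtimes H_0$ as in the proof of \autoref{prop:LBG}.
\item Keep $H$ and replace $\mathfrak h^\vee$ by the $\mu_r$-equivariant complex $e^*L_H$, with $\mu_r$ acting through conjugation by $\psi$. Its graded Euler characteristics are again constant once the conjugacy class of $\psi$ is fixed, so your conclusion survives.
\end{itemize}
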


\begin{proof}
The conclusion can be checked Zariski locally, so we may assume that $\cX$ has affine diagonal and a good moduli space. Then this is \cite[Proposition 4.19]{SatrianoUsatine6}, whose proof never uses that $k$ is algebraically closed or characteristic 0.
\end{proof}

\autoref{propositionWeightFunctionLocallyConstant} allows us to set the following notation.

\begin{notation}
Let $\cX$ be a smooth finite type equidimensional Artin stack over $k$. Assume that $\cX$ admits a cover by open substacks each with affine diagonal and a good moduli space (for example, if $\cX$ is tame and proper over a scheme, or if $\cX$ has affine diagonal and is tame over $k$). For any connected component $\cY$ of $I_{\mu_r}(\cX)$, we let $\overline{\wt}_\cX(\cY)$ denote the value $\overline{\wt}_\cX$ takes on $|\cY|$, and we set
\[
	\shft_\cX(\cY) = \dim\cX - \dim\cY - \overline{\wt}_\cX(\cY).
\]
\end{notation}

\begin{remark}
When $\cX$ is tame and $k$ has characteristic 0, $\shft_\cX(\cY)$ coincides with the shift defined in \cite{Yasuda2004,Yasuda} by \cite[Remark 2.11 and Lemma 9.6]{HuangSatrianoUsatine}.
\end{remark}

\subsection{Miscellaneous} We end this section by stating some miscellaneous results that will be used in the paper.

\begin{proposition}\label{propositionDimensionAndGrothendieckNorm}
Let $\cY$ be a finite type Artin stack over $k$ with affine geometric stabilizers. Then
\[
	\Vert \e(\cY) \Vert = \exp(\dim\cY).
\]
\end{proposition}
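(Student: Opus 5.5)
The plan is to reduce to the case where $\cY$ is a quotient stack $[U/\GL_n]$ with $U$ a finite type scheme, and then compute the norm there directly. Recall that $\Vert\cdot\Vert$ on $\widehat{\sM}_k$ is the norm attached to the dimension filtration: $\Vert a\Vert = \exp(-n)$, where $n$ is the largest integer with $a \in F^n$ (so $F^n$ is generated by classes $\e(X)\bL^{-i}$ with $\dim X - i \le -n$). Thus $\Vert \e(X)\Vert \le \exp(\dim X)$ for any variety $X$, with equality because the top-dimensional part of a nonzero variety class cannot lie in a deeper filtration step.

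\emph{Reduction.} Since $\cY$ is finite type with affine geometric stabilizers, Kresch's theorem gives a stratification of $\cY_{\red}$ into finitely many locally closed substacks $\cY_i$, each of the form $[U_i/\GL_{n_i}]$. The class $\e$ is additive on such stratifications, so $\e(\cY) = \sum_i \e(\cY_i)$ and $\dim\cY = \max_i \dim\cY_i$. The non-archimedean triangle inequality gives $\Vert\e(\cY)\Vert \le \max_i \exp(\dim\cY_i) = \exp(\dim\cY)$, provided each piece satisfies the statement. Equality will follow from a nonvanishing argument below, not from the ultrametric inequality alone, since the top-dimensional terms could a priori cancel.

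\emph{Quotient case.} For $\cY = [U/\GL_n]$ we have $\e(\cY) = \e(U)\,\e(\GL_n)^{-1}$. Here $\e(\GL_n) = \bL^{n^2}\prod_{j=1}^n(1-\bL^{-j})$, and the product is a unit of norm $1$. Hence $\Vert\e(\cY)\Vert = \Vert\e(U)\Vert\exp(-n^2) = \exp(\dim U - n^2) = \exp(\dim\cY)$, using the scheme case.

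\emph{Equality in general.} Write each $\e(\cY_i)$ as $\e(U_i)\bL^{-n_i^2}u_i$ with $u_i \equiv 1$ modulo higher filtration. Modulo $F^{-d+1}$, where $d = \dim\cY$, the sum becomes $\sum_{\dim\cY_i = d}\e(U_i)\bL^{-n_i^2}$. To see this is nonzero in $F^{-d}/F^{-d+1}$, I would take the virtual Poincaré or Hodge–Deligne type realization where one exists (the $E$-polynomial in characteristic $0$; in general, the point-counting or $\ell$-adic virtual Poincaré polynomial over a finitely generated field of definition). This realization is compatible with the filtration via the degree in $t$. The leading coefficient of each $\e(U_i)\bL^{-n_i^2}$ in degree $2d$ is the number of top-dimensional geometric irreducible components of $U_i$, which is positive. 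Positive leading terms cannot cancel, so the sum is nonzero and the norm equals $\exp(d)$.

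I expect the main obstacle to be this last step. It requires a realization that is well defined over an arbitrary field $k$, extends to $\widehat{\sM}_k$, and detects the filtration. The cleanest choice is probably the $\ell$-adic virtual Poincaré polynomial obtained by spreading out to a finite type $\Z$-algebra and using weights, since $\ell$-adic cohomology with compact supports has top-degree dimension equal to the number of top-dimensional components. Alternatively, one could cite the corresponding argument in \cite{SatrianoUsatine1} and note that it is characteristic-free.
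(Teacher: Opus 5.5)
Your argument is correct. It is essentially the mechanism behind the paper's proof, written out in full. The paper just cites \cite[Proposition 4.4]{SatrianoUsatine5} and notes that the only characteristic-sensitive input was the scheme case $\Vert \e(X)\Vert = \exp(\dim X)$. That case had been justified using resolution of singularities, and the paper now takes it from \cite[Chapter 2 Corollary 3.5.12(a)]{ChambertLoirNicaiseSebag}, valid over any $k$.

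The realization you identify as the main obstacle is available off the shelf. It is the Euler--Poincar\'e specialization $\EP\colon \widehat{\sM}_k \to \Z\llparenthesis t^{-1}\rrparenthesis$ from the same section of \cite{ChambertLoirNicaiseSebag}, and the paper itself uses it over arbitrary $k$. Your positivity-of-leading-coefficients step, applied to a single stratum, is what actually proves the scheme case that your quotient-case paragraph invokes. The sentence in your first paragraph about ``the top-dimensional part of a nonzero variety class'' is a restatement of that case, not a proof of it.

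Two small remarks.

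First, you can avoid applying the realization to the stack at all. Multiply $\e(\cY) = \sum_i \e(U_i)\e(\GL_{n_i})^{-1}$ by $\prod_j \e(\GL_{n_j})$. The result is $\e(Z)$ for the scheme $Z = \bigsqcup_i U_i \times \prod_{j \neq i} \GL_{n_j}$, which has dimension $\dim\cY + N$ with $N = \sum_j n_j^2$. Hence $\e(\cY) = \e(Z)\bL^{-N}u$, where $u$ and $u^{-1}$ both have norm at most $1$. The scheme case, used as a black box, then gives the result immediately, and there is no possible cancellation between strata to rule out. This makes clear why, as the paper says, only the scheme case needs a characteristic-free replacement.

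Second, your fallback of citing the earlier stack argument and noting that it is characteristic-free would not work as stated. That argument relied on resolution of singularities at exactly this point, which is the one thing the paper has to patch.
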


\begin{proof}
This is \cite[Proposition 4.4]{SatrianoUsatine5} except without any assumptions on the field $k$. The proof in loc. cit. works verbatim except for the use of resolution of singularities at the end of the proof of \cite[Lemma 4.2]{SatrianoUsatine5}. However, that use of resolution of singularities was unnecessary. It was only used to justify the special case of \cite[Lemma 4.2]{SatrianoUsatine5} where $\cY$ is a scheme, and that special case is already known for arbitrary $k$, see e.g., \cite[Chapter 2 Corollary 3.5.12(a)]{ChambertLoirNicaiseSebag}.
\end{proof}

\begin{lemma}\label{l:fppf-loc-sm->sm}
Let $\cX$ be an Artin stack over a base scheme $S$. Let $X\to\cX$ be an fppf cover by an algebraic space.
\begin{enumerate}
\item If $X$ is smooth over $S$, then $\cX$ is as well.
\item If $X$ is regular, then $\cX$ is as well.
\end{enumerate}
\end{lemma}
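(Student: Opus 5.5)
The plan is to reduce both statements to a smooth (respectively regular) atlas of $\cX$ and then exploit that smoothness and regularity descend along fppf coverings. Choose a smooth surjection $U \to \cX$ from a scheme $U$. Set $X_U = X \times_\cX U$; it is an algebraic space, and the projection $X_U \to U$ is an fppf cover, being the base change of $X \to \cX$. Likewise $X_U \to X$ is smooth and surjective. By definition, $\cX$ is smooth over $S$ (respectively regular) exactly when $U$ is, so both parts reduce to the corresponding statement for the fppf cover $X_U \to U$ of algebraic spaces, and then further to schemes by taking étale atlases.

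For part (1), first note that $X$ smooth over $S$ and $X_U \to X$ smooth give $X_U$ smooth over $S$. We then need to descend smoothness from $X_U$ to $U$ along the flat, locally finitely presented surjection $X_U \to U$; we also know $U$ is locally of finite presentation over $S$ because $\cX$ is. One route is the standard fact (EGA IV 17.7.7, or the Stacks Project's statement that smoothness over $S$ descends along fppf covers of the source) that if $Y' \to Y$ is faithfully flat and locally of finite presentation and $Y' \to S$ is smooth, then $Y \to S$ is smooth. Alternatively, one can argue directly. Flatness of $U$ over $S$ follows because $X_U \to U$ is faithfully flat and $X_U \to S$ is flat. For each geometric fiber, we then need $U_{\bar s}$ regular given that $(X_U)_{\bar s}$ is regular and $(X_U)_{\bar s} \to U_{\bar s}$ is faithfully flat. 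This is the same descent-of-regularity statement as in part (2), applied fiberwise.

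For part (2), since $X_U \to X$ is smooth and $X$ is regular, $X_U$ is regular. Regularity descends along flat local homomorphisms: if $A \to B$ is a flat local map of Noetherian local rings and $B$ is regular, then $A$ is regular (Matsumura, Theorem 23.7). Applying this at points of $X_U$ lying over each point of $U$, which exist by surjectivity, shows $U$ is regular. This uses local Noetherianity, which holds because a regular scheme is locally Noetherian by definition, and $U$ is locally of finite presentation over the locally Noetherian base relevant to our situation.

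I expect the main care to be needed on this finiteness point in (2). One must confirm that $U$ is locally Noetherian, or phrase regularity for stacks so that it forces this. I would handle it by noting that $X_U$ is locally Noetherian and $X_U \to U$ is fppf, and invoking descent of the locally Noetherian property along fppf covers; the remaining steps are formal.
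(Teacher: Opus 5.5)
Your proposal is correct and is essentially the paper's proof. Both pass to a smooth atlas $U \to \cX$, take an \'etale atlas of $X \times_\cX U$ to get an fppf cover of $U$ by a smooth (resp.\ regular) scheme, and descend smoothness and regularity along that cover; the paper simply cites Stacks Project Tags 05B5 and 06QL, where you cite EGA IV 17.7.7 and spell out Matsumura 23.7 together with fpqc-descent of local Noetherianity.

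One harmless slip: the lemma does not assume $\cX$ is locally of finite presentation over $S$, so your aside that $U$ is locally of finite presentation ``because $\cX$ is'' is unjustified. It is also unneeded, since local finite presentation descends along fppf covers of the source (EGA IV 11.3.16) and is already part of the descent statement you cite.
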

\begin{proof}
Choosing a smooth cover $U\to\cX$ by a scheme, it is enough to show $U$ is smooth (resp.~regular). The fiber product $U\times_\cX X$ is an algebraic space; choose an \'etale cover $W\to U\times_\cX X$ by a scheme. We see $W$ is smooth (resp.~regular) and $W\to U$ is an fppf cover, so Tag 05B5 (resp.~Tag 06QL) of \cite{stacks-project} shows $U$ is smooth (resp.~regular).
\end{proof}

We need the following generalization of \cite[Proposition A.1]{FantechiMannNironi}.

\begin{proposition}\label{FMN-tame-stacks}
Let $\cX$ be a normal Artin stack, $\cY$ a Noetherian tame stack with finite diagonal, and $\iota\colon\cU\to\cX$ a dominant open immersion. If $F_1,F_2\colon\cX\to\cY$ are two maps and $\beta\colon F_1\iota\Rightarrow F_2\iota$ is a $2$-isomorphism, then there exists a unique $2$-isomorphism $\alpha\colon F_1\Rightarrow F_2$ such that $\iota^*\alpha=\beta$.
\end{proposition}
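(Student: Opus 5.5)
The plan is to reduce everything to a statement about sections of a finite morphism. Form the sheaf of $2$-isomorphisms
\[
	I = \uIsom_\cX(F_1, F_2) = \cX \times_{(F_1,F_2),\, \cY\times_k\cY,\, \Delta_\cY} \cY \longrightarrow \cX.
\]
Because $\Delta_\cY$ is representable and finite, $I \to \cX$ is representable and finite. A $2$-isomorphism $F_1 \Rightarrow F_2$ is exactly a section of $I \to \cX$, and $\beta$ is a section of $I_\cU = I \times_\cX \cU \to \cU$. So the statement becomes: a section of a finite representable morphism over a dense open of a normal stack extends uniquely to all of $\cX$.

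\emph{Uniqueness.} Choose a smooth atlas $W \to \cX$ by a scheme. Then $W$ is normal, hence reduced, and $W_\cU = W \times_\cX \cU$ is dense in $W$, since $W \to \cX$ is open and $\cU$ is dense. The base change $I_W \to W$ is finite, hence separated. Two sections of a separated morphism that agree on a dense open of a reduced scheme agree everywhere. Since $I$ is an fppf sheaf, uniqueness on $W$ gives uniqueness on $\cX$.

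\emph{Existence.} Work first on $W$. Let $s\colon W_\cU \to I_W$ be the pullback of $\beta$, and let $Z \subset I_W$ be the scheme-theoretic closure of $s(W_\cU)$, with its reduced structure. Then $Z \to W$ has the following properties:
\begin{itemize}
\item it is finite, being a closed subscheme of the finite $I_W$;
\item $Z$ is reduced and every irreducible component of $Z$ dominates a component of $W$;
\item it is an isomorphism over $W_\cU$, because $s$ is a closed immersion (a section of a separated map) and so $Z \cap I_{W_\cU} = s(W_\cU)$.
\end{itemize}
Working locally on affines, $\cO_W \to \cO_Z$ is a finite map into a reduced ring that is an isomorphism generically on each component. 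Since $W$ is normal, $\cO_Z$ is integral over $\cO_W$ and contained in its total ring of fractions, so $\cO_W = \cO_Z$. Hence $Z \to W$ is an isomorphism, and its inverse gives a section $\alpha_W$ of $I_W$ extending $s$.

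To descend, compare the two pullbacks of $\alpha_W$ to $W \times_\cX W$. This scheme is smooth over $W$, hence normal, and the preimage of $\cU$ is dense in it. The two pullbacks agree over that preimage, so by the uniqueness argument they agree everywhere. Therefore $\alpha_W$ descends to a section $\alpha$ of $I \to \cX$, and $\iota^*\alpha = \beta$ by construction.

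I expect the main obstacle to be the existence step. In \cite{FantechiMannNironi} the target is Deligne--Mumford, whereas here the finite diagonal of $\cY$ can be non-reduced (for instance the stabilizer $\mu_p$), so $I$ need not be reduced or étale over $\cX$. The argument above sidesteps this by taking the \emph{reduced} closure of the image of $\beta$, so only finiteness and separatedness of $\Delta_\cY$ are used. I would double-check that the closure really is flat-local, that is, that it commutes with the flat base change $W \times_\cX W \to W$. If some additional input turns out to be needed, the tameness of $\cY$ (flatness of the relevant group schemes, via \cite{AOV}) is where I would look for it.
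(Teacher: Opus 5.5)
Your argument is correct, and it takes a genuinely different route from the paper. The paper reruns the proof of \cite[Proposition A.1]{FantechiMannNironi}. That proof works with a local quotient presentation of $\cY$ by a group scheme $G$ and the induced torsors $P_i \to \cX$, and it needs $P_1$ to be reduced, which forces $G$ to be smooth. The only new input in the paper is \cite[Proposition 5.2]{SatrianoCST}: fppf locally on its coarse space, a tame stack is a quotient by a \emph{smooth} affine group scheme.

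You bypass presentations and torsors entirely. The $2$-isomorphisms form the finite representable morphism $\uIsom_\cX(F_1,F_2)\to\cX$. Taking the reduced closure of the image of $\beta$, together with normality of $\cX$, handles non-reduced stabilizers such as $\mu_p$ directly. Your route buys generality and self-containedness: it uses only that $\Delta_\cY$ is finite, so tameness of $\cY$ and the local structure theorem for tame stacks are never needed. The paper's route buys brevity by citing an existing proof.

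Three small remarks:
\begin{enumerate}
\item Your closing worry about the closure commuting with flat base change is unnecessary. You descend $\alpha_W$ through the uniqueness statement on $W\times_\cX W$, not by base-changing $Z$.
\item $W\times_\cX W$ is in general only an algebraic space, not a scheme. This is harmless, since uniqueness can be checked on an \'etale atlas.
\item The step ``isomorphism generically on each component'' tacitly uses that $W$ is locally Noetherian. That is what guarantees the dense open $W_\cU$ contains every generic point of $W$, and that $W$ is locally a finite disjoint union of normal domains. This holds in all of the paper's applications, and the cited proof in \cite{FantechiMannNironi} works under the same kind of standing hypotheses.
\end{enumerate}
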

\begin{proof}
The proof is exactly the same as \cite[Proposition A.1]{FantechiMannNironi} with one change. The first paragraph of the proof of (loc.~cit.) does not apply when $G$ is merely a separated group scheme; the proof also requires smoothness of $G$ since, using their notation, to deduce $\alpha_1=\alpha_2$ we need $P_1$ to be reduced. Thus, we need to show that if $\cY\to Y$ is the coarse space map, then fppf locally on $Y$, we may write $\cY=[W/G]$ with $G$ a smooth affine group scheme. This is shown in \cite[Proposition 5.2]{SatrianoCST}; note that (loc.~cit.) is stated over a perfect field but this assumption is never used. 
%\footnote{\cite[Theorem 3.2]{AOV} technically only states that fppf locally, $\cY$ is of the form $[U/G]$ with $G$ finite flat linearly reductive and hence fpqc locally on $Y$, it is well-split; however, when $Y$ is Noetherian, the proof of \cite[Lemma 2.13] produces an fppf cover where $G$ is well-split, and then the proof of \cite[Proposition 5.2]{SatrianoCST} applies to show $\cY$ fppf locally has the desired form.} 
\end{proof}

\section{Motivic integration over untwisted arcs}\label{sectionUntwistedMotivicIntegration}

In this section, we will define motivic integration over untwisted arcs of Artin stacks and prove its main structural theorems. Specifically, we will define measurable sets of untwisted arcs and their motivic measure, we will prove the change of variables formula for untwisted arcs, and we will prove that so-called ``thin subsets'' of untwisted arcs have measure zero. In the special case where the ground field $k$ has characteristic 0, these results were proved in \cite{SatrianoUsatine1, SatrianoUsatine2, SatrianoUsatine6}\footnote{In loc. cit., $k$ was also assumed to be algebraically closed, but this hypothesis was not used in the proofs of these results.}, respectively. The main contribution of this section is extending these results to the case where $k$ has positive characteristic. While many parts of the original proofs work in positive characteristic, in which case we will cite the relevant portions of loc. cit., there are also parts where we will need new arguments, which we will detail below.

We begin by setting our notation for stacks parametrizing (untwisted) jets and arcs. If $\cX$ is an Artin stack over $k$ and $n \in \Z_{\geq 0}$, we will let $\sL_n(\cX)$ denote the Weil restriction of $\cX \otimes_k k[t]/(t^{n+1})$ along the morphism $\Spec(k[t]/(t^{n+1})) \to \Spec(k)$. 

\begin{remark}
Each $\sL_n(\cX)$ is an Artin stack over $k$ by \cite[Theorem 3.7(iii)]{Rydh}. If $\cX$ is locally finite type over $k$, then each $\sL_n(\cX)$ is locally finite type over $k$ by \cite[Proposition 3.8(xii)]{Rydh}. If $\cX$ is finite type over $k$, then each $\sL_n(\cX)$ is finite type over $k$ by \cite[Proposition 3.8(xv)]{Rydh}. If $\cX$ has affine geometric stabilizers, then each $\sL_n(\cX)$ has affine geometric stabilizers by the exact same argument as in \cite[Remark 3.4]{SatrianoUsatine1}.
\end{remark}

For $m \geq n$, the truncation morphism $k[t]/(t^{m+1}) \to k[t]/(t^{n+1})$ induces a morphism $\theta^m_n: \sL_m(\cX) \to \sL_n(\cX)$. We then let $\sL(\cX)$ denote the induced inverse limit $\varprojlim_n\sL_n(\cX)$, and we let each $\theta_n: \sL(\cX) \to \sL_n(\cX)$ denote the canonical map. Note that for any field extension $k'$ of $k$, the category $\sL(\cX)(k')$ is canonically equivalent to the category of $k$-morphisms $\Spec(k'\llbracket t \rrbracket) \to \cX$, e.g., by Artin's criteria for algebraicity. We will implicitly identify these categories for the remainder of the paper, and we will refer to the objects of these categories as \emph{(untwisted) arcs}.

\begin{remark}
Although there is no reason to believe $\sL(\cX)$ is an Artin stack in general, it is at least a stack (see, e.g., \cite[Proposition 2.1.9]{Talpo}) and we will still use the notation $|\sL(\cX)|$ to denote the associated set parametrizing field valued points of $\sL(\cX)$ up to equivalence.
\end{remark}

The following special subsets of $|\sL(\cX)|$ will eventually be used to define motivic integration over $|\sL(\cX)|$.

\begin{definition}
Let $\cX$ be a locally finite type Artin stack over $k$, and let $\cC \subset |\sL(\cX)|$. The set $\cC$ is called a \emph{cylinder} if there exists some $n \in \Z_{\geq 0}$ and locally constructible subset $\cC_n \subset |\sL_n(\cX)|$ such that $\cC = \theta_n^{-1}(\cC_n)$. The set $\cC$ is called \emph{bounded} if $\theta_0(\cC)$ is contained is a quasi-compact subset of $|\sL_0(\cX)| = |\cX|$.
\end{definition}

\begin{remark}
It is straightforward to check that $\cC$ is a bounded cylinder if and only if there exists some $n \in \Z_{\geq 0}$ and quasi-compact locally constructible subset $\cC_n \subset |\sL_n(\cX)|$ such that $\cC = \theta_n^{-1}(\cC_n)$.
\end{remark}

The following is the key structural theorem that allows us to define a motivic volume for bounded cylinders.

\begin{theorem}
Let $\cX$ be a smooth equidimensional Artin stack over $k$ with affine geometric stabilizers, and let $\cC \subset |\sL(\cX)|$ be a bounded cylinder. Then $\theta_n(\cC)$ is a quasi-compact locally constructible subset of $|\sL_n(\cX)|$ for all $n \in \Z_{\geq 0}$, and the sequence
\[
	\{\e(\theta_n(\cC))\bL^{-(n+1)\dim\cX}\}_{n \in \Z_{\geq 0}} \subset \widehat{\sM}_k
\]
stabilizes for $n$ sufficiently large.
\end{theorem}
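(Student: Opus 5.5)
We follow the strategy of \cite{SatrianoUsatine1}, reducing to a smooth cover by a scheme, and replacing the characteristic-$0$ inputs with arguments that hold over any field.

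\textbf{Reduction to a smooth atlas.} Since $\cC$ is bounded, we first choose an open substack of finite type containing $\theta_0(\cC)$ and replace $\cX$ with it; the statement only involves points lying over $\theta_0(\cC)$. Next we choose a smooth cover $\rho\colon X\to\cX$ by a finite type scheme $X$ with $X$ equidimensional. Because $\rho$ is smooth, Weil restriction gives smooth morphisms $\sL_n(X)\to\sL_n(\cX)$. Formal smoothness of $\rho$ shows that every arc of $\cX$ over a field $k'$ lifts to an arc of $X$, possibly after a finite separable extension of $k'$. Hence $\theta_n(\cC)$ is the image of $\theta_n(\rho^{-1}\cC)$, where $\rho^{-1}\cC$ is a bounded cylinder in $\sL(X)$. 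Constructibility of $\theta_n(\cC)$ then follows from Chevalley's theorem for finite type morphisms of Artin stacks, once we know $\theta_n(\rho^{-1}\cC)$ is constructible. That last fact holds for the smooth scheme $X$ in any characteristic, since the truncation maps $\sL_{m}(X)\to\sL_n(X)$ are $\bA^{(m-n)\dim X}$-bundles.

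\textbf{Fiberwise analysis.} To compare classes, we study the fibers of $\theta^{n+1}_n\colon\theta_{n+1}(\cC)\to\theta_n(\cC)$. Over a $k'$-point $\psi$ of $\sL_n(\cX)$ that lifts to an arc, deformation theory identifies the fiber with a quotient stack of the form $[\bA^{a}/\bA^{b}]$, that is, a vector bundle stack. Here $a$ and $b$ are the dimensions of $H^0$ and $H^1$ of the cotangent complex $L\psi_0^*L_\cX^\vee$ restricted along the arc, and $a-b=\dim\cX$. The key input is the structure of $L_\cX$ pulled back along an arc $\Spec k'\llbracket t\rrbracket\to\cX$. This is a two-term complex of free modules, and its cohomology decomposes into a free part and torsion. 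As in \cite{SatrianoUsatine1}, this lets us stratify $\theta_n(\cC)$ by the \emph{height} $h$, meaning the length of the torsion part of $H^1$ of the dual complex. For $n$ large relative to $h$, the fiber of $\theta_{n+1}(\cC)\to\theta_n(\cC)$ over the stratum of height $h$ has class exactly $\bL^{\dim\cX}$, because the torsion no longer interacts with the truncation. Since $\cC$ is bounded, only finitely many heights can occur on any fixed cylinder, up to a set whose contribution tends to $0$ in $\widehat{\sM}_k$. Summing over strata then gives $\e(\theta_{n+1}(\cC))=\bL^{\dim\cX}\e(\theta_n(\cC))$ for $n\gg0$, which is the claimed stabilization.

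\textbf{Main obstacle.} In characteristic $p$ we cannot use two characteristic-$0$ tools. First, fibers that are vector bundle stacks $[\bA^a/\bA^b]$ have class $\bL^{a-b}$ only if they are Zariski-locally trivial. This needs the additive group to be special, which holds in all characteristics, so this point is fine. Second, the argument also needs constructibility of the height strata and generic triviality of these bundle stacks over the strata. In \cite{SatrianoUsatine1} this was obtained by spreading out over reduced bases, using generic smoothness, which fails in positive characteristic. I would instead stratify $\theta_n(\cC)$ so that each stratum is a reduced scheme or stack on which the complex $L\psi^*L_\cX$ has locally free cohomology. Generic flatness, which holds in every characteristic, gives such a stratification, and then the deformation-theoretic fiber description globalizes over each stratum. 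Surjectivity onto $\theta_n(\cC)$ requires lifting along the smooth atlas, which may only be possible after finite separable extensions. Isomorphism classes of points, and hence classes in $\widehat{\sM}_k$, are insensitive to such extensions on fibers that are vector bundle stacks, so this does not affect the computation.
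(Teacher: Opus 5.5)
\textbf{Gap in the stabilization step.} Your reduction to finite type is the same as the paper's first step. The constructibility argument via a smooth atlas is fine: jets and arcs of $\cX$ lift after finite separable extensions, and the truncations of the smooth scheme $X$ are affine bundles. The problem is the height stratification, which cannot deliver the statement. Heights are not bounded on a bounded cylinder. For $\cX=[\bA^1/\bG_m]$ and the arc $x=t^eu$ with $u$ a unit, $L\varphi^*L_\cX$ is $k'\llbracket t\rrbracket\xrightarrow{t^eu}k'\llbracket t\rrbracket$, with torsion $k'\llbracket t\rrbracket/(t^e)$, so every $e$ occurs on the bounded cylinder $|\sL(\cX)|$. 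The height of an arc is also not a function of its $n$-jet once it exceeds about $n$, so ``stratify $\theta_n(\cC)$ by height'' is not well defined. Finally, discarding a set ``whose contribution tends to $0$'' gives at best convergence of the sequence. It does not give the eventual exact equality $\e(\theta_{n+1}(\cC))=\bL^{\dim\cX}\e(\theta_n(\cC))$ that the theorem asserts. Heights enter when comparing measures along a morphism, as in the change of variables formula, not in the truncation maps.

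\textbf{What the argument needs instead.} No stratification by arc invariants is required. The ideal $J=(t^{n+1})/(t^{n+2})$ of $D_n\hookrightarrow D_{n+1}$ is killed by $t$, so $\Ext^i(L\psi_n^*L_\cX,J)=\Ext^i(L\psi_0^*L_\cX,k')$. Hence the fiber of $\theta^{n+1}_n$ over every point, for every $n$, is $\bA^a\times B\bG_a^b$ with $a-b=\dim\cX$, as you yourself computed. Globally, $\theta^{n+1}_n$ is a torsor under the pullback along $\theta_0$ of the tangent bundle stack of $\cX$. If $\cC=\theta_m^{-1}(\cC_m)$, surjectivity of truncations for the smooth stack $\cX$ gives $\theta_{n+1}(\cC)=(\theta^{n+1}_n)^{-1}(\theta_n(\cC))$ for all $n\geq m$. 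So stabilization from $n=m$ on reduces to one characteristic-free lemma: the preimage under such a torsor of a constructible set has class $\bL^{\dim\cX}$ times the class of that set. That is where your generic-flatness stratification belongs: restrict to reduced pieces on which the two-term complex has constant rank, so its kernel and cokernel are vector bundles and the torsor trivializes on affine pieces, then use that $\GL_N$ and $\bG_a$ are special.

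\textbf{Your anticipated obstacle.} It is not an obstacle for the cited proof. The paper's proof is just the reduction to finite type followed by \cite[Theorem 3.33]{SatrianoUsatine2}, and the paper notes that this proof uses neither characteristic $0$ nor algebraic closedness.
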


\begin{proof}
Since $\cC$ is bounded, we may replace $\cX$ with a finite type open substack containing $\theta_0(\cC)$ and therefore immediately reduce to the case where $\cX$ is finite type. The case where $\cX$ is finite type is \cite[Theorem 3.33]{SatrianoUsatine2}, where it is stated when $k$ has characteristic 0 and is algebraically closed, but neither of these hypotheses are used in the proof.
\end{proof}

As a consequence, we may define the motivic volume of a bounded cylinder.

\begin{definition}\label{definitionMotivicVolumeUntwistedArcs}
Let $\cX$ be a smooth equidimensional Artin stack over $k$ with affine geometric stabilizers, and let $\cC \subset |\sL(\cX)|$ be a bounded cylinder. Then the \emph{motivic volume} of $\cC$ is defined as
\[
	\mu_\cX(\cC) = \lim_{n \to \infty} \e(\theta_n(\cC))\bL^{-(n+1)\dim\cX} \in \widehat{\sM}_k.
\]
\end{definition}

\subsection{A technical proposition used in the proof of the change of variables formula}

The next main goal of this section is to prove the motivic change of variables formula \cite[Theorem 1.2]{SatrianoUsatine2} in positive characteristic. All of the proof in loc. cit. works verbatim except for the proof of \cite[Proposition 6.1]{SatrianoUsatine2}, which relies on \cite[Lemma 6.3]{SatrianoUsatine2}, which uses that $k$ has characteristic 0 in order to guarantee that group schemes over $k$ are smooth. The goal of this subsection is to show that \cite[Proposition 6.1]{SatrianoUsatine2} still holds when $k$ has positive characteristic. We begin with a lemma.

\begin{lemma}\label{groupCommutativeIfFibersCommutative-charp}
Let $k'$ be a field, let $S$ be a reduced locally finitely presented $k'$-scheme, and let $f\colon G\to S$ be a flat separated locally finitely presented group scheme. If for every $s \in S$, the fiber $f_s\colon G_s\to\Spec k'(s)$ is a smooth commutative group scheme, then $G$ is commutative.
\end{lemma}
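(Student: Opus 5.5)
We need to show that the commutator morphism $c\colon G\times_S G\to G$, $(g,h)\mapsto ghg^{-1}h^{-1}$, factors through the identity section $e\colon S\to G$. Equivalently, the two morphisms $c$ and $e\circ\pi$ from $G\times_S G$ to $G$ agree, where $\pi\colon G\times_S G\to S$ is the structure map. Since $f$ is separated, the equalizer of these two morphisms is a closed subscheme $Z\subset G\times_S G$. The plan is to show $Z=G\times_S G$ as schemes.

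The first step is to show that $G$, and hence $G\times_S G$, is reduced. The map $G\times_S G\to S$ is flat and locally of finite presentation, and its fibers $G_s\times G_s$ are smooth over $k'(s)$. Flat plus locally finitely presented plus smooth fibers gives smoothness, so $G\times_S G\to S$ is smooth. Since $S$ is reduced, $G\times_S G$ is then reduced; for instance, a smooth morphism is étale locally a composite of an étale map and affine space over a reduced base.

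The second step is to check that $Z$ contains every point of $G\times_S G$. Take a point $x$ lying over $s\in S$. The restriction of $c$ to the fiber is the commutator map of $G_s$, which is trivial because $G_s$ is commutative. Hence $Z\times_S \Spec k'(s)=G_s\times G_s$, and therefore $Z$ contains the whole fiber over $s$. So $Z$ is a closed subscheme with the same underlying set as $G\times_S G$.

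Since $G\times_S G$ is reduced, a closed subscheme with the same underlying set must be the whole scheme. Thus $Z=G\times_S G$, so $c$ factors through $e$, and $G$ is commutative.

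The only potentially delicate step is reducedness. The naive argument fails if the fibers are not smooth: $\mu_p$-type phenomena produce non-reduced group schemes, which is exactly why the hypothesis is "smooth fibers" and not merely "commutative fibers." That step is handled by the fibral smoothness criterion (EGA IV 17.5.1, or Stacks Tag 01V8). Separatedness is used only to make $Z$ closed. Reducedness of $S$ together with smoothness is used to pass from a set-theoretic equality to a scheme-theoretic one.
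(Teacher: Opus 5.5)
Your proof is correct and takes essentially the same approach as the paper. The paper defers to the earlier proof of this lemma (the characteristic $0$ version), which likewise uses the fibral smoothness criterion to make $G\times_S G\to S$ smooth and hence $G\times_S G$ reduced, and then concludes because two maps $G\times_S G\to G$ that agree on every fiber must coincide. The only difference is cosmetic: you compare the commutator map with $e\circ\pi$ through an explicit closed equalizer (which is where separatedness enters), whereas the paper compares $m$ with $m\circ\tau$ and cites a general lemma on morphisms agreeing on fibers.
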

\begin{proof}
The proof is the same as in \cite[Lemma 6.3]{SatrianoUsatine2}. In (loc.~cit.), the characteristic $0$ assumption was only used to prove $f_s$ is smooth.
%Let $m\colon G\times_S G\to G$ be the multiplication map and $\tau\colon G\times_S G\to G\times_S G$ be the map $\tau(g,h)=(h,g)$. To prove $G$ is commutative, we must show $m\circ\tau=m$. By \cite[Tag 0B8G]{stacks-project}, every fiber $f_s\colon G_s\to\Spec k'(s)$ is a group scheme, hence by Cartier's Theorem (see e.g., \cite[Tag 047N]{stacks-project}), every fiber $f_s$ is smooth. Since $f$ flat and locally of finite presentation, \cite[Tag 01V8]{stacks-project} shows that $f$ is smooth; in particular, $G\times_S G\to S$ is smooth. Since $S$ is reduced, $G\times_S G$ is as well. Lastly, the maps $m,m\circ\tau\colon G\times_S G\to G$ are equal on fibers, so $m\circ\tau=m$ by \autoref{equalityMapsOnFibers}.
\end{proof}

We may now prove that \cite[Proposition 6.1]{SatrianoUsatine2} holds in arbitrary characteristic.

\begin{proposition}\label{prop:paper2Prop6.1}
Let $r \in \Z_{\geq 0}$, and let $\cY$ be a finite type Artin stack over $k$. Assume that the map $\cI_\cY \to \cY$ is separated and that for every field extension $k'$ of $k$ and every $y \in \cY(k')$, the stabilizer of $y$ is isomorphic to $\bG_{a,k'}^r$. Then there exist locally closed substacks $\cY_1, \dots, \cY_m$ of $\cY$ and finite type $k$-schemes $Y_1, \dots, Y_m$ such that $|\cY| = \bigsqcup_{i = 1}^m |\cY_i|$ and $\cY_i \cong Y_i \times_k B\bG_{a,k}^r$ for all $i = 1, \dots, m$.
\end{proposition}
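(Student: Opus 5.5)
We follow the structure of the characteristic 0 proof of \cite[Proposition 6.1]{SatrianoUsatine2} and isolate the single point where smoothness of group schemes was used. The overall strategy is unchanged. First stratify $\cY$ so that, on each stratum, the inertia becomes a flat group space with fibers $\bG_a^r$. Then rigidify along that inertia to get an algebraic space. Finally, show the resulting gerbe is trivial, i.e.\ isomorphic to a product with $B\bG_a^r$, after further stratification.

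\textbf{Step 1: flattening the inertia.} Using generic flatness and Noetherian induction, we find a stratification of $|\cY|$ by reduced locally closed substacks $\cY'$ on which $\cI_{\cY'} \to \cY'$ is flat and finitely presented. This map is separated by hypothesis, and its fibers are $\bG_{a,k'}^r$. We pass to a smooth atlas $U \to \cY'$ with $U$ a reduced finite type scheme. The pullback $G=\cI_{\cY'}\times_{\cY'} U \to U$ is then a flat separated finitely presented group scheme. Each of its fibers is isomorphic to $\bG_a^r$, so in particular each fiber is smooth and commutative.

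\textbf{Step 2: commutativity and smoothness.} We apply \autoref{groupCommutativeIfFibersCommutative-charp} to $G \to U$ to conclude that $G$ is commutative. This is precisely the place where \cite[Lemma 6.3]{SatrianoUsatine2} needed characteristic 0: there it was used only to know the fibers are smooth. Here smoothness comes for free from the hypothesis that the stabilizers are $\bG_a^r$. A flat, finitely presented morphism with smooth fibers is smooth, so $\cI_{\cY'}\to\cY'$ is also smooth. Commutativity then lets the inertia descend to a commutative group space on the rigidification. The rigidification $\cY' \to Y'$ along the full inertia exists because the inertia is flat. Here $Y'$ is an algebraic space, and $\cY'$ is a gerbe over $Y'$ banded by a smooth commutative group space $H$ whose geometric fibers are $\bG_a^r$.

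\textbf{Step 3: trivializing.} After further stratifying $Y'$ and passing to an open dense affine scheme in each stratum, we want two things: $H \cong \bG_{a}^r \times Y'$, and a section of the gerbe, so that $\cY' \cong Y' \times B\bG_a^r$. This is the main obstacle. In positive characteristic, a smooth commutative group that is a form of $\bG_a^r$ need not be trivial; over imperfect generic points there are nontrivial unipotent forms. Those forms, however, occur only over non-perfect fields and are killed by a purely inseparable base change. The hypothesis says the stabilizer is $\bG_{a,k'}^r$ at every $k'$-point, including generic points over their own residue fields. Applying this at the generic point $\eta$ of each component, with $k'=k(\eta)$, gives $H_\eta \cong \bG_a^r$. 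Spreading out, we get $H \cong \bG_a^r$ over a dense open. The gerbe class then lies in $H^2(Y', \bG_a^r)$ (fppf cohomology, which agrees with \'etale and Zariski cohomology for quasi-coherent sheaves). On an affine scheme this vanishes, so the gerbe is neutral there, giving $\cY' \cong Y'\times B\bG_a^r$. Noetherian induction then yields finitely many strata $\cY_i \cong Y_i \times_k B\bG^r_{a,k}$ with $Y_i$ finite type schemes, after shrinking to schemes. This follows the remaining steps of the proof of \cite[Proposition 6.1]{SatrianoUsatine2} verbatim.
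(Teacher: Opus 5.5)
Your Steps 1 and 2 are fine and essentially agree with the paper. The paper likewise needs the stratum to be reduced only in order to apply \autoref{groupCommutativeIfFibersCommutative-charp}, and smoothness of the fibers comes from the hypothesis.

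\textbf{The gap is in Step 3.} There you apply the hypothesis ``at generic points over their own residue fields'' to get $H_\eta\cong\bG_{a,k(\eta)}^r$. But the hypothesis only concerns actual morphisms $\Spec k'\to\cY$. The gerbe $\cY'_\eta\to\Spec k(\eta)$ over a generic point $\eta$ of $Y'$ need not have a $k(\eta)$-point; a priori $\eta\in|\cY'|$ is represented only by morphisms from larger fields. Neutrality of this gerbe is part of what must be proved. Your neutrality argument (the class lies in $H^2(Y',\bG_a^r)$, which vanishes on affines) already presupposes $H\cong\bG_a^r$, so as written the argument is circular.

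Your own remark about inseparable forms shows this is not a formality. Suppose the gerbe only acquired points over purely inseparable extensions of $K=k(\eta)$. Then the hypothesis would only say that $H_\eta$ becomes $\bG_a^r$ over such an extension, which does not force $H_\eta\cong\bG_{a,K}^r$.

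\textbf{How the paper supplies the missing step.} The band of the gerbe over $K$ is smooth, being smooth fpqc locally. Hence the gerbe is smooth over $K$ and has a point over a finite separable extension $K''/K$, which may be taken Galois. The hypothesis at that $K''$-point gives $H_{K''}\cong\bG_{a,K''}^r$, and Galois descent gives $H_\eta\cong\bG_{a,K}^r$. Only then is $H^2(K,\bG_a^r)=0$ used to conclude that the gerbe is $B\bG_{a,K}^r$. The paper phrases the descent via Hilbert 90 for $\GL_r$. In characteristic $p$, however, $\Aut(\bG_a^r)$ also contains additive polynomials, so the fact really needed is that forms of $\bG_a^r$ split by a separable extension are trivial, which does hold.

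\textbf{An alternative repair closer to your ordering.} Prove neutrality first. Note that $H_\eta$ is unipotent, since $H_{\overline{K}}\cong\bG_a^r$. Then use that $H^2_{\mathrm{fppf}}(K,U)=0$ for every commutative unipotent $K$-group $U$; this follows by d\'evissage to subgroups of $\bG_a$, each of which is the kernel of an additive polynomial $\bG_a\to\bG_a$. Then apply the hypothesis at the resulting $K$-point.

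With either fix, your spreading out over a dense affine open and the Noetherian induction go through.
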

\begin{proof}
The proof of \cite[Proposition 6.1]{SatrianoUsatine2} goes through largely the same with some modifications. The first two paragraphs of the proof hold in arbitrary characteristic and reduce us to showing that if $\Spec K$ and $\cY\to\Spec K$ is a gerbe, then it is the trivial gerbe. At this point in the proof of \cite[Proposition 6.1]{SatrianoUsatine2}, the characteristic $0$ assumption is used to conclude that $\cY$ is smooth over $K$, hence $\cY$ is reduced; however, we can show $\cY$ is reduced without needing to assume $K$ has characteristic $0$ as follows. Recall from the first two paragraphs of the proof of \cite[Proposition 6.1]{SatrianoUsatine2} that $\cY$ is the generic fiber of some $\cZ \to W$. Since $\cZ$ arose by stratifying, we can replace it with $\cZ_{\red}$, and we can also replace $W$ with an integral affine subscheme of $W$, say $\Spec A$. To show $\cY$ is reduced, it is enough to take a smooth cover $Z\to\cZ$ and show the generic fiber of $Z \to W$ is reduced since this generic fiber is a smooth cover of $\cY$. Replacing $Z$ with an open affine, we may assume $Z=\Spec B$ is reduced. Then the generic fiber of $\Spec B \to \Spec A$ is the spectrum of a localization of $B$, hence reduced.

%It looks like where 6.3 is used, we know G is reduced, even in char p, as long as we confirm it's ok to assume \Y is reduced. Here's the reason. \Y is a gerbe so I\Y --> \Y is flat. Also all its fibers are G_a^r so smooth, and since a flat map with smooth fibers is smooth, we get I\Y --> \Y is smooth. Then G --> tilde{Y} is a base change of I\Y --> \Y so smooth. As long as we agree \Y is reduced, then tilde{Y} is reduced, so G is reduced
Replacing the use of \cite[Lemma 6.3]{SatrianoUsatine2} with \autoref{groupCommutativeIfFibersCommutative-charp}, \cite[Tag 0CJY]{stacks-project} and the remainder of the third paragraph of the proof of \cite[Proposition 6.1]{SatrianoUsatine2} show that $\cY$ is $\cG$-gerbe for some sheaf of abelian groups $\cG$ on $\Spec K$.

Since $\cY$ is a gerbe over $K$, there exists an fppf cover $T\to\Spec K$ over which $\cY$ is trivialized. Choosing any field valued point $K'$ of $T$, we see $q\colon\Spec K'\to\Spec K$ is an fpqc cover and $\cY_{K'}\simeq B\bG_a^r$. Thus, $\cG|_{K'}$ is represented by $\bG_a^r$. Then \cite[Tag 0245]{stacks-project} tells us $\cG$ is represented by an affine group scheme $G\to\Spec K$. Furthermore, $G$ is smooth over $K$ since it is smooth fpqc locally. Thus, $\cY\to\Spec K$ is smooth. Since smooth maps have sections \'etale locally, we see there is an \'etale extension $K''/K$ where $G_{K''}\simeq\bG_a^r$. Replacing $K''$ by its Galois closure, we may assume $K''/K$ is Galois. Then Hilbert's Theorem 90, tells us $H^1(\Gal(K''/K),\GL_r)=0$, hence $G=\bG_a^r$. Then the class of our $\bG_a^r$-gerbe $\cY$ lies in $H^2_{et}(\Spec K, \bG_a^r)=H^2_{et}(\Spec K, \cO^{\oplus r})=H^2_{Zar}(\Spec K, \cO^{\oplus r})=0$. It follows that $\cY=B\bG_{a,K}^r$.
\end{proof}

\subsection{The motivic change of variables formula for untwisted arcs}

We will first need some notation for the ``height function'' that provides the correction term in the change of variables formula.

\begin{notation}
Let $\cX$ be an Artin stack over $k$. For any $E \in D^-_{\coh}(\cX)$, any $i \in \Z$, and any arc $\varphi: \Spec(k'\llbracket t \rrbracket) \to \cX$, we set
\[
	\het^{(i)}_E(\varphi) = \dim_{k'} \mathbb{H}^i(L\varphi^* E) \in \Z_{\geq 0} \cup \{\infty\}.
\]
We then have an induced map 
\[
	\het^{(i)}_E: |\sL(\cX)| \to \Z_{\geq 0} \cup \{\infty\}.
\]
Let $\cY$ be a locally finite type Artin stack over $k$, and let $\cX \to \cY$ be a morphism. If $\cX$ is smooth\footnote{The following makes sense without assuming $\cX$ is smooth, but it is useful to reserve the notation for a modified version when $\cX$ is not smooth, as in \cite[Definition 6.2]{SatrianoUsatine5}.} over $k$ and $\varphi$ is an arc of $\cX$, we set
\[
	\het_{\cX/\cY}(\varphi) = \begin{cases} \het^{(0)}_{L_{\cX/\cY}}(\varphi) - \het^{(1)}_{L_{\cX/\cY}}(\varphi), \quad& \het^{(0)}_{L_{\cX/\cY}}(\varphi), \het^{(1)}_{L_{\cX/\cY}}(\varphi) \in \Z_{\geq 0} \\ \infty, & \text{otherwise} \end{cases}
\]
so we get the map
\[
	\het_{\cX/\cY}: |\sL(\cX)| \to \Z \cup \{\infty\}.
\]
\end{notation}

We will also set notation for how to integrate certain functions over spaces of arcs.

\begin{notation}
Let $\cX$ be a smooth equidimensional Artin stack over $k$ with affine geometric stabilizers, let $\cC \subset |\sL(\cX)|$ be a bounded cylinder, and let $f: \cC \to \Z$. If $f$ takes only finitely many values and the set $f^{-1}(n) \subset |\sL(\cX)|$ is a (bounded) cylinder for all $n \in \Z$, then we set
\[
	\int_\cC \bL^{f} \diff\mu_\cX = \sum_{n \in \Z} \bL^n \mu_{\cX}(f^{-1}(n)).
\]
\end{notation}

We are now prepared to state and prove the motivic change of variables formula for untwisted arcs.

\begin{theorem}\label{theoremUntwistedChangeOfVariables}
Let $\cX$ be a smooth irreducible Artin stack over $k$ with affine geometric stabilizers and separated diagonal, let $Y$ be a finite type irreducible scheme over $k$, and let $\cX \to Y$ be a morphism. Let $\cU$ be an open substack of $\cX$ such that the composition $\cU \hookrightarrow \cX \to Y$ is an open immersion, let $\cC \subset |\sL(\cX)|$ be a bounded cylinder that is disjoint from $|\sL(\cX \setminus \cU)|$, and let $D \subset \sL(Y)$ be a cylinder. Assume that for every field extension $k'$ of $k$, the map $\overline{\cC}(k') \to D(k')$ induced by $\cX \to Y$ is a bijection.
\begin{enumerate}[label=(\alph*)]

\item The restriction of $\het_{\cX/Y}$ to $\cC$ is integer valued and takes only finitely many values, and the set $\het_{\cX/Y}^{-1}(n) \cap \cC$ is a bounded cylinder for all $n \in \Z$.

\item We have the equality
\[
	\mu_Y(D) = \int_{\cC} \bL^{-\het_{\cX/Y}}\diff\mu_\cX.
\]

\end{enumerate}
\end{theorem}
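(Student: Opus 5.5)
The plan is to follow the proof of the motivic change of variables formula \cite[Theorem 1.2]{SatrianoUsatine2} essentially verbatim, and to check at each stage which inputs used that $k$ has characteristic $0$. As announced at the start of this subsection, the only place where characteristic $0$ entered is \cite[Proposition 6.1]{SatrianoUsatine2}. We have replaced it with \autoref{prop:paper2Prop6.1}, so the bulk of the argument can be cited, and the work is to confirm that no other step silently needs smoothness of group schemes.

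First I reduce. Since $\cC$ is bounded, I replace $\cX$ by a quasi-compact open substack containing $\theta_0(\cC)$, so $\cX$ is finite type. For part (a), $\het_{\cX/Y}$ is computed from $L_{\cX/Y}$ restricted along arcs. On arcs avoiding $\sL(\cX\setminus\cU)$, the complex $L\varphi^*L_{\cX/Y}$ is generically zero, because $\cU\to Y$ is an open immersion. So $\mathbb{H}^0$ and $\mathbb{H}^1$ are finite-length $k'\llbracket t\rrbracket$-modules. Their lengths are bounded by the order of vanishing of Fitting-type ideals. The cylinder property and finiteness of values then follow as in loc.\ cit., via truncation to jets and a boundedness argument on a quasi-compact cylinder. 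None of this uses the characteristic.

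For part (b), I stratify $\cC$ into the cylinders $\cC^{(n)} = \het_{\cX/Y}^{-1}(n)\cap\cC$. For large $m$, I study the map on jets $\theta_m(\cC^{(n)}) \to \theta_m(D^{(n)})$, where $D^{(n)}$ is the image of $\cC^{(n)}$. The key fiber computation of loc.\ cit. compares the fibers over a jet of $Y$: using deformation theory of arcs with the cotangent complex in degrees $[-1,0]$, each fiber is, up to an affine-space factor of dimension $\het^{(0)}$, a stack whose stabilizers are vector groups $\bG_a^{\het^{(1)}}$.

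This is where \cite[Proposition 6.1]{SatrianoUsatine2} enters. I need the fiberwise stacks, stratified, to be of the form $Y_i\times B\bG_a^r$, so that their classes equal $\e(Y_i)\bL^{-r}$. \autoref{prop:paper2Prop6.1} supplies exactly this in any characteristic, given that the inertia is separated. Separatedness follows from the separated-diagonal hypothesis on $\cX$, inherited by the jet stacks. Summing, I get $\e(\theta_m(\cC^{(n)})) = \bL^{n}\,\e(\theta_m(D^{(n)}))\cdot\bL^{(m+1)(\dim\cX-\dim Y)}$ in the appropriate normalization. The bijection on $k'$-points for all field extensions $k'$ is what lets me compare classes pointwise, since classes in $\widehat{\sM}_k$ are determined by a stratification argument. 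Dividing by $\bL^{(m+1)\dim}$, letting $m\to\infty$, and summing over $n$ yields $\mu_Y(D)=\int_\cC\bL^{-\het_{\cX/Y}}\diff\mu_\cX$.

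The main obstacle I expect is this fiber identification. In positive characteristic, stabilizers of jet stacks might a priori be non-smooth infinitesimal groups rather than $\bG_a^r$. I would first verify that the deformation-theoretic description of the stabilizers as $\mathbb{H}^{1}$-vector groups holds scheme-theoretically; the stabilizer is a vector group scheme built from a finite $k'$-module, which is smooth regardless of characteristic. I would then check that the gerbe triviality argument of \autoref{prop:paper2Prop6.1} applies without modification.
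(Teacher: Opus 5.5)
Your proposal is correct and follows essentially the paper's route: reduce to $\cX$ of finite type using boundedness of $\cC$, then invoke \cite[Theorem 1.2]{SatrianoUsatine2} verbatim, with \cite[Proposition 6.1]{SatrianoUsatine2} (its only characteristic-dependent input) replaced by \autoref{prop:paper2Prop6.1}. The one step you leave implicit, hidden in your ``appropriate normalization'' factor, is that the paper first uses \cite[Corollary 2.2]{SatrianoUsatine3} to put the open-immersion and disjointness hypotheses into the form needed by \cite[Theorem 1.2]{SatrianoUsatine2}, and notes that either $\cU=\emptyset$ (so the statement is vacuous) or $\dim\cX=\dim Y$, so that factor is trivial.
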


\begin{proof}
Since $\cC$ is bounded, we may replace $\cX$ with a finite type open substack $\cX'$ containing $\theta_0(\cC)$ and replace $\cU$ with its intersection with $\cX'$ and therefore immediately reduce to the case where $\cX$ is finite type. Next, note that the proof of \cite[Corollary 2.2]{SatrianoUsatine3} never uses that $k$ is algebraically closed or characteristic 0, so it holds in our setting. Note that either $\cU$ is empty, in which case the theorem is vacuously true, or $\dim\cX = \dim Y$. We are thus reduced to showing that the change of variables formula \cite[Theorem 1.2]{SatrianoUsatine2} holds as stated without any assumptions on the field $k$. The proof of \cite[Theorem 1.2]{SatrianoUsatine2} in loc. cit. never uses that $k$ is algebraically closed, and the only place it uses that $k$ is characteristic 0 is in the proof of \cite[Proposition 6.1]{SatrianoUsatine2}. Therefore replacing \cite[Proposition 6.1]{SatrianoUsatine2} with its generalization \autoref{prop:paper2Prop6.1}, we are done.
\end{proof}

\subsection{Measurable and thin sets}

Measurable sets of arcs will be those sets of arcs that are approximated by bounded cylinders in a precise way, as we will now define.

\begin{definition}
Let $\cX$ be a smooth equidimensional Artin stack over $k$ with affine geometric stabilizers, let $\cC \subset |\sL(\cX)|$, and let $\varepsilon \in \R_{>0}$. A \emph{bounded cylinderical $\varepsilon$-approximation} of $\cC$ is a pair $(\cC^{(0)}, \{\cC^{(i)}\}_{i \in I})$ such that
\begin{itemize}

\item $\cC^{(0)} \subset |\sL(\cX)|$ is a bounded cylinder,

\item $\{\cC^{(i)}\}_{i \in I}$ is a collection with each $\cC^{(i)} \subset |\sL(\cX)|$ a bounded cylinder,

\item $\Vert \mu_\cX(\cC^{(i)}) \Vert < \varepsilon$ for all $i \in I$, and

\item $(\cC \cup \cC^{(0)}) \setminus (\cC \cap \cC^{(0)}) \subset \bigcup_{i \in I} \cC^{(i)}$.

\end{itemize}
\end{definition}

\begin{definition}
Let $\cX$ be a smooth equidimensional Artin stack over $k$ with affine geometric stabilizers, and let $\cC \subset |\sL(\cX)|$. The set $\cC$ is called \emph{measurable} if it has a bounded cylindrical $\varepsilon$-approximation for all $\varepsilon \in \R_{>0}$.
\end{definition}

\begin{lemma}\label{lemmaUntwistedBoundedCylinderFiniteSubcover}
Let $\cX$ be a locally finite type Artin stack over $k$, and let $\cC \subset |\sL(\cX)|$ be a bounded cylinder. Then any cover of $\cC$ by bounded cylinders has a finite subcover.
\end{lemma}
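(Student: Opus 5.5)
We reduce to a quasi-compactness statement for the constructible topology on a finite type jet stack, using the usual argument that a cylinder is a "compact" object in the pro-constructible topology. First, since $\cC$ is bounded, $\theta_0(\cC)$ lies in a quasi-compact open substack $\cX'$ of $\cX$. Replacing $\cX$ by $\cX'$, and each member of the cover by its intersection with $|\sL(\cX')|$ (still a bounded cylinder, as $|\sL(\cX')| = \theta_0^{-1}(|\cX'|)$), we may assume $\cX$ is finite type. Then every $\sL_n(\cX)$ is finite type by the remark following the definition of $\sL_n(\cX)$, so all locally constructible subsets of $|\sL_n(\cX)|$ are constructible.

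Next write $\cC = \theta_{n_0}^{-1}(\cC_{n_0})$ and each cover member as $\cC^{(i)} = \theta_{n_i}^{-1}(\cC^{(i)}_{n_i})$, with $\cC_{n_0}$ and $\cC^{(i)}_{n_i}$ constructible. Suppose for contradiction that no finite subcover exists. For each finite $J \subset I$, the set $\cC \setminus \bigcup_{i \in J}\cC^{(i)}$ is then a nonempty cylinder. Setting $N = \max(n_0, \max_J n_i)$, it equals $\theta_N^{-1}(E_{J})$ with $E_J \subset |\sL_N(\cX)|$ constructible. I would replace $E_J$ by $E_J \cap \theta_N(|\sL(\cX)|)$ if needed.

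The key input is that for $m \ge n$ the images $\theta^m_n(|\sL_m(\cX)|)$ and $\theta_n(|\sL(\cX)|)$ behave well. Specifically, a point of $|\sL_n(\cX)|$ lifts to an arc if and only if it lifts to every $\sL_m(\cX)$, and each $\theta^m_n(|\sL_m(\cX)|)$ is constructible by Chevalley's theorem for finite type stacks. I expect this to be the main obstacle. For smooth $\cX$ it is immediate, since $\theta^m_n$ is surjective and arcs lift by formal smoothness (in fact $|\sL(\cX)| \to |\sL_n(\cX)|$ is surjective). For general locally finite type $\cX$ I would argue as in the scheme case, via Greenberg's theorem. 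Alternatively, I would exploit the fact that the statement involves only points. A point of $|\sL(\cX)|$ is a compatible system of field-valued points of the $\sL_n(\cX)$, so the inverse system of spectral spaces $|\sL_n(\cX)|$ with constructible topology should have limit mapping onto $|\sL(\cX)|$. Here I would use that an element of $\varprojlim_n |\sL_n(\cX)|$ can be realized by an actual arc after passing to a large enough field, by compatibly choosing representatives.

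Granting this, the argument concludes as follows. Each $|\sL_n(\cX)|$ is compact in the constructible topology. The nonempty closed sets $\theta^{-1}(E_J)$ form a family with the finite intersection property in the profinite-type limit $\varprojlim_n |\sL_n(\cX)|_{\mathrm{cons}}$, which is compact by Tychonoff. Hence the family has a common point, and lifting that point to an arc gives a point of $\cC$ not in any $\cC^{(i)}$, a contradiction.
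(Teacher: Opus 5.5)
Your argument is correct, but it takes a different route from the paper's. After the same reduction to finite type, the paper observes that $|\sL(\cX)| \setminus \cC$ is then a bounded cylinder, so it suffices to treat covers of all of $|\sL(\cX)|$. For that case it simply quotes \cite[Proposition 2.2]{SatrianoUsatine2}, remarking that the proof there uses no hypothesis on $k$.

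You instead prove the compactness directly on the stack. Let $\Lambda = \varprojlim_n |\sL_n(\cX)|$ with projections $p_n$. Every cylinder is the preimage under $|\sL(\cX)| \to \Lambda$ of some $p_n^{-1}(E)$ with $E$ constructible, and such sets are clopen in the inverse limit of the constructible topologies. Then $\Lambda$ is compact and $|\sL(\cX)| \to \Lambda$ is surjective, which finishes the argument. This is self-contained and makes it visible that nothing about $k$ enters; the paper's route is just shorter. You should state explicitly that each $|\sL_n(\cX)|$ is spectral (true for quasi-compact quasi-separated stacks). Hausdorffness of the constructible topology is what makes $\Lambda$ closed in the product, and hence compact by Tychonoff.

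Two simplifications are worth making.

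First, the Greenberg/Chevalley input is unnecessary, and on its own it would not finish the proof. Knowing that a point of $|\sL_N(\cX)|$ which lifts to every level lifts to an arc controls only one truncation. The levels $n_i$ of the cover members are unbounded, so you must realize the whole compatible system $\lambda \in \Lambda$ by a single arc. That is exactly your ``alternative'', and it is elementary:
\begin{itemize}
\item choose representatives $x_n$ over a tower of fields $K_0 \subseteq K_1 \subseteq \cdots$, with isomorphisms $\theta^{n+1}_n(x_{n+1}) \cong x_n|_{K_{n+1}}$ (possible by the definition of equivalence of field-valued points);
\item pass to $K = \bigcup_n K_n$;
\item use $\sL(\cX)(K) = \varprojlim_n \sL_n(\cX)(K)$.
\end{itemize}

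Second, there is no need to intersect $E_J$ with $\theta_N(|\sL(\cX)|)$. The finite intersection property in $\Lambda$ already holds, because the nonempty set $\cC \setminus \bigcup_{i \in J} \cC^{(i)}$ maps into $p_{N_J}^{-1}(E_J)$.
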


\begin{proof}
Since $\cC$ is bounded, we may replace $\cX$ with a finite type open substack containing $\theta_0(\cC)$ and therefore immediately reduce to the case where $\cX$ is finite type. Since $\cX$ is finite type, $|\sL(\cX)| \setminus \cC$ is a bounded cylinder, so we are reduced to the case where $\cC = |\sL(\cX)|$, which is \cite[Proposition 2.2]{SatrianoUsatine2}. We note that the proof of loc. cit. never uses any hypotheses on $k$.
\end{proof}

\begin{proposition}\label{propositionUntwistedMeasurableSetHasWellDefinedVolume}
Let $\cX$ be a smooth equidimensional Artin stack over $k$ with affine geometric stabilizers, and let $\cC \subset |\sL(\cX)|$ be a measurable set. There exists a unique element $\mu_{\cX}(\cC) \in \widehat{\sM}_k$ satisfying $\Vert \mu_\cX(\cC) - \mu_\cX(\cC^{(0)}) \Vert < \varepsilon$ for any bounded cylindrical $\varepsilon$-approximation $(\cC^{(0)}, \{\cC^{(i)}\}_{i \in I})$.
\end{proposition}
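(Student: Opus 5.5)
The plan is the standard ultrametric argument, as in the Denef--Loeser setting and in \cite{SatrianoUsatine2}. The only inputs are three properties of $\mu_\cX$ on bounded cylinders, established one at a time.
\begin{enumerate}[label=(\roman*)]
\item Additivity: $\mu_\cX(\cA \cup \cB) = \mu_\cX(\cA) + \mu_\cX(\cB) - \mu_\cX(\cA \cap \cB)$ for bounded cylinders $\cA, \cB$.
\item Monotonicity of the norm: $\Vert \mu_\cX(\cA) \Vert \leq \Vert \mu_\cX(\cB) \Vert$ whenever $\cA \subset \cB$.
\item Finite subadditivity: $\Vert \mu_\cX(\bigcup_{j} \cB_j) \Vert \leq \max_j \Vert \mu_\cX(\cB_j) \Vert$ for finitely many bounded cylinders $\cB_j$.
\end{enumerate}

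For (i), choose $n$ large enough that $\cA$, $\cB$, $\cA \cap \cB$ and $\cA \cup \cB$ are all pulled back from $|\sL_n(\cX)|$ and all their volumes have stabilized. Additivity of $\e$ on quasi-compact locally constructible subsets of $|\sL_n(\cX)|$ then gives (i).

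For (ii), note that $\e(\theta_n(\cA))$ is the class of a finite type stack of dimension at most that of $\theta_n(\cB)$. By \autoref{propositionDimensionAndGrothendieckNorm}, together with the fact that $\Vert \cdot \Vert$ is non-archimedean and that $\e(\theta_n(\cB)) = \e(\theta_n(\cA)) + \e(\theta_n(\cB)\setminus\theta_n(\cA))$, the norm of the larger class is at least the norm of the smaller. Concretely, both sides equal $\exp$ of the dimension of the top strata, and dimensions of stacks are monotone under inclusion of locally constructible subsets. Multiplying by $\bL^{-(n+1)\dim\cX}$ shifts both norms by the same factor, which gives (ii).

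For (iii), write the union as a disjoint union of bounded cylinders $\cB_j \setminus \bigcup_{i<j}\cB_i$. Then apply (i), the ultrametric inequality, and (ii).

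With these in hand, set $\varepsilon_m = 1/m$ and choose a bounded cylindrical $\varepsilon_m$-approximation $(\cC_m^{(0)}, \{\cC_m^{(i)}\})$ for each $m$. The key estimate is this: for any two approximations $(\cA^{(0)},\{\cA^{(i)}\})$ of size $\varepsilon$ and $(\cB^{(0)},\{\cB^{(j)}\})$ of size $\varepsilon'$,
\[
\Vert \mu_\cX(\cA^{(0)}) - \mu_\cX(\cB^{(0)}) \Vert < \max(\varepsilon,\varepsilon').
\]
To prove it, observe that the symmetric difference of $\cA^{(0)}$ and $\cB^{(0)}$ is a bounded cylinder. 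It is contained in the union of the two symmetric differences with $\cC$, hence in $\bigcup_i \cA^{(i)} \cup \bigcup_j \cB^{(j)}$. By \autoref{lemmaUntwistedBoundedCylinderFiniteSubcover} it is covered by finitely many of these sets, so by (ii) and (iii) its volume has norm less than $\max(\varepsilon,\varepsilon')$. By (i),
\[
\mu_\cX(\cA^{(0)}) - \mu_\cX(\cB^{(0)}) = \mu_\cX(\cA^{(0)}\setminus\cB^{(0)}) - \mu_\cX(\cB^{(0)}\setminus\cA^{(0)}),
\]
and each term on the right has norm bounded by (ii), which proves the estimate.

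The estimate shows that $\mu_\cX(\cC_m^{(0)})$ is Cauchy in the complete ring $\widehat{\sM}_k$. Its limit $\mu_\cX(\cC)$ satisfies the required bound for any approximation, by applying the estimate against the $m$-th approximation and letting $m\to\infty$. Uniqueness is immediate: two candidate elements differ by less than $2\varepsilon$ in norm for every $\varepsilon$, since they are both within $\varepsilon$ of the same $\mu_\cX(\cC^{(0)})$.

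I expect the main obstacle to be (ii), the norm monotonicity in the stacky setting. It is the step where \autoref{propositionDimensionAndGrothendieckNorm}, now available in arbitrary characteristic, is essential. One must also be careful that the complement $\theta_n(\cB)\setminus\theta_n(\cA)$ is again quasi-compact locally constructible, so that its class is defined and has norm $\exp$ of its dimension. Cancellation cannot occur at the top-dimensional terms, because each term is of the form $\exp(\dim)$ and the leading terms are effective classes.
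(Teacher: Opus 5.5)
Your proposal is correct and is essentially the paper's argument. The paper runs the proof of \cite[Chapter 6 Theorem 3.3.2]{ChambertLoirNicaiseSebag} verbatim, and its only stack-specific inputs are the ones you use: \autoref{propositionDimensionAndGrothendieckNorm} (your norm monotonicity (ii)) and \autoref{lemmaUntwistedBoundedCylinderFiniteSubcover} (the finite subcover in your key estimate). One small phrasing fix: passing to a limit does not preserve strict inequalities, so to bound $\Vert \mu_\cX(\cC) - \mu_\cX(\cC^{(0)}) \Vert$, instead fix a single $m$ with $1/m < \varepsilon$ and $\Vert \mu_\cX(\cC) - \mu_\cX(\cC_m^{(0)}) \Vert < \varepsilon$, and apply the ultrametric inequality.
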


\begin{proof}
The proof in \cite[Chapter 6 Theorem 3.3.2]{ChambertLoirNicaiseSebag} of the special case where $\cX$ is a scheme works verbatim, except we need to use \autoref{propositionDimensionAndGrothendieckNorm} and \autoref{lemmaUntwistedBoundedCylinderFiniteSubcover} in place of their special cases where $\cX$ is a scheme.
\end{proof}

\begin{definition}
Let $\cX$ be a smooth equidimensional Artin stack over $k$ with affine geometric stabilizers, and let $\cC \subset |\sL(\cX)|$ be a measurable set. The \emph{motivic volume} of $\cC$ is the element $\mu_{\cX}(\cC)$ in the statement of \autoref{propositionUntwistedMeasurableSetHasWellDefinedVolume}.
\end{definition}

We now end this section with the following theorem about sets of arcs that factor through a closed substack.

\begin{theorem}\label{theoremUntwistedThinSubsets}
Let $\cX$ be a smooth equidimensional finite type Artin stack over $k$ with affine geometric stabilizers, and let $\cZ$ be a closed substack of $\cX$ with $\dim\cZ < \dim\cX$. Then $|\sL(\cZ)|$ is a measurable subset of $|\sL(\cX)|$ with $\mu_\cX(|\sL(\cZ)|) = 0$.
\end{theorem}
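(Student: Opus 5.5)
Measure zero will follow from exhibiting, for every $\varepsilon>0$, a single bounded cylinder containing $|\sL(\cZ)|$ of norm less than $\varepsilon$. The natural candidates are the cylinders $\cC_n = \theta_n^{-1}(\theta_n(|\sL(\cZ)|))$, or more concretely $\theta_n^{-1}(|\sL_n(\cZ)|)$, the set of arcs of $\cX$ whose $n$-jet factors through $\cZ$. Then $(\emptyset, \{\cC_n\})$ is a bounded cylindrical $\varepsilon$-approximation of $|\sL(\cZ)|$ once $\Vert\mu_\cX(\cC_n)\Vert<\varepsilon$. This shows measurability, and \autoref{propositionUntwistedMeasurableSetHasWellDefinedVolume} then gives $\Vert\mu_\cX(|\sL(\cZ)|)\Vert<\varepsilon$ for every $\varepsilon$, hence the volume is $0$.

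Here $\cC_n$ is a bounded cylinder, since $\cX$ is finite type and $|\sL_n(\cZ)|\subset|\sL_n(\cX)|$ is closed. By \autoref{definitionMotivicVolumeUntwistedArcs} and the stabilization theorem,
\[
\mu_\cX(\cC_n) = \e(\theta_m(\cC_n))\bL^{-(m+1)\dim\cX}
\]
for $m$ large. Using \autoref{propositionDimensionAndGrothendieckNorm}, it suffices to bound
\[
\dim \theta_m(\cC_n) - (m+1)\dim\cX \le -c(n)
\]
with $c(n)\to\infty$. The fibers of $\theta^m_n$ over the smooth stack $\cX$ have dimension $(m-n)\dim\cX$: this is the key fiber-dimension input of \cite{SatrianoUsatine1,SatrianoUsatine2} for smooth Artin stacks, and it is valid in any characteristic since it only uses smoothness and the cotangent complex. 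So it suffices to show
\[
\dim |\sL_n(\cZ)| \cap \theta_n(|\sL(\cX)|) \le (n+1)\dim\cX - c(n).
\]

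Following the scheme case (e.g.\ \cite[Chapter 6]{ChambertLoirNicaiseSebag}), I would first reduce to schemes. Choose a smooth cover $X\to\cX$ by a finite type scheme of relative dimension $d$, and set $Z=\cZ\times_\cX X$, so $\dim Z<\dim X$. Arcs of $\cX$ lift to arcs of $X$ because smooth morphisms satisfy arc lifting over henselian $k'\llbracket t\rrbracket$ after a field extension, and $\sL_n(X)\to\sL_n(\cX)$ is smooth surjective of relative dimension $(n+1)d$. Dimensions of jet stacks of $\cZ$ then compare with those of $Z$ with that shift, and the $d$-terms cancel. We are reduced to the classical statement for schemes: for $Z\subset X$ with $X$ smooth finite type, $\dim\theta_n(\sL(Z))\le (n+1)\dim X - c(n)$ with $c(n)\to\infty$.

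The main obstacle is that last bound in positive characteristic. The standard proof goes through Greenberg's theorem, or the Denef--Loeser estimate $\dim \theta_n(\sL(Z)) \le (n+1)\dim Z + $ const, and typically uses generic smoothness of $Z_{\red}$. Generic smoothness can fail over imperfect $k$. I would first pass to $\overline{k}$, since dimensions and $\Vert\cdot\Vert$ are insensitive to field extension and over a perfect field $Z_{\red}$ is generically smooth. Then I would cite \cite[Chapter 3, Corollary 2.5.?]{ChambertLoirNicaiseSebag} (their $\dim\theta_n(\sL(Z))\le (n+1)\dim Z$-type estimate, proven in arbitrary characteristic via Greenberg schemes) and conclude with $c(n)=n+1$.
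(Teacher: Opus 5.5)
There is a gap at the reduction to schemes. Your overall strategy is sound: cover $|\sL(\cZ)|$ by one small bounded cylinder, pass to a smooth atlas $X\to\cX$ with $Z=\cZ\times_\cX X$, and turn dimension bounds into norm bounds via \autoref{propositionDimensionAndGrothendieckNorm}. The trouble is which set you bound.

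The cylinder you actually use is $\theta_n^{-1}(|\sL_n(\cZ)|)$. Since $\theta_n$ is surjective for smooth $\cX$, the set $|\sL_n(\cZ)|\cap\theta_n(|\sL(\cX)|)$ you must bound is all of $|\sL_n(\cZ)|$. After passing to the atlas, the relevant quantity is $\dim\sL_n(Z)$, the dimension of the \emph{full} jet scheme. You then switch to $\dim\theta_n(\sL(Z))$, the truncations of \emph{arcs} of $Z$, and apply $\dim\theta_n(\sL(Z))\le(n+1)\dim Z$. These are different sets, and the estimate fails for jet schemes.

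A counterexample is $\cX=\bA^1$ and $\cZ=V(x^2)$. Here $\sL_n(\cZ)$ is the set of jets of order at least $\lceil (n+1)/2\rceil$. So $\dim\sL_n(\cZ)=\lfloor (n+1)/2\rfloor>0=(n+1)\dim\cZ$, and $\Vert\mu_\cX(\theta_n^{-1}(|\sL_n(\cZ)|))\Vert=\exp(-\lceil (n+1)/2\rceil)$. In particular your conclusion $c(n)=n+1$ is false for your cylinders. Your other candidate, $\theta_n^{-1}(\theta_n(|\sL(\cZ)|))$, does have the right size. However, you never show it is a cylinder, i.e.\ that $\theta_n(|\sL(\cZ)|)$ is locally constructible.

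The missing ingredient in either version is Greenberg's approximation theorem, which does not depend on the characteristic. It gives $c\ge1$ and $s\ge0$ with $\theta^{cn+s}_n(\sL_{cn+s}(Z))=\theta_n(\sL(Z))$ for all $n$. Given $N\ge s$, take $n=\lfloor (N-s)/c\rfloor$; then $\sL_N(Z)\subset(\theta^N_n)^{-1}(\theta_n(\sL(Z)))$. Since $X$ is smooth, $\theta^N_n\colon\sL_N(X)\to\sL_n(X)$ is an $\bA^{(N-n)\dim X}$-bundle. Hence $\dim\sL_N(Z)\le(n+1)\dim Z+(N-n)\dim X\le(N+1)\dim X-(n+1)$. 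So $c(N)\ge\lfloor (N-s)/c\rfloor+1\to\infty$, and the rest of your argument then goes through. Equivalently, you can quote the scheme statement that $\sL(Z)$ has measure zero in $\sL(X)$ from \cite{ChambertLoirNicaiseSebag} and transfer it along the atlas.

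Generic smoothness over imperfect $k$ is also not the real obstacle: the scheme-level input is already characteristic free. That is why the paper's proof consists only of observing that the argument of \cite[Theorem 9.2]{SatrianoUsatine6} goes through once \cite[Proposition 4.4]{SatrianoUsatine5} is replaced by the characteristic-free \autoref{propositionDimensionAndGrothendieckNorm}.
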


\begin{proof}
When $k$ is algebraically closed and characteristic 0, this is \cite[Theorem 9.2]{SatrianoUsatine6}. The proof in loc. cit. works verbatim without any hypotheses on $k$ as long as we use \autoref{propositionDimensionAndGrothendieckNorm} in place of \cite[Proposition 4.4]{SatrianoUsatine5}.
\end{proof}

\section{Warping stacks}

We will eventually generalize the theory developed in \autoref{sectionUntwistedMotivicIntegration} to the setting of \emph{twisted} arcs. As in the approach taken in \cite{SatrianoUsatine6}, we will reduce the main structural theorems of motivic integration over twisted arcs to the theory for untwisted arcs. The key technical machinery used to make this reduction is the notion of \emph{warped maps} and the \emph{warping stacks} that parametrize these. In this short section, we recall these notions and the main results about them that we will need. We begin with the following definition from \cite{SatrianoUsatine4}.

\begin{definition}
Let $\cX$ be an Artin stack over $k$. If $T$ is a scheme over $k$, a \emph{warped map} from $T$ to $\cX$ is a pair $(\cT \to T, \cT \to \cX)$, where $\cT$ is an Artin stack, $\cT \to T$ is a flat, finitely presented, good moduli space map with affine diagonal, and $\cT \to \cX$ is a representable morphism. Warped maps to $\cX$ form a category $\sW(\cX)$ fibered in groupoids over the category of $k$-schemes. The groupoid $\sW(\cX)(T)$ over $T$ is the category of warped maps from $T$ to $\cX$, and the morphisms in $\sW(\cX)$ are defined as in \cite[Remark 1.4]{SatrianoUsatine4}.
\end{definition}

We recall the following main structural result that will allow us to apply the theory in \autoref{sectionUntwistedMotivicIntegration} to the warping stack $\sW(\cX)$.

\begin{theorem}
Let $\cX$ be a locally finite type Artin stack over $k$ with affine diagonal. Then $\sW(\cX)$ is a locally finite type Artin stack over $k$. Furthermore if $\cX$ has a good moduli space, then $\sW(\cX)$ has separated diagonal and affine geometric stabilizers.
\end{theorem}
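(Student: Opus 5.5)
This statement is recalled from \cite{SatrianoUsatine4}, so the plan is to check that the proofs there do not use characteristic zero or an algebraically closed base field, and to replace any step that does. The first assertion, that $\sW(\cX)$ is a locally finite type Artin stack, should follow by exhibiting $\sW(\cX)$ as a union of open substacks. Each of these is built from two pieces. The first is a stack $\mathfrak{G}$ parametrizing flat, finitely presented good moduli space maps $\cT \to T$ with affine diagonal. The second is a relative Hom stack of representable morphisms $\cT \to \cX$ over $\mathfrak{G}$.

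\textbf{Step 1.} Show that the Hom stack is algebraic and locally of finite type. Because $\cT \to T$ is a flat good moduli space map, it is cohomologically proper in the sense needed for Hall--Rydh's Hom-stack theorems. Since $\cX$ has affine diagonal, the results of Hall--Rydh give that $\uHom_T(\cT, \cX)$ is algebraic and locally of finite presentation. Representability is an open condition, which follows from \cite{Rydh}-type results and upper semicontinuity of the stabilizer maps. None of this is characteristic-dependent.

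\textbf{Step 2.} Show that the stack of such $\cT \to T$ is an Artin stack, locally of finite type. This is the step where \cite{SatrianoUsatine4} may have used local structure theorems. In positive characteristic, the Alper--Hall--Rydh local structure theorem requires linearly reductive stabilizers. That condition does hold here, because stabilizers of stacks admitting good moduli spaces are linearly reductive in any characteristic. I would therefore rerun the argument using the Alper--Hall--Rydh étale local presentation $[\Spec A/G_x]$ with $G_x$ linearly reductive. That is enough to deform and algebraize, for example via Artin's criteria, with openness of versality coming from the coherent tangent-obstruction theory. I expect this to be the main obstacle: one must verify that no step invoked reductivity of smooth groups or used Cartier's theorem that group schemes are smooth. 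Any such use should be replaced by linear reductivity of possibly non-reduced stabilizers, such as $\mu_p$.

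\textbf{Step 3.} Now assume $\cX$ has a good moduli space $X$. For separated diagonal, I would reduce to showing that isomorphisms of warped maps satisfy the valuative criterion. Given two warped maps over a DVR $R$ and an isomorphism over the generic point, the isomorphism must extend uniquely. Extending the isomorphism $\cT_1 \cong \cT_2$ over $R$ uses separatedness of the relevant $\uIsom$ space. Uniqueness uses \autoref{FMN-tame-stacks}, or its analogue for good moduli spaces, together with normality of $\cT$ over $R$, after reducing to the tame case locally. For affine geometric stabilizers, I would fix a warped map over an algebraically closed field $k'$. Its automorphism group sits in an extension of the automorphisms of $\cT$ over $\Spec k'$ by a group that acts compatibly with $\cT \to \cX$. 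Since $\cT$ has a good moduli space over a point, it is $[\Spec A/H]$ with $H$ linearly reductive. These groups are closed subgroups of affine groups coming from $\cX$'s affine diagonal and from $\GL$ of a finite-dimensional representation space, so the stabilizer is affine. Again, this argument does not depend on the characteristic.
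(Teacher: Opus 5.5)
Your top-level plan matches the paper's. There the statement is proved purely by citation: algebraicity comes from \cite[Theorem 1.9(1)]{SatrianoUsatine4}, and the separated diagonal and affine geometric stabilizers come from \cite[Proposition 8.1]{SatrianoUsatine6}, not from \cite{SatrianoUsatine4}. The paper then observes that neither proof uses that $k$ is algebraically closed or of characteristic $0$. So no step needs replacing, and the ``main obstacle'' you anticipate in Step 2 does not arise. The substance of the proof is exactly that check of the cited arguments. Your proposal announces the check but replaces it with reconstructions of its own, and those cannot stand in for it.

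In particular, Step 3 fails as written. \autoref{FMN-tame-stacks} needs a normal source and a tame target with finite diagonal. Here $\cX$ and the $\cT_i$ only have affine diagonal and good moduli spaces, so they can have stabilizers such as $\bG_m$ (e.g.\ $\cX = B\bG_m$). The $\cT_i$ need not be normal, and there is no local reduction to the tame case.

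Separatedness of the diagonal also only requires uniqueness: two isomorphisms of warped maps over a DVR $R$ that agree over the fraction field must agree. Existence of extensions would be properness of the diagonal, which is false: the warped map $(\Spec k \to \Spec k, \Spec k \to B\bG_m)$ has stabilizer $\bG_m$. And separatedness of an Isom space gives uniqueness, never existence.

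One argument that does work uses representability instead. Given $(\alpha,\eta)$ and $(\alpha',\eta')$, a compatible $2$-isomorphism $\alpha \Rightarrow \alpha'$ is exactly a lift of $\eta'^{-1}\eta$ along $f_2$. The map $f_2\colon \cT_2 \to \cX$ is a representable morphism from a cohomologically affine stack to one with affine diagonal, hence affine. So the lifting locus is closed, and it is everything by schematic density of the generic fiber of the flat $\cT_1 \to \Spec R$.

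For the stabilizers, the automorphisms of $\cT$ form a $2$-group with no evident affine structure, so your extension does not by itself yield affineness. One workable route:
\begin{itemize}
\item Write the fiber of $\cX$ over the image point in its good moduli space as $[\Spec A/H]$ with $H$ linearly reductive.
\item Then $\cT = [\Spec B/H]$ with $B^H = k'$.
\item Automorphisms of the warped map are $H$-equivariant $A$-algebra automorphisms of $B$.
\item These form a closed subgroup of $\prod_\lambda \GL(B_\lambda)$, taken over finitely many finite-dimensional isotypic pieces that generate $B$.
\end{itemize}
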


\begin{proof}
The first claim is \cite[Theorem 1.9(1)]{SatrianoUsatine4}, and the second claim is \cite[Proposition 8.1]{SatrianoUsatine6}. We note that although both results were stated with the assumption that $k$ is algebraically closed and characteristic 0, these hypotheses were not used in the proofs.
\end{proof}

We will also need a certain open substack $\widetilde{\sW}(\cX)$ of $\sW(\cX)$ introduced in \cite{SatrianoUsatine6}, as we now recall.

\begin{notation}
If $\cX$ is an Artin stack over $k$, we let $\sW^{\synt}(\cX)$ denoted the full subcategory of $\sW(\cX)$ whose objects are warped maps $(\cT \to T, \cT \to \cX)$ where $\cT \to T$ is syntomic.
\end{notation}

\begin{proposition}\label{propositionSyntomicLocusOpenAndSmooth}
Let $\cX$ be a smooth Artin stack over $k$ with affine diagonal. Then $\sW^\synt(\cX)$ is an open substack of $\sW(\cX)$, and $\sW^\synt(\cX)$ is smooth over $k$.
\end{proposition}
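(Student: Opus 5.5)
Two things need proving: that $\sW^\synt(\cX)$ is open in $\sW(\cX)$, and that it is smooth over $k$. In characteristic $0$ both statements are in \cite{SatrianoUsatine6}. The plan is to rerun that argument and check that nothing uses the characteristic except possibly reducedness or smoothness of group schemes.

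\emph{Openness.} Take a smooth cover $W \to \sW(\cX)$ by a scheme. It corresponds to a warped map $(\cT \to W, \cT \to \cX)$, where $\cT \to W$ is flat and finitely presented. A flat, finitely presented morphism is syntomic exactly on the locus where its fibers are local complete intersections, and this locus is open in $\cT$. This holds for stacks, since it can be checked on a smooth atlas of $\cT$, and it is characteristic free by \cite[Tag 01UF]{stacks-project} together with openness of the lci locus of fibers. Let $\cT^{\circ} \subset \cT$ be the open complement of the non-syntomic locus. The good moduli space map $\pi\colon \cT \to W$ is universally closed, so $\pi(\cT \setminus \cT^{\circ})$ is closed in $W$. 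Its complement $W^\circ$ is the set of points over which $\cT \to W$ is syntomic. Syntomicity is stable under base change and fppf local on the target, so $W^\circ$ descends to an open substack of $\sW(\cX)$, namely $\sW^\synt(\cX)$.

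\emph{Smoothness.} I would use the infinitesimal lifting criterion for Artin stacks locally of finite type (\cite{Rydh} or the stacks project). Let $A' \to A$ be a square-zero extension of local Artinian $k$-algebras with ideal $I$, and let $(\cT \to \Spec A, \cT \to \cX)$ be a syntomic warped map. The task is to extend it over $\Spec A'$. There are two steps.
\begin{enumerate}
\item Deform $\cT$ to a flat stack $\cT'$ over $A'$. Since $\cT \to \Spec A$ is syntomic, $L_{\cT/A}$ is perfect of amplitude $[-1,0]$. The obstruction lives in $\Ext^2(L_{\cT/A}, \cO_\cT \otimes I)$, and this vanishes: the good moduli space map makes pushforward of quasicoherent sheaves exact, and $\Spec A$ is affine, so these Ext groups reduce to $H^0$ of a complex concentrated in degrees $\le 1$.
\item Extend the map to $\cX$. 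Since $\cX$ is smooth, $L_\cX$ has amplitude $[-1,0]$. The obstruction to extending $\cT \to \cX$ over $\cT'$ lies in $\Ext^1(L\varphi^* L_\cX, \cO_\cT \otimes I)$. This is $H^0$ of a complex in degrees $\le 1$ followed by pushforward along the good moduli space map, so it vanishes for the same reason.
\end{enumerate}
Cohomological affineness of good moduli spaces, in the sense of Alper, holds in every characteristic, so this step is characteristic free.

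Once $\cT'$ and the map $\cT' \to \cX$ exist, the remaining properties follow by deformation invariance. $\cT' \to \Spec A'$ is flat and syntomic. It is a good moduli space map with affine diagonal, because a thickening of a cohomologically affine stack is cohomologically affine. The map $\cT' \to \cX$ is representable, because representability is detected on the reduced fibers of stabilizers.

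The main obstacle is the deformation step for $\cT$. In positive characteristic the stabilizers of $\cT$ can be non-smooth, for instance $\mu_p$, and one has to make sure the cotangent-complex amplitude argument does not secretly use their smoothness. In characteristic $0$ this point is automatic. Here I would first check it by reducing, locally on $\Spec A$, to the case $\cT = [\Spec B/G]$ with $G$ linearly reductive.
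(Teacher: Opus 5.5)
Your proposal is correct and takes the same route the paper relies on. The paper's proof is only a citation of \cite[Proposition 5.2]{SatrianoUsatine6}, with the remark that it uses no hypotheses on $k$. Your argument is a sound characteristic-free reconstruction of it:
\begin{itemize}
\item openness comes from openness of the syntomic locus, pushed down along the universally closed good moduli space map;
\item smoothness comes from the lifting criterion: deform $\cT$ through the $\Ext^2$ obstruction, then deform the map through the $\Ext^1$ obstruction, both killed by cohomological affineness of $\cT$. The second step is the infinitesimal lifting of \cite[Proposition 2.5]{SatrianoUsatine6}.
\end{itemize}

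Three small slips are worth fixing.

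\emph{Amplitudes.} In the paper's cohomological conventions, $L_{\cT/A}$ has amplitude $[-1,1]$, not $[-1,0]$. For example, for $\cT=\cD^r_{0,A}$ with $p\mid r$ there is a nonzero $\cH^1$ coming from the Lie algebra of the stabilizer $\mu_r$. Likewise $L_\cX$ has amplitude $[0,1]$. The vanishing of $\Ext^2(L_{\cT/A},\cO_\cT\otimes_A I)$ and of $\Ext^1(L\varphi^*L_\cX,\cO_\cT\otimes_A I)$ uses only the lower bounds $-1$ and $0$, so your conclusions stand. This is also why the ``main obstacle'' you flag is not one: non-smooth stabilizers only enlarge the top degree.

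\emph{Representability.} Representability of $\cT'\to\cX$ is not detected on reduced stabilizers in characteristic $p$; think of $B\mu_p\to\Spec k$. The correct reason is that $\cT\hookrightarrow\cT'$ is a monomorphism that is bijective on points. Hence $\cT'$ and $\cT$ have the same geometric stabilizer group schemes and the same maps on them.

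\emph{Good moduli space.} To see that $\cT'\to\Spec A'$ is a good moduli space you also need $\Gamma(\cT',\cO_{\cT'})=A'$. Apply the exact functor $\Gamma$ to $0\to\cO_\cT\otimes_A I\to\cO_{\cT'}\to\cO_\cT\to 0$. Use $\Gamma(\cT,\cO_\cT\otimes_A I)=I$, which is the projection formula, and compare with $0\to I\to A'\to A\to 0$.
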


\begin{proof}
This is \cite[Proposition 5.2]{SatrianoUsatine6}, whose proof in loc. cit. never uses any hypotheses on the field $k$.
\end{proof}

For any Artin stack $\cX$ over $k$, there is a canonical morphism $\cX \to \sW^{\synt}(\cX)$ given by sending each map $T \to \cX$ to the warped map $(T \xrightarrow{\id_T} T, T \to \cX)$.

\begin{proposition}\label{propositionClosureInSyntomicLocus}
Let $\cX$ be a smooth Artin stack over $k$ with affine diagonal. Then the canonical morphism $\cX \to \sW^{\synt}(\cX)$ is an open immersion, and the closure of the image of $|\cX| \to |\sW^\synt(\cX)|$ in $|\sW^\synt(\cX)|$ is an open subset of $|\sW^\synt(\cX)|$.
\end{proposition}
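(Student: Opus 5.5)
The plan is to follow the pattern of the other results in this section. The proposition should be the positive-characteristic version of the corresponding statement in \cite{SatrianoUsatine6}, where it was proved over an algebraically closed field of characteristic $0$. So I would first go through that proof and check which steps actually use characteristic $0$, citing the rest verbatim. I expect characteristic $0$ to enter only through smoothness of group schemes (Cartier's theorem) or through generic smoothness, and I would replace those inputs by arguments that hold for arbitrary $k$.

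For the first claim I would show that $\cX \to \sW^\synt(\cX)$ is an open immersion.
\begin{enumerate}
\item Monomorphism: an untwisted map $T \to \cX$ is recovered from the warped map $(T \xrightarrow{\id} T, T\to\cX)$. An isomorphism of two such warped maps consists of an isomorphism $T \to T$ over $T$, which must be the identity, together with a $2$-isomorphism of the maps to $\cX$. Hence the functor is fully faithful.
\item Flatness and local finite presentation: I would use a deformation-theoretic comparison. Since $\sW^\synt(\cX)$ is smooth by \autoref{propositionSyntomicLocusOpenAndSmooth} and $\cX$ is smooth, it suffices to compare dimensions or tangent complexes at points of $\cX$.
\item Alternatively, and more directly, I would identify the image as the locus of warped maps $(\cT\to T,\cT\to\cX)$ where $\cT\to T$ is an isomorphism. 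This is the locus where the good moduli space map has trivial (geometric) stabilizers. Because $\cT \to T$ is flat, finitely presented and has affine diagonal, the locus where $\cT \to T$ is an isomorphism is open; this follows by upper semicontinuity of fiber dimension of the inertia $I_{\cT/T}\to \cT$ together with the degree of its fibers. Moreover, over that locus $\cT\to T$ is an isomorphism because a good moduli space map that is a flat gerbe-free monomorphism is an isomorphism.
\end{enumerate}
Option 3 is the route I would commit to, since it is characteristic free. The one subtlety is that inertia may be non-reduced, for instance $\mu_p$ stabilizers. So I would test triviality of the stabilizer scheme using its degree/rank rather than its reduced points, relying on finiteness and flatness locally over $T$ of the stabilizers at closed points.

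For the second claim I would argue that the closure of $|\cX|$ in $|\sW^\synt(\cX)|$ is a union of irreducible components of the smooth stack $\sW^\synt(\cX)$. On a smooth, hence normal, locally noetherian stack, irreducible components coincide with connected components and are therefore open and closed. The closure of the open set $|\cX|$ is the union of the closures of the irreducible components it meets, each of which is a full component. It is therefore a union of connected components, and that union is open. Local finiteness of components, which follows from $\sW^\synt(\cX)$ being locally finite type, makes the union open.

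I expect the main obstacle to be the openness of the isomorphism locus in the non-reduced-stabilizer setting, that is, step 3 of the first claim. There, the characteristic $0$ proof may have used smoothness of stabilizers, and I would need the flat-family argument over $T$ to handle infinitesimal stabilizers such as $\mu_p$.
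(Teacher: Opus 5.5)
Your argument for the second claim is fine and is the expected one. Since $\sW^\synt(\cX)$ is smooth, hence normal, its irreducible components are disjoint, locally finite and open, and the closure of the open set $|\cX|$ is the union of the components it meets.

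The gap is in step 3 of the first claim: the openness in $T$ of the locus over which $\cT\to T$ is an isomorphism. You derive it from $\cT\to T$ being flat and finitely presented with affine diagonal, plus semicontinuity of the dimension and ``degree'' of the stabilizers. Those inputs do not suffice. Here is a counterexample:
\begin{itemize}
\item Let $T=\bA^1_k$ and $j\colon T\setminus\{0\}\hookrightarrow T$.
\item Let $G\to T$ be the \'etale affine group scheme representing $j_!(\Z/2)$; as a scheme, $G=T\sqcup(T\setminus\{0\})$.
\item Then $BG\to T$ is flat, finitely presented and has affine diagonal, and it is an isomorphism exactly over the closed point $0$. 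So the locus is not open.
\end{itemize}
In this example the degree of the stabilizer drops under specialization. Degree is upper semicontinuous only when the inertia is \emph{finite}, which you assume rather than prove.

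Flatness of stabilizers, which you also invoke, fails in general even for tame stacks: for $[\bA^1/\mu_2]\to\bA^1$ the stabilizer is $\mu_2$ at the origin and trivial elsewhere. Finally, semicontinuity arguments give open subsets of $|\cT|$, not of $T$. To push them down to $T$ you need to know that $\cT\to T$ is closed.

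The missing idea is the good moduli space hypothesis on $\pi\colon\cT\to T$, which is exactly what excludes the example above ($BG\to T$ there is not a good moduli space). It enters in two ways:
\begin{itemize}
\item Universal closedness of $\pi$ turns the closed loci in $|\cT|$ of positive-dimensional stabilizers, and then of nontrivial stabilizers, into closed subsets of $T$ that miss $t$.
\item Once stabilizers are quasi-finite, finiteness of the inertia near $\pi^{-1}(t)$ follows from $\cT$ having a good moduli space. It does not follow from flatness and affine diagonal alone.
\end{itemize}
Then Nakayama applied to the finite inertia makes the trivial-stabilizer locus open. This works in any characteristic, so $\mu_p$-type stabilizers are not the real difficulty. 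Over the resulting open subset of $T$, $\cT$ is an algebraic space with good moduli space $T$, hence isomorphic to $T$.

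The paper does none of this by hand. It obtains the first claim from \cite[Theorem 1.9(2)]{SatrianoUsatine4}, that $\cX\to\sW(\cX)$ is an open immersion, together with the openness of $\sW^\synt(\cX)$ in $\sW(\cX)$. It obtains the second claim from \cite[Proposition 5.4]{SatrianoUsatine6}. It then notes that both proofs are characteristic-free, which is the plan you started with before replacing it with your own argument.
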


\begin{proof}
The first claim is immediate from \autoref{propositionSyntomicLocusOpenAndSmooth} and \cite[Theorem 1.9(2)]{SatrianoUsatine4}, and the second claim is \cite[Proposition 5.4]{SatrianoUsatine6}. The proofs of \cite[Theorem 1.9(2)]{SatrianoUsatine4} and \cite[Proposition 5.4]{SatrianoUsatine6} work verbatim for general $k$.
\end{proof}

\autoref{propositionClosureInSyntomicLocus} allows us to set the following notation.

\begin{notation}
If $\cX$ is a smooth Artin stack over $k$ with affine diagonal, we will let $\widetilde{\sW}(\cX)$ denote the open substack of $\sW^{\synt}(\cX)$ whose support is the closure of the image of $|\cX| \to |\sW^\synt(\cX)|$ in $|\sW^\synt(\cX)|$.
\end{notation}

\begin{remark}
By the above results, if $\cX$ is a smooth equidimensional Artin stack over $k$ with affine diagonal and $\cX$ has a good moduli space, then $\widetilde{\sW}(\cX)$ is a smooth equidimensional Artin stack over $k$ with affine geometric stabilizers and separated diagonal, and $\dim \widetilde{\sW}(\cX) = \dim \cX$. In particular, $\widetilde{\sW}(\cX)$ satisfies the hypotheses of \autoref{definitionMotivicVolumeUntwistedArcs}. If furthermore $\cX$ is irreducible, then $\widetilde{\sW}(\cX)$ is also irreducible, so $\widetilde{\sW}(\cX)$ satisfies the hypotheses for the source Artin stack in \autoref{theoremUntwistedChangeOfVariables}.
\end{remark}

\section{Motivic integration over twisted arcs}

In this short section, we will recall the basics of motivic integration over twisted arcs of Artin stacks as defined in \cite{SatrianoUsatine6}. We begin by setting notation for \emph{twisted discs} and their truncations.

\begin{notation}
If $A$ is a ring and $r \in \Z_{>0}$, we will set
\[
	\cD^r_A = [\Spec(A\llbracket t^{1/r} \rrbracket) / \mu_r],
\]
where the action of the $r$-th roots of unity $\mu_r$ on $\Spec(A\llbracket t^{1/r} \rrbracket)$ is given by $\xi \in \mu_r$ taking $f(t^{1/r})$ to $f(\xi t^{1/r})$. In the special case $r = 1$, we also use the notation $D_A = \cD^1_A = \Spec(A\llbracket t \rrbracket)$. We let $\cD^r_A \to D_A$ be the usual coarse space map. We also let $D_A \to \cD^r_A$ denote the fppf cover given by precomposing the quotient map $\Spec(A\llbracket t^{1/r}\rrbracket) \to \cD^r$ with the isomorphism $D_A \xrightarrow{\sim} \Spec(A\llbracket t^{1/r}\rrbracket)$ given by $t^{1/r} \to t$. For any $n \in \Z_{\geq 0}$, we will set
\[
	\cD^r_{n, A} = \cD^r_A \otimes_{A\llbracket t \rrbracket} A[t]/(t^{n+1}).
\]
In the special case $r = 1$, we also use the notation $D_{n,A} = \cD^1_{n,A} = \Spec(A[t]/(t^{n+1}))$. We thus have coarse space maps $\cD^r_{n,A} \to D_{n, A}$ and fppf covers $D_{r(n+1) - 1,A} \to \cD^r_{n,A}$. When it is clear from context, especially when $A = k$, we will sometimes suppress $A$ in the notation and use $\cD^r, D, \cD^r_n, D_n$. 
\end{notation}

We now set notation for \emph{twisted jet stacks} and \emph{twisted arc stacks} as in \cite[Section 4]{SatrianoUsatine6}.

\begin{notation}
If $r \in \Z_{>0}$, $n \in \Z_{\geq 0}$, and $\cX$ is a finite type Artin stack over $k$ with affine diagonal, we let $\sJ^r_n(\cX)$ denote the Hom stack parametrizing representable $k$-morphisms from $\cD^r_{n, k}$ to $\cX$. For $m \geq n$, we have the \emph{truncation map} $\theta^m_n: \sJ^r_m(\cX) \to \sJ^r_n(\cX)$ induced by the closed immersion $\cD^r_n \hookrightarrow \cD^r_m$. We then set $\sJ^r(\cX) = \varprojlim_n \sJ^r_n$, and we let $\theta_n: \sJ^r(\cX) \to \sJ^r_n(\cX)$ denote the canonical map. We also set $\sJ_n(\cX) = \bigsqcup_{r \in \Z_{>0}} \sJ^r_n(\cX)$ and $\sJ(\cX) = \bigsqcup_{r \in \Z_{\geq 0}} \sJ^r(\cX)$, and we let $\theta_n: \sJ(\cX) \to \sJ_n(\cX)$ denote the obvious map.
\end{notation}

\begin{remark}
By the exact same reasoning as in \cite[Remark 4.4]{SatrianoUsatine6}, each $\sJ^r_n(\cX)$ is a finite type Artin stack over $k$ with affine diagonal.
\end{remark}

\begin{remark}\label{remarkTwistedArcsAreTwistedArcs}
By the exact same reasoning as in \cite[4.7]{SatrianoUsatine6}, for any field extension $k'$ of $k$, we have $\sJ^r(\cX)(k')$ is canonically equivalent to the category of representable $k$-morphisms $\cD^r_{k'} \to \cX$.
\end{remark}

As in the untwisted case, we define the following special subsets of $|\sJ(\cX)|$.

\begin{definition}
Let $\cX$ be a finite type Artin stack over $k$ with affine diagonal, and let $\cC \subset |\sL(\cX)|$. The set $\cC$ is called a \emph{cylinder} if there exists some $n \in \Z_{\geq 0}$ and locally constructible subset $\cC_n \subset |\sJ_n(\cX)|$ such that $\cC = \theta_n^{-1}(\cC_n)$. The set $\cC$ is called \emph{bounded} if $\theta_0(\cC)$ is contained in a quasi-compact subset of $|\sJ_0(\cX)|$, or equivalently, if $\cC \cap |\sJ^r(\cX)| = \emptyset$ for all but finitely many $r$.
\end{definition}

The following allows us to define the motivic volume of bounded cylinders.

\begin{theorem}
Let $\cX$ be a smooth finite type equidimensional Artin stack over $k$ with affine diagonal and a good moduli space, and let $\cC \subset |\sJ(\cX)|$ be a bounded cylinder. Then $\theta_n(\cC)$ is a quasi-compact locally constructible subset of $|\sJ_n(\cX)|$ for all $n \in \Z_{\geq 0}$, and the sequence
\[
	\{\e(\theta_n(\cC))\bL^{-(n+1)\dim\cX}\}_{n \in \Z_{\geq 0}} \subset \widehat{\sM}_k
\]
stabilizes for $n$ sufficiently large.
\end{theorem}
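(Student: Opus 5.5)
We reduce to the untwisted theory on the warping stack, as in \cite{SatrianoUsatine6}. Since $\cC$ is bounded, it meets only finitely many $|\sJ^r(\cX)|$. Moreover $\theta_n$ respects the decomposition $\sJ_n(\cX)=\bigsqcup_r \sJ^r_n(\cX)$, and $\e$ is additive on finite disjoint unions. Hence we may assume $\cC \subset |\sJ^r(\cX)|$ for a single $r$, and then work with $\sJ^r_n(\cX)$ and $\sJ^r(\cX)$ throughout.

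The key input is the comparison between twisted jets of $\cX$ and untwisted jets of the warping stack $\widetilde{\sW}(\cX)$. A representable morphism $\cD^r_{n,k'} \to \cX$ is itself a warped map from $D_{n,k'}$ to $\cX$: the coarse space map $\cD^r_n \to D_n$ is flat, finitely presented and a good moduli space map with affine diagonal, and it is syntomic. This yields a morphism $\sJ^r_n(\cX) \to \sL_n(\sW^{\synt}(\cX))$, compatible with truncations. Following \cite[Section 8--9]{SatrianoUsatine6}, I would show three things:
\begin{enumerate}[label=(\roman*)]
\item this morphism identifies $\sJ^r_n(\cX)$, up to the relevant locally constructible pieces, with the locus of $\sL_n(\widetilde{\sW}(\cX))$ whose closed fiber has the $\mu_r$-twisted type;
\item on arcs, $|\sJ^r(\cX)|$ corresponds to a subset of $|\sL(\widetilde{\sW}(\cX))|$ under which bounded cylinders go to bounded cylinders, with $\theta_n$ commuting with the correspondence;
\item on $k'$-points the induced maps of isomorphism classes are bijective, and the classes $\e$ of corresponding locally constructible subsets agree.
\end{enumerate}
As in the preceding sections, I would check that the cited arguments of \cite{SatrianoUsatine6} never use that $k$ is algebraically closed or of characteristic $0$. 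The one exception is any place where finite group schemes such as $\mu_r$ were implicitly treated as \'etale or constant, and those steps must be reproved.

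With this in hand, the theorem follows by applying the untwisted statement (the first theorem of \autoref{sectionUntwistedMotivicIntegration}) to $\widetilde{\sW}(\cX)$. That stack is smooth, equidimensional of dimension $\dim\cX$, and has affine geometric stabilizers, by the remark following the definition of $\widetilde{\sW}(\cX)$. The images $\theta_n(\cC)$ are therefore quasi-compact and locally constructible, and $\e(\theta_n(\cC))\bL^{-(n+1)\dim\cX}$ stabilizes because the dimensions coincide.

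The main obstacle is (iii) in positive characteristic. Since $\mu_r$ may be non-reduced, $\cD^r$ is no longer a Deligne--Mumford stack, and one must check that deformation-theoretic facts about $\cD^r_n \to D_n$ still hold. Concretely, I would verify the following: the identification of $\sJ^r_n(\cX)$ with the preimage in $\sL_n(\widetilde{\sW}(\cX))$ of the point $[\cD^r_0\to D_0]$-type locus still holds; the relevant fibers are constructibly trivial; and $\e$ of these fibers is unchanged.

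I would try first to establish this using only the fact that $\mu_r$ is linearly reductive. Linear reductivity makes $\cD^r$ a tame stack with good moduli space $D$, so $H^i$ of coherent sheaves on $\cD^r_n$ vanishes for $i>0$. This vanishing is what drives the proofs in loc. cit. If some step there instead relied on smoothness of stabilizers, I would replace it by an fppf-local argument using \autoref{l:fppf-loc-sm->sm} and \autoref{prop:paper2Prop6.1}.
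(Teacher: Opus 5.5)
Your reduction to a single $r$ matches the paper, but the transfer to $\widetilde{\sW}(\cX)$ fails at (iii). At finite level, $\sJ^r_n(\cX)\to\sL_n(\widetilde{\sW}(\cX))$ is \emph{not} bijective on isomorphism classes. Precomposing a twisted jet with an automorphism of $\cD^r_n$ over $D_n$ changes the twisted jet but not the warped jet. By \autoref{thm:fibers-twisted->warped-aut-explicit-charp} the nonempty fibers are $\bG_a$-torsors, so for $r\geq 2$ one gets $\e(\theta_n(\cC))=\bL\,\e(\theta_n(\cE))$, not equality. This is the $\bL$ in \autoref{theoremComparingWarpedToTwisted}; bijectivity holds only for arcs. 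That discrepancy alone is repairable.

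The fatal point is that this fiber computation is only available for $n$ divisible by $p^s$ when $\cha(k)=p$ and $r=p^sr'$. The argument of \autoref{prop:auts-Dnell-reduce-to-id} needs an fppf-local $r$-th root of $1+ct^n$ (\autoref{l:ell-th-root}). If $p^s\nmid n$, none exists, because $r$-th powers in $A[t]/(t^{n+1})$ are polynomials in $t^{p^s}$. This is why the proof of \autoref{theoremComparingWarpedToTwisted} explicitly chooses $n$ divisible by $p^s$. Your route therefore gives stabilization only along $n\in p^s\Z$. It says nothing about $\e(\theta_n(\cC))$ for intermediate $n$, and nothing in $\widehat{\sM}_k$ lets you interpolate; the statement concerns the whole sequence. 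Also, \autoref{theoremComparingWarpedToTwisted}(b) is phrased in terms of $\nu_\cX$, whose definition rests on the statement you are proving. So that result is downstream and cannot be quoted.

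The missing idea is to stay on $\sJ^r(\cX)$ and analyze the truncation maps $\theta^{n+1}_n\colon\sJ^r_{n+1}(\cX)\to\sJ^r_n(\cX)$ directly. This is what the paper's cited \cite[Theorem 4.12]{SatrianoUsatine6} does, which is why it transfers unchanged to arbitrary $k$.
\begin{itemize}
\item Over a $k'$-point $\psi$, deformation theory (as in \cite[Proposition 4.11]{SatrianoUsatine6}) identifies the fiber with $\bA^{a}\times B\bG_a^{b}$. Here $a$ and $b$ are the dimensions of $\Ext^0$ and $\Ext^{-1}$ of $L\psi^*L_\cX$ against the ideal of $\cD^r_n$ in $\cD^r_{n+1}$, which is a copy of $\cO_{\cD^r_0}$.
\item Obstructions in $\Ext^1$ vanish because $\cD^r_0$ is cohomologically affine, since $\mu_r$ is linearly reductive in every characteristic.
\item One has $a-b=\dim\cX$, because $\cD^r_0=[\Spec(k'[t^{1/r}]/(t))/\mu_r]$ and $k'[t^{1/r}]/(t)$ contains each $\mu_r$-weight exactly once.
\item Once $n$ exceeds the level of the cylinder, $\theta_{n+1}(\cC)=(\theta^{n+1}_n)^{-1}(\theta_n(\cC))$, since jets lift to arcs by the same obstruction vanishing.
\item Then \cite[Proposition 2.4]{SatrianoUsatine6} gives $\e(\theta_{n+1}(\cC))=\bL^{\dim\cX}\e(\theta_n(\cC))$ for every such $n$, with no divisibility condition.
\item Constructibility of the remaining $\theta_n(\cC)$ follows from Chevalley's theorem.
\end{itemize}
This is exactly the cohomological-affineness input you mention in your last paragraph, applied where it suffices on its own.
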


\begin{proof}
Since $\cC$ is bounded, we can immediately reduce to the case where $\cC \subset |\sJ^r(\cX)|$ for some $r \in \Z_{>0}$. This case is \cite[Theorem 4.12]{SatrianoUsatine6}, whose proof in loc. cit. never uses that $k$ is algebraically closed or characteristic 0.
\end{proof}

We may therefore make the following definition.

\begin{definition}
Let $\cX$ be a smooth finite type equidimensional Artin stack over $k$ with affine diagonal and a good moduli space, and let $\cC \subset |\sJ(\cX)|$ be a bounded cylinder. Then the \emph{motivic volume} of $\cC$ is defined as
\[
	\nu_\cX(\cC) = \lim_{n \to \infty} \e(\theta_n(\cC))\bL^{-(n+1)\dim\cX} \in \widehat{\sM}_k.
\]
\end{definition}

We end this section by briefly recalling the height and weight functions on $\sJ(\cX)$, introduced in \cite{SatrianoUsatine6}, that will appear in the motivic change of variables formula for twisted arcs.

\begin{notation}
Let $\cX$ be a smooth finite type Artin stack over $k$ with affine diagonal, let $\cY$ be a locally finite type Artin stack over $k$, and let $\cX \to \cY$ be a morphism. If $\varphi: \cD^r_{k'} \to \cX$ is a twisted arc of $\cX$ for some field extension $k'$ of $k$, then we set
\[
	\het_{\cX/\cY}(\varphi) = (1/r)\het_{\cX/\cY}(\psi),
\]
where $\psi: D_{k'} \to \cX$ is the composition of the fppf cover $D_{k'} \to \cD_{k'}$ with $\varphi$. This induces a map
\[
	\het_{\cX/Y}: |\sJ(\cX)| \to \Q \cup \{\infty\}.
\]
\end{notation}

\begin{notation}
Let $\cX$ be a finite type Artin stack over $k$ with affine diagonal. We have a map $\sJ^r_0(\cX) \to I_{\mu_r}(\cX)$ induced by the closed immersion $B\mu_r \hookrightarrow \cD^r_0$, so we can consider the map
\[
	\wt_\cX: |\sJ(\cX)| \xrightarrow{\theta_0} |\sJ_0(\cX)| \to |I_\mu(\cX)| \xrightarrow{\overline{\wt}_\cX} \Q,
\]
where $\overline{\wt}_\cX$ is the weight function defined in \autoref{definitionCyclotomicInertiaAndWeightFunction}.
\end{notation}

As we did in the untwisted setting, we will eventually obtain a change of variables formula and thin subset result in the twisted setting. We will do so by using warping stacks to reduce these to the untwisted setting, and therefore we will need a comparison between the motivic measures on $|\sJ(\cX)|$ and $|\sL(\sW(\cX))|$. We will obtain such a comparison in \autoref{sectionTwistedToWarpedComparison} below, but in order to do so, we will need to first generalize some technical statements from \cite{SatrianoUsatine6} to the setting where $k$ has positive characteristic. The generalizations of these technical statements require significant changes to the proofs (and sometimes to the statements themselves) in order to work in positive characteristic, and we accomplish this in the next two sections.

\section{Twisted forms of twisted discs and their truncations}

The goal of this section is to prove the following generalization of \cite[Proposition 6.1]{SatrianoUsatine6}, which shows that twisted discs and their truncations have no nontrivial twisted forms. Specifically, \autoref{prop:no-non-trivial-forms-truncated-twisted-charp} removes the assumption that $r$ is prime to the characteristic and the requirement that $L/K$ is separable.

\begin{proposition}\label{prop:no-non-trivial-forms-truncated-twisted-charp}
Let $L/K$ be a finite extension of fields and assume $r\geq2$. Let $n\in\bZ\cup\{\infty\}$ and assume $n\geq2$. If $\cT_n\to D_{n,K}$ is a map from an Artin stack and we have a $D_{n,L}$-isomorphism $\cT_n\times_K L\xrightarrow{\simeq}\cD_{n,L}^{r}$, then there exists a $D_{n,K}$-isomorphism $\cT_n\xrightarrow{\simeq}\cD_{n,K}^{r}$; here $\cD_\infty^{r}:=\cD^{r}$.
\end{proposition}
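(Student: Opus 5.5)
The plan is to avoid Galois/fppf cohomology of the automorphism $2$-group of $\cD^r_n$. The reason is that a naive descent argument would lead to a $\mu_r$-gerbe class in $H^2(K,\mu_r)$, which need not vanish. Instead, I will recognize $\cD^r_{n,K}$ intrinsically as a root stack and construct the root data directly on $\cT_n$.

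Recall that $\cD^r_{A}\to D_A$ is the $r$-th root stack of $D_A$ along the divisor $t=0$. Its $T$-points over $T\to D_A$ are triples $(M,\sigma,\rho)$, where $M$ is a line bundle on $T$, $\sigma\in\Gamma(M^\vee)$ (equivalently an ideal $M\hookrightarrow\cO_T$), and $\rho\colon M^{\otimes r}\xrightarrow{\sim}\cO_T$ is an isomorphism carrying $\sigma^r$ to $t$. Root stack formation commutes with base change, so for every $n\in\Z_{\geq0}\cup\{\infty\}$ the stack $\cD^r_{n,A}$ is the root stack of $D_{n,A}$ along $t$. On $\cD^r_{n,L}$ the universal datum is the ideal $\cI=(t^{1/r})$ together with $\cI^{r}=t\,\cO$. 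Thus it suffices to produce on $\cT_n$ an invertible ideal sheaf $\cI\subset\cO_{\cT_n}$ with $\cI^{\otimes r}\xrightarrow{\sim}\cI^r=t\cO_{\cT_n}$. Such data gives a $D_{n,K}$-morphism $\cT_n\to\cD^r_{n,K}$. If its base change to $L$ is the given isomorphism (or any isomorphism), then it is an isomorphism by fpqc descent along $\Spec L\to\Spec K$.

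The key steps, in order, are as follows.

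First, define $\cI$ intrinsically as the ideal sheaf of the reduced closed substack $(\cT_n)_{\red}$. I expect this to be the main obstacle, because when $L/K$ is inseparable, taking reduced substacks need not commute with $-\otimes_K L$. I would argue as follows. Since $D_{n,K}$ is topologically a point and $\cT_n\times_K L\cong\cD^r_{n,L}$, the stack $\cT_n$ has a single point, so $(\cT_n)_{\red}$ is its residual gerbe $\cG$ over some field. Under the isomorphism, $\cG\times_K L$ is a closed substack of $\cD^r_{n,L}$ with the same support as $B\mu_{r,L}$. Moreover, $\cG\times_K L$ is a gerbe over $L$ (or over $\kappa\otimes_K L$, where $\kappa$ is the field over which $\cG$ is a gerbe). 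Such a gerbe admits a flat cover by field spectra after extending $L$, hence is reduced once I check that $\kappa=K$. I would verify $\kappa=K$ by observing that the coarse space of $\cG$ is the reduced point of $D_{n,K}$, using that coarse spaces of tame stacks commute with flat base change. Granting this, $\cG\times_K L=(B\mu_{r,L})$, and hence $\cI\otimes_K L$ is the ideal $(t^{1/r})$. This is where the hypotheses $r\geq2$ and $n\geq2$ should enter: they ensure that the nilpotent structure is rich enough that this ideal is nonzero and invertible, and not, for example, equal to the whole maximal ideal of a too-short truncation.

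Second, check the properties after faithfully flat base change to $L$. There $\cI$ is invertible, the multiplication map $\cI^{\otimes r}\to\cO$ is injective with image $t\cO$, and $t$ is a nonzerodivisor on the relevant quotient. Since invertibility and the isomorphism $\cI^{\otimes r}\xrightarrow{\sim}t\cO_{\cT_n}$ descend, they hold on $\cT_n$. This yields the morphism $F\colon\cT_n\to\cD^r_{n,K}$ over $D_{n,K}$.

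Finally, compare $F_L$ with the given isomorphism $\cT_n\times_K L\xrightarrow{\sim}\cD^r_{n,L}$. Both classify the same root datum $(t^{1/r})$ up to the canonical identification, so they agree up to a $2$-isomorphism. Hence $F_L$ is an isomorphism, and therefore $F$ is an isomorphism by fpqc descent.
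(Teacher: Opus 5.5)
Your construction breaks down for finite $n$, which is the case the paper actually needs, since the statement is applied to truncations. Over $D_{n,L}$ the element $t$ is nilpotent, so root data along $t$ is \emph{not} the same as an invertible ideal. In the root-stack description, $\sigma\colon M\to\cO$ is an arbitrary map from a line bundle, and your parenthetical ``equivalently an ideal $M\hookrightarrow\cO_T$'' is only valid when $\sigma$ is injective.

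Concretely, by \autoref{l:twisted-twisted-discs-def-sp-redone} we have $\cD^r_{n,L}=[\Spec S'/\bG_m]$ with
$S'=L[u,v^{\pm}]/(u^r-tv,t^{n+1})=L[u,v^{\pm}]/(u^{r(n+1)})$. The ideal of the reduced substack $B\mu_{r,L}$ is $\cI=(u)$.
\begin{itemize}
\item Since $\Ann(u)=(u^{r(n+1)-1})\neq0$, we get $\cI\cong S'/(u^{r(n+1)-1})$, which is not invertible.
\item The multiplication map $\cI^{\otimes r}\to t\cO=(u^r)$ has kernel $(u^{rn})/(u^{r(n+1)-1})$, which is nonzero because $r\geq2$.
\end{itemize}
So every assertion in your second step (invertibility, injectivity of $\cI^{\otimes r}\to\cO$, $t$ a nonzerodivisor) is already false over $L$, for every finite $n$. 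The hypotheses $r,n\geq2$ play no such role. The universal datum on $\cD^r_{n,L}$ is a twisting line bundle $M$ with a \emph{non-injective} section whose image is $(u)$, and $M$ cannot be recovered from $\cI$. Hence your intrinsic ideal does not define a morphism $\cT_n\to\cD^r_{n,K}$.

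Your final step fails for the same reason. Since $\Ann(t)=(t^n)\neq0$, $\rho$ is not determined by $\sigma$, and two $D_{n,L}$-morphisms inducing the same ideal need not be $2$-isomorphic. For example, the automorphisms $u\mapsto hu$ with $h^r\in 1+(t^n)$ from \autoref{prop:auts-Dnell-reduce-to-id} all preserve $(u)$, yet in general they are not $2$-isomorphic to the identity.

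There is also a slip at $n=\infty$. There $D_K$ is not a point, and $\cT_\infty$ is reduced (this descends from $\cD^r_L$), so the ideal of $(\cT_\infty)_{\red}$ is zero. You should instead take the reduced closed fibre over $t=0$. With that change your argument does work for $n=\infty$: $u$ is then a nonzerodivisor and the whole datum is determined by $\cI$. In that case it is a reasonable alternative to the paper's regularity argument.

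The paper never builds root data on $\cT_n$. When $\cha(K)\mid r$, it uses \autoref{l:twisted-twisted-discs-def-sp-redone} and \autoref{prop:twisted-twisted-discs-defs-truncated-twisted-charp}: obstructions vanish, and every flat deformation of $\cD^r_n$ over $D_{n+1}$ is $\cD^r_{n+1}$. This lets it extend $\cT_n$ to a form $\cT_\infty$ of $\cD^r_K$, and it then handles $n=\infty$ by fppf descent of regularity.

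To rescue your route you would have to \emph{construct} $M$, for instance as follows.
\begin{enumerate}
\item Deform the line bundle $\cI/\cI^2$ from the residual gerbe through the thickenings cut out by the powers of $\cI$. The obstructions lie in $H^2$ of quasi-coherent sheaves, which vanish because $\cT_n$ is cohomologically affine over $D_{n,K}$ (this descends from $L$).
\item Lift $M|_{\cZ}\cong\cI/\cI^2$ to a map $\sigma\colon M\to\cI$, which is surjective by Nakayama.
\item Lift $\sigma^{\otimes r}\colon M^{\otimes r}\to\cI^r=t\cO$ through $t\colon\cO\to t\cO$ to obtain $\rho$, which is an isomorphism since it is one modulo nilpotents.
\item Prove directly that the resulting $F_L$ is an isomorphism, rather than by identifying it with the given isomorphism.
\end{enumerate}
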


We begin by proving an analogue of \cite[Lemma 6.2]{SatrianoUsatine6}. 

\begin{lemma}\label{l:twisted-twisted-discs-def-sp-redone}
Fix $r\in\bZ_{>1}$ and let $A$ be a ring whose characteristic divides $r$. Let $n\in\bZ_{\geq0}$, $\pi_n\colon\cD_{n,A}^{r}\to D_{n,A}$ be the coarse space map, and $J=(t^{n+1})/(t^{n+2})$. Then for all $i\neq0,1$,
\[
\Ext^i(L_{\cD^{r}_{n,A} / D_{n,A}},\pi_n^*J)=0.
\]

Furthermore, letting $S'=A[u,v^\pm]/I'$ with $I'=(u^r-tv,t^{n+1})$, then $\cD^{r}_{n,A}=[\Spec S'/\bG_m]$ where $\bG_m$ acts on $u,v$ with weights $1,r$; we additionally have natural isomorphisms
%\[\Ext^0(L_{\cD^{r}_{n,A} / D_{n,A}},\pi_n^*J)\simeq\Hom(S\otimes_{A[u]}\Omega^1_{A[u]/A},\pi_n^*J)^{\mu_r}\]and
\[
\Ext^1(L_{\cD^{r}_{n,A} / D_{n,A}},\pi_n^*J)\simeq\Hom(I'/(I')^2,\pi_n^*J)^{\bG_m}
\]
and
\[
\Ext^0(L_{\cD^{r}_{n,A} / D_{n,A}},\pi_n^*J)\simeq\pi_n^*J.
\]
\end{lemma}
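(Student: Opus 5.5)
First I will pin down the presentation $\cD^r_{n,A}=[\Spec S'/\bG_m]$ explicitly. Start from $\cD^r_{n,A}=[\Spec(A[s]/(s^{r(n+1)}))/\mu_r]$ with $s=t^{1/r}$. Rewrite $\mu_r$-quotients as $\bG_m$-quotients via $\mu_r=\bG_m\times_{\bG_m}\{1\}$ for the $r$-th power map, i.e.\ $[X/\mu_r]=[(X\times\bG_m)/\bG_m]$ for the induced action. Concretely, $\Spec A[u,v^{\pm}]/(u^r-tv)$ over $A[t]$ with weights $(1,r)$ recovers $\cD^r_A$: on the chart $v=1$ it is $\Spec A\llbracket u\rrbracket$ with residual $\mu_r$-action, and $t=u^r$. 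Base changing to $t^{n+1}=0$ then gives the stated $S'$. The map $\Spec S'\to D_{n,A}$ factors as a closed immersion into $\Spec A[t]/(t^{n+1})[u,v^{\pm}]$, which is smooth over $D_{n,A}$, cut out by the single element $u^r-tv$. This element is a nonzerodivisor, since the ring is free over $A[t,v^\pm]/(t^{n+1})$ with basis $1,u,\dots,u^{r-1}$. Hence $\Spec S'\to D_{n,A}$ is a local complete intersection, and $L_{S'/A[t]/(t^{n+1})}$ is the two-term complex
\[
I'/(I')^2\to \Omega_{P}\otimes S', \qquad P=A[t]/(t^{n+1})[u,v^\pm],
\]
in degrees $[-1,0]$. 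Here $I'/(I')^2$ is free of rank one, generated by $u^r-tv$.

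Next I will pass to the stack. Write $q\colon \Spec S'\to\cD^r_{n,A}$ for the $\bG_m$-torsor. There is a triangle $q^*L_{\cD/D}\to L_{S'/D}\to L_{S'/\cD}=\mathfrak{g}^\vee\otimes\cO=\cO$ in degree $0$ (the dual of the Lie algebra of $\bG_m$). Thus $q^*L_{\cD/D}$ is the three-term complex
\[
I'/(I')^2\xrightarrow{d} S'\,du\oplus S'\,dv\to S'
\]
in degrees $[-1,1]$. The last map is contraction with the Euler vector field, $du\mapsto u$, $dv\mapsto rv$. Since $\bG_m$ is linearly reductive, Ext groups on $\cD^r_{n,A}$ are the $\bG_m$-invariants of the Ext groups upstairs with their equivariant structure. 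Moreover $\Spec S'$ is affine, so higher coherent cohomology vanishes. Therefore $\Ext^i(L_{\cD/D},\pi_n^*J)$ is the degree-$i$ cohomology of the invariant part of
\[
S'\otimes J\to \Hom(S'du\oplus S'dv,J')\xrightarrow{\delta} \Hom(I'/(I')^2,J')
\]
in degrees $0,1,2$, where $J'=\pi_n^*J\otimes S'$. This immediately gives vanishing for $i\neq 0,1,2$.

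The remaining cases are computed in this complex. The key point is $d(u^r-tv)=ru^{r-1}du-t\,dv=-t\,dv$, because $r=0$ in $A$. Since $t$ annihilates $J=(t^{n+1})/(t^{n+2})$, the map $\delta$ is zero. Hence $\Ext^2$ is the cokernel of the Euler map on the weight-zero pieces, and $\Ext^1$ is $\Hom(I'/(I')^2,J')^{\bG_m}$ modulo the image of the Euler map. So the main obstacle, and the real content, is the Euler-contraction map
\[
J'^{\bG_m}\to \Hom(S'du\oplus S'dv,J')^{\bG_m}, \qquad f\mapsto (du\mapsto uf,\ dv\mapsto rvf=0).
\]
Invariant homs correspond to elements of $J'$ of weights $1$ and $r$, namely the images of $du$ and $dv$. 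I expect to show the following, using that $J'\simeq J\otimes S'$ and that $S'$ is free over $A[t,v^\pm]/(t^{n+1})$ on $1,\dots,u^{r-1}$:
\begin{enumerate}
\item this map is injective on weight $0$, since multiplication by $u$ on $J\otimes S'$ kills only multiples of $u^{r-1}$, which have nonzero weight, and this gives $\Ext^0\simeq J'^{\bG_m}=\pi_n^*J$;
\item its cokernel is exactly the $dv$-component, which is identified with $\Hom(I'/(I')^2,J')^{\bG_m}$, and this yields the stated $\Ext^1$ formula, with $\Ext^2=0$.
\end{enumerate}
I expect the bookkeeping in this step to be fiddly. I will double-check the identification of $\Ext^1$ by direct comparison with deformation theory: first-order deformations of $\cD^r_{n,A}$ over $D_{n,A}$ correspond to perturbing the equation $u^r-tv$ by a weight-$r$ element of $J'$.
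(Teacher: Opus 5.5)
Your route is the paper's. You present $\cD^r_{n,A}$ as $[\Spec S'/\bG_m]$, combine the torsor triangle with the lci presentation, note that $d(u^r-tv)=-t\,dv$ dies against $J$, and analyze the Euler map $1\mapsto u\partial_u$. The paper first reduces to $n=0$ by flat base change; this is equivalent to your observation that $t$ kills $J'$, so that $J'\cong A[u,v^{\pm}]/(u^r)$.

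However, there is a genuine error in the degree bookkeeping, and it breaks the argument as written. You correctly place $q^*L_{\cD/D}=[I'/(I')^2\to S'du\oplus S'dv\to S']$ in degrees $[-1,1]$. Its dual $\Hom^\bullet(-,J')$ then also lives in degrees $[-1,1]$: the term $\Hom(S',J')$ sits in degree $-1$, not $0$. Use your indexing $0,1,2$ consistently with your own computations: the Euler map is injective with cokernel the $dv$-component $Av$, and $\delta=0$. You would then get $\Ext^0=0$, $\Ext^1\cong Av$, and $\Ext^2\cong\Hom(I'/(I')^2,J')^{\bG_m}\cong Av\neq 0$, which contradicts the lemma. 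Several steps in your text do not follow and hide this:
\begin{itemize}
\item Injectivity of the Euler map cannot give $\Ext^0\simeq (J')^{\bG_m}$; it makes the kernel zero.
\item The Euler map does not land in $\Hom(I'/(I')^2,J')$, so ``$\Hom(I'/(I')^2,J')^{\bG_m}$ modulo the image of the Euler map'' has no meaning.
\item The ``immediate'' vanishing for $i\neq 0,1,2$ silently includes $i=-1$, which is not automatic.
\end{itemize}

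With the correct indexing, everything you computed falls into place:
\begin{itemize}
\item $\Ext^i=0$ for $i\notin[-1,1]$ trivially.
\item $\Ext^{-1}=\ker(\text{Euler})=0$ by your injectivity argument. The kernel of multiplication by $u$ on $J'$ is $u^{r-1}A[v^{\pm}]$, which has no weight-$0$ elements since $r\geq 2$.
\item $\Ext^0=\coker(\text{Euler})\cong Av\cong A$, because $\delta=0$.
\item $\Ext^1=\coker(\delta)=\Hom(I'/(I')^2,J')^{\bG_m}$ on the nose, which is the natural isomorphism the lemma asserts.
\end{itemize}
Your misindexed argument appeared to reach the stated answers only by coincidence. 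The cokernel of the Euler map and $\Hom(I'/(I')^2,J')^{\bG_m}$ are both the weight-$r$ piece $Av$ of $J'$, but they are $\Ext^0$ and $\Ext^1$ respectively, not both $\Ext^1$. Once you shift the dual complex to degrees $[-1,1]$, your proof coincides with the paper's.
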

\begin{proof}
We suppress the subscript $A$ throughout the proof. To see $\cD^{r}_{n,A}=[\Spec S'/\bG_m]$, we simply observe that $\cD^{r}_{n,A}=[\Spec S/\mu_r]$ where $S=A[t^{1/r}]/t^{n+1}$, and we may rewrite the stack as $[\Spec S'/\bG_m]$ where $S'=(S[w^\pm])^{\mu_r}$ where $\mu_r$ acts on $t^{1/r},w$ with weights $1,-1$. We then see $S'=A[u,v^\pm]/I'$ where $v=w^r$, $u=t^{1/r}w$.

The same argument as in the proof of \cite[Lemma 6.2]{SatrianoUsatine6} allows us to reduce to the case where $n=0$. Letting $\rho\colon\Spec S'\to\cD^{r}_0$ be the smooth cover, we have an exact triangle
\[
\rho^*L_{\cD^{r}_0/A}\to L_{S'/A}\to L_{S'/\cD^{r}_0}.
\]
Since $L_{S'/\cD^{r}_0}=\cO_{S'}\otimes\g^\vee$ where $\g$ is the Lie algebra of $\bG_m$, we obtain a short exact sequence
\[
0\to\Ext^{-1}(L_{\cD^{r}_0 / A},\cO_{\cD^{r}_0})\to (\cO_{S'}\otimes\g)^{\bG_m}\xrightarrow{\alpha}\Ext^0(L_{S'/A},\cO_{S'})^{\bG_m}\to \Ext^0(L_{\cD^{r}_0 / A},\cO_{\cD^{r}_0})\to 0
\]
and for $i\notin[-1,0]$ we have
\[
\Ext^i(L_{\cD^{r}_0 / A},\cO_{\cD^{r}_0})=\Ext^i(L_{S'/A},\cO_{S'})^{\bG_m}.
\]
Since $A\to S'$ is lci, we see the latter quantity vanishes for $i\neq0,1$. In particular, we see
\[
\Ext^i(L_{\cD^{r}_0 / A},\cO_{\cD^{r}_0})=0
\]
if $i\notin[-1,1]$.

We now calculate $\Ext^1(L_{\cD^{r}_0 / A},\cO_{\cD^{r}_0})=\Ext^1(L_{S'/A},S')^{\bG_m}$. For this, we consider the regular closed immersion $\Spec S'\to\Spec A[u,v^\pm]$. 
%From this, a computation analogous to that in Lemma 6.2 shows we have an exact sequence
From this we obtain an exact triangle
\[
I'/(I')^2\to \Omega^1_{A[u,v^\pm]/A}|_{S'}\to L_{S'/A}.
\]
Note $I'=(u^r)$ and that the left hand map maps the generator to $d(u^r)=r u^{r-1}du=0$ since $r=0$ in $A$. Thus,
\[
\Ext^1(L_{S'/A},S')^{\bG_m}=\Hom(I'/(I')^2,S')^{\bG_m}.
\]
Since $I'/(I')^2=S'$, we see
\[
\Ext^1(L_{S'/A},S')^{\bG_m}=S'_0=A
\]
where the subscript $0$ denotes the degree $0$ part.

Lastly, we compute $\Ext^i(L_{\cD^{r}_0 / A},\cO_{\cD^{r}_0})$ for $i=-1,0$ by computing the kernel and cokernel of the map $\alpha$ above. Recall that $\Ext^0(L_{S'/A},\cO_{S'})=\Hom(\Omega^1_{S'/A},S')$. The map
\[
\Omega^1_{S'/A}\to \cO_{S'}\otimes\g^\vee\simeq\cO_{S'}
\]
is given by $du\mapsto u$ and $dv\mapsto r v=0$; note that $\bG_m$-action $\g$ is trivial as $\bG_m$ is abelian. Dualizing and taking $\bG_m$-invariants, we obtain
\[
\alpha\colon A\to \Hom(\Omega^1_{S'/A},S')^{\bG_m}\simeq Au\oplus Av,\quad\alpha(1)=(u,0);
\]
the computation of $\Hom(\Omega^1_{S'/A},S')^{\bG_m}$ follows from the fact that $\Hom(\Omega^1_{S'/A},S')$ is free with generators $\partial_u,\partial_v$ which have weights $-1,-r$. As a result,
\[
\Ext^{-1}(L_{\cD^{r}_0 / A},\cO_{\cD^{r}_0})=\ker(\alpha)=0
\]
and 
\[
\Ext^0(L_{\cD^{r}_0 / A},\cO_{\cD^{r}_0})=\coker(\alpha)=Av\simeq A.\qedhere
\]
\end{proof}

%\matt{Note that we may assume $r>1$. Furthermore, the only computations needed for Section 6 are that $\Ext^2=0$ (used in the proof of 6.1) and the specific computation of $\Ext^1$ (used in the proof of 6.3).}

We next generalize \cite[Proposition 6.3]{SatrianoUsatine6}.

\begin{proposition}\label{prop:twisted-twisted-discs-defs-truncated-twisted-charp}
Fix $r\in\bZ_{\geq2}$, $n\in\bZ_{\geq2}$, and $\pi_n\colon\cD_{n,A}^{r}\to D_{n,A}$ be the coarse space map where $A$ is a ring whose characteristic divides $r$. If
\[
\xymatrix{
\cD_{n,A}^{r}\ar[r]\ar[d]_-{\pi_n} & \cC\ar[d]^-{q}\\
D_{n,A}\ar[r] & D_{n+1,A}
}
\]
is a cartesian diagram with $q$ flat, then we have a $D_{n+1,A}$-isomorphism $\sigma\colon\cC\xrightarrow{\simeq}\cD_{n+1,A}^{r}$ such that $\sigma\times_{D_{n+1,A}} D_{n-1,A}$ is the identity automorphism.
\end{proposition}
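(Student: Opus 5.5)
We would argue by deformation theory of the flat morphism $\pi_n$ over the square-zero thickening $D_{n,A}\hookrightarrow D_{n+1,A}$, whose ideal is $J=(t^{n+1})/(t^{n+2})$, following the proof of \cite[Proposition 6.3]{SatrianoUsatine6} and replacing the characteristic-$0$ input by \autoref{l:twisted-twisted-discs-def-sp-redone}. Since $q$ is flat and its base change to $D_{n,A}$ is $\pi_n$, the stack $\cC$ is a flat deformation of $\cD^r_{n,A}$ over $D_{n+1,A}$. By Illusie/Olsson deformation theory for algebraic stacks, such deformations are governed by $\Ext^i(L_{\cD^{r}_{n,A}/D_{n,A}},\pi_n^*J)$. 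The obstruction lives in $\Ext^2$, which vanishes by \autoref{l:twisted-twisted-discs-def-sp-redone}. The isomorphism classes form a torsor under $\Ext^1$, and infinitesimal automorphisms form $\Ext^0$.

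The first step is to place both $\cC$ and $\cD^r_{n+1,A}$ in the set of deformations and compute the class of $\cC$ relative to $\cD^r_{n+1,A}$ in
\[
\Ext^1(L_{\cD^{r}_{n,A}/D_{n,A}},\pi_n^*J)\simeq\Hom(I'/(I')^2,\pi_n^*J)^{\bG_m}.
\]
To make this concrete, we use the presentation $\cD^r_{n,A}=[\Spec S'/\bG_m]$ with $S'=A[u,v^\pm]/(u^r-tv,t^{n+1})$. A deformation of the $\bG_m$-torsor $\Spec S'$ is again a trivial-looking lci quotient: $\Spec S'$ is affine and $\bG_m$ is smooth, so the $\bG_m$-torsor $\Spec S''\to\cC$ lifts. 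The relevant $\bG_m$-equivariant lifts of $S'$ are then of the form
\[
A[t,u,v^\pm]/(t^{n+2},\,u^r-tv-t^{n+1}c),
\]
where $c$ has weight $r$, so $c=a v$ with $a\in A$ (one has to check the weight-zero part of $\pi_n^*J$ and identify the class with $a$). The second step is to change coordinates, replacing $v$ by $v'=v(1+t^{n}a)$. This gives $tv'=tv+t^{n+1}av$, so the relation becomes $u^r-tv'$, and the deformation is identified with $\cD^r_{n+1,A}$. We have $v'\equiv v$ modulo $t^n$, which is where we use $n\ge2$ (indeed $n\geq1$ suffices here). The resulting $\sigma$ is therefore the identity after restriction to $D_{n-1,A}$: the change of variables is trivial modulo $t^{n}$, and a fortiori over $D_{n-1,A}=\Spec A[t]/(t^n)$.

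The main obstacle is justifying that the class of an arbitrary flat $\cC$ is actually represented by such an explicit equivariant presentation. This requires checking that $\cC$ is a quotient of an affine $\Spec S''$ by $\bG_m$ lifting $\Spec S'$, which follows by lifting the $\bG_m$-torsor using vanishing of $H^2$ on tame stacks: $\bG_m$ is smooth and $\cD^r$ has cohomologically affine behavior. Under our characteristic assumption ($r=0$ in $A$), we must also make sure that the derivation $d(u^r)=0$ does not produce extra classes beyond those killed by the $v$-rescaling. This is consistent with the lemma's computation $\Ext^1\simeq \Hom(I'/(I')^2,\pi_n^*J)^{\bG_m}$, whose generator corresponds to perturbing the relation $u^r-tv$, and which is absorbed by the rescaling above.
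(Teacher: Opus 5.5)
Your proposal is correct and follows essentially the same route as the paper. You present $\cC$ as $[C/\bG_m]$ for a $\bG_m$-equivariant deformation $C$ of $\Spec S'$ in which the relation $u^r-tv$ is perturbed by $t^{n+1}$ times the weight-$r$ element $av$. You then rescale $v\mapsto(1+at^n)v$, which gives an equivariant $D_{n+1,A}$-isomorphism with $\cD^r_{n+1,A}$ that is the identity modulo $t^n$.

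The only difference is that you spell out why $\cC$ admits such a presentation, by lifting the $\bG_m$-torsor via vanishing of $H^2$ on the cohomologically affine stack $\cD^r_{n,A}$. The paper instead inherits this step from \cite[Proposition 6.3]{SatrianoUsatine6} together with \autoref{l:twisted-twisted-discs-def-sp-redone}.
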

\begin{proof}
The proof is nearly the same as that of \cite[Proposition 6.3]{SatrianoUsatine6}. We suppress the subscript $A$ throughout the proof, let $R_m:=A[t]/t^{m+1}$, i.e., the coordinate ring of $D_m$. Then replacing the use of \cite[Lemma 6.2]{SatrianoUsatine6} with \autoref{l:twisted-twisted-discs-def-sp-redone} shows that deformations of $\cD_n^r$ are given by $\bG_m$-equivariant deformations of
\[
\Spec (R_n[u,v^\pm]/(u^r v^{-1}-t).
\]
Hence, $\cC\simeq [C/\bG_m]$ where 
\[
C=\Spec R_{n+1}[u,v^\pm]/(u^r v^{-1}-(t+rt^{n+1}))
\]
and $r\in R_{n+1}$. Lastly, note that $v\mapsto (1+rt^n)v$ defines a $\bG_m$-equivariant isomorphism $R_{n+1}[u,v^\pm]/(u^r v^{-1}-t)$ and $C$, and that this is the identity automorphism mod $t^n$.
%Indeed, letting $\rho\colon D_m\to\cD^{r}_n$ be the standard \'etale cover with $m=r(n+1)-1$, \autoref{l:twisted-twisted-discs-def-sp} and its proof show (following notation from the lemma) $\Ext^1(L_{\cD^{r}_n / D_n},\pi_n^*J)=\Ext^1(L_{D_m / D_n},\rho^*\pi_n^*J)^{\mu_r}=\Hom(I/I^2,\pi_n^*J)^{\mu_r}$. The first equality tells us that $\cC$ is given by an equivariant deformation of $D_m$, i.e., $\cC=[C/\mu_r]$ with $C$ an equivariant deformation of $D_m$; the second equality tells us $C$ is given by deforming the equation $u^r-t$ to one of the form $u^r - t - ct^{n+1}$ with $c\in A$.
\end{proof}

We now prove the main result of this section.

\begin{proof}[{Proof of \autoref{prop:no-non-trivial-forms-truncated-twisted-charp}}]
If $\cha(K)$ divides $r$, then in light of \autoref{l:twisted-twisted-discs-def-sp-redone} and \autoref{prop:twisted-twisted-discs-defs-truncated-twisted-charp}, the proof of \cite[Proposition 6.1]{SatrianoUsatine6} goes through exactly the same to reduce us to the case where $n=\infty$, i.e., we must show there are no twisted forms of $\cD_K^r$. If $\cha(K)$ is prime to $r$, then the proof as written in \cite[Proposition 6.1]{SatrianoUsatine6} also reduces us to the case $n=\infty$ without yet making use of the assumption that $L/K$ is separable.

The rest of this proof makes no assumptions on $\cha(K)$. The proof of \cite[Lemma 6.6]{SatrianoUsatine6} handles the $n=\infty$, however it assumes $L/K$ is separable. In order to drop this hypothesis we note that separability of $L/K$ was only used in two places:~(i) to conclude $K((t))\otimes_K L=L((t))$ and (ii) since $D_L\to\cD_L^r=\cT_\infty\times_K L\to\cT_\infty$ is an \'etale cover and $D_L$ is regular, we concluded $\cT_\infty$ is regular. For (i), we note that the proof of \cite[Lemma 6.6]{SatrianoUsatine6} never actually used $K((t))\otimes_K L=L((t))$ but instead only used $K((t))\otimes_{K[[t]]} L[[t]]=L((t))$ which holds without the separability hypothesis. For (ii), even when $L/K$ is not separable, $\Spec L\to\Spec K$ is an fppf cover and so regularity of $D_L$ implies regularity of $\cT_\infty$ by \autoref{l:fppf-loc-sm->sm}.
\end{proof}

We end this section with a lemma that will not be used until \autoref{sectionTwistedChangeOfVariables} below, but we include it here because its proof is similar to the ideas above.

\begin{lemma}\label{lemmaExtComputationTwistedDiscOverDisc}
Fix $r\in\bZ_{>1}$ and let $k'$ be a field. If $\pi\colon\cD\to D=\Spec k'[[t]]$ is the $r$-th twisted disc, then
\[
\dim\Ext^1(L_{\cD/D},\cO)=1
\]
and
\[
\Ext^i(L_{\cD/D},\cO)=0
\]
for $i\neq 1$.
\end{lemma}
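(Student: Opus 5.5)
The plan is to compute directly with an explicit quotient presentation of $\cD$, as in the proof of \autoref{l:twisted-twisted-discs-def-sp-redone}, but now relative to $D$ and with the full power series ring rather than a truncation. We do not split into cases according to whether $\cha(k')$ divides $r$, since the computation below is uniform. Write $R=k'[[t]]$. The same rewriting as in \autoref{l:twisted-twisted-discs-def-sp-redone} gives
\[
\cD=[\Spec S'/\bG_m], \qquad S'=R[u,v^{\pm}]/(u^r-tv),
\]
where $\bG_m$ acts on $u$ and $v$ with weights $1$ and $r$. Let $\rho\colon \Spec S'\to \cD$ be the smooth cover. There is an exact triangle $\rho^*L_{\cD/D}\to L_{S'/D}\to L_{S'/\cD}=\cO_{S'}\otimes\g^\vee$. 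Since $\bG_m$ is linearly reductive, taking invariants is exact, so $R\Hom(L_{\cD/D},\cO)$ is computed by the $\bG_m$-invariants of the total complex
\[
\cO_{S'}\otimes\g \xrightarrow{\ \alpha\ } \Hom(\Omega^1_{R[u,v^\pm]/R}|_{S'},S') \xrightarrow{\ d\ } \Hom(I'/(I')^2,S'),
\]
placed in degrees $-1,0,1$. Here $I'=(u^r-tv)$, and we use that $S'$ is a hypersurface over $R$, hence lci. In particular $\Ext^i$ vanishes for $i\notin[-1,1]$.

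Next we identify the two maps. The map $\alpha$ is dual to $du\mapsto u$, $dv\mapsto rv$, so it sends $1$ to the Euler field $u\partial_u+rv\partial_v$. The map $d$ is dual to the conormal map, which sends the generator $u^r-tv$ to $ru^{r-1}du-t\,dv$. Hence
\[
d(a\partial_u+b\partial_v)=ru^{r-1}a-tb.
\]

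Then we take invariants degree by degree, using that $S'$ is graded with $S'_0=R$ (monomials $u^av^b$ with $a+rb=0$ are powers of $t=u^rv^{-1}$), $S'_1=uR$ and $S'_r=vR$.
\begin{itemize}
\item Degree $-1$: the invariant part is $R$.
\item Degree $0$: the generators $\partial_u,\partial_v$ have weights $-1,-r$, so the invariants are $\{c\,u\partial_u+c'\,v\partial_v : c,c'\in R\}\cong R^2$.
\item Degree $1$: the generator of $I'/(I')^2$ has weight $r$, so the invariants are $\Hom(I'/(I')^2,S')^{\bG_m}\cong S'_r=vR$.
\end{itemize}
Substituting into the formulas for the maps, $\alpha(c)=(c,rc)$ and
\[
d(c,c')=ru^r c-tvc'=tv(rc-c').
\]
From this the cohomology can be read off. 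First, $\alpha$ is injective, so $\Ext^{-1}=0$. Second, $\ker d=\{(c,rc)\}=\operatorname{im}\alpha$, so $\Ext^0=0$. Third, $\operatorname{im}d=tvR$, so $\Ext^1\cong vR/tvR\cong k'$, which is one-dimensional.

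The step needing the most care is justifying that the invariants of this three-term complex really compute $\Ext^i(L_{\cD/D},\cO)$ over the non-finite-type base $D$. This amounts to the identification $L_{S'/\cD}\simeq\cO_{S'}\otimes\g^\vee$ together with the compatibility of the connecting maps, and these are the same inputs already used in \autoref{l:twisted-twisted-discs-def-sp-redone}. Beyond that, the only point to watch is that the grading bookkeeping ($S'_1$ and $S'_r$) is done correctly over $R$ rather than over a truncation.
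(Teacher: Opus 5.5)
Your proposal is correct and follows essentially the same route as the paper: the presentation $\cD=[\Spec S'/\bG_m]$, the triangle with $L_{S'/\cD}\simeq\cO_{S'}\otimes\g^\vee$, and the $\bG_m$-invariants of $\Hom(\Omega^1,S')$ and $\Hom(I'/(I')^2,S')$, computed via the Euler field and the conormal map $u^r-tv\mapsto ru^{r-1}du-t\,dv$. The one difference is in how the characteristic is handled. The paper only runs this computation when $\cha(k')\mid r$, where the Euler field becomes $u\partial_u$ and the conormal map becomes $-t\,dv$, and it cites the authors' earlier Lemma 5.2 for the case $\cha(k')\nmid r$. By keeping the factors of $r$ in $\alpha$ and $d$, your single computation covers both cases uniformly and needs no citation.
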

\begin{proof}
If $\cha(k')$ does not divide $r$, then the proof of \cite[Lemma 5.2]{SatrianoUsatine6} shows the result. So we may assume $\cha(k')$ divides $r$. Let $S=k[[t]][u,v^\pm]/I$ where $I=(u^r-tv)$, and let $\bG_m$ act on $W=\Spec S$ with weights $1,r$ on $u,v$. Then $\cD=[W/\bG_m]$; let $\rho\colon W\to\cD$ be the smooth cover. We have a distinguished triangle
\[
\rho^*L_{\cD/D}\to L_{W/D}\to \cO_W\otimes_{k'}\g^\vee
\]
where $\g$ is the Lie algebra of $\bG_m$. Hence we see
\[
\Ext^i(L_{\cD/D},\cO)=\Ext^i(L_{W/D},\cO)^{\bG_m}
\]
for all $i\neq -1,0$, and we have an exact sequence
\[
0\to\Ext^{-1}(L_{\cD/D},\cO_W)\to \cO_D\otimes\g\xrightarrow{\alpha}  \Hom(\Omega^1_{W/D},\cO)^{\bG_m}\to \Ext^0(L_{\cD/D},\cO_W)\to 0
\]
where we have used $\cO_D\otimes\g=(\cO_W\otimes\g)^{\bG_m}$ and $\Ext^0(L_{W/D},\cO)=\Hom(\Omega^1_{W/D},\cO)$. Since $W\to D$ is lci, we find $\Ext^i(L_{\cD/D},\cO)=0$ for all $i\neq -1,0,1$. Note that $\Omega^1_{S/k'[[t]]}$ is generated by $du$, $dv$ subject to the relation $tdv=0$. Since $t$ is a non-zero divisor in $S$, we see $\Hom(\Omega^1_{S/k'[[t]]},S)$ is generated by the functional $\partial_u$ dual to $du$, and $\alpha$ is given by
\[
\alpha\colon S\simeq k'[[t]]\otimes\g\to \Hom(\Omega^1_{S/k'[[t]]},\cO)^{\bG_m}\simeq k'[[t]] u\partial_u,\quad\alpha(1)=u\partial_u.
\]
Thus, $\alpha$ is an isomorphism, showing that $\Ext^i(L_{\cD/D},\cO_W)=0$ for $i=-1,0$.

It remains to prove $\dim\Ext^1(L_{\cD/D},\cO_W)=\dim\Ext^1(L_{S/k'[[t]]},S)^{\bG_m}=1$. Using that $W\to D$ is lci and letting $S'=k'[[t]][u,v^\pm]$, we see
\[
\Hom(S\otimes_{S'}\Omega^1_{S'/k'[[t]]},S)\xrightarrow{\beta} \Hom(I/I^2,S)\to\Ext^1(L_{S/k'[[t]]},S)\to0
\]
is exact. The map
\[
I/I^2\to S\otimes_{S'}\Omega^1_{S'/k'[[t]]}
\]
is given by $f=u^r-tv\mapsto d(u^r)-tdv=-tdv$, so $\beta(\partial_u)=0$ and $\beta(\partial_v)=-t\partial_f$. Thus,
\[
\Ext^1(L_{S/k'[[t]]},S)^{\bG_m}=(S/t)^{\bG_m}=(k'[u,v^\pm]/u^r)^{\bG_m}=k'
\]
which finishes the proof.
\end{proof}

\section{Fibers of the twisted-to-warped map}\label{sectionTwistedToWarpedFibers}

For the remainder of this paper, if $r \in \Z_{> 0}$, $n \in \Z_{\geq 0}$, and $\cX$ is a finite type Artin stack over $k$ with affine diagonal, we will let $\sJ^r_n(\cX) \to \sL_n(\sW(\cX))$ denote the obvious map taking the twisted jet $\cD^r_n \to \cX$ to the warped jet $(\cD^r_n \to D_n, \cD^r_n \to \cX)$. In this section, we prove the following result, which generalizes \cite[Theorem 7.1]{SatrianoUsatine6} and computes the fibers of the map $\sJ^r_n(\cX) \to \sL_n(\sW(\cX))$ in the setting of interest.

\begin{theorem}\label{thm:fibers-twisted->warped-aut-explicit-charp}
Let $\cX$ be a finite type Artin stack with affine diagonal over a field $k$. Suppose either
\begin{enumerate}[label=(\alph*)]
\item $\cha(k)=0$ and $n,r\geq2$, or
\item $\cha(k)=p$, $r=p^s r'$ with $\gcd(p,r')=1$, and $n$ is divisible by $p^s$.
\end{enumerate}
Let $L/k$ be a field extension, and consider an $L$-point of $\sL_n(\sW(\cX))$ given by the warped map $(\cD_n\to D_{n,L},f\colon\cD_n\to\cX)$.

If the fiber of $\cJ^{r}_n(\cX)\to \sL_n(\sW(\cX))$ is non-empty, then it is given by
\[
\cJ^{r}_n(\cX)\times_{\sL_n(\sW(\cX))} \Spec L\simeq \bA^1_L.
\]
More specifically, the fiber is a $\bG_a$-torsor, where $\bG_a$ is canonically the vector group associated to the $1$-dimensional vector space
\[
\Ext^0(L_{\cD^{r}_{0,L} / L},\mcJ);
\]
here $\mcJ$ denotes the pull back to $\cD^{r}_{0,L}$ of the ideal sheaf defining the closed immersion $D_{n,L}\hookrightarrow D_{n+1,L}$.
\end{theorem}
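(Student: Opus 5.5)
Since the statement generalizes \cite[Theorem 7.1]{SatrianoUsatine6}, I would follow that proof and change only the deformation-theoretic inputs that used characteristic $0$ or $\gcd(r,\cha k)=1$.

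\textbf{Step 1: identify the fiber as a space of isomorphisms.} An $L$-point of the fiber is a twisted jet $g\colon\cD^r_{n,L}\to\cX$ together with an isomorphism of warped maps from $(\cD^r_{n,L}\to D_{n,L}, g)$ to the given $(\cD_n\to D_{n,L}, f)$. Such an isomorphism is a $D_{n,L}$-isomorphism $\sigma\colon\cD^r_{n,L}\to\cD_n$ together with a $2$-isomorphism $g\Rightarrow f\circ\sigma$. Hence $g$ is determined by $\sigma$ up to unique isomorphism. The same holds after any base change $T\to\Spec L$. So the fiber is the sheaf $\uIsom_{D_{n}}(\cD^r_{n},\cD_n)$ modulo the subgroup of automorphisms of $\cD_n$ over $D_n$ compatible with $f$. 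Because $f$ is representable, I expect this subgroup to be controlled in the same way as in loc. cit.

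\textbf{Step 2: the fiber is a torsor.} Non-emptiness lets me assume $\cD_n\simeq\cD^r_{n,L}$. By \autoref{prop:no-non-trivial-forms-truncated-twisted-charp}, every form over an extension is trivial, so the fiber is a torsor under the group of automorphisms of $\cD^r_{n,L}$ over $D_{n,L}$, taken up to those induced by $2$-automorphisms and compatibility with $f$. Running the comparison of \cite{SatrianoUsatine6} between automorphisms of $\cD^r_n$ and automorphisms of $\cD^r_{n+1}$, this quotient group should be the kernel of restricting automorphisms from level $n+1$ to level $n$. This step is where the divisibility hypothesis $p^s\mid n$ in case (b) must enter: it should make the map of automorphism groups behave well.

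\textbf{Step 3: identify the group with the vector group.} Deformation theory says that automorphisms of a flat deformation restricting to the identity form a torsor under $\Ext^0(L_{\cD/D},\pi^*J)$, up to the image of $\Ext^{-1}$ coming from $2$-isomorphisms. By the computation in \autoref{l:twisted-twisted-discs-def-sp-redone}, reduced to $n=0$ as in that lemma's proof, $\Ext^{-1}=0$ and $\Ext^0\simeq\pi_n^*J$. Its relevant $\bG_m$-graded piece is $1$-dimensional, spanned by $v\partial_v$, i.e.\ the reparametrization $v\mapsto(1+ct^n)v$ from the proof of \autoref{prop:twisted-twisted-discs-defs-truncated-twisted-charp}. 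I would make this identification canonical by matching it with $\Ext^0(L_{\cD^r_{0,L}/L},\mcJ)$, which is $1$-dimensional. In the tame case the lemma from loc. cit. gives the same answer. Once the group is identified, a torsor under $\bG_a$ over a field is trivial, since $H^1(L,\bG_a)=0$, so the fiber is $\bA^1_L$.

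\textbf{Main obstacle.} The hardest step is Step 2 in the wild case: showing that exactly a $1$-dimensional family of automorphisms is realized and unobstructed, rather than being killed by $\mu_{p^s}$-infinitesimal automorphisms. I would handle it with the explicit presentation $[\Spec R_n[u,v^{\pm}]/(u^r-tv)/\bG_m]$. I would write down all $\bG_m$-equivariant automorphisms $u\mapsto au$, $v\mapsto bv$ with $a^r=b$ modulo $t^{n+1}$, compute them, and check that the hypothesis $p^s\mid n$ is exactly what makes the quotient by the automorphisms coming from $\mu_r$ a single copy of $\bG_a$.
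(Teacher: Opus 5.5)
Your overall shape is the paper's: identify the fiber with isomorphisms, use \autoref{prop:no-non-trivial-forms-truncated-twisted-charp} to trivialize the form, and compute the group from the $\Ext$ calculation of \autoref{l:twisted-twisted-discs-def-sp-redone}. However, two steps fail as written.

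First, your description of the fiber in Steps 1--2 is wrong. The fiber is the $2$-fiber product with the \emph{fixed} point $(\cD_n\to D_{n,L},f)$. Its objects over $T$ are triples $(g,\sigma,\alpha)$, and two of them are isomorphic exactly when their $\sigma$'s are $2$-isomorphic. The automorphisms $\tau$ of $\cD_n$ with $f\circ\tau\cong f$ form the stabilizer of the target point: they act on the fiber, but they are not divided out. What the argument of \cite[Proposition 7.2]{SatrianoUsatine6}, combined with \autoref{prop:no-non-trivial-forms-truncated-twisted-charp}, actually gives is $\sJ^r_n(\cX)\times_{\sL_n(\sW(\cX))}\Spec L\simeq\Res_{D_{n,L}/L}\uAut_{D_n}(\cD^r_{n,L})$. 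Your quotient produces wrong answers. Take $\cX=B\mu_r$ and let $f$ be the representable map classifying the $\mu_r$-torsor $D_{r(n+1)-1}\to\cD^r_n$. Part (1) of \autoref{prop:auts-Dnell-reduce-to-id} shows every automorphism lifts, fppf locally, to an equivariant automorphism of that torsor, hence satisfies $f\circ\tau\cong f$. Your quotient would then be a point instead of $\bA^1$, and representability of $f$ does not rescue this.

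Second, the real content of the proof is missing. You need the \emph{entire} sheaf $\uAut_{D_n}(\cD^r_n)$ to be one-dimensional: every automorphism must be, fppf locally and up to $2$-isomorphism, the identity on $\cD^r_{n-1}$, so that only the infinitesimal automorphisms measured by $\Ext^0$ survive. Step 2 only asserts this. Moreover, a ``kernel of restriction from level $n+1$ to level $n$'' concerns automorphisms of $\cD^r_{n+1}$, and says nothing about non-infinitesimal automorphisms of $\cD^r_n$.

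The paper proves this in \autoref{prop:auts-Dnell-reduce-to-id}, in four steps:
\begin{itemize}
\item $\sigma^*\cO(1)\cong\cO(a)$, by uniqueness of deformations of $\cO_{B\mu_r}(a)$;
\item $a=1$, via $H^0(\cO_\cZ(b))$ on the first-order neighbourhood $\cZ$;
\item the pulled-back torsor is $v^r=gt$, with $h^rg=1+ct^n$;
\item an $r$-th root of $1+ct^n$ exists fppf locally by \autoref{l:ell-th-root}, which is where $p^s\mid n$ enters.
\end{itemize}

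Your fallback computation does not substitute for this, for three reasons. Restricting to maps $u\mapsto au$, $v\mapsto bv$ presupposes that $\sigma$ lifts to a $\bG_m$-equivariant automorphism of the atlas, which is exactly the statement $\sigma^*\cO(1)\cong\cO(1)$ you have not proved. The relation $u^r=tv$ imposes only $a^r\equiv b\pmod{t^n}$. With your ``mod $t^{n+1}$'' condition the quotient by $\bG_m(R_n)$ is trivial, whereas the $\bG_a$ is precisely the slack $b=a^r(1+ct^n)$. Finally, in that $\bG_m$-presentation the divisibility $p^s\mid n$ plays no role, so it is not what makes the quotient a single copy of $\bG_a$.
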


%First of all, note the proof of 7.2 uses 6.1 and here $k''/k'$ is a finite extension which is not necessarily separable. BUT WE GENERALIZED 6.1 TO HOLD W/O SEPARABILITY SO THIS IS ALL OK. Alternatively there is a commented out argument which I leave here in case we need it. %However, we can get away with the \'etale hypothesis in 6.1 b/c........ Suppose we first prove Theorem 7.1 in the special case where L is perfect. Then I think in the proof of 7.2, we will have $k' = L$ is perfect, so $k''/k'$ is separable, so then maybe it's fine to keep separable in 6.1. Then we want the case where L is not perfect. But now we know that the fiber $\cF$ is a (possibly) twisted form of affine space, so it's smooth (fpqc locally smooth), we then \'etale locally have a section, so now $\cF$ has a $k''$ point with $k''$ a finite separable extension of $k'$. And then we can apply 6.1 with the separable hypothesis remaining.

We begin with a helpful lemma.

\begin{lemma}\label{l:ell-th-root}
Suppose either
\begin{enumerate}[label=(\alph*)]
\item $A$ is a $\QQ$-algebra and $r\geq2$, or
\item $A$ is an $\bF_p$-algebra with $r=p^s r'$ and $\gcd(p,r')=1$, and let $n$ be divisible by $p^s$.
\end{enumerate}
Then for every $a\in A$, there is an fppf cover $q\colon\Spec A'\to\Spec A$ such that $1+at^n$ has an $r$-th root in $A'[t]/(t^{n+1})$. If $A$ is a $\QQ$-algebra, we can take $q=\id$.
\end{lemma}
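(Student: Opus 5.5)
In $A[t]/(t^{n+1})$ we have $t^{2n}=0$ (as $n\geq 1$), so $(1+at^n)(1+bt^n)=1+(a+b)t^n$. Thus the elements $1+at^n$ form a group isomorphic to $(A,+)$, and finding an $r$-th root of $1+at^n$ splits naturally into a ``prime-to-$p$ part'' and a ``$p$-power part''. The plan is to look for a root of the form $1+bt^{m}$, or a product of such, and to construct it separately in the two cases (a) and (b).

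In case (a), $A$ is a $\QQ$-algebra, so $r$ is invertible in $A$. Put $b=a/r$. Since $t^{2n}=0$, we get
\[
(1+bt^n)^r = 1+rbt^n+\binom{r}{2}b^2t^{2n}+\cdots = 1+at^n .
\]
No cover is needed, and we may take $q=\id$. More generally, the same computation handles any exponent invertible in $A$.

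In case (b), write $r=p^s r'$. The first step is the prime-to-$p$ part. Because $r'$ is invertible in $A$, the computation from case (a) gives $c=a/r'$ with $(1+ct^n)^{r'}=1+at^n$, and this requires no cover. It then suffices to find a $p^s$-th root of $1+ct^n$.

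Here we use that $p^s$ divides $n$; write $n=p^s m$. In characteristic $p$, Frobenius is additive, so
\[
(1+d\,t^{m})^{p^s}=1+d^{p^s}t^{n}.
\]
So we need $d$ with $d^{p^s}=c$. Set
\[
A'=A[x]/(x^{p^s}-c).
\]
This is free of rank $p^s$ over $A$, hence faithfully flat and finitely presented, so $\Spec A'\to\Spec A$ is an fppf cover. Taking $d=x$, the element $(1+xt^m)^{r'}$ is then an $r$-th root of $1+at^n$, since
\[
(1+xt^m)^{r'p^s}=\bigl(1+x^{p^s}t^{n}\bigr)^{r'}=(1+ct^n)^{r'}=1+at^n .
\]

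The only delicate point is the Frobenius step. It is exactly where the hypothesis $p^s\mid n$ enters: the root must have the form $1+d\,t^{n/p^s}$ because a $p^s$-th power only produces exponents divisible by $p^s$. It is also why the cover is genuinely non-étale, since adjoining a $p^s$-th root is a radicial extension. The remaining verifications are immediate.
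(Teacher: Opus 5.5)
Your proof is correct and is essentially the paper's argument. The paper also uses $(1+ar^{-1}t^n)^r=1+at^n$ in case (a); in case (b) it adjoins a $p^s$-th root $x$ of $a$ itself (rather than of $a/r'$) and checks $(1+x t^{n/p^s}/r')^r=1+at^n$, using that $(1/r')^{p^s}=1/r'$ in $\bF_p$.

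One small slip: your final display shows that $1+xt^m$ itself, not $(1+xt^m)^{r'}$, is the $r$-th root. Indeed $\bigl((1+xt^m)^{r'}\bigr)^r=(1+at^n)^{r'}=1+r'at^n$, which is not $1+at^n$ unless $r'=1$.
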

\begin{proof}
If $A$ is a $\QQ$-algebra, observe
\[
(1+ar^{-1}t^n)^r=1+at^n.
\]
Now suppose $A$ is an $\bF_p$-algebra. Let $A'=A[x]/(x^{p^s}-a)$. Then $\Spec A'\to\Spec A$ is an fppf cover and
\[
\left(1+\frac{xt^{n/p^s}}{r'}\right)^r=1+at^n
\]
in $A'[t]/(t^{n+1})$.
\end{proof}

The following result is the analogue of \cite[Proposition 7.7 and Lemma 7.9]{SatrianoUsatine6}.

\begin{proposition}\label{prop:auts-Dnell-reduce-to-id}
Let $S$ be a scheme over a field $k$ with
\begin{enumerate}[label=(\alph*)]
\item $\cha(k)=0$ and $n,r\geq2$, or
\item $\cha(k)=p$, $r=p^s r'$ with $\gcd(p,r')=1$, and $n$ is divisible by $p^s$.
\end{enumerate}
If $\sigma\in\Aut_{D_{n,S}}(\cD_{n,S}^r)$, then
\begin{enumerate}
\item\label{prop:auts-Dnell-reduce-to-id::lift} there is an fppf cover $q\colon S'\to S$ such that $\sigma\times_S S'$ is induced by an equivariant automorphism of the cover $D_{r(n+1)-1,S'}$ of $\cD_{n,S'}^r$. Furthermore, if $S=\Spec R$ and all elements of $R$ have $r$-th roots (e.g., $R$ is an algebraically closed field), then we may take $q=\id$.

\item\label{prop:auts-Dnell-reduce-to-id::2auts} the only $2$-automorphism of $\sigma\times_S S'$ is $\id$.

\item\label{prop:auts-Dnell-reduce-to-id::id} if $\tau$ denotes the image of $\sigma\times_S S'$ in $\Aut_{D_{n-1,S'}}(\cD_{n-1,S'}^r)$, then $\tau$ is isomorphic to $\id$.
\end{enumerate}
\end{proposition}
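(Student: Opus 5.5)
Write $R_m=\cO_S[t]/(t^{m+1})$, $N=r(n+1)-1$, and $\cD^r_{n,S}=[\Spec R_N/\mu_r]$, with $\mu_r$ acting on $u=t^{1/r}$ with weight $1$.

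\textbf{Step 1: lift $\sigma$ to the cover.} Work locally on $S$. The composite $D_{N,S}\to\cD^r_{n,S}\xrightarrow{\sigma}\cD^r_{n,S}$ amounts to a $\mu_r$-torsor $P\to D_{N,S}$ together with an equivariant map $P\to\Spec R_N$. Equivalently, using the presentation $[\Spec S'/\bG_m]$ of \autoref{l:twisted-twisted-discs-def-sp-redone}, it amounts to a line bundle $\cL$ on $\cD^r_{n,S}$ with a section $u'$ of $\cL$ and a trivialization $v'$ of $\cL^{\otimes r}$ satisfying $u'^r=tv'$.

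After a Zariski localization on $S$, the line bundle $\cL$ is isomorphic to $\cO(1)$, using the usual description of $\mathrm{Pic}$ of $\cD^r$. We can then write $u'=cu$ with $c$ of weight $0$, i.e. $c\in R_n$. Since $\sigma$ lies over the identity of $D_n$, the relation $u'^r=tv'$ forces $v'=c^r v$.

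An equivariant automorphism of $D_N$ is given by $u\mapsto c u$ with $c\in R_n^\times$, and it induces exactly this datum. Two such data agree up to $2$-isomorphism (the $\bG_m$-gauge) precisely when they differ by multiplying $c$ by a unit. So the only obstruction to realizing $\sigma$ on the cover is taking an $r$-th root of a unit $1+(\text{higher terms})\in R_n$.

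For this we use \autoref{l:ell-th-root}, applied inductively to the coefficients. In characteristic $p$, $r$-th roots of $1+at^j$ exist after the fppf extraction $x^{p^s}=a$, except that we need $p^s\mid j$ for the displayed formula. This is the main obstacle. I expect to handle it by pushing the non-divisible $j$ onto the gauge freedom and by using the hypothesis $p^s\mid n$. If all elements of $R$ have $r$-th roots, no cover is needed.

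\textbf{Step 2: $2$-automorphisms.} A $2$-automorphism of $\sigma\times_SS'$ is a section of the inertia, i.e. an element $\zeta\in\mu_r(D_{N,S'})$ that commutes with the lift and acts trivially on the map. Concretely it is $\zeta$ with $\zeta u=u$ in $R_N$. Since $u$ is not a zero divisor in degree below $N$ and $\zeta-1$ has weight zero, we get $\zeta=1$. Alternatively, this should follow from $\Ext^{-1}(L_{\cD/D},\cO)=0$, which is established in the proofs of \autoref{l:twisted-twisted-discs-def-sp-redone} and \autoref{lemmaExtComputationTwistedDiscOverDisc}.

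\textbf{Step 3: truncation.} After Step 1, the lift is $u\mapsto cu$ with $c\in R_n$ of the form $1+(\text{terms})$. The point is that $c^r$ must equal $1$ modulo the relation forced by $\sigma$ lying over $D_n$, so the nontrivial part of $c$ is pushed into the top degree $t^n$. Modulo $t^n$, the map $u\mapsto cu$ then becomes isomorphic to the identity via the gauge $\bG_m$. This is the analogue of the last step in \autoref{prop:twisted-twisted-discs-defs-truncated-twisted-charp}, where $(1+rt^n)$ is the identity modulo $t^n$.
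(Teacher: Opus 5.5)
\textbf{There is a genuine gap in Step 1, at its decisive computation.} Describing $\sigma$ as a triple $(\cL,u',v')$ via the $\bG_m$-presentation is equivalent to the paper's setup, which pulls back the torsor $D_N\to\cD^r_n$ along $\sigma$ and writes $P=\Spec B[v]/(v^r-gt)$.

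\textbf{The cancellation of $t$ is invalid.} Take $\cL=\cO(1)$, $u'=cu$, $v'=ev$ with $c\in R_n$, $e\in R_n^\times$. The relation $u'^r=tv'$ only gives $t(c^r-e)=0$ in $R_n=\cO_S[t]/(t^{n+1})$. Since $t$ is killed by $t^n$, you cannot cancel it, and the correct conclusion is $e\in c^r+(t^n)$, not $v'=c^rv$.

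Your claim would make every $\sigma$ $2$-isomorphic to the identity (gauge by $c^{-1}$), which is false. For $\nu\neq0$, the datum $(u,(1+\nu t^n)v)$ is an automorphism over $D_n$. A gauge $\lambda$ to the identity would need $\lambda u=u$, forcing $\lambda=1$ and then $\nu=0$. Similarly, $u\mapsto du$ on $D_N$ lies over $D_n$ only when $d^r\in 1+(t^n)$, and it induces $(du,v)\sim(u,d^{-r}v)$, not $(du,d^rv)$.

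So after gauging $c$ to $1$, $\sigma$ is $(u,(1+\nu t^n)v)$. It is induced from $D_N$ exactly when $1+\nu t^n$ is an $r$-th power. This is the paper's identity $h^rg\in 1+\Ann(v^r)=1+(t^n)$. It is precisely why \autoref{l:ell-th-root} is only ever needed in degree $n$, where the hypothesis $p^s\mid n$ applies.

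\textbf{Consequence: your Step 1 plan cannot succeed in characteristic $p$.} Missing this, you pose the obstruction as an $r$-th root of a general unit $1+(\text{higher terms})$, extracted coefficient by coefficient. You leave the degrees not divisible by $p^s$ as an unresolved ``main obstacle''. But in characteristic $p$, every $r$-th power in $A'[t]/(t^{n+1})$ is a $p^s$-th power and hence lies in $A'[t^{p^s}]$. So $1+at^j$ with $p^s\nmid j$ and $a\neq 0$ has no $r$-th root after any fppf base change. Such terms are not absorbed by the gauge freedom; they never arise in the first place.

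\textbf{A smaller gap: $\cL\simeq\cO(1)$ is not justified.} The description of $\mathrm{Pic}$ only gives $\cL\simeq\cO(a)$ Zariski locally on $S$, and you must still show $a\equiv 1\pmod r$. The paper does this using $H^0$ of the twists $\cO(b)$ on the first-order neighborhood $[D_1/\mu_r]$. In your setup, comparing $t$-coefficients in $u'^r=tv'$ rules out $a\equiv 0$ and $2\le a\le r-1$.

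\textbf{Steps 2 and 3.} Once Step 1 is repaired, these agree with the paper. In Step 2, weight-zero elements of $\Ann(u)$ vanish, so you can cancel $u$. In Step 3, you act by $c^{-1}$; modulo $t^n$ this is an $r$-th root of unity in $R_{n-1}$, not something of the form $1+(\text{top degree})$ as you suggest. Your alternative for Step 2 via $\Ext^{-1}(L_{\cD/D},\cO)=0$ only rules out infinitesimal $2$-automorphisms, so it does not replace the direct argument.
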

\begin{proof}
Replacing $S$ by an affine open cover, we may assume $S=\Spec A$; we suppress $A$ and $S$ from the notation throughout the proof. Let $m=r(n+1)-1$ and $\rho\colon D_m\to\cD_n^{r}$ be the standard $\mu_r$-torsor, which corresponding to the line bundle $\cO(1)$ with its trivialization $\iota\colon\cO\xrightarrow{\simeq}\cO(r)=\cO(1)^{\otimes r}$ given by the section $t\in B:=A[t]/t^{n+1}$. 
Consider the cartesian diagram
\[
\xymatrix{
P\ar[r]^-{f}_-{\simeq}\ar[d] & D_m\ar[d]^-{\rho}\\
\cD_n^r\ar[r]^-{\sigma}_-{\simeq} & \cD_n^r
}
\]
Then $P\to\cD_n^{r}$ corresponds to the line bundle $\sigma^*\cO(1)$ and the trivialization $\sigma^*\iota$. Looking at how $\sigma$ acts on the closed point of $\cD_n^r$, we see it induces an automorphism of $B\mu_r$ given by $\zeta\mapsto\zeta^a$ for some fixed $a$ with $\gcd(a,r)=1$. As a result, $(\sigma^*\cO(1))|_{B\mu_r}\simeq\cO_{B\mu_r}(a)\otimes\cM$ where $\cM$ is the pullback of a line bundle on $S$; after looking Zariski locally on $S$, we may assume $\cM=\cO$. Thus, $\sigma^*\cO(1)$ is a deformation of $\cO_{B\mu_r}(a)$. Since $\gcd(a,r)=1$, the map $q\colon B\mu_r\to B\bG_m$ induced by $\cO_{B\mu_r}(a)$ is representable. Thus, an argument as in \cite[Proposition 2.5]{SatrianoUsatine6} shows $\sigma^*\cO(1)\simeq\cO(a)$. More specifically, using induction by considering the nilpotent thickenings of $B\mu_r$ in $\cD_n^r$ and using the fact that $L_{B\bG_m}$ is concentrated in degree $1$, we see that the deformation space of $\cO_{B\mu_r}(a)$, namely $\Ext^0(Lq^*L_{B\bG_m},\cO)$, vanishes. Hence, there is a unique deformation of $\cO_{B\mu_r}(a)$ to  $\cD_n^r$. Since $\cO(a)$ is such a deformation, we see $\sigma^*\cO(1)\simeq\cO(a)$.

We next show $a=1$. Note that $m\geq1$ and $\sigma$ must preserve the first infinitesimal neighbourhood $\cZ:=[D_1/\mu_r]$ of the closed point of $\cD^r_n$. Since $\dim H^0(\cO_\cZ(1))\neq0$ and $\dim H^0(\cO_\cZ(b))=0$ for $1<b<r$, we see $0\leq a\leq1$. On the other hand, $\sigma^*\cO(1)$ is non-trivial so we see $a=1$.

Hence, the $\mu_r$-torsor $P\to\cD^r_n$ corresponds to the line bundle $\cO(1)$ with trivialization $\alpha\colon\cO\xrightarrow{\simeq}\cO(1)^{\otimes r}=\cO(r)$ given by $\alpha(1)=gt$ for some unit $g$ of $B$. Then 
\[
P=\Spec B[v]/(v^r-gt),
\]
where $v$ has weight $1$. Furthermore, $f$ is a $D_n$-isomorphism which is $\mu_r$-equivariant (since $\sigma$ induces the $a$-th power map on $\mu_r$ and $a=1$), so we must have $f(u)=hv$ with $h$ of weight $0$ and
\[
g^{-1}v^r=t = f(t)=h^r v^r.
\]
Therefore
\[
h^r g\in 1+\Ann(v^r)=1+(v^{nr})=1+(t^n).
\]
Thus,
\[
h^r g=1+ct^n
\]
for some $c\in B$.

By \autoref{l:ell-th-root}, after replacing $S$ by an fppf cover, we may assume there exists $h'\in B$ such that $(h')^r=1+ct^n$. Note that $h'/h$ is in $B$ since $h$ is a unit. Then consider the automorphism $\beta$ of $\cO(1)$ given by multiplication by $h'/h$. Then $\beta^{\otimes r}\circ\iota=\alpha$, thereby showing $P\to\cD^r_n$ and $D_m\to\cD^r_n$ are isomorphic as $\mu_r$-torsors. In particular, we see $\sigma$ is induced by the equivariant automorphism $f$ of $D_m$. This proves (\ref{prop:auts-Dnell-reduce-to-id::lift}).

For the remainder of the proof we may therefore assume $g=1$ and $h^r=1+ct^n$. In particular, $h^r=1$ mod $t^n$. Then acting by $h^{-1}\in\mu_r(D_{n-1})=\mu_r(\cD^r_{n-1})$ yields a $2$-isomorphism showing that $\tau\simeq\id$ which proves (\ref{prop:auts-Dnell-reduce-to-id::id}). Lastly, to prove (\ref{prop:auts-Dnell-reduce-to-id::2auts}) note that if $h'$ is any element of $\mu_r(D_{n-1})=\mu_r(\cD^r_{n-1})$, it acts on $f(u)=hu$ by producing the new equivariant automorphism $u\mapsto h'hu$. Thus, if $h'h=h$, we see $h'=1$ as $h$ is a unit.
\end{proof}

We require the following analogue of \cite[Proposition 7.4]{SatrianoUsatine6}.

\begin{proposition}\label{prop:resaut-is-A1-charp}
Let $K$ be a field with
\begin{enumerate}[label=(\alph*)]
\item $\cha(K)=0$ and $n,r\geq2$, or
\item $\cha(K)=p$, $r=p^s r'$ with $\gcd(p,r')=1$, and $n$ is divisible by $p^s$.
\end{enumerate}
Then
\[
\Res_{D_{n,K}/K}\uAut_{D_{n,K}}(\cD^{r}_{n,K})\simeq\bA^1_K.
\]
More specifically, the left hand side is canonically identified with the affine space associated to the $1$-dimensional vector space 
\[
\Ext^0(L_{\cD^{r}_{0,K} / K},\mcJ),
\]
where $\mcJ$ denotes the pull back to $\cD^{r}_{0,K}$ of the ideal sheaf defining the closed immersion $D_{n,K}\hookrightarrow D_{n+1,K}$
\end{proposition}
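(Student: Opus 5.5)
We want to identify $\Res_{D_{n,K}/K}\uAut_{D_{n,K}}(\cD^r_{n,K})$, i.e. the stack over $K$ whose $S$-points are $D_{n,S}$-automorphisms of $\cD^r_{n,S}$ together with their $2$-morphisms. The plan is to follow the proof of \cite[Proposition 7.4]{SatrianoUsatine6}, using \autoref{prop:auts-Dnell-reduce-to-id} in place of \cite[Proposition 7.7 and Lemma 7.9]{SatrianoUsatine6}.

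First, by \autoref{prop:auts-Dnell-reduce-to-id}(\ref{prop:auts-Dnell-reduce-to-id::2auts}), objects have no nontrivial $2$-automorphisms. Hence the Weil restriction is fibered in setoids, so it is an algebraic space, and it suffices to identify its functor of points up to fppf sheafification. By \autoref{prop:auts-Dnell-reduce-to-id}(\ref{prop:auts-Dnell-reduce-to-id::lift}), fppf locally on $S=\Spec A$ every automorphism $\sigma$ is induced by a $\mu_r$-equivariant $D_n$-automorphism of $D_m=\Spec B[u]/(u^r-t)$, where $m=r(n+1)-1$. Such an automorphism is $u\mapsto hu$ with $h\in B[u]$ a unit of weight $0$, i.e. $h\in B^\times$, and $h^r=1+ct^n$ for some $c\in A$.

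Two such lifts $h,h'$ give $2$-isomorphic automorphisms precisely when $h'/h\in\mu_r(D_n)$. Hence $\sigma\mapsto c$ gives a well-defined map to $\bA^1_A$. We check two things about it.

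1. \emph{Injectivity.} If $h^r=h'^r$, then $h'/h$ is an $r$-th root of unity in $B$, so the two automorphisms are $2$-isomorphic.
2. \emph{Surjectivity.} This holds fppf locally by \autoref{l:ell-th-root}, which produces an $r$-th root of $1+ct^n$ after an fppf cover.

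The assignment $c$ is compatible with base change, and $r$-th roots of $1+ct^n$ are unique up to $\mu_r(D_n)$. So the map descends to an isomorphism of fppf sheaves $\Res\uAut\to\bA^1_K$. Writing $h=1+\cdots$, the composition of automorphisms corresponds to multiplication of the elements $1+ct^n$. This is addition of the $c$'s because $t^{2n}=0$, so the isomorphism is one of group objects.

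For the canonical description, we identify the parameter $c$ with a derivation class. An automorphism restricting to the identity on $\cD^r_{n-1}$, which holds by \autoref{prop:auts-Dnell-reduce-to-id}(\ref{prop:auts-Dnell-reduce-to-id::id}), differs from the identity by an element of $\Ext^0(L_{\cD^r_{0}/K},\mcJ)$; this is standard deformation theory for automorphisms of the thickening. This Ext group is computed in \autoref{l:twisted-twisted-discs-def-sp-redone}, and in the tame case in \cite[Lemma 6.2]{SatrianoUsatine6}. In the presentation $[\Spec S'/\bG_m]$, it is the cokernel of $\alpha$, spanned by the class of $v\partial_v$. We then check that the automorphism $v\mapsto(1+ct^n)v$, equivalently $u\mapsto hu$ with $h^r=1+ct^n$, corresponds to $c\,t^n v\partial_v$, which gives a canonical linear identification.

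The main obstacle is this last matching when $p\mid r$. There the derivation $u\mapsto hu$ has $h-1$ of the form $xt^{n/p^s}/r'$, which is not simply $ct^n/r$. So the identification must be made through the $v$-coordinate, where $v=u^r$ in the $\bG_m$-presentation, rather than through $u$. I would verify the matching directly in the $\bG_m$-presentation $S'=A[u,v^\pm]/(u^r-tv,t^{n+1})$.
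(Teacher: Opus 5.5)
Your proposal is correct, but it is organized differently from the paper's proof. The paper never introduces the coordinate $c$. It reruns the deformation-theoretic argument of \cite[Corollary 7.10, Corollary 7.11, and Proposition 7.4]{SatrianoUsatine6}. By \autoref{prop:auts-Dnell-reduce-to-id}, every $D_n$-automorphism of $\cD^r_n$ restricts, fppf locally, to an automorphism of $\cD^r_{n-1}$ that is uniquely $2$-isomorphic to the identity. Automorphisms of this kind form the vector group of $\Ext^0(L_{\cD^r_{0}/K},\mcJ)$, and \autoref{l:twisted-twisted-discs-def-sp-redone} shows this space is $1$-dimensional. So in the paper the isomorphism with $\bA^1$ and its canonical linear structure come out together, with no computation beyond the Ext group. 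Your route instead first builds an explicit group isomorphism $\Res\uAut\cong\bG_a$ from the invariant $c$ of a $\mu_r$-equivariant lift $u\mapsto hu$ with $h^r=1+ct^n$. This gives a concrete coordinate and the group law, but because you bring in deformation theory only afterwards, you then need a matching step.

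That matching step is less of an obstacle than you suggest. Both $\sigma\mapsto c$ and $\sigma\mapsto(\text{deformation class})$ are homomorphisms of group sheaves; the first is bijective and the second injective. An injective endomorphism of $\bG_{a,K}$ is a nonzero scalar, since additive polynomials of degree $>1$ have nontrivial kernel. So only a normalizing constant is at stake, and your $\bG_m$-chart computation supplies it.

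Concretely, take the weight-zero unit $h^{-1}\in\bG_m(D_n)$. When $c\neq0$ it is not in $\mu_r(D_n)$, which is why this step cannot be seen on the $\mu_r$-cover. It gives a $2$-isomorphism from $(u,v)\mapsto(hu,v)$ to $(u,v)\mapsto(u,h^{-r}v)=(u,(1-ct^n)v)$. The latter is the identity modulo $t^n$ on the nose and equals $\id-ct^nv\partial_v$, so its class is $-c\,[v\partial_v]$. Your phrase ``$v\mapsto(1+ct^n)v$, equivalently $u\mapsto hu$'' therefore has a harmless sign slip.

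Pushed further, this chart removes the need for \autoref{l:ell-th-root} in this particular statement. The proof of \autoref{prop:auts-Dnell-reduce-to-id} shows $\sigma^*\cO(1)\simeq\cO(1)$ Zariski locally, before any roots are extracted. Hence every $\sigma$ lifts to some $(u,v)\mapsto(\lambda u,\mu v)$ with $\lambda^r\equiv\mu \bmod t^n$, and $\lambda^{-r}\mu\in 1+(t^n)$ classifies it up to $2$-isomorphism.
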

\begin{proof}
In light of \autoref{prop:auts-Dnell-reduce-to-id}, the proofs of \cite[Corollary 7.10, Corollary 7.11, and Proposition 7.4]{SatrianoUsatine6} all go through the same (as long as $n$ is divisible by $p^s$ in the case where $K$ has positive characteristic) to show
\[
\cJ^{r}_n(\cX)\times_{\sL_n(\sW(\cX))} \Spec L
\]
is the $\bG_a$-torsor, where $\bG_a$ is canonically the vector group associated to the $1$-dimensional vector space
\[
\Ext^0(L_{\cD^{r}_{0,L} / L},\mcJ).
\]
Then \autoref{l:twisted-twisted-discs-def-sp-redone} finishes the proof.
\end{proof}

\begin{proof}[{Proof of \autoref{thm:fibers-twisted->warped-aut-explicit-charp}}]
The proof of \cite[Proposition 7.2]{SatrianoUsatine6}, with \autoref{prop:no-non-trivial-forms-truncated-twisted-charp} used in place of \cite[Proposition 6.1]{SatrianoUsatine6}, holds in our setting and shows 
\[
	\cJ^r_n(\cX)\times_{\sL_n(\sW(\cX))}\Spec L \simeq \Res_{D_{n,L}/L}\uAut_{D_n}(\cD^{r}_{n,L})
\]
where $\Res$ denotes Weil restriction. Thus, the result follows from \autoref{prop:resaut-is-A1-charp}.
\end{proof}

\section{Comparing the motivic measures on warped and twisted arcs}\label{sectionTwistedToWarpedComparison}

The goal of this section is to obtain a generalization in our setting of \cite[Theorem 8.5(a,b)]{SatrianoUsatine6}, which compares the motivic measure on $|\sJ(\cX)|$ to the motivic measure on $|\sL(\widetilde{\sW}(\cX))|$. We begin with the following proposition.

\begin{proposition}\label{propositionTwistedToWarpedFactorsThroughSyntomicLocus}
Let $\cX$ be a smooth finite type Artin stack over $k$ with affine diagonal. Then for all $n \in \Z_{\geq 0}$ and $r \in \Z_{> 0}$, the map $\sJ^r_n(\cX) \to \sL_n(\sW(\cX))$ defined in \autoref{sectionTwistedToWarpedFibers} factors through $\sL_n(\widetilde{\sW}(\cX))$.
\end{proposition}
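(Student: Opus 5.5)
We must show that a twisted jet $\cD^r_{n,T} \to \cX$, viewed as a warped jet, lands in the open substack $\widetilde{\sW}(\cX) \subset \sW^{\synt}(\cX)$. Since $\sL_n(\widetilde{\sW}(\cX))$ is an open substack of $\sL_n(\sW(\cX))$, namely the preimage under $\theta_0$ of $|\widetilde{\sW}(\cX)|$, it suffices to check on points. The plan is to do this in two steps: first land in $\sW^\synt(\cX)$, then land in the closure of $|\cX|$.

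\textbf{Step 1: the syntomic locus.} A $T$-point of $\sJ^r_n(\cX)$ corresponds to a warped map over $D_{n,T}$ whose underlying stack is $\cD^r_{n,T} \to D_{n,T}$. By the presentation in \autoref{l:twisted-twisted-discs-def-sp-redone}, we have $\cD^r_{n,A} = [\Spec S'/\bG_m]$ with $S' = A[u,v^\pm]/(u^r - tv, t^{n+1})$, so $\Spec S' \to D_{n,A}$ is cut out by one equation in a smooth scheme over $D_{n,A}$. Moreover $S'$ is flat over $A[t]/(t^{n+1})$, since it is free with basis $u^i v^j$, $0\le i<r$. Hence it is syntomic, and so $\cD^r_{n,T}\to D_{n,T}$ is syntomic. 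Therefore the map factors through $\sL_n(\sW^\synt(\cX))$, which is open in $\sL_n(\sW(\cX))$ by \autoref{propositionSyntomicLocusOpenAndSmooth}.

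\textbf{Step 2: the closure of $|\cX|$.} It suffices to show that for each field-valued point, the image under $\theta_0$ in $|\sW^\synt(\cX)|$ lies in the closure of the image of $|\cX|$. This image is the warped point $(B\mu_{r,L}\to \Spec L,\ B\mu_{r,L}\to\cX)$. We deform it to untwisted points as follows. Let $\cT = [\Spec L[s][u,v^\pm]/(u^r - sv)/\bG_m] \to \bA^1_s$. Over $s\neq 0$ it is $\bA^1\setminus 0$ (untwisted), and over $s=0$ it is $B\mu_r$ with its reduced structure modified; this is the same family as $\cD^r$ with $t$ renamed $s$. The key is to extend the map $B\mu_r \to \cX$ to a representable map $\cT\to\cX$ over some neighborhood. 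Since $\cX$ is smooth, the obstructions to extending along the thickenings of the $s$-adic filtration vanish to all orders: $\cT$ restricted to $\Spec L\llbracket s\rrbracket$ is exactly $\cD^r_L$. So one approach is to extend the given twisted $0$-jet to a twisted arc $\cD^r_L \to \cX$, using smoothness of $\cX$ and the fact that $\cD^r_n$ has good moduli space $D_n$ so that the Ext groups controlling the lifts are cohomology on a cohomologically affine stack.

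This arc gives an $L\llbracket s\rrbracket$-point of $\sW^\synt(\cX)$. Its generic point $\cD^r\times_D \Spec L((s)) = \Spec L((s^{1/r}))/\mu_r \cong \Spec L((s))$ is an untwisted point, so it lies in the image of $|\cX|$. Its special point is our $0$-jet, which therefore lies in the closure. Representability is open, and it holds at the closed point, so it holds throughout.

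The main obstacle is the lifting step in positive characteristic when $p\mid r$, where $\mu_r$ is non-reduced. There, the existence of a lift of a twisted $0$-jet to a twisted arc needs care: the obstruction lies in an $\Ext^1$ on the tame stack $\cD^r_n$, which vanishes by smoothness of $\cX$ and linear reductivity of $\mu_r$. If this is problematic, the fallback is the existing argument for \cite[Proposition 5.4 and 8.x]{SatrianoUsatine6}, verifying that it is characteristic free, as done elsewhere in the paper.
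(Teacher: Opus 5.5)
Your proposal is correct and follows essentially the paper's proof. It first establishes syntomicity of $\cD^r_{n}\to D_{n}$; you verify this directly from the $\bG_m$-presentation, whereas the paper deduces it from regularity of $\cD^r_k$ via \autoref{l:fppf-loc-sm->sm} and only needs $n=0$. It then extends the twisted $0$-jet to a twisted arc $\cD^r_L\to\cX$, using infinitesimal lifting on the cohomologically affine stacks $\cD^r_n$ together with \autoref{remarkTwistedArcsAreTwistedArcs}, and uses that this arc's generic fiber is untwisted.

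One slip to fix: the $\theta_0$-image is the warped point $(\cD^r_{0,L}\to\Spec L,\ \cD^r_{0,L}\to\cX)$, not one with underlying stack $B\mu_{r,L}$. It is this $0$-jet itself that must be extended, not merely its restriction to $B\mu_{r,L}$, and your final argument does exactly that.
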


\begin{proof}
By \autoref{l:fppf-loc-sm->sm}, we have that $\cD^r_k$ is regular, so $\cD^r_k \to D_k$ is syntomic. Therefore for any $k$-algebra $A$, we have $\cD^r_{0, A} \to D_{0,A}$ is syntomic. Therefore $\sJ_0^r(\cX) \to \sL_0(\sW(\cX)) = \sW(\cX)$ factors through $\sW^\synt(\cX)$. Thus in order to show that $\sJ_0^r(\cX) \to \sW(\cX)$ factors through $\widetilde{\sW}(\cX)$, it is sufficient to show that for any field extension $k'$ of $k$ and any warped map of the form $(\cD^r_{0, k'} \to \Spec(k'), \cD^r_{0, k'} \to \cX)$, the corresponding point $\Spec(k') \to \sW(\cX)$ is the special point of a map $D_{k'} \to \sW(\cX)$ whose generic point lands in $\cX$. Since $\cD_{k'} \to D_{k'}$ is an isomorphism over the generic point $\Spec(k'\llparenthesis t \rrparenthesis) \hookrightarrow D_{k'}$, it is thus sufficient to show that there exists a map $\cD^r_{k'} \to \cX$ whose composition with the closed immersion $\cD^r_{0, k'} \hookrightarrow \cD^r_{k'}$ is (2-isomorphic to) the given map $\cD^r_{0, k'} \to \cX$. Such a map $\cD^r_{k'} \to \cX$ exists by infinitesimal lifting for cohomologically affine stacks \cite[Proposition 2.5]{SatrianoUsatine6}, whose proof in loc. cit. never uses that $k$ is algebraically closed or characteristic 0, and \autoref{remarkTwistedArcsAreTwistedArcs}. Therefore the map $\sJ^r_0(\cX) \to \sW(\cX)$ factors through $\widetilde{\sW}(\cX)$, i.e., the proposition holds when $n = 0$. The full proposition then follows from the fact that $\sL_n(\widetilde{\sW}(\cX)) = (\theta^n_0)^{-1}(\sL_0(\widetilde{\sW}(\cX)))$.
\end{proof}

In the theorem below, the map $\sJ^r(\cX) \to \sL(\widetilde{\sW}(\cX))$ is the map given by taking an inverse limit over $n$ of the maps $\sJ^r_n(\cX) \to \sL_n(\widetilde{\sW}(\cX))$ obtained by the conclusion of \autoref{propositionTwistedToWarpedFactorsThroughSyntomicLocus}.

\begin{theorem}\label{theoremComparingWarpedToTwisted}
Let $\cX$ be a smooth finite type equidimensional Artin stack over $k$ with affine diagonal and a good moduli space. Let $r \in \Z_{>0}$, let $\cC \subset |\sJ^r(\cX)|$, and let $\cE \subset |\sL(\widetilde{\sW}(\cX)|$ be the image of $\cC$ along the map $|\sJ^r(\cX)| \to |\sL(\widetilde{\sW}(\cX)|$.

\begin{enumerate}[label=(\alph*)]

\item If $k'$ is a field extension of $k$, then the map $\overline{\cC}(k') \to \overline{\cE}(k')$ is bijective.

\item\label{itemTwoOfTwistedToWarpedTheorem} $\cC$ is a cylinder if and only if $\cE$ is a cylinder if and only if $\cE$ is a bounded cylinder. In that case,

\[
	\nu_\cX(\cC) = \begin{cases} \mu_{\widetilde{\sW}(\cX)}(\cE), \quad &r = 1 \\ \bL\mu_{\widetilde{\sW}(\cX)}(\cE), &\text{otherwise} \end{cases}.
\]

\end{enumerate}
\end{theorem}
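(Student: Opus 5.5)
We follow the proof of \cite[Theorem 8.5(a,b)]{SatrianoUsatine6} and change only the places where characteristic $0$ or $\gcd(r,\cha k)=1$ was used. There the argument combines three inputs. The first is that the absence of twisted forms of twisted discs makes the twisted-to-warped map injective on field-valued points. The second is that its fibers at finite level are $\bA^1$ once $n \geq 2$. The third is a bookkeeping argument that passes from jets to arcs via truncations. The first two inputs are now supplied in arbitrary characteristic by \autoref{prop:no-non-trivial-forms-truncated-twisted-charp} and \autoref{thm:fibers-twisted->warped-aut-explicit-charp}, the latter only for $n$ divisible by $p^s$ when $\cha(k)=p$ and $r=p^sr'$. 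The case $r=1$ is immediate, since then $\sJ^1(\cX)=\sL(\cX)$ is an open part of $\sL(\widetilde{\sW}(\cX))$ by \autoref{propositionClosureInSyntomicLocus}. From now on we assume $r\geq 2$.

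For part (a), surjectivity onto $\overline{\cE}(k')$ holds by definition of $\cE$ as an image. The one subtle point is that $\cE(k')$ consists of points whose residue field is $k'$ but which a priori only come from $\cC$ after a field extension. So take a warped arc $(\cT\to D_{k'},\cT\to\cX)$ that becomes isomorphic to the image of a twisted arc over some finite extension $L/k'$. Then $\cT\times_{k'}L\cong \cD^r_L$ over $D_L$, and \autoref{prop:no-non-trivial-forms-truncated-twisted-charp} with $n=\infty$ gives $\cT\cong\cD^r_{k'}$; note that $L/k'$ is allowed to be inseparable here. Hence the point comes from $\cC(k')$.

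For injectivity, suppose two twisted arcs $\cD^r_{k'}\to\cX$ give isomorphic warped arcs. Then they differ by a $D_{k'}$-automorphism $\sigma$ of $\cD^r_{k'}$. We need $\sigma$ to be $2$-isomorphic to the identity, or at least compatible with the two maps. For this we pass to truncations $\cD^r_n$ with $n$ divisible by $p^s$. \autoref{prop:auts-Dnell-reduce-to-id} shows that each truncated automorphism becomes trivial on $\cD^r_{n-1}$, with uniquely determined $2$-isomorphisms. Taking the limit over such $n$ shows that $\sigma$ is $2$-isomorphic to $\id$, so the two twisted arcs are isomorphic. Alternatively, since $\sigma$ is trivial over the generic point and $\cX$ is tame, \autoref{FMN-tame-stacks} gives this directly.

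For part (b), the plan is to restrict all truncation levels to $n$ divisible by $p^s$ (all $n\geq2$ in characteristic $0$), which is cofinal and therefore harmless for cylinders and limits. At such levels, \autoref{thm:fibers-twisted->warped-aut-explicit-charp} says that $\theta_n(\cC)\to\theta_n(\cE)$ has every fiber an $\bA^1$-torsor. It follows that $\theta_n(\cC)$ is locally constructible exactly when $\theta_n(\cE)$ is. Comparing preimages under $\theta_n$ and using part (a) to identify the sets, this gives that $\cC$ is a cylinder if and only if $\cE$ is a cylinder.

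Boundedness of $\cE$ follows because $\sJ^r(\cX)$ is quasi-compact at level $0$, so $\theta_0(\cE)$ lies in the quasi-compact image of $\sJ^r_0(\cX)$. For the measure, a piecewise trivialization of the $\bA^1$-torsors gives $\e(\theta_n(\cC))=\bL\,\e(\theta_n(\cE))$. Both sides are normalized by $\bL^{-(n+1)\dim\cX}$, and $\dim\widetilde{\sW}(\cX)=\dim\cX$, so $\nu_\cX(\cC)=\bL\,\mu_{\widetilde{\sW}(\cX)}(\cE)$.

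The main obstacle is the converse direction of the cylinder equivalence. If $\cE=\theta_m^{-1}(\cE_m)$, we must show that $\cC$ is the preimage of a constructible set, namely of $\cE_m$, which requires that fiber information at arbitrary levels can be lifted. I would handle this by passing to a level $n\geq m$ divisible by $p^s$, using the surjection $\sJ^r_n(\cX)\to\theta_n(\sJ^r(\cX))$-image over $\sL_n(\widetilde{\sW}(\cX))$, and applying the fiber description there together with the fact that $\theta_n$ commutes with the twisted-to-warped map.

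The torsor-trivialization step also needs care. The fibers are $\bG_a$-torsors over fields that need not be perfect, and the identification is only with the vector group attached to $\Ext^0(L_{\cD^r_{0}/L},\mcJ)$. Since $\bG_a$-torsors over a field are trivial by additive Hilbert 90, the classes still multiply by $\bL$ after stratifying.
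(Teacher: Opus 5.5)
Your outline follows the paper's approach: run the proof of \cite[Theorem 8.5]{SatrianoUsatine6} with characteristic-free replacements and restrict to levels $n$ divisible by $p^s$. However, it leaves out one of the replacements the paper needs, and part (b) breaks exactly there.

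\textbf{The gap in part (b).} For $\cC$ a cylinder to imply $\cE$ a cylinder, you need $\cE=\theta_n^{-1}(\theta_n(\cE))$ in $|\sL(\widetilde{\sW}(\cX))|$. That is, every arc $(\cT\to D_{k'},\cT\to\cX)$ of $\widetilde{\sW}(\cX)$ whose $n$-jet lies in $\theta_n(\cE)$ must have $\cT\cong\cD^r_{k'}$, and the resulting twisted arc must lie in $\cC$. Neither of your inputs gives this:
\begin{itemize}
\item Knowing that $\theta_n(\cE)$ is locally constructible says nothing about which arcs lie over it.
\item \autoref{thm:fibers-twisted->warped-aut-explicit-charp} only describes fibers of $\sJ^r_n(\cX)\to\sL_n(\sW(\cX))$, i.e.\ it works at jet level.
\end{itemize}
The missing ingredient is rigidity of $\cD^r_n$ under flat deformation. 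This comes from \autoref{l:twisted-twisted-discs-def-sp-redone}, the $\Ext$ computation via the $\bG_m$-presentation, valid when $\cha(k)\mid r$. It is combined with \autoref{prop:twisted-twisted-discs-defs-truncated-twisted-charp} inside the proof of \cite[Lemma 6.5]{SatrianoUsatine6}. This is precisely the adaptation in the paper's list that your proposal never mentions.

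\textbf{A second unjustified step.} Your measure identity $\e(\theta_n(\cC))=\bL\,\e(\theta_n(\cE))$ needs $\theta_n(\cC)$ to be the full preimage of $\theta_n(\cE)$ in $|\sJ^r_n(\cX)|$, which you do not justify. It holds once $n$ exceeds the level defining $\cC$, because by \autoref{prop:auts-Dnell-reduce-to-id}(3) the $\bA^1$ of automorphisms of $\cD^r_n$ becomes trivial at level $n-1$.

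\textbf{You have the difficulty backwards.} The direction you call the main obstacle is formal.
\begin{itemize}
\item Apply (a) to all of $|\sJ^r(\cX)|$: it shows $\cC$ is the full preimage of $\cE$.
\item So if $\cE=\theta_m^{-1}(\cE_m)$, then $\cC=\theta_m^{-1}(\Phi_m^{-1}(\cE_m))$, where $\Phi_m\colon\sJ^r_m(\cX)\to\sL_m(\widetilde{\sW}(\cX))$ is a morphism of locally finite type stacks.
\end{itemize}
The substantive direction is the one you treated in a single sentence.

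\textbf{Two smaller points on (a).}
\begin{itemize}
\item Equivalence of points in $|\sL(\widetilde{\sW}(\cX))|$ only gives a common extension $K/k'$, not a finite one. So \autoref{prop:no-non-trivial-forms-truncated-twisted-charp} does not apply as you invoke it. The reduction to finite extensions is available at finite jet level, where the relevant fibers are of finite type. Getting from $\cT_n\cong\cD^r_{n,k'}$ for all $n$ back to $\cT\cong\cD^r_{k'}$ again uses the deformation rigidity above.
\item In your alternative injectivity argument, apply \autoref{FMN-tame-stacks} with target $\cD^r_{k'}$, which is tame with finite diagonal, to get $\sigma\cong\id$. The stack $\cX$ is not assumed tame.
\end{itemize}
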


\begin{proof}
This is \cite[Theorem 8.5(a,b)]{SatrianoUsatine6} without the assumption that $k$ is algebraically closed and characteristic 0. The proof in loc. cit. works verbatim with the following adaptations. \autoref{FMN-tame-stacks} must be used in place of \cite[Proposition A.1]{FantechiMannNironi}. \autoref{prop:no-non-trivial-forms-truncated-twisted-charp} must be used in place of \cite[Proposition 6.1]{SatrianoUsatine6}. In the proof of \cite[Lemma 6.5]{SatrianoUsatine6}, we must use \autoref{prop:twisted-twisted-discs-defs-truncated-twisted-charp} for the cases not handled by \cite[Proposition 6.3]{SatrianoUsatine6}. \autoref{thm:fibers-twisted->warped-aut-explicit-charp} must be used in place of \cite[Theorem 7.1]{SatrianoUsatine6}. Finally, if $k$ has characteristic $p > 0$, in the paragraph where loc. cit. is used, choose $n, \cE_n$ such that $n$ is divisible by $p^s$, where $r = p^s r'$ with $\gcd(p, r') = 1$.
\end{proof}

We end this section by stating a corollary that will later be used to show that the motivic measure for measurable sets of twisted arcs is well defined.

\begin{corollary}\label{lemmaTwistedBoundedCylinderFiniteSubcover}
Let $\cX$ be a smooth finite type equidimensional Artin stack over $k$ with affine diagonal and a good moduli space, and let $\cC \subset |\sJ(\cX)|$ be a bounded cylinder. Then any cover of $\cC$ by bounded cylinders has a finite subcover.
\end{corollary}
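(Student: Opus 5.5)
Since $\cC$ is bounded, it meets only finitely many of the components $|\sJ^r(\cX)|$, say for $r \in R$ with $R$ finite. Each $|\sJ^r(\cX)|$ is a bounded cylinder, being $\theta_0^{-1}$ of the quasi-compact set $|\sJ^r_0(\cX)|$. So $\cC = \bigsqcup_{r \in R} (\cC \cap |\sJ^r(\cX)|)$ with each piece a bounded cylinder. Given a cover $\{\cC_i\}_{i\in I}$ of $\cC$ by bounded cylinders, each $\cC_i \cap |\sJ^r(\cX)|$ is again a bounded cylinder. It therefore suffices to prove the statement when $\cC \subset |\sJ^r(\cX)|$ for a single $r$ and every $\cC_i \subset |\sJ^r(\cX)|$; taking the union of the finitely many finite subcovers then finishes.

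For fixed $r$, I would transport the problem to untwisted arcs of $\widetilde{\sW}(\cX)$ via \autoref{theoremComparingWarpedToTwisted}. Let $\Phi\colon |\sJ^r(\cX)| \to |\sL(\widetilde{\sW}(\cX))|$ be the map of that theorem, and set $\cE = \Phi(\cC)$ and $\cE_i = \Phi(\cC_i)$. By part (b), each of these is a bounded cylinder in $|\sL(\widetilde{\sW}(\cX))|$. Every point of $\cE$ has the form $\Phi(c)$ with $c \in \cC \subset \bigcup_i \cC_i$, so $\{\cE_i\}$ covers $\cE$. By \autoref{lemmaUntwistedBoundedCylinderFiniteSubcover}, applied to the locally finite type stack $\widetilde{\sW}(\cX)$, there is a finite subset $I_0 \subset I$ with $\cE \subset \bigcup_{i\in I_0} \cE_i$.

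It remains to pull this finite subcover back, i.e.\ to show $\cC \subset \bigcup_{i \in I_0}\cC_i$; this is the main obstacle. For $c \in \cC$, represent $c$ by a $k'$-point. Then $\Phi(c) \in \cE_i$ for some $i\in I_0$, so $\Phi(c) = \Phi(c')$ for some $c' \in \cC_i$. We need $c = c'$ in $|\sJ^r(\cX)|$. The injectivity in part (a) is stated only for $k'$-points within a fixed set. I would therefore apply part (a) to the set $\cC \cup \cC_i$, which is a cylinder. After passing to a common field extension $k''$ realizing both $c$ and $c'$ with the same image point in $\overline{\Phi(\cC\cup\cC_i)}(k'')$, bijectivity of $\overline{(\cC\cup\cC_i)}(k'') \to \overline{\Phi(\cC\cup\cC_i)}(k'')$ forces the two $k''$-points to be isomorphic. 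Hence $c = c' \in \cC_i$. The delicate point is arranging the common field, but since points of $|\cdot|$ are equivalence classes of field-valued points, any two points with equal images can be represented over a common extension where their images become isomorphic. Alternatively, one can apply (a) directly with $\cC$ replaced by $|\sJ^r(\cX)|$, which gives injectivity of $\Phi$ on points outright.
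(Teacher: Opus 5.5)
Your proof is correct and follows the same route as the paper, which simply cites the cylinder equivalence in \autoref{theoremComparingWarpedToTwisted}(b) together with \autoref{lemmaUntwistedBoundedCylinderFiniteSubcover} applied to $\widetilde{\sW}(\cX)$. You also spell out two steps the paper leaves implicit: the reduction to a single $r$, which is needed because the comparison theorem is stated for fixed $r$, and the use of the bijectivity in part (a), applied to all of $|\sJ^r(\cX)|$, to pull the finite subcover back from $\widetilde{\sW}(\cX)$ to $|\sJ^r(\cX)|$.
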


\begin{proof}
This follows immediately from the first part of \autoref{theoremComparingWarpedToTwisted}\ref{itemTwoOfTwistedToWarpedTheorem} and from the untwisted version, \autoref{lemmaUntwistedBoundedCylinderFiniteSubcover}, applied to $\widetilde{\sW}(\cX)$.
\end{proof}

\section{Change of variables and thin subsets for twisted arcs}\label{sectionTwistedChangeOfVariables}

We now obtain the change of variables formula for twisted arcs.

\begin{theorem}\label{theoremChangeOfVariablesForTwistedArcs}
Let $\cX$ be a smooth irreducible finite type Artin stack over $k$ with affine diagonal and a good moduli space, let $Y$ be a finite type irreducible scheme over $k$, and let $\cX \to Y$ be a morphism. Let $\cU$ be an open substack of $\cX$ such that the composition $\cU \hookrightarrow \cX \to Y$ is an open immersion, let $\cC \subset |\sJ(\cX)|$ be a bounded cylinder that is disjoint from $|\sJ(\cX \setminus \cU)|$, and let $D \subset \sL(Y)$ be a cylinder. Assume that for every field extension $k'$ of $k$, the map $\overline{\cC}(k') \to D(k')$ induced by $\cX \to Y$ is a bijection.
\begin{enumerate}[label=(\alph*)]

\item\label{theoremChangeOfVariablesIntegerPart} The restriction of $\het_{\cX/Y} + \wt_\cX$ to $\cC$ is integer valued and takes only finitely many values, and the set $(\het_{\cX/Y} + \wt_\cX)^{-1}(n) \cap \cC$ is a bounded cylinder for all $n \in \Z$.

\item We have the equality
\[
	\mu_Y(D) = \int_{\cC} \bL^{-\het_{\cX/Y} - \wt_\cX}\diff\nu_\cX.
\]

\end{enumerate}
\end{theorem}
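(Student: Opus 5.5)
The strategy is the one used in \cite{SatrianoUsatine6} in characteristic 0: pass from twisted arcs of $\cX$ to untwisted arcs of the warping stack $\widetilde{\sW}(\cX)$, apply the untwisted change of variables formula \autoref{theoremUntwistedChangeOfVariables} there, and translate back. Since $\cC$ is bounded, I first decompose $\cC = \bigsqcup_r \cC \cap |\sJ^r(\cX)|$ (finitely many $r$) and $D$ correspondingly, so it suffices to treat $\cC \subset |\sJ^r(\cX)|$ for a single $r$. Let $\cE$ be the image of $\cC$ in $|\sL(\widetilde{\sW}(\cX))|$. By \autoref{theoremComparingWarpedToTwisted}, $\cE$ is a bounded cylinder, $\overline{\cC}(k') \to \overline{\cE}(k')$ is bijective, and $\nu_\cX$ of any subcylinder of $\cC$ equals $\mu_{\widetilde{\sW}(\cX)}$ (times $\bL$ if $r>1$) of its image. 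The map $\cX \to Y$ induces $\widetilde{\sW}(\cX) \to Y$ via coarse/good moduli spaces of the warped source, so $\overline{\cE}(k') \to D(k')$ is bijective. Also $\cU \to \widetilde{\sW}(\cX)$ is an open immersion by \autoref{propositionClosureInSyntomicLocus}, with composite to $Y$ still an open immersion; $\cE$ avoids $\sL(\widetilde{\sW}(\cX)\setminus\cU)$ since $\cC$ avoids $\sJ(\cX\setminus\cU)$ and twisted arcs are generically untwisted. By the remark after \autoref{propositionClosureInSyntomicLocus}, $\widetilde{\sW}(\cX)$ satisfies the hypotheses of \autoref{theoremUntwistedChangeOfVariables}, which then gives that $\het_{\widetilde{\sW}(\cX)/Y}$ is integer valued with finitely many values on $\cE$, with cylindrical fibers, and
\[
\mu_Y(D) = \int_{\cE} \bL^{-\het_{\widetilde{\sW}(\cX)/Y}} \diff\mu_{\widetilde{\sW}(\cX)}.
\]

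The key remaining step is the pointwise comparison of height functions: for a twisted arc $\varphi\colon \cD^r_{k'} \to \cX$ with image warped arc $\psi$,
\[
\het_{\widetilde{\sW}(\cX)/Y}(\psi) = \het_{\cX/Y}(\varphi) + \wt_\cX(\varphi) + \begin{cases} 0, & r=1 \\ -1, & r>1 \end{cases},
\]
where the $-1$ exactly absorbs the factor $\bL$ in \autoref{theoremComparingWarpedToTwisted}. In characteristic 0 this is \cite[Theorem 8.5(c)]{SatrianoUsatine6} (or the corresponding lemma computing $L\psi^*L_{\sW(\cX)}$ via the deformation theory of the pair $(\cD^r \to D, \cD^r \to \cX)$). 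I will redo that computation: $L\psi^* L_{\sW(\cX)}$ is computed from $R\pi_*$ of $L\varphi^* L_\cX$ together with $L_{\cD^r/D}$, where $\pi\colon\cD^r\to D$. Here $\pi_*$ is exact because $\cD^r$ is tame. The $L_\cX$ contribution gives the twisted height plus the weight term via the $\mu_r$-weight decomposition at the closed point, exactly as in \cite{SatrianoUsatine6}. The $L_{\cD^r/D}$ contribution is controlled by \autoref{lemmaExtComputationTwistedDiscOverDisc}, which replaces \cite[Lemma 5.2]{SatrianoUsatine6} in the wild case and gives a one-dimensional $\Ext^1$ and nothing else, producing the $-1$.

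Granting this, part (a) follows: the values of $\het_{\cX/Y}+\wt_\cX$ on $\cC$ are values of an integer-valued function shifted by an integer, and their fibers are preimages of the cylinders $\het_{\widetilde{\sW}(\cX)/Y}^{-1}(n)\cap\cE$, hence cylinders by \autoref{theoremComparingWarpedToTwisted}\ref{itemTwoOfTwistedToWarpedTheorem}. Part (b) follows by summing the measure comparison over these fibers. I expect the main obstacle to be the height comparison in positive characteristic when $p \mid r$. Since $\mu_r$ is non-reduced, the cotangent complex of $\cD^r$ over $D$ and the weight decomposition behave differently, so one must check that each step of the characteristic 0 computation uses only tameness (exactness of $\pi_*$, grading by $\mu_r$-weights) and \autoref{lemmaExtComputationTwistedDiscOverDisc}, rather than smoothness of $\mu_r$.
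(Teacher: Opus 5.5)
Your proposal is correct and follows the paper's route. The paper likewise reruns the argument of \cite{SatrianoUsatine6} through $\widetilde{\sW}(\cX)$, substituting \autoref{theoremComparingWarpedToTwisted}, \autoref{theoremUntwistedChangeOfVariables} (this is where irreducibility of $\cX$, hence of $\widetilde{\sW}(\cX)$, is needed), and \autoref{lemmaExtComputationTwistedDiscOverDisc} for \cite[Lemma 11.3]{SatrianoUsatine6} in the height comparison \cite[Theorem 11.2]{SatrianoUsatine6}, exactly as you propose. One minor repair: don't split $D$ by $r$, since you never show each piece is a cylinder; instead apply the untwisted formula once to the image $\cE$ of all of $\cC$, which is a finite disjoint union of the bounded cylinders $\cE_r$ and maps bijectively onto $D$, and split the resulting integral by $r$ afterwards.
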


\begin{proof}
This is \cite[Theorem 8.5(a,b)]{SatrianoUsatine6} without the assumption that $k$ is algebraically closed and characteristic 0, but with the additional assumption that $\cX$ is irreducible. The proof in loc. cit. works verbatim if we use \autoref{theoremComparingWarpedToTwisted} in place of \cite[Theorem 8.5]{SatrianoUsatine6}, \autoref{theoremUntwistedChangeOfVariables} in place of \cite[Theorem 7.1]{SatrianoUsatine5}, and \autoref{lemmaExtComputationTwistedDiscOverDisc} in place of \cite[Lemma 11.3]{SatrianoUsatine6} in the proof of \cite[Theorem 11.2]{SatrianoUsatine6}.
\end{proof}

The remainder of this section will be used to define measurable subsets of twisted arcs, obtain the twisted version of \autoref{theoremUntwistedThinSubsets}, and then use these in applying \autoref{theoremChangeOfVariablesForTwistedArcs} to tame crepant resolutions.

\begin{definition}
Let $\cX$ be a smooth finite type equidimensional Artin stack over $k$ with affine diagonal and a good moduli space, let $\cC \subset |\sJ(\cX)|$, and let $\varepsilon \in \R_{>0}$. A \emph{bounded cylinderical $\varepsilon$-approximation} of $\cC$ is a pair $(\cC^{(0)}, \{\cC^{(i)}\}_{i \in I})$ such that
\begin{itemize}

\item $\cC^{(0)} \subset |\sJ(\cX)|$ is a bounded cylinder,

\item $\{\cC^{(i)}\}_{i \in I}$ is a collection with each $\cC^{(i)} \subset |\sJ(\cX)|$ a bounded cylinder,

\item $\Vert \nu_\cX(\cC^{(i)}) \Vert < \varepsilon$ for all $i \in I$, and

\item $(\cC \cup \cC^{(0)}) \setminus (\cC \cap \cC^{(0)}) \subset \bigcup_{i \in I} \cC^{(i)}$.

\end{itemize}
\end{definition}

\begin{definition}
Let $\cX$ be a smooth finite type equidimensional Artin stack over $k$ with affine diagonal and a good moduli space, and let $\cC \subset |\sJ(\cX)|$. The set $\cC$ is called \emph{measurable} if it has a bounded cylindrical $\varepsilon$-approximation for all $\varepsilon \in \R_{>0}$.
\end{definition}

\begin{proposition}\label{propositionTwistedMeasurableSetHasWellDefinedVolume}
Let $\cX$ be a smooth finite type equidimensional Artin stack over $k$ with affine diagonal and a good moduli space, and let $\cC \subset |\sJ(\cX)|$ be a measurable set. There exists a unique element $\nu_{\cX}(\cC) \in \widehat{\sM}_k$ satisfying $\Vert \nu_\cX(\cC) - \nu_\cX(\cC^{(0)}) \Vert < \varepsilon$ for any bounded cylindrical $\varepsilon$-approximation $(\cC^{(0)}, \{\cC^{(i)}\}_{i \in I})$.
\end{proposition}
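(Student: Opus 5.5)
This will follow the proof of \autoref{propositionUntwistedMeasurableSetHasWellDefinedVolume}, that is, the argument of \cite[Chapter 6 Theorem 3.3.2]{ChambertLoirNicaiseSebag}, applied now to $|\sJ(\cX)|$ and $\nu_\cX$. That argument needs only three inputs:
\begin{itemize}
\item $\nu_\cX$ is additive on finite disjoint unions of bounded cylinders;
\item the compactness statement for bounded cylinders;
\item the norm estimate $\Vert\nu_\cX(\cC)\Vert \le \Vert\nu_\cX(\cC')\Vert$ whenever $\cC\subset\cC'$ are bounded cylinders, together with subadditivity of the norm over finite unions.
\end{itemize}
The compactness is exactly \autoref{lemmaTwistedBoundedCylinderFiniteSubcover}. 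So the plan is to verify the remaining two inputs and then run the formal argument.

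For additivity, I would write disjoint bounded cylinders as preimages under a single $\theta_n$ for $n$ large. The stabilization of $\e(\theta_n(\cC))\bL^{-(n+1)\dim\cX}$ together with additivity of $\e$ on disjoint quasi-compact locally constructible subsets of $|\sJ_n(\cX)|$ then gives additivity of $\nu_\cX$.

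For monotonicity of norms, I would reduce to the untwisted setting to avoid repeating the jet-level analysis. A bounded cylinder meets only finitely many $\sJ^r(\cX)$, so decompose it by $r$. Then \autoref{theoremComparingWarpedToTwisted}\ref{itemTwoOfTwistedToWarpedTheorem} identifies $\nu_\cX$ of each piece with $\mu_{\widetilde{\sW}(\cX)}$, or $\bL\mu_{\widetilde{\sW}(\cX)}$, of its image $\cE$, which is a bounded cylinder. Part (a) of the same theorem says the map is injective on points, so inclusions and disjointness are preserved. Multiplication by $\bL$ scales the norm by a fixed factor. Monotonicity then follows from the corresponding untwisted fact on the smooth stack $\widetilde{\sW}(\cX)$. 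That untwisted fact is proved via \autoref{propositionDimensionAndGrothendieckNorm}: $\Vert\e(\cC_n)\Vert$ is $\exp$ of the dimension, and dimension is monotone.

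Given these inputs, the formal argument runs as follows. Take two bounded cylindrical $\varepsilon$-approximations $(\cC^{(0)},\{\cC^{(i)}\})$ and $(\cC'^{(0)},\{\cC'^{(j)}\})$ of $\cC$. Their symmetric difference $\cC^{(0)}\triangle\cC'^{(0)}$ is a bounded cylinder covered by the union of the $\cC^{(i)}$ and the $\cC'^{(j)}$. By compactness, finitely many of these cylinders suffice. By the non-archimedean property of the norm and monotonicity, $\Vert\nu_\cX(\cC^{(0)})-\nu_\cX(\cC'^{(0)})\Vert<\varepsilon$. Letting $\varepsilon\to0$ gives a Cauchy sequence in the complete ring $\widehat{\sM}_k$. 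Its limit is the required $\nu_\cX(\cC)$, and the same estimate shows it is unique.

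The main obstacle is the monotonicity and additivity bookkeeping across different $r$ and the factor $\bL$. I expect it to be routine once everything is transported through \autoref{theoremComparingWarpedToTwisted}.
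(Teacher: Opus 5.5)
Your proposal is correct and is essentially the paper's proof, which reruns the argument of \cite[Chapter 6 Theorem 3.3.2]{ChambertLoirNicaiseSebag} with \autoref{lemmaTwistedBoundedCylinderFiniteSubcover} supplying compactness. The one difference is how you get the norm estimates: the paper's verbatim rerun of the untwisted proof applies \autoref{propositionDimensionAndGrothendieckNorm} directly to the finite type stacks $\sJ^r_n(\cX)$, which have affine diagonal. This makes your transport through \autoref{theoremComparingWarpedToTwisted} unnecessary, and it also settles the cross-$r$ no-cancellation you flag: for $n$ large, $\nu_\cX(\cC)$ is $\bL^{-(n+1)\dim\cX}$ times the class of the single constructible set $\theta_n(\cC)$, whose norm is $\exp$ of its dimension.
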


\begin{proof}
The proof is exactly as for \autoref{propositionUntwistedMeasurableSetHasWellDefinedVolume} except we use \autoref{lemmaTwistedBoundedCylinderFiniteSubcover} in place of \autoref{lemmaUntwistedBoundedCylinderFiniteSubcover}.
\end{proof}

\begin{definition}
Let $\cX$ be a smooth finite type equidimensional Artin stack over $k$ with affine diagonal and a good moduli space, and let $\cC \subset |\sJ(\cX)|$ be a measurable set. The \emph{motivic volume} of $\cC$ is the element $\nu_{\cX}(\cC)$ in the statement of \autoref{propositionTwistedMeasurableSetHasWellDefinedVolume}.
\end{definition}

\begin{remark}\label{remarkTwistedToWarpedForMeasurable}
Using \autoref{theoremComparingWarpedToTwisted}, it is straightforward to check that under the hypotheses of \autoref{theoremComparingWarpedToTwisted}, $\cC$ is measurable if and only if $\cE$ is measurable, and in that case,
\[
	\nu_\cX(\cC) = \begin{cases} \mu_{\widetilde{\sW}(\cX)}(\cE), \quad &r = 1 \\ \bL\mu_{\widetilde{\sW}(\cX)}(\cE), &\text{otherwise} \end{cases}.
\]
\end{remark}

\begin{theorem}\label{theoremTwistedThinSubsets}
Let $\cX$ be a smooth finite type equidimensional Artin stack over $k$ with affine diagonal and a good moduli space, and let $\cZ$ be a closed substack of $\cX$ with $\dim\cZ < \dim\cX$. For any $r \in \Z_{>0}$, the set $|\sJ^r(\cZ)|$ is a measurable subset of $|\sJ^r(\cX)|$ with $\nu_\cX(|\sJ^r(\cZ)|) = 0$.
\end{theorem}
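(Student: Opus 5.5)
We will reduce \autoref{theoremTwistedThinSubsets} to its untwisted counterpart, \autoref{theoremUntwistedThinSubsets}, applied to the warping stack $\widetilde{\sW}(\cX)$. We transport the question along the comparison map using \autoref{theoremComparingWarpedToTwisted} and \autoref{remarkTwistedToWarpedForMeasurable}.

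\emph{Step 1: set up the comparison.} Fix $r$ and let $\cE \subset |\sL(\widetilde{\sW}(\cX))|$ be the image of $|\sJ^r(\cZ)| \subset |\sJ^r(\cX)|$. By \autoref{remarkTwistedToWarpedForMeasurable}, it suffices to show that $\cE$ is measurable with $\mu_{\widetilde{\sW}(\cX)}(\cE) = 0$. By the remark following the definition of $\widetilde{\sW}(\cX)$, this stack is smooth, equidimensional and finite type. It has affine geometric stabilizers and $\dim \widetilde{\sW}(\cX) = \dim\cX$. We may work with the finite type open substack that receives $\sJ^r_0(\cX)$.

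\emph{Step 2: find a small closed substack.} We want a closed substack $\cZ' \subset \widetilde{\sW}(\cX)$ with $\dim \cZ' < \dim \cX$ such that every warped arc coming from a twisted arc of $\cZ$ lies in $|\sL(\cZ')|$.

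\begin{itemize}
\item The natural candidate is the closure of the image of $\sW(\cZ) \to \sW(\cX)$, intersected with $\widetilde{\sW}(\cX)$.
\item For $r=1$ this is just $\cZ \subset \cX \subset \widetilde{\sW}(\cX)$.
\item In general, a twisted arc $\cD^r_{k'} \to \cZ$ gives a warped arc whose generic point lies in $\cZ \cap \cU$, where $\cU = \cX \subset \widetilde{\sW}(\cX)$ is open.
\end{itemize}

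So the arc's generic point lands in $\cZ$, viewed as a locally closed substack of $\widetilde{\sW}(\cX)$. Hence the whole arc factors through the closure $\overline{\cZ}$ of $\cZ$ in $\widetilde{\sW}(\cX)$; more precisely, it factors through its reduced structure up to nilpotents, which is harmless because arcs through a nilpotent thickening of a closed substack are controlled by \autoref{theoremUntwistedThinSubsets} applied to a suitable closed substack with the same support. Since $\cX$ is open dense, $\dim \overline{\cZ} = \dim \cZ < \dim\cX$.

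\emph{Step 3: conclude and handle subtleties.} By \autoref{theoremUntwistedThinSubsets}, $|\sL(\overline{\cZ})|$ is measurable of measure zero, and $\cE \subset |\sL(\overline{\cZ})|$. A subset of a measure-zero measurable set is measurable of measure zero: take the empty set as $\cC^{(0)}$ and the same covering cylinders. Pulling back via \autoref{remarkTwistedToWarpedForMeasurable} finishes the proof.

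The main obstacle is the factorization claim in Step 2. An arc $D_{k'} \to \widetilde{\sW}(\cX)$ whose generic point lies in a closed substack factors through that substack only if $D_{k'}$ is reduced and the substack is closed. Both hold here. I would verify this via a smooth atlas: lift the arc after a finite extension of $k'\llbracket t\rrbracket$ and use the fact that closed immersions are fpqc local, noting that $\overline{\cC}(k')$-level statements suffice since measurability is tested on points. If the dimension bound for the closure in a non-separated stack causes trouble, I would instead bound $\dim$ of the image of $\sJ^r_0(\cZ)$ directly via \autoref{thm:fibers-twisted->warped-aut-explicit-charp}.
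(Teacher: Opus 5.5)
Your proposal is correct and follows essentially the paper's route. The paper transports $|\sJ^r(\cZ)|$ to $|\sL(\widetilde{\sW}(\cX))|$ via \autoref{remarkTwistedToWarpedForMeasurable} and then applies \autoref{theoremUntwistedThinSubsets}. It does so on a finite type open of $\widetilde{\sW}(\cX)$, which is only locally of finite type (not finite type, as your Step 1 first states), and to the closure of $\cZ$, whose dimension equals $\dim\cZ$ because $\cZ$ is open and dense in it.

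Two small tightenings:
\begin{enumerate}
\item The factorization in Step 2 is immediate, with no atlas or nilpotent discussion needed: the pullback of $\overline{\cZ}_{\red}$ to the integral scheme $D_{k'}$ is a closed subscheme containing the generic point, so it is all of $D_{k'}$.
\item In Step 3 you must keep the original $\cC^{(0)}$ among the covering cylinders. Its norm is $<\varepsilon$ precisely because the measure is $0$.
\end{enumerate}
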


\begin{proof}
This is \cite[Theorem 9.1]{SatrianoUsatine6} without the assumption that $k$ is algebraically closed and characteristic 0. The proof in loc. cit. works as long as we use \autoref{remarkTwistedToWarpedForMeasurable} in place of \cite[Theorem 8.5]{SatrianoUsatine6} and \autoref{theoremUntwistedThinSubsets} in place of \cite[Theorem 9.2]{SatrianoUsatine6}.
\end{proof}

We end this section with the following application of \autoref{theoremChangeOfVariablesForTwistedArcs} and \autoref{theoremTwistedThinSubsets} to tame crepant resolutions of singularities.

\begin{corollary}\label{corollaryMotivicIntegralCrepantResolution}
Let $\cX$ be a smooth finite type irreducible Artin stack over $k$, and let $Y$ be a $\Q$-Gorenstein finite type irreducible scheme over $k$. If $\cX \to Y$ is a tame proper birational map with $K_{\cX/Y} = 0$, then $\bL^{\ord_Y^\Gor}$ is integrable on $\sL(Y)$. Furthermore if $\cX$ has affine diagonal, then
\[
	\mu^\Gor_Y(\sL(Y)) = \int_{\sJ(\cX)} \bL^{-\wt_\cX} \diff\nu_\cX.
\]
\end{corollary}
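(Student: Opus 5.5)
The plan is to apply \autoref{theoremChangeOfVariablesForTwistedArcs} to a stratification of $\sL(Y)$ by level sets of $\ord^\Gor_Y$, after discarding thin sets via \autoref{theoremTwistedThinSubsets} and \autoref{theoremUntwistedThinSubsets}.

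\textbf{Reductions.} First reduce to the case where $\cX$ has affine diagonal. Since $\cX \to Y$ is tame and proper over a scheme, $\cX$ has a coarse space, and Zariski locally on $Y$ it is a quotient with affine diagonal and a good moduli space. Integrability is local on $Y$ (use a finite open cover and cylinders), so for the first claim we may replace $Y$ by an affine open and $\cX$ by its preimage. Let $\cU \subset \cX$ be a dense open on which $\cX \to Y$ is an isomorphism onto an open $U \subset Y$ contained in the smooth locus. Put $\cZ = \cX \setminus \cU$ and $Z = Y \setminus U$. By \autoref{theoremTwistedThinSubsets}, $|\sJ(\cZ)|$ is null; similarly $\sL(Z)$ is null in $\sL(Y)$ by the classical scheme case.

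\textbf{Bijection on arcs.} Let $\cC = |\sJ(\cX)| \setminus |\sJ(\cZ)|$ and $D = \sL(Y) \setminus \sL(Z)$. We claim that for every field $k'$, the map $\overline{\cC}(k') \to D(k')$ is bijective. Given an arc $D_{k'} \to Y$ with generic point in $U$, the generic point lifts uniquely to $\cU$. Properness of $\cX \to Y$ together with tameness (the valuative criterion for tame stacks, after a root construction on the disc) then gives a unique extension to a representable twisted arc $\cD^r_{k'} \to \cX$ for a unique $r$. Uniqueness up to 2-isomorphism follows from \autoref{FMN-tame-stacks}.

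\textbf{Relating $\het$ to $\ord^\Gor_Y$.} Next we show that $K_{\cX/Y} = 0$ gives $\het_{\cX/Y} = \ord^\Gor_Y$ on $\cC$, compatibly with the map to $D$. We pull the $m$-th canonical sheaves back along $\cD^r \to D \to Y$ and compute the determinant of $L_{\cX/Y}$ restricted to the arc. By definition of $K_{\cX/Y}$ in \cite{SatrianoUsatine3}, the cokernel of $(\Omega^{\dim Y}_Y)^{\otimes m}$ pulled back to $\cX$, compared with $\omega_{\cX}^{\otimes m}$, is governed by $\sJ_{Y,m}$. So $\het_{\cX/Y}(\varphi) = (1/m)\ord_{\sJ_{Y,m}}$ of the image arc, with the factor $1/r$ matching the ramification of $D \to \cD^r$ built into the definition of $\het$. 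I expect this identification, for Artin (not Deligne--Mumford) $\cX$ in positive characteristic, to be the main obstacle. I would first try to reduce to the analogous statement in \cite{SatrianoUsatine6} (its proof of the crepant case), checking that only \autoref{lemmaExtComputationTwistedDiscOverDisc}-type inputs depend on the characteristic.

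\textbf{Stratifying and summing.} By \autoref{theoremChangeOfVariablesForTwistedArcs}\ref{theoremChangeOfVariablesIntegerPart}, $\het_{\cX/Y} + \wt_\cX$ takes finitely many values on each bounded cylinder. Write $\cC$ as a countable union of bounded cylinders $\cC_{n,r} = \cC \cap |\sJ^r(\cX)| \cap \{\het = n\}$, and let $D_{n,r}$ be their images in $D$, which are cylinders in $\sL(Y)$. Applying \autoref{theoremChangeOfVariablesForTwistedArcs} to each piece gives
\[
\mu_Y(D_{n,r}) = \int_{\cC_{n,r}} \bL^{-n-\wt_\cX}\diff\nu_\cX.
\]
Since $\ord^\Gor_Y = -n$ on $D_{n,r}$, multiplying by $\bL^{-n}$ gives $\mu^\Gor_Y(D_{n,r}) = \int_{\cC_{n,r}} \bL^{-\wt_\cX}\diff\nu_\cX$.

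Integrability then amounts to convergence of the sum over $(n,r)$. $\wt_\cX$ is locally constant (\autoref{propositionWeightFunctionLocallyConstant}) and $I_\mu(\cX)$ has finitely many nonempty components, since tameness bounds the possible $r$. Hence only finitely many $r$ occur, and the right-hand side $\int_{\sJ(\cX)}\bL^{-\wt_\cX}\diff\nu_\cX$ is a finite sum of finite volumes, $\nu_\cX(|\sJ^r(\cX)|)$ being finite since $\cX$ is finite type. Summing over $(n,r)$ and using the null sets from the reductions gives both integrability and the stated formula.
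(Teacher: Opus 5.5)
Your skeleton matches the paper's. You reduce to affine diagonal and get the bijection $\overline{\cC}(k')\to D(k')$ from the valuative criterion for tame stacks (the paper cites Bresciani--Vistoli). You identify $\het_{\cX/Y}$ with $\ord^\Gor_Y\circ f$; this is \cite[Proposition 12.2]{SatrianoUsatine6}, whose proof is characteristic-free, so the step you flag as the main obstacle costs nothing. You then apply \autoref{theoremChangeOfVariablesForTwistedArcs} and discard null sets.

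The gap is in your stratification step. The pieces $\cC_{n,r}=\cC\cap|\sJ^r(\cX)|\cap\{\het_{\cX/Y}=n\}$, with $\cC=|\sJ(\cX)|\setminus|\sJ(\cZ)|$, and their images $D_{n,r}\subset\sL(Y)\setminus\sL(Z)$ are not cylinders in general. So \autoref{theoremChangeOfVariablesForTwistedArcs} cannot be applied to them. Concretely, nothing in your choice of $U$ forces $Z\subset V(\sJ_{Y,m})$. If $Z\not\subset V(\sJ_{Y,m})$, then $\{\ord^\Gor_Y=0\}$ contains arcs of $Z$ based at smooth points of $Y$. Every finite jet of such an arc also lifts to arcs not in $\sL(Z)$, so $\{\ord^\Gor_Y=0\}\setminus\sL(Z)$, a finite union over $r$ of your $D_{0,r}$, is not of the form $\theta_N^{-1}(\text{constructible})$. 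Part \ref{theoremChangeOfVariablesIntegerPart} of the theorem only says level sets are cylinders \emph{inside} a bounded cylinder already known to biject onto a cylinder of $\sL(Y)$; it says nothing about level sets on your non-cylinder $\cC$. Moreover, even if the pieces were cylinders, summing over infinitely many $n$ needs countable additivity of $\nu_\cX$ and $\mu_Y$ on measurable sets, in particular $\Vert\nu_\cX(\cC_{n,r})\Vert\to0$. That is not among the available tools, and finiteness of $\int_{\sJ(\cX)}\bL^{-\wt_\cX}\diff\nu_\cX$ does not by itself make your series converge to it.

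The missing idea is to exhaust by genuine cylinders rather than by level sets. Put $D^{(N)}=\theta_N^{-1}(\sL_N(Y)\setminus\sL_N(Z))$ and let $\cC^{(N)}$ be its preimage in $|\sJ(\cX)|$. These are increasing cylinders, disjoint from $\sL(Z)$ resp.\ $|\sJ(\cZ)|$, with unions $\sL(Y)\setminus\sL(Z)$ resp.\ $|\sJ(\cX)|\setminus|\sJ(\cZ)|$. On $D^{(N)}$ the function $\ord^\Gor_Y$ is finite, since $V(\sJ_{Y,m})\subset Z$ because $U$ is smooth. Hence it takes finitely many values by quasi-compactness (\autoref{lemmaUntwistedBoundedCylinderFiniteSubcover}). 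The change of variables theorem therefore applies to the pair $(\cC^{(N)},D^{(N)})$, and with $\het_{\cX/Y}=\ord^\Gor_Y\circ f$ it gives $\int_{D^{(N)}}\bL^{\ord^\Gor_Y}\diff\mu_Y=\int_{\cC^{(N)}}\bL^{-\wt_\cX}\diff\nu_\cX$.

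Now let $N\to\infty$. Because $\sL(Z)$ is null, integrability of $\bL^{\ord^\Gor_Y}$ on $\sL(Y)$ is equivalent to convergence of the left side. Because the sets $|\sJ(\cX)|\setminus\cC^{(N)}$ are decreasing cylinders with null intersection $|\sJ(\cZ)|$ (\autoref{theoremTwistedThinSubsets}), the right side converges to $\int_{\sJ(\cX)}\bL^{-\wt_\cX}\diff\nu_\cX$.

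Finally, correct a sign slip. On your pieces $\ord^\Gor_Y=\het_{\cX/Y}=n$ (not $-n$), so you must multiply $\mu_Y(D_{n,r})$ by $\bL^{n}$. As written, with $\ord^\Gor_Y=-n$ and factor $\bL^{-n}$, you would get $\int\bL^{-2n-\wt_\cX}\diff\nu_\cX$.
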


\begin{remark}
In the statement of \autoref{corollaryMotivicIntegralCrepantResolution}, $\cX$ is tame over the scheme $Y$ and is thus tame over $k$. Thus $\cX$ has a good moduli space. Since $\cX$ is tame over $k$, $I_{\mu_r}(\cX)$ is empty for all but finitely many $r$, so $\sJ^r(\cX)$ is empty for all but finitely many $r$. Altogether, along with \autoref{propositionWeightFunctionLocallyConstant}, the integral $\int_{\sJ(\cX)} \bL^{-\wt_\cX} \diff\nu_\cX$ is well defined in the statement of \autoref{corollaryMotivicIntegralCrepantResolution}.
\end{remark}

\begin{proof}
Since $\bL^{\ord_Y^\Gor}$ being integrable can be checked on an open cover of $Y$ and the hypotheses imply $\cX \to Y$ has affine diagonal, we may assume the $\cX$ has affine diagonal.

Now let $V$ be a nonempty open subscheme of $Y$ such that $\cX \to Y$ is an isomorphism over $V$, and set $\cU = \cX \times_Y V$. By \cite{BrescianiVistoli}, the map 
\[
	\overline{(|\sJ(\cX)| \setminus |\sJ(\cX \setminus \cU)|)}(k') \to (\sL(Y) \setminus \sL(Y \setminus V))(k')
\]
is bijective for any field extension $k'$ of $k$. In particular for any cylinder $D \subset \sL(Y)$ disjoint from $\sL(Y \setminus V)$, if $\cC$ is the preimage of $D$ in $|\sJ(\cX)|$, then $\overline{\cC}(k') \to D(k')$ is bijective. Now for every $n \in \Z_{\geq 0}$, set $D^{(n)} = (\theta_n)^{-1}(\sL_n(Y) \setminus \sL_n(Y \setminus V)) \subset \sL(Y)$, where we give $Y \setminus V$ the reduced closed subscheme structure, and let $\cC^{(n)}$ be the preimage of $D^{(n)}$ in $|\sJ(\cX)|$. Since the ideal defining $\ord^\Gor_Y$ is supported away from the smooth scheme $V$, the function $\ord^\Gor_Y$ does not take the value $\infty$ on $D^{(n)} \subset \sL(Y) \setminus \sL(Y \setminus V)$. Thus $D^{(n)}$ is covered by finitely many cylinder of the form $D^{(n)} \cap (\ord^\Gor_Y)^{-1}(m)$ (see e.g., \autoref{lemmaUntwistedBoundedCylinderFiniteSubcover}). Thus $\bL^{\ord^\Gor_Y}$ is integrable on $D^{(n)}$, and it is straightforward to use \autoref{theoremChangeOfVariablesForTwistedArcs} to show that
\[
	\int_{D^{(n)}} \bL^{\ord^\Gor_Y}\diff\mu_Y = \int_{\cC^{(n)}} \bL^{\ord^\Gor_Y \circ f -\het_{\cX/Y} - \wt_\cX}\diff\nu_\cX,
\]
where $f: |\sJ(\cX)| \to \sL(Y)$ is the map induced by $\cX \to Y$. Then by \cite[Proposition 12.2]{SatrianoUsatine6}, whose proof never uses that $k$ is algebraically closed or characteristic 0,
\[
	\int_{D^{(n)}} \bL^{\ord^\Gor_Y}\diff\mu_Y = \int_{\cC^{(n)}} \bL^{- \wt_\cX}\diff\nu_\cX.
\]
Since $\sL(Y \setminus V)$ is measurable in $\sL(Y)$ and $\mu_Y(\sL(Y \setminus V)) = 0$, we have $\bL^{\ord^\Gor_Y}$ is integrable on $\sL(Y)$ if and only if $\bL^{\ord^\Gor_Y}$ is integrable on $\sL(Y) \setminus \sL(Y \setminus V)$, and in that case
\[
	\int_{\sL(Y)} \bL^{\ord^\Gor_Y} \diff\mu_Y = \int_{\sL(Y) \setminus \sL(Y \setminus V)} \bL^{\ord^\Gor_Y} \diff\mu_Y.
\]
Using also that $D^{(n)} \subset D^{(n+1)}$ and $\sL(Y) \setminus \sL(Y \setminus V) = \bigcup_{n \in \Z_{\geq 0}} D^{(n)}$, we see that $\bL^{\ord^\Gor_Y}$ is integrable on $\sL(Y)$ if and only if the sequence $\{\int_{D^{(n)}} \bL^{\ord^\Gor_Y}\diff\mu_Y\}_n$ converges, and in that case
\[
	\int_{\sL(Y)} \bL^{\ord^\Gor_Y} \diff\mu_Y = \lim_{n \to \infty} \int_{D^{(n)}} \bL^{\ord^\Gor_Y}\diff\mu_Y.
\]
Therefore $\bL^{\ord^\Gor_Y}$ is integrable on $\sL(Y)$ if and only if the sequence $\{\int_{\cC^{(n)}} \bL^{- \wt_\cX}\diff\nu_\cX\}_n$ converges, and in that case
\[
	\int_{\sL(Y)} \bL^{\ord^\Gor_Y} \diff\mu_Y = \lim_{n \to \infty} \int_{\cC^{(n)}} \bL^{- \wt_\cX}\diff\nu_\cX.
\]
By \autoref{theoremTwistedThinSubsets}, $|\sJ(\cX \setminus \cU)|$ is a measurable subset of $|\sJ(\cX)|$ with measure 0. Thus using that $\cC^{(n)} \subset \cC^{(n+1)}$ and $\bigcup_{n \in \Z_{\geq 0}} \cC^{(n)} = |\sJ(\cX)| \setminus |\sJ(\cX \setminus \cU)|$, the sequence $\{\int_{\cC^{(n)}} \bL^{- \wt_\cX}\diff\nu_\cX\}_n$ converges and
\[
	\lim_{n \to \infty} \int_{D^{(n)}} \bL^{\ord^\Gor_Y}\diff\mu_Y = \int_{\sJ(\cX)} \bL^{-\wt_\cX} \diff\nu_\cX,
\]
completing our proof.
\end{proof}

\section{Twisted jets and cyclotomic inertia}

The goal of this section is to prove \autoref{maintheoremGorensteinMeasureCrepantResolution} as a consequence of \autoref{corollaryMotivicIntegralCrepantResolution}. We will spend most of this section proving the following intermediate result, which is a motivic analog of \cite[Proposition 9.1]{HuangSatrianoUsatine} but in a more general setting. In some ways, the proof follows the structure of loc. cit., but we have the added complication of working with two term complexes since $L_\cX$ is not necessarily a single vector bundle. In positive characteristic, this occurs even when $\cX$ is tame.

\begin{proposition}\label{propositionTwistedJetsToInertiaComponents}
Let $\cX$ be a smooth equidimensional finite type Artin stack over $k$ with affine diagonal, and let $\cY$ be a connected component of $I_\mu(\cX)$. Then
\[
	\e(\sJ_0(\cX) \times_{I_\mu(\cX)} \cY) = \bL^{\dim\cX - \dim\cY} \e(\cY).
\]
\end{proposition}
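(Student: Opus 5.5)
The plan is to study the map $\pi\colon \sJ^r_0(\cX) \to I_{\mu_r}(\cX)$ induced by the closed immersion $B\mu_r \hookrightarrow \cD^r_0$. We identify its fibers with a ``vector bundle stack'' built from the two-term complex $L\varphi^*L_\cX$, and then compute the class of the preimage of $\cY$ stratum by stratum.

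Recall $\cD^r_0 = [\Spec(k[t^{1/r}]/(t))/\mu_r]$, a square-zero thickening of $B\mu_r$ with ideal $\cO_{B\mu_r}(-1)$, spanned by $t^{1/r}$ of weight $1$ (up to sign convention). It is cut out by $u^r=0$ in the $\bG_m$-presentation of \autoref{l:twisted-twisted-discs-def-sp-redone}. Fix a field point $\varphi\colon B\mu_{r,k'}\to\cX$ of $\cY$. By deformation theory, the extensions of $\varphi$ to $\cD^r_{0,k'}$ form the following groupoid. The obstruction lies in $\Ext^1(L\varphi^*L_\cX,\cO(-1))$, which vanishes: $\cX$ is smooth, so $L_\cX$ is perfect of amplitude $[0,1]$, and $B\mu_r$ is cohomologically affine since $\mu_r$ is linearly reductive. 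The extensions form a torsor under $\Ext^0(L\varphi^*L_\cX,\cO(-1))$, with automorphisms $\Ext^{-1}(L\varphi^*L_\cX,\cO(-1))$. Because $B\mu_r$ is cohomologically affine, these Ext groups are the weight-$w$ pieces (for the appropriate $w\equiv \pm1$) of $H^{-i}$ of the dual complex. Therefore the fiber of $\pi$ over $\varphi$ is the quotient stack $[\bA(H^0(L\varphi^*L_\cX)^\vee_{(w)}) / \bA(H^1(L\varphi^*L_\cX)^\vee_{(w)})]$, where the group acts through the differential, or more precisely a vector bundle stack $h^1/h^0$ of the weight-$w$ part of the dual. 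Its class is $\bL^{\dim H^0_{(w)}-\dim H^1_{(w)}}$.

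To globalize, I will stratify $\cY$ into locally closed substacks on which the ranks $\dim H^i((L\varphi^*L_\cX)(-w))$ are constant. On each stratum $\cY_j$, the universal complex restricted to the universal $B\mu_r$ over $\cY_j$ has a weight-$w$ part which, after possibly further stratifying, is represented by a two-term complex of vector bundles $E^0\to E^1$ on $\cY_j$ with locally free cohomology. The preimage of $\cY_j$ is then the vector bundle stack $[E^{1\vee}$-quotient$]$ over $\cY_j$. This identification uses the relative version of the deformation argument over $\cY_j$, again relying on cohomological affineness of $B\mu_r\times\cY_j\to\cY_j$ and smoothness of $\cX$. Its class is $\bL^{\mathrm{rk}}\e(\cY_j)$, by the standard fact that a vector bundle stack of virtual rank $d$ has class $\bL^d$ times the base. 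This fact follows from bundles being Zariski-locally trivial and from $\e(B\bG_a^m)=\bL^{-m}$.

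The key remaining point is that the virtual rank $\dim H^0_{(w)}-\dim H^1_{(w)}$ is constant on $\cY$ and equals $\dim\cX-\dim\cY$. This can be done by computing the tangent complex of $I_{\mu_r}(\cX)=\uHom^{\mathrm{rep}}(B\mu_r,\cX)$: it is the weight-$0$ part of $L\varphi^*T_\cX$. Hence $\dim\cY$ (which is smooth by \autoref{propositionCyclotomicInertiaFiniteTypeAffineDiagonalSmooth}) equals $\chi$ of the weight-$0$ part. Then $\dim\cX=\chi$ of the whole complex gives $\dim\cX-\dim\cY=\sum_{w\neq0}\chi_{(w)}$.

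This is where I expect the main obstacle. The fiber only sees one nonzero weight, whereas $\dim\cX-\dim\cY$ sums over all nonzero weights. So I must recheck the description of $\sJ^r_0$. One option is that the relevant thickening has ideal involving all weights $1,\dots,r-1$, since $\cD^r_0$ has coordinate ring $k[t^{1/r}]/(t)$ with $t^{1/r},\dots,t^{(r-1)/r}$. In that case one must filter by powers of $t^{1/r}$ and perform $r-1$ successive square-zero extensions, each contributing the weight-$(-j)$ virtual rank. Summing over $j=1,\dots,r-1$ gives exactly $\sum_{w\neq0}\chi_{(w)}=\dim\cX-\dim\cY$. I will carry out this iterated extension, stratifying at each step, and multiply the resulting classes.
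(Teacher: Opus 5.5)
Your final plan is correct, and its skeleton matches the paper. The paper also filters $\cD^r_0=[\Spec(k[s]/(s^r))/\mu_r]$ (with $s=t^{1/r}$) by the substacks $\cC^r_n=[\Spec(k[s]/(s^n))/\mu_r]$ for $1\le n\le r$. It then uses deformation theory to show that the fiber of $\cY^r_{n+1}\to\cY^r_n$ over a field point $\gamma$ is $\bA^{d^0_{-n}(\gamma)}\times B\bG_a^{d^1_{-n}(\gamma)}$.

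Two corrections to your write-up. First, your opening description of $\cD^r_0$ as a square-zero thickening of $B\mu_r$ with ideal $\cO(-1)$ is false for $r\ge 3$, since $s^2\neq 0$; the iterated extension is forced, not one option among several. Second, at intermediate steps the preimage of a stratum is in general a torsor under a vector bundle stack over the previous stage, not the vector bundle stack itself. This does not change the class computation.

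Where you genuinely diverge is in identifying the exponent.

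\textbf{The paper's route.} It applies \cite[Proposition 2.4]{SatrianoUsatine6} one step at a time. This requires each $d^0_{-n}-d^1_{-n}$ to be constant on $\cY$, which the paper extracts from the proof of \cite[Proposition 4.19]{SatrianoUsatine6}. The result is $\e(\sJ_0(\cX)\times_{I_\mu(\cX)}\cY)=\bL^d\e(\cY)$ for an undetermined $d$. The paper then pins down $d$ by taking norms (\autoref{propositionDimensionAndGrothendieckNorm}), using that $\sJ_0(\cX)$ is equidimensional of dimension $\dim\cX$ by \cite[Proposition 5.4]{HuangSatrianoUsatine}.

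\textbf{Your route.} You stratify $\cY$ once by all the ranks $d^i_a$ and compute the total exponent $\sum_{n=1}^{r-1}(d^0_{-n}-d^1_{-n})$ directly. The tangent space and infinitesimal automorphisms of the smooth stack $I_{\mu_r}(\cX)$ at $\gamma$ are $\Ext^0$ and $\Ext^{-1}$ of $LF_\gamma^*L_\cX$ against $\cO_{B\mu_r}$, i.e.\ the weight-$0$ parts. Hence $\dim\cY=d^0_r-d^1_r$. Since $\dim\cX=\sum_{a=1}^{r}(d^0_a-d^1_a)$, and $-n$ runs over all nonzero residues mod $r$ as $n$ runs from $1$ to $r-1$, the two sides agree.

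\textbf{Comparison.} Your route gives constancy of the total exponent for free, so you never need per-step constancy. It also effectively reproves the pointwise content of \cite[Proposition 5.4]{HuangSatrianoUsatine} and explains why the exponent is $\dim\cX-\dim\cY$. The cost is that you must justify the passage from fibers to classes yourself, by globalizing the relative deformation theory over strata. The paper black-boxes that step with a fiberwise criterion, which you could equally apply on each stratum of $\cY$.
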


Before we prove \autoref{propositionTwistedJetsToInertiaComponents}, we will show how \autoref{maintheoremGorensteinMeasureCrepantResolution} follows from \autoref{corollaryMotivicIntegralCrepantResolution} and \autoref{propositionTwistedJetsToInertiaComponents}.

\begin{proof}[Proof of \autoref{maintheoremGorensteinMeasureCrepantResolution}]
Since everything we need to prove can be checked Zariski locally on $Y$, we may assume that $\cX$ has affine diagonal. Then by \autoref{corollaryMotivicIntegralCrepantResolution}, we have $\bL^{\ord^\Gor_Y}$ is integrable on $\sL(Y)$ and
\[
	\mu^\Gor_Y(\sL(Y)) = \int_{\sJ(\cX)} \bL^{-\wt_\cX} \diff\nu_\cX = \bL^{-\dim\cX} \sum_{\cY} \bL^{-\overline{\wt}_\cX(\cY)} \e(\sJ_0(\cX) \times_{I_\mu(\cX)} \cY),
\]
where the second equality is by the definition of $\nu_\cX$, the definition of $\wt_\cX$, and \autoref{propositionWeightFunctionLocallyConstant}, and the sum is over all connected components $\cY$ of $I_\mu(\cX)$. Everything but the last sentence of the desired result then follows from \autoref{propositionTwistedJetsToInertiaComponents}, the definition of $\shft_\cX(\cY)$ in terms of $\overline{\wt}_\cX(\cY)$, and the fact that $\dim \cX = \dim Y$.

Now assume that $Y$ is $1$-Gorenstein. By the definitions, it is sufficient to show that $\wt_\cX$ is integer valued. Let $U$ be a nonempty open subscheme of $Y$ over which $\cX \to Y$ is an isomorphism, and let $\cZ$ be a closed substack of $\cX$ supported on the complement of the preimage of $U$. By \autoref{propositionWeightFunctionLocallyConstant}, it is sufficient to show that $\wt_\cX$ takes integer values outside a set of measure 0, so by \autoref{theoremTwistedThinSubsets}, it is sufficient to show that $\wt_\cX$ is integer valued on $|\sJ(\cX)| \setminus \theta_n^{-1}(|\sJ_n(\cZ)|)$ for all $n$. Then by \autoref{theoremChangeOfVariablesForTwistedArcs}\ref{theoremChangeOfVariablesIntegerPart}, it is sufficient to show that $\het_{\cX/Y}$ is integer valued on $|\sJ(\cX)| \setminus \theta_n^{-1}(|\sJ_n(\cZ)|)$. Using that $Y$ is 1-Gorenstein, the latter holds by \cite[Proposition 12.2]{SatrianoUsatine6}, whose proof in loc. cit. never uses that $k$ is algebraically closed or characteristic 0.
\end{proof}

The next subsection is devoted to proving \autoref{propositionTwistedJetsToInertiaComponents}.

\subsection{Fibers from twisted jets to cyclotomic inertia}

We will fix some notation that will be used throughout this subsection. Let $\cX$ be a smooth equidimensional finite type Artin stack over $k$ with affine diagonal. Given $\gamma\colon\Spec k'\to I_{\mu_r}(\cX)$ for a field extension $k'$ of $k$, let $F_\gamma\colon B\mu_r\to\cX$ denote the associated representable map. Let
\[
L^iF_\gamma^*L_\cX=\bigoplus_{a=1}^r\cO(a)^{\oplus d^i_a(\gamma)}.
\]
For any $b\in\bZ$, we define
\[
d^i_b(\gamma):=d^i_c(\gamma)\quad\textrm{such\ that}\quad c\in(0,r]\quad\textrm{and}\quad b\equiv c\pmod{r}.
\]
If $r, n \in \Z_{>0}$, and $A$ is a $k$-algebra, we let
\[
	\cC^r_{n,A} = [\Spec( A[t^{1/r}]/(t^{n/r}) ) / \mu_r],
\]
where $\mu_r$ acts with weight $1$ on $t^{1/r}$. %Then $\cC^r_{1,A} = B\mu_r \otimes_k A$ and $\cC^r_{r, A} = \cD^r_{0,A}$. For each $r, n \in \Z_{>0}$, we will set
Let
\[
	\cZ^r_n = \uHom^{\rep}_k(\cC^r_{n,k}, \cX),
\]
so for example, $\cZ^r_1 = I_{\mu_r}(\cX)$ and $\cZ^r_r = \sJ^r_0(\cX)$. For any component $\cY$ of $I_{\mu_r}(\cX)$, let
\[
	\cY^r_n:=\cZ^r_n\times_{I_{\mu_r}(\cX)}\cY.
\]

We now generalize \cite[Proposition 9.4]{HuangSatrianoUsatine}.

\begin{proposition}\label{propSubTruncationFiberComputation-charp}
Let $r, n \in \Z_{>0}$, assume $n+1 \leq r$, let $\cY$ be a connected component of $I_{\mu_r}(\cX)$, let $k'$ be a field extension of $k$, and let $\Spec(k') \to \cY^r_n$ be a morphism, and let $\gamma\colon\Spec k'\to I_{\mu_r}(\cX)$ be the induced map. Then
\[
	\cY^r_{n+1} \times_{\cY^r_n} \Spec(k') \cong \bA^{d^0_{-n}(\gamma)}_{k'}\times B\bG_{a,k'}^{d^1_{-n}(\gamma)}.
\]
as stacks over $k'$. Furthermore, 
\[
d^0_{-n}(\gamma)-d^1_{-n}(\gamma)
\]
depends only on $\cY$.
\end{proposition}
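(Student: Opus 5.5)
This is a deformation-theoretic computation: the fiber $\cY^r_{n+1}\times_{\cY^r_n}\Spec(k')$ parametrizes extensions of a fixed representable map $\cC^r_{n,k'}\to\cX$ across the square-zero thickening $\cC^r_{n,k'}\hookrightarrow\cC^r_{n+1,k'}$.

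\emph{Step 1: the ideal of the thickening.} Its ideal sheaf $\cI$ is generated by $t^{n/r}$, so $\cI\cong\cO(n)$ as a $\mu_r$-equivariant sheaf, i.e. $\cI\cong\cO(n)$ pulled back from $B\mu_r$. It is annihilated by $t^{1/r}$, so it is a module over the closed point $B\mu_{r,k'}$.

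\emph{Step 2: apply obstruction theory.} Since $\cX$ is smooth, $L_\cX$ is perfect of amplitude $[0,1]$, and standard deformation theory for maps of algebraic stacks (e.g. Olsson's) describes the extensions as follows:
\begin{itemize}
\item obstructions to extending lie in $\Ext^1(Lf^*L_\cX,\cI)$;
\item when unobstructed, the extensions form a torsor-gerbe controlled by $\Ext^0(Lf^*L_\cX,\cI)$ (translations) and $\Ext^{-1}(Lf^*L_\cX,\cI)$ (automorphisms).
\end{itemize}
Because $\cI$ is supported on $B\mu_r$, we have $\Ext^i(Lf^*L_\cX,\cI)=\Ext^i_{B\mu_r}(LF_\gamma^*L_\cX,\cO(n))$. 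On $B\mu_r$, which is cohomologically affine since $\mu_r$ is linearly reductive, this equals the $\mu_r$-invariant part of $\Hom(L^{-i}F_\gamma^*L_\cX,\cO(n))$ up to the shift conventions. Writing $L^iF_\gamma^*L_\cX=\bigoplus_a\cO(a)^{d^i_a}$, only the weight $a\equiv n$ summands contribute. With the paper's sign convention for $\cO(a)$ (the statement uses $d_{-n}$), the result is:
\begin{itemize}
\item the translations form a vector space of dimension $d^0_{-n}(\gamma)$;
\item the automorphism group is $\bG_a^{d^1_{-n}(\gamma)}$, coming from the $H^1$ part of $L_\cX$, i.e. the Lie algebra of the stabilizer;
\item the obstruction group vanishes, because $L_\cX$ has no cohomology in degree $2$ (amplitude $[-1,0]$ in homological indexing).
\end{itemize}
Thus the fiber is a gerbe-torsor of the form $[\bA^{d^0}/\bG_a^{d^1}]$ with trivial action. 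Concretely, the map from $\Ext^{-1}$-type data to translations is zero, because of the $\mu_r$-equivariant splitting on $B\mu_r$ (everything is semisimple).

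\emph{Step 3: trivialize over $k'$.} Nonemptiness follows from the vanishing of the obstruction, which gives a $k'$-point, so the torsor is trivial and the fiber is $\bA^{d^0_{-n}}_{k'}\times B\bG_{a,k'}^{d^1_{-n}}$. The hypothesis $n+1\le r$ ensures that $\cC^r_{n+1}$ is still inside $\cD^r_0$ and that the extended maps remain representable automatically, since they restrict to the representable $F_\gamma$ on the reduced closed substack.

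I expect this step to be the main obstacle. Deformation theory naturally gives a $2$-torsor under a Picard stack associated to the complex $\tau_{\le 0}R\Hom(LF^*_\gamma L_\cX,\cO(n))$, and one must show it splits as a product. I would argue via semisimplicity of $\mu_r$-representations. Alternatively, I would present $\cX$ locally near the image as $[U/\GL_N]$, compute with a smooth chart, and observe that the differential of the stabilizer Lie algebra action into the weight-$n$ piece vanishes in the relevant degree.

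\emph{Step 4: independence of the difference.} The difference $d^0_{-n}-d^1_{-n}$ is the $\mu_r$-weight-$(-n)$ Euler characteristic of $LF_\gamma^*L_\cX$. For the family over $\cY$ (after a smooth cover), $LF^*L_\cX$ pulled to $\cY\times B\mu_r$ is a perfect complex, and its weight-$w$ summand is perfect on $\cY$, since taking invariants is exact. The rank of a perfect complex is locally constant, and $\cY$ is connected, so the difference depends only on $\cY$. This is the same mechanism as \autoref{propositionWeightFunctionLocallyConstant}.
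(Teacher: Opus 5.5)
Your proposal is correct and takes essentially the paper's route. The paper delegates the reduction from the fiber to the dimensions of $\Ext^i(LF_\gamma^*L_\cX,\cI_n)$ to the proof of \cite[Proposition 4.11]{SatrianoUsatine6}; this includes the product splitting you flag, which is only linear algebra over $k'$ once the torsor has a $k'$-point. It then computes these groups on $B\mu_r$ by cohomological affineness and the weight decomposition, as you do. Finally, it obtains the independence of $d^0_{-n}(\gamma)-d^1_{-n}(\gamma)$ from the proof of \cite[Proposition 4.19]{SatrianoUsatine6}, which shows each weight-$w$ difference is locally constant; your perfect-complex argument on $\cY\times B\mu_r$ is a self-contained substitute for that citation.

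Two small fixes. First, in the convention defining the $d^i_a$, the ideal is $\cI_n\simeq\cO(-n)$, which is what yields $d_{-n}$ rather than $d_n$. Second, the obstruction group vanishes because $\cH^{-1}(LF_\gamma^*L_\cX)=0$ for smooth $\cX$, not because of anything in degree $2$.
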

\begin{proof}
As in the proof of \cite[Proposition 4.11]{SatrianoUsatine6}, it suffices to compute 
\[
	\dim\Ext^i(LF_\gamma^*L_\cX,\cI_n),
\]
where $\cI_n$ is the pullback of the ideal sheaf cutting out $\cY^r_n$ in $\cY^r_{n+1}$. As in \cite[Proposition 9.4]{HuangSatrianoUsatine}, $\cI_n\simeq\cO(-n)$. By \cite[Tag 07AA]{stacks-project}, we have a spectral sequence
\[
E_2^{ij}=\Ext^i(L^{-j}F_\gamma^*L_\cX,\cO(-n))\Rightarrow \Ext^{i+j}(LF_\gamma^*L_\cX,\cO(-n)).
\]
All terms vanish except when $j\in[-1,0]$. Since each $L^{-j}F_\gamma^*L_\cX$ is locally free,
\[
\Ext^i(L^{-j}F_\gamma^*L_\cX,\cO(-n))=H^i(L^{-j}F_\gamma^*L_\cX(-n)).
\]
Since $B\mu_r$ is cohomologically affine, this term vanishes unless $i=0$. Thus,
\[
\dim\Ext^j(LF_\gamma^*L_\cX,\cO(-n))=\sum_a \dim H^0(\cO(-n-a))^{\oplus d^{-j}_a(\gamma)}.
\]
This quantity vanishes unless $-n-a\equiv 0\pmod{r}$, so we see $a\in(0,r]$ is congruent to $-n$, which proves
\[
	\cY^r_{n+1} \times_{\cY^r_n} \Spec(k') \cong \bA^{d^0_{-n}(\gamma)}_{k'}\times B\bG_{a,k'}^{d^1_{-n}(\gamma)}.
\]
Lastly, the fact that
\[
d^0_{-n}(\gamma)-d^1_{-n}(\gamma)
\]
depends only on $\cY$ follows from the proof of \cite[Proposition 4.19]{SatrianoUsatine6}. Indeed the statement of (loc.~cit.) says $\overline{\wt}_\cX$ is locally constant on $|I_{\mu_r}(\cX)|$ however the proof shows that for each $w$, the difference $d^0_{-w}(\gamma)-d^1_{-w}(\gamma)$ is locally constant; see the 7th, 9th, and 10th displayed equations of the proof.
\end{proof}

We may now prove \autoref{propositionTwistedJetsToInertiaComponents}.

\begin{proof}[Proof of \autoref{propositionTwistedJetsToInertiaComponents}]
By \autoref{propSubTruncationFiberComputation-charp} and \cite[Proposition 2.4]{SatrianoUsatine6}, whose proof in loc. cit. never uses that $k$ is algebraically closed or has characteristic 0, there exists some $d \in \Z$ such that
\[
	\e(\sJ_0(\cX) \times_{I_\mu(\cX)} \cY) = \bL^{d} \e(\cY).
\]
Then by \autoref{propositionDimensionAndGrothendieckNorm}, $d = \dim(\sJ_0(\cX) \times_{I_\mu(\cX)} \cY) - \dim\cY$. By \cite[Proposition 5.4]{HuangSatrianoUsatine}, whose proof in loc. cit. never uses that $k$ is algebraically closed or has characteristic 0, we have $\sJ_0(\cX)$ is equidimensional and has dimension $\dim\cX$. The result then follows from the fact that $\sJ_0(\cX) \times_{I_\mu(\cX)} \cY$ is a nonempty open substack of $\sJ_0(\cX)$.
\end{proof}

\section{Quotients by finite linearly reductive subgroups of special linear groups}

We would like to apply \autoref{maintheoremGorensteinMeasureCrepantResolution} to the map $[V/G] \to V/G$, where $G$ is a finite linearly reductive subgroup scheme of $\SL(V)$ for a finite dimensional $k$-vector space $V$. The purpose of this section is to prove the following, which shows that $[V/G] \to V/G$ satisfies the hypotheses of \autoref{maintheoremGorensteinMeasureCrepantResolution}.

\begin{theorem}\label{theoremQuotientIsQGorenstein}
Let $V$ be a finite dimensional vector space over $k$, and let $G$ be a finite linearly reductive subgroup scheme of $\SL(V)$. Then $V/G$ is Gorenstein, the map $[V/G] \to V/G$ is birational, and $K_{[V/G]/(V/G)} = 0$.
\end{theorem}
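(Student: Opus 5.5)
We would reduce first to the case $k$ algebraically closed: forming $V/G$ commutes with flat base change (linear reductivity makes invariants commute with base change), $[V/G]\otimes_k\overline{k} = [V_{\overline{k}}/G_{\overline{k}}]$, and Gorensteinness, birationality and the vanishing of a relative canonical divisor can all be checked after the faithfully flat extension $\overline{k}/k$. Over $\overline{k}$, a finite linearly reductive group scheme is well known to be an extension of a tame constant group $H$ by a diagonalizable group $\Delta$ (a product of $\mu_{p^a}$'s); the embedding $G\subset\SL(V)$ restricts to both.

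\textbf{Step 1: birationality.} We need a dense open $W\subset V$ on which $G$ acts freely, so that $[W/G]\to W/G$ is an isomorphism onto an open of $V/G$. For the diagonalizable part $\Delta$, faithfulness of $V$ means the characters occurring generate $X(\Delta)$. Hence on the torus-like open where all weight coordinates are nonzero, the stabilizer is trivial scheme-theoretically. For the constant part we use the usual argument that fixed loci of nontrivial elements are proper linear subspaces. Combined, the locus where the stabilizer group scheme is trivial is open (by upper semicontinuity of fibre degree of the finite inertia) and nonempty, so $[V/G]\to V/G$ is an isomorphism over its image.

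\textbf{Step 2: Gorenstein and $K=0$.} Here we follow the strategy of Watanabe's theorem. The stack $[V/G]$ is smooth, and its canonical bundle is $\omega_{[V/G]} = \cO_V\otimes \det(V)^\vee$ as a $G$-equivariant sheaf. Since $G\subset\SL(V)$, $\det V$ is trivial, so $\omega_{[V/G]}\cong\cO$, with the trivializing form $dx_1\wedge\cdots\wedge dx_n$ being $G$-invariant. Let $\pi\colon[V/G]\to V/G$ be the good (coarse) moduli map. Because $G$ is linearly reductive, $\pi_*$ is exact and $\pi_*\cO=\cO_{V/G}$. Using that $V/G$ is normal and $\pi$ is an isomorphism in codimension one on $V/G$, we get $\omega_{V/G}\cong(\pi_*\omega_{[V/G]})$. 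The latter is reflexive, and it agrees with $\omega$ away from codimension $2$; this gives $\omega_{V/G}\cong\cO_{V/G}$. Cohen–Macaulayness of $V/G$ follows from Hochster–Roberts, or more directly because $k[V]^G$ is a direct summand of the Cohen–Macaulay ring $k[V]$ via the Reynolds operator. Hence $V/G$ is Gorenstein, and the pullback of the generator of $\omega_{V/G}$ is the invariant volume form, so $K_{[V/G]/(V/G)}=0$ in the sense of \cite[Definition 1.6]{SatrianoUsatine3}.

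\textbf{Main obstacle.} The subtle point is the codimension claim: that $\pi$ is an isomorphism outside codimension $\ge2$ in $V/G$, i.e.\ that there are no "pseudo-reflections" giving divisorial ramification. For diagonalizable $\mu_{p}$ inside $\SL(V)$ this needs care, since non-free loci can be divisors (a weight coordinate hyperplane) with stabilizer $\mu_p$. We would handle this with the first author's Chevalley–Shephard–Todd theorem \cite{SatrianoCST}. If a codimension-one stabilizer occurs, it is generated by a pseudo-reflection, which has determinant $\ne1$ on the normal direction, contradicting $G\subset\SL(V)$. Thus the non-free locus has codimension $\ge 2$ in $V$, hence in $V/G$ since $\pi$ is finite.
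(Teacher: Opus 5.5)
\textbf{The gap is in Step 2.} Your Step 1 and your codimension-one analysis are fine. Splitting off a complementary line by linear reductivity, so that the stabilizer acts on it through an injective character equal to its determinant, is a legitimate alternative to the computation in Proposition~\ref{prop:generic-rep}, and it does not really need Chevalley--Shephard--Todd.

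The problem is the identification $\omega_{[V/G]}\cong\cO_V\otimes\det(V)^\vee$. That is the formula for \emph{\'etale} $G$. The canonical bundle that governs both the Gorenstein property and $K_{[V/G]/(V/G)}$ is $\det L_{[V/G]}$. Let $g\colon[V/G]\to BG$ be the structure map. The triangle $Lg^*L_{BG}\to L_{[V/G]}\to L_{[V/G]/BG}$ gives
\[
\det L_{[V/G]}\cong g^*\det L_{BG}\otimes\det(V)^\vee .
\]
When $G$ is non-reduced, $L_{BG}\neq 0$. Its fibre at the point is the shifted co-Lie complex of $G$, with both $H^0$ and $H^1$ nonzero; for $\mu_p$ each is $k$. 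The \'etale part $H$ can act nontrivially on these pieces: for $\mu_3\rtimes\Z/2$ in characteristic $3$, inversion acts by $-1$ on both. So $\det L_{BG}$ is a character of $G$ that must be shown to be trivial, and the hypothesis $G\subset\SL(V)$ only kills $\det(V)$.

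This is exactly the missing idea, and it is the paper's Proposition~\ref{prop:LBG}. The paper takes an $H$-equivariant embedding $\Delta\hookrightarrow T=\Spec k[M]$ and writes $BG=[T'/(T\rtimes H)]$ with $T'=\Spec k[M']=T/\Delta$. It then shows $M\otimes k\cong M'\otimes k$ as $H$-representations by comparing traces through $M'\otimes\Q\cong M\otimes\Q$. Only then does one get $\det L_{[V/G]}\cong\cO$ (Proposition~\ref{prop:LVmodG}). Restricting to the free locus $\cU\cong U$, where $\det L_{\cU}=\omega_U$, and using that the complement has codimension $\geq2$ then gives $\omega_{V/G}\cong\cO_{V/G}$.

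\textbf{The Watanabe-style step fails outright.} Your claim that the pullback of a generator of $\omega_{V/G}$ is the invariant volume form is false for infinitesimal $G$. Take $\mu_p$ acting on $\bA^2$ with weights $(1,-1)$, so $V/G=\Spec k[x^p,y^p,xy]$. The form $d(x^p)\wedge d(xy)$ generates $\omega_{V/G}$ on the locus $x^p\neq0$, yet it pulls back to $0$ because $d(x^p)=0$. Over the free locus the torsor $W\to U$ is inseparable, so top forms cannot be compared by pullback. The correct comparison is $\rho^*\omega_U\cong\omega_W\otimes\det(\ell_G)^{-1}$ as $G$-equivariant bundles, where $\ell_G$ is the co-Lie complex. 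This again requires the character $\det\ell_G$ to be trivial.

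\textbf{What survives.} The vanishing $K_{[V/G]/(V/G)}=0$ still holds. As in the paper, it follows from $[V/G]\to V/G$ being an isomorphism over an open set whose complement has codimension $\geq2$ in $[V/G]$. What your argument does not establish is the index-one Gorenstein statement $\omega_{V/G}\cong\cO_{V/G}$, which the theorem asserts and which is later needed for integrality of ages.
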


\subsection{Freeness in codimension $1$}

We begin by showing that finite diagonalizable subgroup schemes of $\SL(V)$ must act freely in codimension $1$ on $V$, after which we turn to the general case.

\begin{lemma}\label{l:generic-rep-diagonalizable}
Let $G\subset\SL(V)$ be a faithful representation of a finite diagonalizable group scheme over an algebraically closed field $k$. Then $G$ is a generic representation, i.e., it acts freely in codimension $1$.
\end{lemma}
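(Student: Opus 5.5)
Since $G$ is finite diagonalizable over an algebraically closed field, $G\cong D(M)$ for a finite abelian group $M$, its character group. A faithful representation of $G$ then splits as a direct sum of characters: choose a basis $x_1,\dots,x_n$ of $V$ on which $G$ acts by characters $\chi_1,\dots,\chi_n\in M$. Faithfulness means the $\chi_i$ generate $M$. The condition $G\subset\SL(V)$ means $\det=\chi_1+\cdots+\chi_n=0$ in $M$. I will show that the non-free locus has no irreducible component of codimension $1$.

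First I describe stabilizers. Take a point $v\in V(k)$ and let $S(v)=\{i : v_i\neq 0\}$. The stabilizer of $v$ is the subgroup scheme $D(M/\langle \chi_i : i\in S(v)\rangle)$. This holds scheme-theoretically, not just on points, because the stabilizer of $v$ in $D(M)$ is cut out by $\chi_i=1$ for $i\in S(v)$. The same description works fppf-locally for $T$-points, so the non-free locus is the union, over subsets $S$ with $\langle\chi_i : i\in S\rangle\neq M$, of the coordinate subspaces $\{x_j=0 : j\notin S\}$. Freeness at the scheme level is really about stabilizer group schemes being trivial, so I would check this functorially, but the diagonal form makes it routine.

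A codimension-$1$ component would therefore come from some $S=\{1,\dots,n\}\setminus\{j\}$ with $\langle\chi_i : i\neq j\rangle\neq M$. But $\chi_j=-\sum_{i\neq j}\chi_i$ by the $\SL$ condition, so $\chi_j$ already lies in $\langle\chi_i : i\neq j\rangle$. Hence this subgroup contains all the generators and equals $M$, a contradiction. So every component of the non-free locus has codimension at least $2$, and $G$ acts freely in codimension $1$.

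The only real care needed is making precise that a scheme-theoretically trivial stabilizer at each point of a dense open gives a free action there. Since the stabilizer group scheme over the open set $\{x_i\neq0 \text{ for } i\in S\}$ is constant, equal to $D(M/\langle\chi_i : i\in S\rangle)$, this follows directly. I do not expect a serious obstacle.
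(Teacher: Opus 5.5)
Your proof is correct and is essentially the paper's argument: diagonalize, observe that a codimension-$1$ non-free locus must be a coordinate hyperplane $x_j=0$ for which the characters $\chi_i$, $i\neq j$, generate a proper subgroup, and use $\sum_i\chi_i=0$ to put $\chi_j$ back into that subgroup, contradicting faithfulness. The one difference is that the paper first passes to a prime-order subgroup $\mu_q$ of the stabilizer so that the weights are scalars in $\Z/q$, whereas you work with the full character group $M$ at once. One small correction: over $\{x_i\neq 0 : i\in S\}$ the stabilizer is not constant (other coordinates may vanish there); it is only contained in $D(M/\langle\chi_i : i\in S\rangle)$. That containment is all you need, since this group is trivial for $S=\{1,\dots,n\}\setminus\{j\}$.
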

\begin{proof}
Suppose there is a codimension $1$ point $v$ with non-trivial stabilizer $H$. Since all subgroups of $G$ are diagonalizable, we see $H$ is diagonalizable. Furthermore, every non-trivial diagonalizable group contains a copy of $\mu_q$ for some prime $q$. Thus, we have reduced to the case where $G=\mu_q$.

We may choose coordinates $V\simeq\bA^n$ such that $\mu_q$ acts diagonally with weights $(w_1,\dots,w_n)$. Then a codimension $1$ locus has non-trivial stabilizer if and only if it is a coordinate hyperplane $x_j=0$ with the property that all $w_i=0$ for $i\neq j$. But $\mu_q\subset\SL(V)$, so $\sum_{m=1}^n w_i=0$ which implies $w_j=0$. This contradicts faithfulness of the representation.
\end{proof}

\begin{proposition}\label{prop:generic-rep}
Let $G\subset\SL(V)$ be a faithful representation of a finite linearly reductive group scheme over an algebraically closed field $k$. Then $G$ is a generic representation, i.e., it acts freely in codimension $1$.
\end{proposition}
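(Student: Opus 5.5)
We reduce to \autoref{l:generic-rep-diagonalizable} by the structure theory of finite linearly reductive group schemes over an algebraically closed field. By Nagata's theorem (in the form used in \cite{AOV}), $G$ sits in an extension
\[
	1 \to \Delta \to G \to H \to 1
\]
with $\Delta = G^\circ$ a finite diagonalizable (connected) group scheme and $H$ a finite constant group of order prime to $p = \cha(k)$. If $\cha(k)=0$, then $G$ is a finite constant group and $\Delta$ is trivial. The plan is to suppose some codimension $1$ point $v \in V$ has nontrivial stabilizer $G_v$ and produce a nontrivial subgroup scheme of $G$ isomorphic to $\mu_q$ for a prime $q$ that fixes a hyperplane pointwise. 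As in the proof of \autoref{l:generic-rep-diagonalizable}, this is impossible for a subgroup of $\SL(V)$ acting faithfully.

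\textbf{Step 1: find a $\mu_q$ in the stabilizer.} Let $v$ be the generic point of a divisor $D \subset V$ with $G_v \neq 1$. (Here $G_v$ is the scheme-theoretic stabilizer, a subgroup scheme of $G_{k(v)}$, or equivalently a subgroup scheme of $G$ fixing $D$ pointwise after spreading out.) Subgroup schemes of linearly reductive group schemes are linearly reductive, so $G_v$ is linearly reductive. If $G_v^\circ \neq 1$, then $G_v^\circ$ is diagonalizable and nontrivial, hence contains some $\mu_p$. If instead $G_v$ is étale and nontrivial, it contains a cyclic subgroup of prime order $q$ with $q \neq p$, and this is $\cong \mu_q$ since $k$ is algebraically closed. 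In both cases we get $\mu_q \subset G_v$. We must check that this subgroup can be taken to be defined over $k$ rather than only over $k(v)$. Since finite diagonalizable subgroups and constant subgroups of $G$ are rigid, we can make this precise as follows. The locus fixed by a subgroup scheme $K \subset G$ is a linear subspace $V^K$. There are only finitely many subgroup schemes $\mu_q \hookrightarrow G$ up to the relevant finiteness (\cite[Proposition 1.1]{Sala}), so $D$ is contained in $V^{K}$ for one such $K$ defined over $k$.

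\textbf{Step 2: conclude.} With $K \cong \mu_q \subset G \subset \SL(V)$ acting faithfully and fixing the hyperplane $D$ pointwise, the weight computation of \autoref{l:generic-rep-diagonalizable} applies verbatim. Diagonalize the $\mu_q$-action with weights $(w_1,\dots,w_n)$. Fixing the coordinate hyperplane $x_j = 0$ pointwise forces $w_i = 0$ for $i \neq j$. The determinant condition then gives $w_j = 0$, so $K$ acts trivially, contradicting faithfulness.

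The main obstacle is Step 1, namely making the stabilizer argument rigorous scheme-theoretically. We need the subgroup fixing $D$ pointwise to be a closed subgroup scheme of $G$ over $k$ that is nontrivial, and we need the subgroup of a linearly reductive group to be linearly reductive, so that it contains some $\mu_q$. I would first try defining $K$ directly as the pointwise stabilizer of the linear span of $D$, namely the kernel of $G \to \GL(V)$ restricted to that span. This is a closed subgroup scheme of $G$ over $k$, and it is linearly reductive as a subgroup of a linearly reductive finite group scheme. Its connected–étale decomposition then yields $\mu_q$ as above. With this definition the descent issue disappears.
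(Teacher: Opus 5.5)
Your argument is correct in outline but follows a different route from the paper's. The paper fixes a linear hyperplane $W$ and an arbitrary $g\in G(S)$ fixing $W(S)$ pointwise, and argues one element at a time. Since $\det g=1$, $g$ is a transvection $v_n\mapsto v_n+\sum_{i<n}r_iv_i$, so $mr_i=0$ where $m$ is the order of $g$. Hence $g=1$ in characteristic $0$, and otherwise $p\mid m$. Then $g^{|H|}$ is a nontrivial element of $\Delta(R)$ fixing $W$, and \autoref{l:generic-rep-diagonalizable} finishes.

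You instead observe that any nontrivial stabilizer contains some $\mu_q$. It comes from the connected part if $q=p$, and from the \'etale part, which is constant of order prime to $p$, if $q\neq p$. The weight computation then disposes of every case uniformly, because $\mu_q$ acts diagonalizably. This avoids the transvection computation and the $|H|$-power trick. The price is using the connected-\'etale structure of the stabilizer, which the paper sidesteps since it only needs $G=\Delta\rtimes H$ and that subgroups of diagonalizable groups are diagonalizable. You also make explicit the passage from a divisor of non-free points to a hyperplane fixed pointwise, which the paper's proof of this proposition takes for granted by starting from a hyperplane.

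The step to tighten is the descent in Step 1. Finiteness of $\uHom^{\grp,\inj}_k(\mu_q,G)$ by itself does not give it. What does is that this is a finite scheme over the algebraically closed field $k$. All its residue fields are then $k$, so every $k(v)$-point factors through a $k$-point. Hence $\mu_q\subset G_v$ equals $K\otimes_k k(v)$ for some $K\cong\mu_q\subset G$ defined over $k$. The generic point $v$ lies in the closed linear subspace $V^K$, so $D\subset V^K\subsetneq V$, and $V^K=D$ is a linear hyperplane. Alternatively, work at closed points of $D$, using irreducibility of $D$ and that only finitely many such $K$ exist.

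Your proposed shortcut, taking $K$ to be the pointwise stabilizer of the span of $D$, does not remove this issue by itself. Nothing shows that this $K$ is nontrivial without comparing it to $G_v$, and if $D$ is not a linear hyperplane its span is $V$, so $K$ is trivial. The shortcut does give a clean, complete proof of the formulation the paper actually proves: for every linear hyperplane $W$, the pointwise stabilizer $K_W\subset G$ is trivial. There $K_W$ is defined over $k$ from the start.
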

\begin{proof}
Let $W\subset V$ be hyperplane and assume there exists a $k$-scheme $S$ and $g\in G(S)$ with $gw=w$ for all $w\in W(S)$. We show $g=1$.

Choosing an affine cover of $S$, it suffices to assume $S=\Spec R$. Choose a basis $v_1,\dots,v_n$ of $V$ over $k$ with the property that $v_1,\dots,v_{n-1}$ is a basis for $W$. Then $v_1,\dots,v_n$ is an $R$-basis for $V(R)$ and  $v_1,\dots,v_{n-1}$ is an $R$-basis for $W(R)$. We see $gv_i=v_i$ for $i<n$ and since $g\in\SL_n(R)$, we must have $gv_n=v_n+\sum_{i<n}r_iv_i$. Letting $m$ be the order of $g$, we find $g^mv_n=v_n+\sum_{i<n}mr_iv_i$, hence $mr_i=0$. If $\cha(k)=0$, then we see $r_i=0$ so $g=1$. If $\cha(k)=p$ and $g\neq1$, then some $r_i\neq0$ and so $p$ divides $m$.

By the proof of \cite[Lemma 2.17]{AOV}, we know $G=\Delta\rtimes H$ with $\Delta$ diagonalizable and the connected component of the identity, and $H$ a finite \'etale tame constant group scheme. Then the image of $g^{|H|}$ in $H$ is trivial, so $g^{|H|}\in\Delta(R)$. Furthermore, $g^{|H|}\neq1$ since $p$ divides the order of $g$ and $|H|$ is prime to $p$. Therefore, replacing $g$ by $g^{|H|}$, we may assume $g\in\Delta(R)$, $g\neq1$, and $gw=w$ for all $w\in W(R)$. The result, therefore follows from \autoref{l:generic-rep-diagonalizable}.
\end{proof}

\subsection{The cotangent complex of $[V/G]$ and the weight function of $BG$}

To prove $\omega_{V/G}$ is torsion, we make use of the determinant of the cotangent complex $L_{[V/G]}$, which we calculate in \autoref{prop:LVmodG}.

\begin{proposition}\label{prop:LBG}
If $G$ is a finite linearly reductive group scheme over an algebraically closed field $k$, then 
\[
\det L_{BG/k}\simeq\cO_{BG}\quad\textrm{and}\quad \overline{\wt}_{BG}=0.
\]
\end{proposition}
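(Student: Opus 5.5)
The plan is to compute $L_{BG/k}$ explicitly through the cover $\Spec k \to BG$. For a finite group scheme $G$ over $k$, the cotangent complex $L_{BG/k}$ is perfect of amplitude $[0,1]$. It is described as follows: pulling back along $e\colon \Spec k\to BG$ gives the triangle
\[
e^*L_{BG/k}\to L_{k/k}=0\to L_{\Spec k/BG},
\]
and $L_{\Spec k/BG}$ is the pullback of $L_{G/k}$ along the identity. Since $G$ is finite and lci over $k$ (every finite group scheme over a field is lci), $e^*L_G$ is a two-term complex $[\mathfrak{n}/\mathfrak{n}^2 \to \omega_G]$, where $\omega_G=\g^\vee$. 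Equivariantly, $L_{BG/k}$ is then the complex of $G$-representations $\g^\vee \to (\mathfrak{n}/\mathfrak{n}^2)'$ in degrees $0,1$ (up to the usual shift conventions), where both terms carry the adjoint/coadjoint action. The cleanest route is to use the structure theorem already invoked in the proof of \autoref{prop:generic-rep}: by \cite[Lemma 2.17]{AOV}, $G=\Delta\rtimes H$ with $\Delta$ diagonalizable and connected and $H$ constant tame. Then $L_{BG}$ is controlled by $\Delta$, because $H$ is étale. Concretely, $\Delta\cong\prod\mu_{p^{e_i}}$, and each factor $\mu_{p^e}=\Spec k[x]/(x^{p^e}-1)$ has $H^0(e^*L_{B\mu_{p^e}})$ and $H^1(e^*L_{B\mu_{p^e}})$ both one-dimensional. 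In the equivariant picture, $L_{BG}$ has $H^0=\g^\vee$ and $H^1\cong \g^\vee$ as well, via the canonical identification $\mathfrak{n}/\mathfrak{n}^2\cong \omega_G$ for $\mu_{p^e}$: the relation $x^{p^e}-1$ gives a conormal generator matched with $dx/x$.

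Step one: I will show that the differential $\g^\vee\to(\text{conormal})$ vanishes and that the two cohomology sheaves are isomorphic $G$-representations, both equal to $\g^\vee$ with the coadjoint action. For $\mu_{p^e}$ the differential is the map $d(x^{p^e}-1)=p^e x^{p^e-1}dx=0$, exactly as in \autoref{l:twisted-twisted-discs-def-sp-redone}. Since $\Delta$ is commutative and normal, conjugation by $H$ acts on $\g=\mathrm{Lie}(\Delta)$, and it acts on the conormal space compatibly. This compatibility is the identification $\mathfrak n/\mathfrak n^2\cong\g^\vee$, which I expect to be canonical since both arise from $X^*(\Delta)\otimes k$. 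Granting this, $\det L_{BG}=\det(\g^\vee)\otimes\det(\g^\vee)^{-1}\simeq\cO_{BG}$.

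Step two, the weight function: for each $\varphi\colon B\mu_r\to BG$ (that is, up to conjugacy, a monomorphism $\mu_r\to G$), restrict the complex to $B\mu_r$. By step one, $H^0$ and $H^1$ of $L\varphi^*L_{BG}$ are the same $\mu_r$-representation, so their $\cO(-w)$-multiplicities agree for every $w$. Since $\dim BG=0$, the definition in \autoref{definitionCyclotomicInertiaAndWeightFunction} then gives $\overline{\wt}_{BG}(\varphi)=0$. Here I use that $L\varphi^*$ of a complex with zero differential and locally free terms computes the cohomology termwise, which holds because $B\mu_r$ is cohomologically affine.

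The main obstacle I anticipate is the equivariance claim in step one: checking that $H^1(L_{BG})\cong H^0(L_{BG})$ as $G$-representations, not merely as vector spaces, when $H$ acts nontrivially on $\Delta$. The approach I would try first is to note that for diagonalizable $\Delta=D(M)$, both $\g^\vee$ and the conormal space are canonically $M\otimes k$ modulo $p$-torsion data. The conormal space comes from the relations $x^{m}$ for $m\in M$ of $p$-power order, which are functorial in $M$, hence $H$-equivariant.
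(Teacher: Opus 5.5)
Your overall reduction is sound: $G$ is linearly reductive, so $L_{BG}$ is determined by its two cohomology representations. Both conclusions therefore follow once $\cH^0(e^*L_{BG})\cong\cH^1(e^*L_{BG})$ as $G$-representations. However, that isomorphism is the entire content of the proposition, and your proposed way of obtaining it fails.

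Compute the co-Lie complex of $\Delta=D(A)$ from a presentation $0\to N\to F\to A\to 0$ with $F$ free of finite rank, so that $\Delta=\ker(D(F)\to D(N))$. This gives $e^*L_{BG}\simeq[N\otimes k\to F\otimes k]$ in degrees $0,1$. Hence $\cH^1=A\otimes_{\Z}k=\g^\vee$, which sits in degree $1$, not $0$ as you wrote. Meanwhile $\cH^0=\mathrm{Tor}_1^{\Z}(A,k)=A[p]\otimes_{\bF_p}k$, which does not ``arise from $X^*(\Delta)\otimes k$''.

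There is also no functorial isomorphism $A[p]\cong A/pA$. Take $A=\Z/p^2\oplus\Z/p$ with generators $a,b$ and the automorphism $a\mapsto a+b$, $b\mapsto b$. It acts nontrivially on $A/pA$ but trivially on $A[p]$. So ``canonical, hence $H$-equivariant'' cannot work. Your factor-by-factor computation for $\prod\mu_{p^{e_i}}$ also says nothing about equivariance, since $H$ need not preserve any product decomposition. A telling symptom is that Step one never uses $p\nmid\#H$, yet the example shows equivariance genuinely fails without it.

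The missing input is tameness of $H$. One fix: the exact sequence of $\Z[H]$-modules $0\to A[p]\to A\xrightarrow{p}A\to A/pA\to 0$ gives $[A[p]]=[A/pA]$ in the Grothendieck group of finite $\Z[H]$-modules, whose composition factors are simple $\bF_p[H]$-modules. Since $\bF_p[H]$ is semisimple, $A[p]\cong A/pA$. Hence $\cH^0\cong\cH^1$ as $H$-representations, and $\Delta$ acts trivially on both because the action comes from conjugation and $\Delta$ is commutative. With this, your Step two goes through.

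The paper avoids identifying cohomology altogether. It chooses the presentation $H$-equivariantly: $M\to A$ with $M$ a $\Z[H]$-lattice and $M'$ its kernel. It then realizes $BG\simeq[T'/G']$, so that $e^*L_{BG}$ is the $H$-equivariant complex $[M'\otimes k\to M\otimes k]$. The two terms are isomorphic $H$-representations because $M'\subset M$ is an $H$-stable finite-index sublattice, so the characters agree and $H$ is tame. Since $\det L_{BG}$ and $\overline{\wt}_{BG}$ depend only on the class of this complex (restricted to each $\mu_r$ for the weight), and that class is zero, both conclusions follow.
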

\begin{proof}
By the proof of \cite[Lemma 2.17]{AOV}, we know $G=\Delta\rtimes H$ where $\Delta$ is the connected component of the identity which is diagonalizable, and $H$ is a finite \'etale tame constant group scheme. Write $\Delta=\Spec k[A]$ with $A$ a finite abelian group. We have a map $H\to\Aut(\Delta)=\Aut(A)$, so choosing a surjection $M\to A$ in the category of $k[H]$-modules from a finite rank free abelian group $M$, we obtain an $H$-equivariant embedding
\[
\Delta\hookrightarrow T:=\Spec k[M]
\]
into a torus $T$. Via the $H$-action on $T$, we may form the group scheme $G'=T\rtimes H$. Then there is a natural $G'$-action on $T'=T/\Delta=\Spec k[M']$ where $T$ acts by left multiplication and $H$ acts through its action on $T$. It is straightforward to see that this $G'$-action is transitive and that the stabilizer of the identity is $G$. Thus,
\[
BG\simeq[T'/G'].
\]
From this presentation, and letting $\rho\colon T'\to BG$ be the smooth cover, we have an exact triangle
\[
\rho^*L_{BG}\to \Omega^1_{T'}\to \cO_{T'}\otimes(\g')^\vee
\]
where $\g'$ is the Lie algebra of $G'$ (which is the same as the Lie algebra of $T$). Note that 
\[
\Omega^1_{T'}\simeq\cO_{T'}\otimes(M'\otimes k)\quad\textrm{and}\quad \g'\simeq M\otimes k,
\]
so 
\[
\rho^*\det L_{BG}=\omega_{T'}\otimes\det\g' = \cO_{T'}\otimes(\det(M'\otimes k)\otimes_k\det(M\otimes k)^\vee).
\]
Since $T$ acts trivially on $M'\otimes k=\Omega_{T',1}$ and on its Lie algebra $M\otimes k$, the $G'$-action on $\det(M'\otimes k)\otimes\det(M\otimes k)^\vee$ factors through the $H$-action.

To finish the proof, it therefore suffices to show that $M'\otimes k$ and $M\otimes k$ are isomorphic as $H$-representations. Indeed, it is clear how this implies $\det L_{BG}\simeq\cO_{BG}$. To see how it implies $\overline{\wt}_{BG}=0$, recall that if $B\mu_r\to BG$ is an representable map then $\overline{\wt}_{BG}$ is defined (note that $\dim BG = 0$) as a weighted sum of the differences
\[
\dim(M'\otimes k)_w-\dim(M'\otimes k)_w
%\wt_{BG}=\sum_{w=1}^r w(\dim(M'\otimes k)_w-\dim(M'\otimes k)_w)
\]
where the subscript $w$ indicates the graded piece indexed by $w$. Note also that $(M\otimes k)_w=M_w\otimes k$ and similarly for $M'$.

We now show that $M'\otimes k$ and $M\otimes k$ are isomorphic as $H$-representations. To begin with, since $k$ is algebraically closed, $H$ is the base change to $k$ of a constant group defined over $\bZ$; we will not distinguish $H$ and the constant group defined over $\bZ$. We note that the quotient map $T\to T'=T/\Delta$ induces a finite index $H$-equivariant inclusion $M'\subset M$. This induces an $H$-equivariant isomorphism $M'\otimes\QQ\simeq M\otimes\QQ$. So for all $h\in H$, we see $\Tr(h| M'\otimes\QQ)=\Tr(h| M\otimes\QQ)$, and hence $\Tr(h| M')=\Tr(h| M)$. Thus, $\Tr(h| M'\otimes k)=\Tr(h| M\otimes k)$, i.e., $M'\otimes k$ and $M\otimes k$ have the same character, which implies $M'\otimes k\simeq M\otimes k$ as $H$-representations since $H$ is constant and linearly reductive.
\end{proof}

\begin{proposition}\label{prop:LVmodG}
Keep the notation of \autoref{prop:LBG} and let $G\subset\SL(V)$. Then 
\[
\det L_{[V/G]}\simeq\cO_{[V/G]}
\]
\end{proposition}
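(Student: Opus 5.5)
The plan is to use the structure map $\pi\colon[V/G]\to BG$, which is the vector bundle stack associated to the representation $V$. Since $\pi$ is smooth and representable, there is an exact triangle
\[
L\pi^*L_{BG}\to L_{[V/G]}\to L_{[V/G]/BG}.
\]
Determinants are multiplicative in exact triangles, so this gives
\[
\det L_{[V/G]}\simeq \pi^*\det L_{BG}\otimes \det L_{[V/G]/BG}.
\]
By \autoref{prop:LBG}, the first factor is $\pi^*\cO_{BG}=\cO_{[V/G]}$. It therefore remains to trivialize $\det L_{[V/G]/BG}$.

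For the second factor I use that $[V/G]\to BG$ is the total space of the vector bundle $\cV$ on $BG$ corresponding to $V$. Hence $L_{[V/G]/BG}=\Omega^1_{[V/G]/BG}\simeq\pi^*\cV^\vee$. Concretely, $\Omega^1_{V}=\cO_V\otimes V^\vee$, and this is $G$-equivariant with $G$ acting on $V^\vee$ by the dual representation. Consequently
\[
\det L_{[V/G]/BG}\simeq\pi^*\det(\cV)^{\vee}.
\]
Here $\det\cV$ is the line bundle on $BG$ given by the character $\det\colon G\to\GL(\det V)=\bG_m$. Because $G\subset\SL(V)$, this character is trivial as a morphism of group schemes; I check this on $R$-points for every $k$-algebra $R$, which is necessary because $G$ may be non-reduced. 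Thus $\det\cV\simeq\cO_{BG}$, and combining the two factors gives $\det L_{[V/G]}\simeq\cO_{[V/G]}$.

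The main point requiring care is the determinant formalism for a two-term perfect complex on an Artin stack. I would define the determinant smooth-locally, or via the Knudsen--Mumford determinant functor, which is compatible with pullback and multiplicative on exact triangles. I would also verify the identification $L_{[V/G]/BG}\simeq\pi^*\cV^\vee$ by base changing along the cover $\Spec k\to BG$ and tracking the $G$-equivariant structure.
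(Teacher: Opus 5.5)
Your proposal is correct and is essentially the paper's proof: both use the transitivity triangle for $[V/G]\to BG$, trivialize $\det L_{BG}$ via the preceding proposition, and then note that the relative term pulls back on $V$ to $\Omega^1_V=\cO_V\otimes V^\vee$, whose determinant is given by the character $\det$, which is trivial because $G\subset\SL(V)$ scheme-theoretically. Your triangle $L\pi^*L_{BG}\to L_{[V/G]}\to L_{[V/G]/BG}$ is in the standard orientation, whereas the paper writes its arrows in the other order; this is harmless, since only the triviality of the determinants matters.
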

\begin{proof}
Let $\cX=[V/G]$ and consider the cartesian square
\[
\xymatrix{
V\ar[d]_-{\rho}\ar[r]^-{f'} & \Spec k\ar[d]\\
\cX\ar[r]^-{f} & BG
}
\]
We have an exact triangle
\[
L_\cX\to Lf^*L_{BG}\to L_f,
\]
from which we see
\[
\det L_\cX\simeq f^*\det(L_{BG})\otimes \det(L_f)^\vee\simeq \det(L_f)^\vee,
\]
where the second isomorphism uses \autoref{prop:LBG}. Thus, it remains to show $\det(L_f)$ is trivial, or equivalently, $\rho^*\det(L_f)=\det\Omega^1_V$ is trivial as a $G$-representation. This is true since $G\subset\SL(V)$.
\end{proof}

\subsection{Proving the canonical bundle on $V/G$ is trivial}

\begin{proposition}\label{prop:Q-Gorenstein}
Let $V$ be a representation of $G\subset\SL(V)$, a finite linearly reductive group scheme over an algebraically closed field $k$. Then $\omega_{V/G}\simeq\cO_{V/G}$, so $V/G$ is $1$-Gorenstein. 

Furthemore, there exists an open subset $U \subset V/G$ such that $[V/G] \to V/G$ is an isomorphism over $U$ and the preimage of $U$ in $[V/G]$ has codimension at least 2 in $[V/G]$.
\end{proposition}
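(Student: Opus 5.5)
The plan is to prove the second statement first and then use it to compute $\omega_{V/G}$. Let $\pi\colon \cX=[V/G]\to V/G$ be the coarse (good) moduli space map. By \autoref{prop:generic-rep}, $G$ acts freely in codimension $1$ on $V$. So the closed locus $Z\subset V$ of points with nontrivial stabilizer has codimension at least $2$ in $V$. It is $G$-stable, and its image $\pi(Z)$ in $V/G$ is closed, since $V\to V/G$ is finite and hence closed. Set $U=(V/G)\setminus \pi(Z)$.

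Over $U$ the stack has trivial stabilizers, so it is an algebraic space. The map to the good moduli space is then an isomorphism: a tame stack with trivial stabilizers is its own coarse space, since the coarse space map is a universal homeomorphism that is an isomorphism on the structure sheaf level. The preimage of $U$ is $[(V\setminus \pi^{-1}\pi(Z))/G]$. Because $V\to V/G$ is finite and surjective, $\pi^{-1}\pi(Z)$ has the same dimension as $Z$, so its complement has codimension at least $2$ in $[V/G]$. This gives the second claim.

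For $\omega_{V/G}\simeq\cO_{V/G}$, use that $V/G$ is normal: it is a ring of invariants of a normal domain, being a direct summand. Let $j\colon U\hookrightarrow V/G$. Since $\dim V/G=\dim V$ and $\pi$ is finite with $V\setminus\pi^{-1}(U)$ of codimension $\ge 2$, the complement of $U$ in $V/G$ also has codimension at least $2$. Hence $\omega_{V/G}=j_*\omega_U$, and it suffices to show $\omega_U$ is trivial compatibly with reflexive extension.

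On $U\cong \cX_U$, which is smooth, $\omega_U=\det L_{\cX_U}^\vee$, since for a smooth algebraic space the cotangent complex is $\Omega^1$. By \autoref{prop:LVmodG}, $\det L_{\cX}\simeq \cO_\cX$, so $\omega_U\simeq\cO_U$. Then
\[
\omega_{V/G}=j_*\cO_U=\cO_{V/G}
\]
by normality and codimension $\ge2$, so $V/G$ is $1$-Gorenstein. Cohen--Macaulayness is not needed, because ``$1$-Gorenstein'' here only requires $\omega$ to be invertible.

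The main obstacle is the codimension bookkeeping between $V$, $[V/G]$ and $V/G$, together with the isomorphism of the stack with its coarse space over the free locus when $G$ is non-reduced. For the latter, on the free locus the action is free as a group scheme action, so $V\to V/G$ is a $G$-torsor there. This follows from \cite{AOV}, or from Mumford's result on free actions of finite group schemes, and it gives $[V^\circ/G]\cong V^\circ/G$ directly.
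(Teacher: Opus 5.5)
Your argument is correct and is essentially the paper's proof: the paper obtains the open set $U$, the isomorphism $\pi^{-1}(U)\to U$, and both codimension-$2$ statements by running the arguments of \cite[Propositions 10.3 and 10.4]{SatrianoUsatine4} on the generic representation supplied by \autoref{prop:generic-rep}, and then concludes from $\cO_U\simeq(\det L_\cX)|_{\pi^{-1}(U)}\simeq\omega_{V/G}|_U$ and reflexivity, which is exactly what you carry out by hand. One harmless slip: on the smooth locus $\omega_U\simeq\det L_U$, not its dual, but this does not matter here since $\det L_\cX$ is trivial by \autoref{prop:LVmodG}.
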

\begin{proof}
By \autoref{prop:generic-rep}, $V$ is a generic representation of $G$. The stack $\cX=[V/G]$ is tame with coarse space $\pi\colon\cX\to Y:=V/G$. Note that $\cX$ is equal to its own stable locus $\cX^s$. The proof of \cite[Proposition 10.3]{SatrianoUsatine4} starting from the phrase ``Thus, $\cX^s$ is tame'' applies word-for-word to show there is an open subset $U\subset V/G$ such that if $\cU=\pi^{-1}(U)$, then $\cX\setminus\cU$ has codimension at least $2$ in $\cX$, and $\cU\to U$ is an isomorphism. The proof of \cite[Proposition 10.4]{SatrianoUsatine4} then goes through to show that $Y\setminus U$ has codimension at least $2$ in $Y$ and 
\[
\cO_U \simeq \cO_\cU \simeq (\det L_\cX)|_\cU \simeq (\det L_\cX)|_U \simeq \omega_Y|_U,
\]
where the second equality uses \autoref{prop:LVmodG}; hence $\omega_Y$ is trivial.
\end{proof}

We may now prove \autoref{theoremQuotientIsQGorenstein}.

\begin{proof}[Proof of \autoref{theoremQuotientIsQGorenstein}]
To check that $V/G$ is Gorenstein, it is sufficient to check after base changing to the algebraic closure of $k$, so $V/G$ is Gorenstein by \autoref{prop:Q-Gorenstein} (Note that $V/G$ is Cohen-Macaulay by \cite[Main Theorem]{HochsterRoberts}). To check that $[V/G] \to V/G$ is birational and $K_{[V/G]/(V/G)} = 0$, it is sufficient to find an open subset $U \subset V/G$ such that $[V/G] \to V/G$ is an isomorphism over $U$ and the preimage of $U$ in $[V/G]$ has codimension at least 2 in $[V/G]$. Since this condition can be checked after base changing to the algebraic closure of $k$, we are done by \autoref{prop:Q-Gorenstein}.
\end{proof}

\section{Cyclotomic inertia of quotients by finite linearly reductive groups}

The main goal of this section is to show how \autoref{maintheoremMotivicMcKay} follows from \autoref{maintheoremGorensteinMeasureCrepantResolution} and a certain description for cyclotomic inertia of quotient stacks, which we will also prove in this section. We begin by setting some useful notation.

\begin{notation}
Let $r \in \Z_{>0}$, let $G$ be a finite linearly reductive group scheme over $k$, and let $x: B\mu_r \to BG$ be a representable morphism. If $k$ is algebraically closed, then there exists a group monomorphism $\widetilde{\phi}: \mu_r \to G$ such that $x$ is given (up to 2-isomorphism) by applying the pushout construction along $\widetilde{\phi}$ to $\mu_r$-torsors. In that case, we will let $\phi_x \in \Conj_{\mu_r}(G)$ denote the image of $\widetilde{\phi} \in \uHom_k^{\grp, \inj}(\mu_r, G)(k)$ in $\Conj_{\mu_r}(G)$. More generally, if $V$ is a scheme over $k$ with a $G$-action and $y: B\mu_r \to [V/G]$ is a representable morphism, we will set $\phi_y = \phi_x$, where $x$ is the composition of $y$ with $[V/G] \to BG$. Note that $\phi_y$ only depends on $y$ up to 2-isomorphism.
\end{notation}

\begin{notation}
If $G$ is a group scheme over $k$, $V$ is a scheme over $k$ with $G$-action, and $\widetilde{\phi}: \mu_r \to G$ is a group monomorphism, we will let $Z(\widetilde{\phi})$ denote the centralizer of $\widetilde{\phi}$ in $G$, and we will let $V^{\widetilde{\phi}}$ denote the $\widetilde{\phi}$-fixed locus of $V$. Since $V^{\widetilde{\phi}}$ is invariant under the restriction of the $G$-action to $Z_G(\widetilde{\phi})$, we will equip $V^{\widetilde{\phi}}$ with the resulting $Z_G(\widetilde{\phi})$-action.
\end{notation}

We now give a description for the cyclotomic inertia of global quotients by finite linearly reductive groups.

\begin{proposition}\label{propositionCyclotomicInertiaOfQuotient}
Assume $k$ is algebraically closed, let $G$ be a finite linearly reductive group scheme over $k$, and for every $r \in \Z_{>0}$ and $\phi \in \Conj_{\mu_r}(G)$, choose some group monomorphism $\widetilde{\phi}: \mu_r \to G$ whose image in $\Conj_{\mu_r}(G)$ is $\phi$. Let $V$ and $V'$ be smooth finite type schemes over $k$ with affine diagonal and $G$-action, and let $V \to V'$ be a $G$-equivariant morphism. Then there exist isomorphisms
\[
	I_{\mu}[V/G] \xrightarrow{\sim} \bigsqcup_{\phi \in \Conj_\mu(G)} [V^{\widetilde{\phi}} / Z_G(\widetilde{\phi})]
\]
and
\[
	I_{\mu}[V'/G] \xrightarrow{\sim} \bigsqcup_{\phi \in \Conj_\mu(G)} [(V')^{\widetilde{\phi}} / Z_G(\widetilde{\phi})]
\]
such that
\[
\xymatrix{
I_{\mu}[V/G]\ar[r]\ar[d] & \bigsqcup_{\phi \in \Conj_\mu(G)} [V^{\widetilde{\phi}} / Z_G(\widetilde{\phi})]\ar[d]\\
I_{\mu}[V'/G] \ar[r] & \bigsqcup_{\phi \in \Conj_\mu(G)} [(V')^{\widetilde{\phi}} / Z_G(\widetilde{\phi})]
}
\]
2-commutes and for every $k$-point $y$ of $I_{\mu_r}[V/G]$, the isomorphism $I_{\mu}[V/G] \xrightarrow{\sim} \bigsqcup_{\phi \in \Conj_\mu(G)} [V^{\widetilde{\phi}} / Z_G(\widetilde{\phi})]$ sends $y$ to the piece indexed by $\phi_y$.
\end{proposition}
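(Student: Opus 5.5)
We will build the isomorphism from the functor of points of $I_{\mu_r}[V/G]$, working one $r$ at a time. The construction is functorial in the $G$-scheme $V$, and that functoriality will give the 2-commutativity of the square.

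An object of $I_{\mu_r}[V/G]$ over a $k$-scheme $T$ is a representable morphism $B\mu_{r,T}\to[V/G]$. Equivalently, it is a $G$-torsor $P$ on $B\mu_{r,T}$, i.e.\ a $\mu_r$-equivariant $G$-torsor over $T$, together with a $\mu_r\times G$-equivariant map $P\to V$, subject to the condition that $\mu_r$ acts on fibers through a monomorphism.

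\textbf{Step 1: decomposition of $I_{\mu_r}BG$.} We first show
\[
I_{\mu_r}BG\cong\bigsqcup_{\phi\in\Conj_{\mu_r}(G)}BZ_G(\widetilde\phi).
\]
The standard argument identifies $\uHom(B\mu_r,BG)$ with $[\uHom^{\grp}(\mu_r,G)/G]$, with $G$ acting by conjugation. For this we use that $\mu_r$ is linearly reductive, so $H^1(\mu_r,-)$ vanishes for the relevant nonabelian cohomology fppf locally, and every $G$-torsor on $B\mu_{r,T}$ becomes fppf locally on $T$ the pushout of a trivial torsor along a homomorphism $\mu_{r,T}\to G_T$. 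Representability restricts to the open and closed subscheme $\uHom^{\grp,\inj}$.

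The key input is that $\uHom^{\grp,\inj}_k(\mu_r,G)$ is a finite scheme on which $G$ acts with finitely many orbits. Each orbit $G\cdot\widetilde\phi\cong G/Z_G(\widetilde\phi)$ is open and closed. To see this, we use rigidity: homomorphisms from a linearly reductive group to an affine group are infinitesimally rigid up to conjugation. Concretely, the deformation obstruction group $H^1(\mu_r,\mathfrak g)=0$, so the orbit map $G\to\uHom(\mu_r,G)$ at $\widetilde\phi$ is flat, hence open. Then
\[
[\uHom^{\grp,\inj}/G]=\bigsqcup_{\phi}[G\widetilde\phi/G]=\bigsqcup_\phi BZ_G(\widetilde\phi),
\]
and by construction a $k$-point $x$ lands in the piece indexed by $\phi_x$.

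\textbf{Step 2: the fiber over each piece.} Next we compute the fiber product $I_{\mu_r}[V/G]\to I_{\mu_r}BG$ over each piece $BZ_G(\widetilde\phi)$. Pulling back along $\Spec k\to BZ_G(\widetilde\phi)$ gives the trivial torsor with $\mu_r$ acting via $\widetilde\phi$. A map to $V$ from this torsor is then a $\mu_r$-equivariant $G$-equivariant map $G\to V$, i.e.\ a $\widetilde\phi$-fixed point of $V$. So the fiber is $V^{\widetilde\phi}$, with residual action of $Z_G(\widetilde\phi)=\uAut$ of the point, and we obtain $[V^{\widetilde\phi}/Z_G(\widetilde\phi)]$.

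We will make this precise by writing
\[
I_{\mu_r}[V/G]=I_{\mu_r}BG\times_{\uHom(B\mu_r,BG)}\uHom(B\mu_r,[V/G]),
\]
and identifying the relative Hom stack over $BG$ with the fixed locus. The fixed locus $V^{\widetilde\phi}$ is representable and closed because $V$ is separated. Since every step is functorial in $V$, a $G$-equivariant map $V\to V'$ induces compatible maps $V^{\widetilde\phi}\to(V')^{\widetilde\phi}$, which gives the 2-commutative square.

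\textbf{Main obstacle.} The main obstacle is the rigidity claim in Step 1 for non-reduced $G$ and $\mu_r$, namely that orbits in $\uHom^{\grp,\inj}(\mu_r,G)$ are open, so that the quotient splits as a disjoint union of classifying stacks. Our first approach is the tangent space computation: the tangent space to $\uHom$ at $\widetilde\phi$ is $Z^1(\mu_r,\mathfrak g)$, which equals $B^1$ since $H^1=0$ by linear reductivity. Hence the orbit map is surjective on tangent spaces and on points with nilpotents, and then smoothness/flatness of the orbit map gives openness. Failing that, we fall back on the structure $G=\Delta\rtimes H$ from \cite[Lemma 2.17]{AOV}. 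There, maps $\mu_r\to\Delta$ are a finite discrete set of characters, and we can reduce to the constant tame group $H$.
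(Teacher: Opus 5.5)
The overall architecture is sound. You identify $I_{\mu_r}BG$ with $[\uHom_k^{\grp,\inj}(\mu_r,G)/G]$, compute the preimage of each piece as $[V^{\widetilde\phi}/Z_G(\widetilde\phi)]$, and get the square from functoriality in $V$. Step 2 goes through once Step 1 is known.

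The gap is in Step 1: the claim that each orbit $G\cdot\widetilde\phi$ is open and closed in $\uHom_k^{\grp,\inj}(\mu_r,G)$ and isomorphic to $G/Z_G(\widetilde\phi)$ \emph{as a scheme}. Because $G$ is non-reduced, this Hom scheme genuinely carries nilpotents (in \autoref{ex:singularHominj} it is $\mu_p$), and your rigidity argument does not control them. The tangent computation $\g\twoheadrightarrow Z^1(\mu_r,\g)=B^1(\mu_r,\g)$ is correct, but it does not give what you need, for three reasons.
\begin{enumerate}
\item A closed immersion of local Artin schemes that is bijective on tangent spaces need not be an isomorphism (e.g.\ $\Spec k[x]/(x^2)\subset\Spec k[x]/(x^3)$). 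So it does not show the orbit fills its component.
\item The usual upgrade from $H^1=0$ to surjectivity on Artin-local points lifts the conjugating element along small extensions $A'\to A$. That needs $G(A')\to G(A)$ to be surjective, i.e.\ $G$ smooth. This fails already for $\mu_p$: the element $1+x\in\mu_p(k[x]/(x^p))$ has no lift to $k[x]/(x^{p+1})$.
\item The orbit map $G\to G/Z_G(\widetilde\phi)$ is not smooth whenever $Z_G(\widetilde\phi)$ is non-reduced. Its flatness onto an open subscheme is exactly what has to be proved.
\end{enumerate}
The fallback via $G=\Delta\rtimes H$ is too vague. $\uHom_k^{\grp}(\mu_r,\Delta)$ is indeed \'etale, but in \autoref{ex:singularHominj} the nilpotent structure of $\uHom_k^{\grp,\inj}(\mu_r,G)$ consists of $\Delta$-conjugates of a map into $H$. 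So ``reducing to $H$'' discards precisely what must be controlled.

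The missing idea is to let smoothness remove the nilpotents.
\begin{itemize}
\item By \autoref{propositionCyclotomicInertiaFiniteTypeAffineDiagonalSmooth} and \autoref{l:fppf-loc-sm->sm}, $I_{\mu_r}[V/G]$ is smooth; taking $V=\Spec k$ covers $I_{\mu_r}BG$.
\item $\bigsqcup_\phi[V^{\widetilde\phi}/Z_G(\widetilde\phi)]$ is smooth, since fixed loci of $\mu_r$ on smooth $V$ are smooth.
\item Both stacks are therefore reduced. So it suffices to check that the natural map to $[V_{\mu_r}/G]$ (as in \cite[Proposition 1.2]{Sala}) is fully faithful and essentially surjective on reduced, connected, locally finitely presented test schemes $S$.
\item On such $S$, $G(S)=G_{\red}(S)=G(k)$, and every $S$-point of the finite scheme $\uHom_k^{\grp,\inj}(\mu_r,G)$ factors through a $k$-point. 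The checks then reduce to the classical ones.
\end{itemize}
This is the paper's argument. If you want to keep your two-step structure, the same input repairs Step 1 directly. $I_{\mu_r}BG$ is smooth, hence reduced, with finitely many closed $k$-points. Hence each connected component equals its residual gerbe $B\Aut(y)=BZ_G(\widetilde\phi)$.
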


\begin{proof}
Note that $I_{\mu_r}[V/G]$ is smooth by \autoref{propositionCyclotomicInertiaFiniteTypeAffineDiagonalSmooth} and \autoref{l:fppf-loc-sm->sm}. By \cite[Proposition 1.2]{Sala}, we have a natural equivalence
\[
I_{\mu_r}[V/G]\xrightarrow{\sim}[V_{\mu_r}/G],
\]
where $\uHom_k^{\grp, \inj}(\mu_r, G)$ is a finite scheme (see \cite[Proposition 1.1]{Sala}) parameterizing injective homomorphisms $\mu_r\to G$ and
\[
V_{\mu_r}\subset V\times\uHom_k^{\grp, \inj}(\mu_r, G)
\]
is the subscheme whose $S$-valued points are pairs $(v,\widetilde{\phi})$ with $\widetilde{\phi}\colon\mu_{\ell,S}\to G_S$, and $v\in V(S)$ a fixed point of $\widetilde{\phi}$; here $G$ acts on $V$ through its left action and acts on $\uHom_k^{\grp, \inj}(\mu_r, G)$ by conjugation.

For all injective maps $\widetilde{\phi}\colon\mu_r\to G$, we have an embedding
\[
V^{\widetilde{\phi}}\subset V_{\mu_r}
\]
given on $S$-valued points by $v\mapsto (v,\widetilde{\phi}_S)$. We see $V^{\widetilde{\phi}}$ is invariant under the centralizer $Z_G(\widetilde{\phi})\subset G$, which exists by \cite[Lemma 2.2.4]{ConradReductive}. As a result, we obtain a map
\[
[V^{\widetilde{\phi}}/Z_G(\widetilde{\phi})]\to[V_{\mu_r}/G]
\]
and hence a map
\[
\cY:=\coprod_{\Conj_{\mu_r}(G)}[V^{\widetilde{\phi}}/Z_G(\widetilde{\phi})]\xrightarrow{F}[V_{\mu_r}/G].
\]
Note that $V^{\widetilde{\phi}}$ is smooth and so $\cY$ is smooth by \autoref{l:fppf-loc-sm->sm}, hence $\cY$ is reduced; recall $\cX$ is also smooth as noted earlier. We prove $F$ induces an isomorphism $\cY\to\cX_{\red}=\cX$ by showing it is fully faithful and essentially surjective on the level of $S$-points for all reduced $k$-schemes $S$. Furthermore, since $\cY$ and $\cX$ are locally finitely presented over $k$, we may additionally assume $S$ is as well. Indeed, we may write $S=\lim_i S_i$ with the $S_i$ locally finitely presented and then $\cY(S) = \colim \cY(S_i)$; similarly for $\cX$.

We begin with the following observation. Given the object $(f,\eta)\in[V^{\widetilde{\phi}}/Z_G(\widetilde{\phi})](S)$ where $\eta\colon Z_G(\widetilde{\phi})\times S\to S$ is the trivial torsor and $f$ maps the identity section of $\eta$ to $v\in V^{\widetilde{\phi}}(S)$, then $F$ induces an isomorphism from the stabilizer of $(f,\eta)$ to that of $F(f,\eta)$. To see this, note that the stabilizer of $(f,\eta)$ is given by those elements of $Z_G(\widetilde{\phi})(S)$ which stabilizes $v$. On the other hand the stabilizer of $F(f,\eta)=(v,\widetilde{\phi}_S)$ is given by $\Stab(v)\cap Z_G(\widetilde{\phi})(S)$ and the map between the stabilizers is the identity map.

Next, to prove $F$ is faithful, we may consider an automorphism $\alpha$ of an object of $[V^{\widetilde{\phi}}/Z_G(\widetilde{\phi})](S)$ such that $F(\alpha)=\id$. Our object of $[V^{\widetilde{\phi}}/Z_G(\widetilde{\phi})](S)$ is given by a $Z_G(\widetilde{\phi})$-torsor $P\to S$ and $Z_G(\widetilde{\phi})$-equivariant map $P\to V^{\widetilde{\phi}}$. Since automorphisms of objects form a sheaf, to prove $\alpha=\id$, it suffices to look locally where $P$ is the trivial torsor; the result then follows from the trivial torsor case.

To prove $F$ is full, let $(f,\eta)\in[V^{\widetilde{\phi}}/Z_G(\widetilde{\phi})](S)$ and $(f',\eta')\in[V^{\widetilde{\phi}'}/Z_G(\widetilde{\phi})](S)$ and suppose we have an isomorphism $\alpha\colon F(f,\eta)\to F(f',\eta')$. To show $\alpha=F(\beta)$ for some $\beta$, we may look locally; indeed faithfulness ensures that any locally defined $\alpha$ descends. Thus, after looking locally, we may assume $\eta$ and $\eta'$ are the trivial torsor and $S$ is connected. Thus, $(f,\eta)$ and $(f',\eta')$ are given by $v\in V^{\widetilde{\phi}}(S)$ and $v'\in V^{\widetilde{\phi}'}(S)$, and $\alpha$ corresponds to $g\in G(S)$ such that $gv=v'$ and $g\widetilde{\phi}_S g^{-1}=\widetilde{\phi}'_S$. Since $S$ is reduced, $G(S)=G_{\red}(S)$ and $G_\red$ is a finite constant \'etale group scheme (see e.g., the first paragraph of the proof of \autoref{l:descending-mur->well-split} below); thus, since $S$ is connected, $g=h_S$ for some $h\in G(k)$. We therefore have two $k$-valued points $h\widetilde{\phi} h^{-1}$ and $\widetilde{\phi}'$ of the scheme $\uHom_k^{\grp, \inj}(\mu_r, G)$ which become equal over $S$; since $S$ is locally finitely presented (and flat) over $k$, $S\to\Spec k$ is an fppf cover, hence $h\widetilde{\phi} h^{-1}=\widetilde{\phi}'$. By our hypothesis that the disjoint union in the definition of $\cY$ runs over $k$-points of $\uHom_k^{\grp, \inj}(\mu_r, G) / G$, we see $\widetilde{\phi}=\widetilde{\phi}'$. As a result, $g\in Z_G(\widetilde{\phi})$ and $gv=v'$, hence defines our desired $\alpha$.

Lastly, to show $F$ is essentially surjective, full faithfulness along with the fact that $\cX$ is a stack ensure that we may look locally; as a result, we may start with a connected $S$ and an $S$-point of $V_{\mu_r}$, i.e., a pair $(v,\psi)$ with $\psi\in\uHom_k^{\grp, \inj}(\mu_r, G)(S)$ and $v\in V^\psi(S)$. Since $\uHom_k^{\grp, \inj}(\mu_r, G)$ is finite and $S$ is reduced and connected, our $S$-point factors through an $L$-point of $\uHom_k^{\grp, \inj}(\mu_r, G)$ with $L/k$ finite. %By \autoref{l:descending-mur->well-split}, we have $\psi=\phi_S$ for some injective $\phi\colon\mu_r\to G$.
Since $k$ is algebraically closed, $L=k$. Then simply observe that the $S$-point of $[V^{\widetilde{\phi}}/Z_G(\widetilde{\phi})]$ corresponding to the trivial torsor and $v\in V^{\widetilde{\phi}}(S)$ maps under $F$ to $(v,\psi)$.

This shows our desired equivalence
\[
I_\mu[V/G]\xrightarrow{\sim}\bigsqcup_{\phi}[V^{\widetilde{\phi}}/Z_G(\widetilde{\phi})].
\]
We see that if $y$ is a $k$-point of $I_\mu[V/G]$ corresponding to $B\mu_r\to [V/G]$, then precomposing with $\Spec k\to B\mu_r$ and choosing a lift to $V$, we obtain a $k$-point $v\in V(k)$. Under the equivalence from \cite[Proposition 1.2]{Sala}, we see $y$ maps to the point of $[V_{\mu_r}/G]$ induced by $(v,\phi_y)\in V_{\mu_r}(k)$. Then under our equivalence $F$, we see this point is mapped to the component indexed by $\phi_y$.

Finally, a $G$-equivariant morphism $V\to V'$ induces $[V^{\widetilde{\phi}}/Z_G(\widetilde{\phi})]\to [(V')^{\widetilde{\phi}}/Z_G(\widetilde{\phi})]$ for all $\phi$. We see directly from the definition of $F$ that
\[
\xymatrix{
\bigsqcup_{\Conj_{\mu_r}(G)}[V^{\widetilde{\phi}}/Z_G(\widetilde{\phi})]\ar[r]\ar[d] & [V_{\mu_r}/G]\ar[d]\\
\bigsqcup_{\Conj_{\mu_r}(G)}[(V')^{\widetilde{\phi}}/Z_G(\widetilde{\phi})]\ar[r] & [V'_{\mu_r}/G]
}
\]
commutes. Combined with naturality of the equivalence in \cite[Proposition 1.2]{Sala}, this completes the proof.
\end{proof}

\begin{example}\label{ex:singularHominj}
We show that although $I_{\mu_r}(BG)$ is smooth, $\uHom_k^{\grp, \inj}(\mu_r, G)$ may be singular. Consider $G=\mu_p\rtimes\ZZ/2$ over $K$ with $\cha(K)=p\neq2$. Then $\mu_2=\ZZ/2$, so $\uHom_k^{\grp, \inj}(\mu_2, G)$ is equal to the $2$-torsion $G[2]$ minus the identity section. This is the non-identity component of $G$, which, as a scheme, is $\mu_p$. This is non-reduced yet $I_{\mu_2}(G)$ is still smooth. Explicitly, this is because $\mu_p\subset G$ acts via the squaring map which is simple and transitive, so $[\uHom_k^{\grp, \inj}(\mu_2, G)/G]=[(\mu_p/\mu_p)/\ZZ/2]=B(\ZZ/2)$.
\end{example}

We may now set the following notation.

\begin{notation}
Assume $k$ is algebraically closed, let $G$ be a finite linearly reductive group scheme over $k$, and let $V$ be a smooth finite type scheme over $k$ with affine diagonal and $G$-action. If $\cY$ is a connected component of $I_\mu[V/G]$, then we will set $\phi_{\cY} = \phi_y$, where $y$ is some $k$-point of $\cY$. By \autoref{propositionCyclotomicInertiaOfQuotient}, this notation does not depend on the choice of $y$.
\end{notation}

We will now prove the following two corollaries of \autoref{propositionCyclotomicInertiaOfQuotient}.

\begin{corollary}\label{corollaryClassOfInertiaComponent}
Assume $k$ is algebraically closed, let $V$ be a finite dimensional vector space over $k$, and let $G$ be a finite linearly reductive subgroup scheme of $\GL(V)$. If $\cY$ is a connected component of $I_\mu[V/G]$, then
\[
	\e(\cY) = \bL^{\dim \cY} \e(BZ_G(\phi_{\cY})).
\]
\end{corollary}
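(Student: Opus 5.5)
By \autoref{propositionCyclotomicInertiaOfQuotient}, $\cY$ is a connected component of $[V^{\widetilde{\phi}}/Z_G(\widetilde{\phi})]$, where $\phi = \phi_\cY$ and $\widetilde{\phi}: \mu_r \to G$ is the chosen representative. The plan is to show this quotient is already connected, identify it with a vector bundle stack over $BZ_G(\widetilde{\phi})$, and read off its class.

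First I describe $V^{\widetilde{\phi}}$. Restricting the $G$-action along $\widetilde{\phi}$ gives the $\Z/r$-grading $V = \bigoplus_w V_w$ into $\mu_r$-weight spaces. The scheme-theoretic fixed locus of the diagonalizable group $\mu_r$ acting linearly on $V$ is the linear subspace $V_0$ of weight-zero vectors; this also holds for non-reduced $\mu_r$, since fixed points of a diagonalizable group are cut out by the vanishing of the coordinates of nonzero weight. Thus $V^{\widetilde{\phi}} = V_0$ is a vector space, hence irreducible and connected. It is stable under the linear action of $Z_G(\widetilde{\phi})$. So $[V^{\widetilde{\phi}}/Z_G(\widetilde{\phi})]$ is connected and equals $\cY$. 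It is the total space of the vector bundle on $BZ_G(\widetilde{\phi})$ associated to the representation $V_0$, of rank $d = \dim V_0$. Since $Z_G(\widetilde{\phi})$ is finite, $\dim \cY = d$.

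It remains to show that $\e([V_0/Z]) = \bL^{d}\e(BZ)$, where $Z = Z_G(\widetilde{\phi})$. Write $E = [V_0/Z] \to BZ$ for this vector bundle stack. The standard fact I would use is that, for a stack with affine stabilizers, the class of a rank-$d$ vector bundle equals $\bL^d$ times the class of the base. One argues this by the usual reduction: the frame bundle $\mathrm{Fr}(E) \to BZ$ is a $\GL_d$-torsor, so $E = [(\mathrm{Fr}(E) \times \bA^d)/\GL_d]$. Since $\GL_d$ is special, the classes of $\GL_d$-torsors multiply, and the vector bundle over the algebraic space $\mathrm{Fr}(E)$ is Zariski-locally trivial. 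This gives $\e(E) = \bL^d \e(\mathrm{Fr}(E))/\e(\GL_d) = \bL^d \e(BZ)$. Alternatively, I would cite the corresponding statement in \cite[Section 2.2]{SatrianoUsatine1}, that vector bundles over finite type stacks with affine stabilizers contribute a factor of $\bL^{\mathrm{rank}}$.

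The main obstacle I expect is justifying $V^{\widetilde{\phi}} = V_0$ scheme-theoretically when $\mu_r$ is non-reduced; the fixed locus must be computed via the coaction rather than via $k$-points. Concretely, $v$ is fixed if and only if $\sum_w v_w x^w = \sum_w v_w$ in $R[x]/(x^r-1)$ for all $R$, which forces $v_w = 0$ for $w \neq 0$ because the monomials $x^w$ form a free basis. This check is short.
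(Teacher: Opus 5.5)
Your proof is correct and is essentially the paper's argument: identify $\cY$ with the connected stack $[V^{\widetilde{\phi}}/Z_G(\widetilde{\phi})]$ via \autoref{propositionCyclotomicInertiaOfQuotient}, view it as a vector bundle stack over $BZ_G(\widetilde{\phi})$, and apply the vector bundle relation for $\e$. The only difference is the exponent: the paper identifies it with $\dim\cY$ via \autoref{propositionDimensionAndGrothendieckNorm}, whereas you count directly that $\dim\cY=\dim V^{\widetilde{\phi}}$ because $Z_G(\widetilde{\phi})$ is finite.

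One small slip in your frame-bundle aside: $\mathrm{Fr}(E)$ is an algebraic space only if $Z_G(\widetilde{\phi})$ acts faithfully on $V^{\widetilde{\phi}}$, which need not hold. This is harmless, because the pullback of $E$ to $\mathrm{Fr}(E)$ is globally trivial, and the relation $\e(E)=\bL^{d}\e(BZ_G(\widetilde{\phi}))$ is standard for stacks with affine stabilizers anyway.
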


\begin{proof}
Since $k$ is algebraically closed, there exists some $r \in \Z_{>0}$ and group monomorphism $\widetilde{\phi}: \mu_r \to G$ such that $\phi_\cY$ is the image of $\widetilde{\phi}$ in $\Conj_\mu(G)$. Since $G$ acts linearly on $V$, the fixed locus $V^{\widetilde{\phi}}$ is a linear subspace of $V$ and in particular is connected. Thus by \autoref{propositionCyclotomicInertiaOfQuotient},
\[
	\cY \cong [V^{\widetilde{\phi}} / Z_G(\widetilde{\phi})].
\]
Since $V^{\widetilde{\phi}}$ is a vector space over $k$ and $Z_G(\widetilde{\phi})$ acts on it linearly, $[V^{\widetilde{\phi}} / Z_G(\widetilde{\phi})]$ is the total space of a vector bundle on $BZ_G(\widetilde{\phi})$, so
\[
	\e(\cY) = \e([V^{\widetilde{\phi}} / Z_G(\widetilde{\phi})]) = \bL^{d} \e(BZ_G(\widetilde{\phi})) = \bL^{d} \e(BZ_G(\phi_{\cY}))
\]
for some $d \in \Z$. Furthermore $d = \dim\cY$ by \autoref{propositionDimensionAndGrothendieckNorm}, so we are done.
\end{proof}

\begin{corollary}\label{corollaryComponentsAndConjugacyClasses}
Assume $k$ is algebraically closed, let $V$ be a finite dimensional vector space over $k$, and let $G$ be a finite linearly reductive subgroup scheme of $\GL(V)$. Then the assignment $\cY \mapsto \phi_\cY$ gives a bijection from the set of connected components of $I_\mu[V/G]$ to the set $\Conj_\mu(G)$.
\end{corollary}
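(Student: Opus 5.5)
Using \autoref{propositionCyclotomicInertiaOfQuotient} with a fixed choice of representatives $\widetilde{\phi}$ for each $\phi \in \Conj_\mu(G)$, we get $I_\mu[V/G] \cong \bigsqcup_{\phi} [V^{\widetilde{\phi}}/Z_G(\widetilde{\phi})]$. The whole argument will run on this isomorphism. The components of $I_\mu[V/G]$ are therefore unions of components of the individual pieces $[V^{\widetilde{\phi}}/Z_G(\widetilde{\phi})]$. So it suffices to show that each piece is connected and nonempty. Once that is established, the connected components are in bijection with $\Conj_\mu(G)$ via the index of the piece containing them. By the last clause of \autoref{propositionCyclotomicInertiaOfQuotient}, every $k$-point $y$ of the piece indexed by $\phi$ has $\phi_y = \phi$, so this index map is exactly $\cY \mapsto \phi_\cY$.

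For nonemptiness, I would note that $V^{\widetilde{\phi}}$ always contains the origin $0 \in V(k)$, since $G$ acts linearly. For connectedness, I would argue as in the proof of \autoref{corollaryClassOfInertiaComponent}. The fixed locus of the linear $\mu_r$-action on $V$ is the weight-zero subspace $V_r$ in the grading $V = \bigoplus_w V_w$, hence a linear subspace and in particular connected. Consequently $|[V^{\widetilde{\phi}}/Z_G(\widetilde{\phi})]|$, being the image of the connected space $|V^{\widetilde{\phi}}|$ under the surjective continuous map from the atlas, is connected. Thus each piece is exactly one connected component.

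Finally, to conclude the map is a bijection rather than just well defined, I would assemble the pieces as follows. Injectivity holds because distinct components lie in distinct pieces, which carry distinct indices $\phi$. Surjectivity holds because each $\phi$ has a nonempty piece, containing for instance the $k$-point coming from $0$, whose $\phi_y$ is $\phi$.

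The only mildly delicate point is identifying the scheme-theoretic fixed locus $V^{\widetilde{\phi}}$ with a linear subspace when $\mu_r$ is non-reduced. Since $\mu_r$ is diagonalizable, the coaction splits $V$ into weight spaces. A point is fixed functorially exactly when all its nonzero-weight components vanish, which gives the linear subspace $V_r$ scheme-theoretically. I expect no real obstacle beyond this.
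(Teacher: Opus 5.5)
Your proposal is correct and follows the paper's proof: the paper also deduces the corollary directly from \autoref{propositionCyclotomicInertiaOfQuotient}, together with the observation that, since $G$ acts linearly on $V$, each piece $[V^{\widetilde{\phi}}/Z_G(\widetilde{\phi})]$ is nonempty and connected. You have spelled out details the paper leaves implicit, namely the origin as a $k$-point of each piece, the fixed locus as the weight-zero subspace, and the use of the last clause of the proposition to identify the index of each piece with $\phi_\cY$.
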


\begin{proof}
This is a straightforward consequence of \autoref{propositionCyclotomicInertiaOfQuotient} and the fact that, since $G$ acts linearly on $V$, each of the $[V^{\widetilde{\phi}} / Z_G(\widetilde{\phi})]$ from the statement of \autoref{propositionCyclotomicInertiaOfQuotient} is nonempty and connected.
\end{proof}

\begin{proposition}\label{propositionShiftOfInertiaComponentAge}
Assume $k$ is algebraically closed, let $V$ be a finite dimensional vector space over $k$, and let $G$ be a finite linearly reductive subgroup scheme of $\GL(V)$. If $\cY$ is a connected component of $I_\mu[V/G]$, then
\[
	\shft_{[V/G]}(\cY) + \dim\cY = \age(\phi_\cY).
\]
\end{proposition}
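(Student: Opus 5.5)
Since $\shft_{[V/G]}(\cY) = \dim[V/G] - \dim\cY - \overline{\wt}_{[V/G]}(\cY)$ and $\dim[V/G] = \dim V$, the claim is equivalent to
\[
	\overline{\wt}_{[V/G]}(\cY) = \dim V - \age(\phi_\cY).
\]
By \autoref{propositionWeightFunctionLocallyConstant} it suffices to compute $\overline{\wt}_{[V/G]}$ at one $k$-point $y$ of $\cY$. Under \autoref{propositionCyclotomicInertiaOfQuotient} we may take $y$ to be the origin $0 \in V^{\widetilde{\phi}}$, which lies in every component. Concretely, $F_y\colon B\mu_r \to [V/G]$ is the composition $B\mu_r \to B\mu_r\times V^{\widetilde\phi}\to [V/G]$ induced by $\widetilde{\phi}\colon \mu_r \to G$ and $0\in V$. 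Equivalently, $F_y$ is $[0/\mu_r] \to [V/\mu_r] \to [V/G]$.

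Next I would compute $LF_y^*L_{[V/G]}$ from the triangle in the proof of \autoref{prop:LVmodG}, namely $L_{[V/G]} \to Lf^*L_{BG} \to L_f$. Pulling back along $F_y$ gives a triangle on $B\mu_r$:
\[
	LF_y^*L_{[V/G]} \to Lx^*L_{BG} \to V^\vee,
\]
where $x\colon B\mu_r \to BG$ and the pullback of $L_f$ is $\Omega_V|_0 = V^\vee$ with its $\mu_r$-action. Since $B\mu_r$ is cohomologically affine and everything splits into weight spaces, the alternating sums of dimensions in each weight are additive. Hence for each $w$,
\[
	\dim H^1(LF_y^*L(-w)) - \dim H^0(LF_y^*L(-w))
\]
equals the corresponding quantity for $Lx^*L_{BG}$ plus $\dim (V^\vee)_{w}$, up to the sign and indexing conventions in \autoref{definitionCyclotomicInertiaAndWeightFunction}. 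The $V^\vee$ term appears in cohomological degree $0$ after the shift and so contributes with a minus sign, while the $L_{BG}$ term vanishes. In particular,
\[
	\overline{\wt}_{[V/G]}(y) = \overline{\wt}_{BG}(x) + \dim V - (1/r)\sum_{w=1}^r w \dim_k (V^\vee\text{-piece of weight } w)^\ast ,
\]
and $\overline{\wt}_{BG}(x) = 0$ by \autoref{prop:LBG}.

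It remains to match the weight bookkeeping with the definition of $\age$. The $H^0((\cdot)(-w))$ of a sum of $\cO(a)$ picks out $a \equiv w$, so the sum becomes $(1/r)\sum_w w\,\dim (V^\vee)_w$. I would check that, with the convention that $\cO(a)$ on $B\mu_r$ corresponds to the character $a$, the weight-$w$ part of $V^\vee$ is dual to $V_{-w}$. The sum then reads $(1/r)\sum_w w \dim V_{-w}$, or $(1/r)\sum_w (r-w)\dim V_w$, depending on the convention. I expect this sign and indexing check to be the main obstacle.

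To pin down the convention, I would test on the simplest case $G=\mu_r$ acting on $V=\bA^1$ with weight $1$. There $[V/G]$ is the root stack and the known shift is $1/r$ (compare \cite{Yasuda} and \cite[Lemma 9.6]{HuangSatrianoUsatine}), so the convention must reproduce $\dim V - \overline{\wt} = \age$. I would also sanity-check against the integrality $\sum_w w\dim V_w \equiv 0 \pmod r$ coming from $G\subset\SL(V)$. With conventions fixed, the identity $\shft + \dim\cY = \dim V - \overline{\wt} = \age(\phi_\cY)$ follows.
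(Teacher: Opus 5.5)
Your skeleton is the paper's. You evaluate at the origin point $y=f\circ x$, use the transitivity triangle for $[V/G]\to BG$, split into $\mu_r$-weight pieces using that $B\mu_r$ is cohomologically affine, and discard the $BG$ contribution via $\overline{\wt}_{BG}=0$ from \autoref{prop:LBG}. As written, though, the proposal does not reach the stated identity.

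First, the triangle $L_{[V/G]}\to Lf^*L_{BG}\to L_f$ that you take from the proof of \autoref{prop:LVmodG} is in the wrong order. This is harmless there, since only determinants are used. The transitivity triangle for $g\colon[V/G]\to BG$ is $Lg^*L_{BG}\to L_{[V/G]}\to L_{[V/G]/BG}$. It pulls back to $Lx^*L_{BG}\to LF_y^*L_{[V/G]}\to LF_y^*L_{[V/G]/BG}$, and the paper obtains exactly this by pulling back along the zero section $f\colon BG\to[V/G]$, using $g\circ f\cong\id_{BG}$. With this triangle, in each weight the relative term adds to $h^0-h^1$ of $LF_y^*L_{[V/G]}$, so it enters $\overline{\wt}$ with a minus sign. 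Your triangle, taken literally, gives the opposite sign. For example, it gives $\overline{\wt}_{[V/G]}=2\dim V$ on the untwisted sector, where the value must be $0$. Your final formula is right only because you overrode your own triangle.

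Second, and this is the real gap, you never make the identification with $\age(\phi_\cY)$, and your plan for making it is circular. The conventions are not yours to choose. The meaning of $\cO(a)$ is fixed by the definition of $\overline{\wt}_\cX$ (compare $\cI_n\simeq\cO(-n)$ in \autoref{propSubTruncationFiberComputation-charp}), and the grading is fixed by the definition of $\age$. Fixing them by demanding that the identity hold for $\mu_r$ acting on $\bA^1$ assumes what is to be proved. Also, the ``known shift'' you invoke comes from a characteristic-$0$ comparison with Yasuda's shift, which carries its own conventions.

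Moreover, take the convention you do state ($\cO(a)\leftrightarrow$ character $a$) together with the fiber $V^\vee$. Then your sum is $\frac{1}{r}\sum_w w\dim V_{-w}$, which is the age of $\phi_\cY$ precomposed with inversion on $\mu_r$. Neither of your two candidate expressions equals $\age(\phi_\cY)$, and the difference is real. For $G=\mu_3\subset\SL_3$ acting with weights $(1,1,1)$ and $\phi_\cY=\id_{\mu_3}$, your sum is $2$ while $\age(\phi_\cY)=1$.

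The paper closes this step by identifying $Lf^*L_{[V/G]/BG}$ with $V$ itself as a $G$-representation. Then the corresponding term of the weight-$(-w)$ exact sequence is $V(-w)^{\mu_r}$, of dimension $\dim V_w$, and the weighted sum is $\age(\phi_\cY)$ term by term. You need an argument of this kind, checked against the fixed conventions rather than chosen after the fact. For the McKay theorems themselves the ambiguity is harmless, since $\phi\mapsto\phi\circ\mathrm{inv}$ is a centralizer-preserving bijection of $\Conj_\mu(G)$.
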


\begin{proof}
Since $k$ is algebraically closed, there exists some $r \in \Z_{>0}$ and group monomorphism $\widetilde{\phi}: \mu_r \to G$ such that $\phi_\cY$ is the image of $\widetilde{\phi}$ in $\Conj_\mu(G)$. Let $x: B\mu_r \to BG$ be the map induced by $\widetilde{\phi}$.

Since $G$ acts linearly on $V$, the inclusion of the origin into $V$ is a $G$-equivariant section of the map $V \to \Spec(k)$, so we have an induced map $f: BG \to [V/G]$ whose composition with $g: [V/G] \to BG$ is 2-isomorphic to the identity map on $BG$. Therefore, setting $y = f \circ x$, we have $\phi_y = \phi_x = \phi_\cY$. Thus by \autoref{corollaryComponentsAndConjugacyClasses}, $y$ is a $k$-point of $\cY$. Therefore
\begin{align*}
	\shft_{[V/G]}&(\cY)+ \dim\cY = \\
	&(1/r)\sum_{w = 1}^r w[ \dim_{k} H^0((Ly^* L_{[V/G]}(-w)) - \dim_{k}H^1((Ly^*L_{[V/G]})(-w))].
\end{align*}
We also have the exact triangle
\[
	Lg^* L_{BG} \to L_{[V/G]} \to L_{[V/G]/BG}.
\]
Pulling back along $f$ and noting that $g \circ f \cong \id_{BG}$, we get an exact triangle
\[
	L_{BG} \to Lf^* L_{[V/G]} \to Lf^* L_{[V/G]/BG}.
\]
Recall that $f$ is given by taking the $G$-quotient of the inclusion of the origin 0 into $V$. Therefore $Lf^* L_{[V/G]/BG}$, thought of as a $G$-equivariant vector space over $k$, is (quasi-isomorphic to) $\Omega^1_{V,0}$ with the induced $G$-action. In other words, $Lf^* L_{[V/G]/BG}$ is (quasi-isomorphic to) $V$ with its $G$-action. Thus we have an exact triangle
\[
	L_{BG} \to Lf^* L_{[V/G]} \to V.
\]
Pulling back along $x$, we get an exact triangle
\[
	Lx^*L_{BG} \to Ly^* L_{[V/G]} \to V,
\]
where $V$ has the $\mu_r$-action given by restricting its $G$-action along $\widetilde{\phi}$. We therefore get the following exact sequence of $\mu_r$-equivariant vector spaces:
\begin{align*}
	0 \to &\cH^0(Lx^*L_{BG}) \to \cH^0( Ly^* L_{[V/G]}) \to V \to\\
	&\cH^1(Lx^*L_{BG}) \to \cH^1(Ly^* L_{[V/G]}) \to 0.
\end{align*}
Since the coarse space map $B\mu_r \to \Spec(k)$ is cohomologically affine, we have that for all $w \in \Z$,
\begin{align*}
	0 \to &H^0((Lx^*L_{BG})(-w)) \to H^0(( Ly^* L_{[V/G]})(-w)) \to V(-w)^{\mu_r} \to\\
	& H^1((Lx^*L_{BG})(-w)) \to H^1((Ly^* L_{[V/G]})(-w)) \to 0
\end{align*}
is an exact sequence of vector spaces over $k$, so
\begin{align*}
	&\dim_k H^0(( Ly^* L_{[V/G]})(-w)) - \dim_k H^1((Ly^* L_{[V/G]})(-w)) \\
	&= \dim_k V(-w)^{\mu_r} -\left[ \dim_k H^1((Lx^*L_{BG})(-w)) - \dim_k H^0((Lx^*L_{BG})(-w))  \right].
\end{align*}
We already showed that if we take the left hand side of the above equality, multiply by $w/r$, and sum over $w = 1, \dots, r$, we get $\shft_{[V/G]}(\cY) + \dim\cY$. By the definitions of $\age$ and $\overline{\wt}$, if we do the same thing to the right hand side, we get $\age(\phi_\cY) - \overline{\wt}_{BG}(x)$. Therefore
\[
	\shft_{[V/G]}(\cY) + \dim\cY = \age(\phi_\cY) - \overline{\wt}_{BG}(x).
\]
The desired result then follows from \autoref{prop:LBG}.
\end{proof}

\begin{corollary}\label{corollaryAgeIsInteger}
Assume $k$ is algebraically closed, let $V$ be a finite dimensional vector space over $k$, and let $G$ be a finite linearly reductive subgroup scheme of $\SL(V)$. If $\phi \in \Conj_\mu(G)$, then $\age(\phi) \in \Z$.
\end{corollary}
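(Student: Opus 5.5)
Two routes are available. One goes through \autoref{maintheoremGorensteinMeasureCrepantResolution} and \autoref{propositionShiftOfInertiaComponentAge}. The other, which I would take as the main argument since it is elementary, uses the $\SL$ condition directly on the $\mu_r$-grading.

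For the direct argument, fix $\widetilde{\phi}\colon\mu_r\to G$ representing $\phi$, and decompose $V=\bigoplus_{w=1}^r V_w$ according to the induced $\mu_r$-action. Since $\mu_r$ is diagonalizable, the determinant of this representation is the character $\det(V)$ of $\mu_r$, which is the weight $\sum_w w\dim_k V_w \bmod r$. The composite $\mu_r\to G\subset\SL(V)\to\bG_m$ (with the last map $\det$) is trivial, so this character is trivial. Hence $\sum_w w\dim_k V_w\equiv 0\pmod r$, and $\age(\phi)=(1/r)\sum_w w\dim_k V_w\in\Z$. The one point needing care is that characters of $\mu_r$ correspond to $\Z/r$ even when $\mu_r$ is non-reduced, since its character group is $\Z/r$ over any field. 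Once that is granted, the identification of $\det$ with the sum of weights holds, and the argument is written for arbitrary $\mu_r$ in any characteristic.

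As a consistency check, and as the route the paper's setup suggests, the result also follows from the machinery. By \autoref{theoremQuotientIsQGorenstein}, $[V/G]\to V/G$ is a tame proper birational map with $K=0$, and $V/G$ is $1$-Gorenstein by \autoref{prop:Q-Gorenstein}. The last assertion of \autoref{maintheoremGorensteinMeasureCrepantResolution} then gives $\shft_{[V/G]}(\cY)\in\Z$. The stack $\cY$ is a quotient of the vector space $V^{\widetilde\phi}$ by a finite group scheme, so $\dim\cY=\dim V^{\widetilde\phi}$ is an integer. \autoref{propositionShiftOfInertiaComponentAge} then gives $\age(\phi_\cY)\in\Z$, and \autoref{corollaryComponentsAndConjugacyClasses} says every $\phi$ arises as some $\phi_\cY$. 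The main obstacle on this route is ensuring that the integrality statement was proved without circularly invoking the present corollary; it was proved via \cite[Proposition 12.2]{SatrianoUsatine6}, so there is no circularity.
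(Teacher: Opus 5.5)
Your proposal is correct. Your second route is exactly the paper's proof: the paper uses \autoref{corollaryComponentsAndConjugacyClasses} and \autoref{propositionShiftOfInertiaComponentAge} to reduce to $\shft_{[V/G]}(\cY)\in\Z$. It then gets this from the last assertion of \autoref{maintheoremGorensteinMeasureCrepantResolution}, applied to $[V/G]\to V/G$ via \autoref{theoremQuotientIsQGorenstein}. You are right that there is no circularity and that $\dim\cY$ is an integer.

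Your main argument is genuinely different and much more elementary. It uses only two facts: $\det\circ\widetilde{\phi}\colon\mu_r\to\bG_m$ is trivial, and $\Hom(\mu_r,\bG_m)=\Z/r$ over any field (concretely, $x^m=1$ in $k[x]/(x^r-1)$ if and only if $r\mid m$). In the paper's normalization the $w=r$ summand contributes $r\dim V_r$, which does not affect divisibility. Consequently your argument works for any homomorphism $\mu_r\to\SL(V)$. It needs neither linear reductivity of $G$ nor injectivity of $\widetilde{\phi}$, and it uses algebraic closedness of $k$ only to make sense of $\Conj_\mu(G)$.

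The paper's route passes through much more machinery:
\begin{itemize}
\item the twisted change of variables formula, thin sets, and \cite[Proposition 12.2]{SatrianoUsatine6} to get integrality of shifts;
\item the Gorenstein property of $V/G$, which itself uses $G\subset\SL(V)$ through $\det L_{[V/G]}$ and freeness in codimension $1$;
\item the vanishing $\overline{\wt}_{BG}=0$ from \autoref{prop:LBG}.
\end{itemize}
What that route buys is conceptual rather than logical: it exhibits integrality of ages as an instance of the general integrality of shifts over a $1$-Gorenstein base. As a proof of this corollary, your character computation is simpler and more general.
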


\begin{proof}
By \autoref{corollaryComponentsAndConjugacyClasses} and \autoref{propositionShiftOfInertiaComponentAge}, it is sufficient to show $\shft_{[V/G]}(\cY) \in \Z$ for every connected component $\cY$ of $I_\mu[V/G]$, but this follows from \autoref{maintheoremGorensteinMeasureCrepantResolution} and \autoref{theoremQuotientIsQGorenstein}.
\end{proof}

We may now prove \autoref{maintheoremMotivicMcKay}.

\begin{proof}[Proof of \autoref{maintheoremMotivicMcKay}]
By \autoref{theoremQuotientIsQGorenstein} and \autoref{maintheoremGorensteinMeasureCrepantResolution}, we have that $V/G$ is Gorenstein, $\bL^{\ord_{V/G}^\Gor}$ is integrable on $\sL(V/G)$, and
\[
	\e_{\str}(V/G) = \sum_{\cY} \bL^{\shft_{[V/G]}(\cY)}\e(\cY),
\]
where the sum varies over all connected components $\cY$ of $I_\mu([V/G])$. Thus if $k$ is algebraically closed, \autoref{corollaryClassOfInertiaComponent} and \autoref{propositionShiftOfInertiaComponentAge} give that
\[
	\e_{\str}(V/G) = \sum_{\cY} \bL^{\age(\phi_\cY)} \e(BZ_G(\phi_\cY)),
\]
and the desired result thus follows from \autoref{corollaryComponentsAndConjugacyClasses}.
\end{proof}

We note that $k$ being algebraically closed is a stronger than necessary hypothesis for the conclusion of \autoref{maintheoremMotivicMcKay} to hold. The key reason we assumed $k$ is algebraically closed is to guarantee that every point of $\uHom_k^{\grp, \inj}(\mu_r, G)$ is defined over $k$. In case a version of \autoref{maintheoremMotivicMcKay} with weaker hypotheses becomes of use, we prove the following lemma, which shows that every point of $\uHom_k^{\grp, \inj}(\mu_r, G)$ is defined over $k$ under the hypothesis that $k$ has enough roots of unity and $G$ is \emph{well-split} in the sense of \cite[Definition 2.9]{AOV}

\begin{lemma}\label{l:descending-mur->well-split}
Let $k$ be a field with $\cha(k)=p$, let $r=p^en$ with $\gcd(p,n)=1$, and suppose $k$ contains all $n$-th roots of unity. Let $G=\Delta\rtimes H$ where $\Delta$ is a connected finite diagonalizable group scheme and $H$ is a finite \'etale tame constant group. 

If $K/k$ is a finite extension of fields, then every homomorphism $\phi\colon\mu_{r,K}\to G_K$ is the base change of a homomorphism $\mu_{r,k}\to G$.
\end{lemma}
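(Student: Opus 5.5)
We argue by structure. Since $\gcd(p,n)=1$ there is a canonical decomposition $\mu_r\cong\mu_{p^e}\times\mu_n$, where $\mu_{p^e}$ is infinitesimal and $\mu_n$ is \'etale. Because $k$ contains the $n$-th roots of unity, $\mu_{n,k}\cong\Z/n$ is a constant group. So a homomorphism $\phi\colon\mu_{r,K}\to G_K$ is the same as a pair of commuting homomorphisms $\phi_1\colon\mu_{p^e,K}\to G_K$ and $\phi_2\colon(\Z/n)_K\to G_K$. It therefore suffices to show that each of $\phi_1,\phi_2$ is defined over $k$; their commuting is then automatic, since it can be checked after the faithfully flat base change $K/k$.

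First we pin down the structure of $G$. Since $\Delta$ is connected and $H$ is \'etale constant, $\Delta$ is the identity component of $G$, and $G_{\red}\cong H$ via the section $H\to G$. Indeed $G\cong\Delta\times H$ as schemes, and $\Delta_{\red}=\Spec k$ because $\Delta$ is infinitesimal over a field.

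\textbf{The \'etale part $\phi_2$.} Since $\Z/n$ is reduced, $\phi_2$ factors through $(G_K)_{\red}=H_K$. A homomorphism from a constant group to a constant group is determined by where the generator goes, namely an element of $H(K)=H(k)$ (as $H$ is constant). So $\phi_2$ is the base change of a homomorphism $\Z/n\to H\subset G$ over $k$.

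\textbf{The infinitesimal part $\phi_1$.} Since $\mu_{p^e}$ is connected, $\phi_1$ factors through the identity component $\Delta_K$. Homomorphisms between diagonalizable groups $\Spec k[A]$ and $\Spec k[B]$ correspond to group homomorphisms $B\to A$ of character groups, and this correspondence is independent of the base field. Hence
\[
\Hom_K(\mu_{p^e,K},\Delta_K)=\Hom(X(\Delta),\Z/p^e)=\Hom_k(\mu_{p^e},\Delta),
\]
so $\phi_1$ also descends to $k$.

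The step requiring the most care is the factorization claims: that a homomorphism from a reduced group lands in $G_{\red}$, and that one from a connected group lands in $G^0=\Delta$. These are standard; for the first, note that $(G_K)_{\red}$ is a subgroup scheme because $H_K$ is reduced and surjects onto it. Having both descended homomorphisms $\psi_1,\psi_2$ over $k$, we define $\psi=\psi_1\cdot\psi_2\colon\mu_{r,k}\to G$. This is a homomorphism because $\psi_1$ and $\psi_2$ commute, which holds since it holds after base change to $K$. Its base change to $K$ is $\phi$.
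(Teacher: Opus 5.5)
Your proposal is correct and follows essentially the same route as the paper's proof: you split $\mu_r\cong\mu_{p^e}\times\Z/n$, descend the infinitesimal part through $\Delta$ via character groups of diagonalizable groups, and descend the \'etale part through $G_{\red}\cong H$ using that $H$ is constant. You then recombine the two pieces by multiplication and check the homomorphism property after the faithfully flat base change to $K$. The only cosmetic difference is that you verify commutativity of $\psi_1,\psi_2$ by descent, whereas the paper applies fppf descent directly to the homomorphism property of $m\circ(\psi_1\times\psi_2)$.
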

\begin{proof}
Let $j\colon H=G_{\red}\to G$ denote the canonical inclusion. The splitting $s\colon H\to G$ is a section of a seperated map, hence a closed immersion. Since $H$ is reduced, it must factor through $G_{\red}$, and thus yields an isomorphism of schemes $H\simeq G_{\red}$. This therefore endows $G_{\red}$ with the structure of a subgroup scheme of $G$.

We have $\mu_r=\mu_{p^e}\times\bZ/n$. Let $\iota_1\colon\mu_{p^e,K}\to\mu_{r,K}$ and $\iota_2\colon(\bZ/n)_K\to\mu_{r,K}$ denote the inclusions. Then $\phi\iota_1$ must factor through $\Delta_K$. Since both $\mu_{p^e,K}$ and $\Delta_K$ are diagonalizable, there exists $\psi_1\colon\mu_{p^e,k}\to\Delta$ such that $\psi_{1,K}=\phi\iota_1$. Since $(\bZ/n)_K$ is reduced $\phi\iota_2$ must factor through $G_{\red,K}=H_K$. Note that $(\bZ/n)_K\to H_K$ is \emph{a priori} a map of schemes but it is, in fact, a group scheme map since $(\bZ/n)_K\to H_K\subset G_K$ is. Since $\bZ/n$ and $H$ are constant groups, there is a group homomorphism $\psi'_2\colon\bZ/n\to H$ and we see $\psi_2:=j\psi'_2$ satisfies $\psi_{2,K}=\psi_2$.

Now define $\psi$ to be the composition $m\circ(\psi_1\times\psi_2)$ where $m\colon G\times G\to G$ is group multiplication. We see $\psi_K=\phi$, so it remains to prove that $\psi$ is a group homomorphism. This follows by fppf descent since $\phi$ is group homomorphism.
\end{proof}

We end this section with an example that illustrates some of the subtleties that can arise in the absence of the hypotheses of \autoref{l:descending-mur->well-split}.

\begin{example}\label{ex:quadratic-twist-mu3}
We illustrate that $I_{\mu_r}(BG)$ may have coarse space whose residue fields are non-trivial extensions of the ground field. Let $L/K$ be a separable quadratic extension with $\bZ/2=\Gal(L/K)$ acting on $\mu_{3,L}$ via inversion. Let $G$ be the associated twisted form over $K$. Since $G$ is abelian, $I_{\mu_3}(BG) = \uHom^{\grp, \inj}(\mu_3, G) \times BG$. Note that $\uHom^{\grp, \inj}(\mu_3, G)=\uIsom^\grp(\mu_3, G)$. We see $\uIsom^\grp(\mu_3, G)_L=\Spec L\coprod\Spec L$ where the two points correspond to the identity map and the inverse map. The Galois action swaps these two points, so $\uIsom^\grp(\mu_3, G)=\Spec L$, and 
\[
I_{\mu_3}(BG) = \Spec L \times_K BG
\]
whose coarse space has residue field $L$.
\end{example}

\section{Components of cyclotomic inertia and irreducible representations}

Our next goal is to prove \autoref{maincorollaryEulerMcKay} by proving the following.

\begin{proposition}\label{prop:Euler-char-irreps}
If $G$ is a finite linearly reductive group scheme over an algebraically closed field $k$, then
\[
\#\Conj_\mu(G)=\#\Irrep_k(G).
\]
\end{proposition}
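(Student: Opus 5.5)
One route is to use the structure theorem already invoked several times: by the proof of \cite[Lemma 2.17]{AOV}, $G=\Delta\rtimes H$. Here $\Delta=\Spec k[A]$ is connected diagonalizable, so $A$ is a finite abelian $p$-group when $\cha(k)=p>0$. The group $H$ is a finite constant group of order prime to $p$, and the same count would be done for each $G$ separately. I would compute both sides explicitly in terms of the pair $(A,H)$, reducing the claim to a Clifford/Mackey-type count for the finite group $A^\vee\rtimes H$.

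For the right-hand side, I will first identify representations of $G$ with $\Delta$-graded representations of $H$. A representation of $\Delta$ is an $A$-graded vector space, and a $G$-representation is an $A$-graded vector space with an $H$-action permuting the graded pieces via the $H$-action on $A$. This is exactly a representation of the (abstract) group algebra $k[H]\ltimes k^{A}$, i.e.\ of the twisted group algebra. By Clifford theory, whose usual proof works here because $|H|$ is invertible in $k$, the irreducibles are indexed by pairs $(a,\rho)$. In such a pair, $a$ runs over representatives of $H$-orbits in $A$, and $\rho\in\Irrep(\Stab_H(a))$. Hence
\[
\#\Irrep_k(G)=\sum_{a\in A/H}\#\Irrep(\Stab_H(a))=\sum_{a\in A/H}\#\{\text{conj. classes of }\Stab_H(a)\},
\]
where the last step uses that $\Stab_H(a)$ has order prime to $p$ and $k$ is algebraically closed. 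A standard double-counting (Burnside) rewrites this as $\#\{(a,h)\in A\times H: ha=a\}/H$, that is, the number of $H$-orbits on pairs of commuting elements. Equivalently, it is the number of conjugacy classes of the abstract finite group $A\rtimes H$ whose $H$-part commutes with its $A$-part.

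For the left-hand side, a monomorphism $\mu_r\to G$ with $r=p^e n$, $\gcd(p,n)=1$, decomposes as $\mu_{p^e}\times\Z/n$. As in the proof of \autoref{l:descending-mur->well-split}, $\mu_{p^e}$ maps into $\Delta$ and $\Z/n$ maps into $G_\red\cong H$ after conjugation. The two images must commute. A homomorphism $\mu_{p^e}\to\Delta$ is a homomorphism $A\to\Z/p^e$, and injectivity means a surjection. Taking the union over all $r$ and forgetting the choice of generator data, the injective homomorphisms from some $\mu_r$ correspond bijectively to pairs $(\chi,h)$. Here $\chi$ is an element of $A^\vee=\Hom(A,\Q/\Z)$, which determines the cyclic $\mu$-part via its order. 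The second entry $h\in H$ is an element fixing $\chi$, and $r=\operatorname{ord}(\chi)\operatorname{ord}(h)$. Conjugation by $G(k)=H$ acts diagonally; conjugation by $\Delta$ is trivial on $\Delta$ but moves $h$ within $\Delta\cdot h$. Here I must check that the $\Delta$-conjugates of a lift of $h$ that still commute with the $\mu_{p^e}$-part collapse to one $k$-point of the quotient. Granting this, $\#\Conj_\mu(G)$ is the number of $H$-orbits of commuting pairs $(\chi,h)\in A^\vee\times H$.

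Finally, $A^\vee\cong A$ noncanonically, but as $H$-sets they need not be isomorphic. However, for each $h$, the counts $\#(A^\vee)^h$ and $\#A^h$ agree, because $|A^h|=|\Hom(A_h,\ldots)|$ and $A^\vee{}^h=(A_h)^\vee$ with $A_h\,{}$ and $A^h$ of equal order for finite abelian $A$. Hence Burnside's lemma, $\#\{\text{orbits}\}=\frac1{|H|}\sum_{h}\#\{(x,h'):h'\in C_H(h)\ldots\}$, makes the two orbit counts equal. I expect the main obstacle to be the scheme-theoretic identification of the $k$-points of $\uHom^{\grp,\inj}_k(\mu_r,G)/G$, in particular the non-reduced conjugation by $\Delta$, with the naive $H$-orbits on commuting pairs. \autoref{ex:singularHominj} shows this set is subtle. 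I would try to handle it via the description $I_{\mu}(BG)=\bigsqcup_\phi BZ_G(\widetilde\phi)$ from \autoref{propositionCyclotomicInertiaOfQuotient} with $V=\Spec k$, counting connected components of $I_\mu(BG)$ directly.
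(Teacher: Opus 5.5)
Your two counts are correct and agree with the paper's. \autoref{l:Clifford-theory-indep-k} gives $\#\Irrep_k(G)=\sum_{a\in A/H}\#\Conj(\Stab_H(a))$, which is the number of $H$-orbits on $P_A=\{(a,h)\in A\times H: ha=a\}$. Likewise $\Conj_\mu(G)$ is the set of $H$-orbits on $P_{A^\vee}=\{(\chi,h)\in A^\vee\times H: h\chi=\chi\}$.

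\textbf{The gap is in the final step} $\#(P_A/H)=\#(P_{A^\vee}/H)$. For the action $g\cdot(x,h)=(gx,ghg^{-1})$, Burnside gives $\#(P_A/H)=|H|^{-1}\sum_{g\in H}\sum_{h\in C_H(g)}\#A^{\langle g,h\rangle}$. So you need $\#A^K=\#(A^\vee)^K$ for subgroups $K=\langle g,h\rangle$ generated by two commuting elements, and these need not be cyclic. Your kernel/cokernel argument only treats a single element. The single-element equality does not imply the two-generator one. For example, let $K=(\Z/2)^2$ act on the augmentation ideal $I\subset\bF_2[K]$. Every $g\in K$ satisfies $\#I^g=\#I_g$, yet $\#I^K=2$ while $\#(I^\vee)^K=\#I_K=\#(I/I^2)=4$.

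The missing ingredient is that $|H|$ is prime to $|A|$. For any $K\le H$, the idempotent $e=|K|^{-1}\sum_{g\in K}g$ acts on the $p$-group $A$. It splits $A=A^K\oplus(1-e)A$, where $(1-e)A$ is the subgroup generated by the elements $ga-a$. Hence $A^K\to A_K$ is an isomorphism, and $\#(A^\vee)^K=\#(A_K)^\vee=\#A^K$. By the table-of-marks criterion this even gives $A\cong A^\vee$ as $H$-sets, contrary to your hedge, and that alone finishes the comparison.

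The step you single out as the main obstacle is in fact immediate. Since $A$ is a $p$-group and $\cha(k)=p$, we have $\Delta(k)=\Hom(A,k^\times)=1$, so $G(k)=H$. Since $k$ is algebraically closed, every $G$-torsor over $k$ is trivial, so
\[
(\uHom^{\grp,\inj}_k(\mu_r,G)/G)(k)=\uHom^{\grp,\inj}_k(\mu_r,G)(k)/H.
\]
The non-reduced conjugation by $\Delta$ is invisible on $k$-points, and no detour through $I_\mu(BG)$ is needed. No conjugation is needed to put the prime-to-$p$ part in $H$ either: a map from the reduced group $\mu_n\cong\Z/n$ automatically lands in $G_{\red}=H$.

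With the coprimality fix, your argument is a correct, purely combinatorial alternative to the paper's. The paper never compares fixed points of $A$ and $A^\vee$. Instead it identifies $\#(P_{A^\vee}/H)$ with $\#\Conj(A^\vee\rtimes H)$ via \autoref{l:p-group-normalized-form}, which is where coprimality enters in its route. It then uses $\#\Conj=\#\Irrep$ for this finite group over $\C$. Finally it applies \autoref{l:Clifford-theory-indep-k} to $A^\vee\rtimes H=D_\C(A)\rtimes H$, whose irreducibles are indexed by $H$-orbits on $A$. So in the paper, complex character theory performs the swap between $A$ and $A^\vee$ that your Burnside argument does by hand.
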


We require some preliminary results.

\begin{lemma}\label{l:p-group-normalized-form}
Let $p$ be a prime, $Q$ be a finite abelian $p$-group, $H$ a finite group with order prime to $p$, and let $\alpha\colon H\to\Aut(Q)$ define a semi-direct product $G'=Q\rtimes H$. Then every element of $G'$ is $Q$-conjugate to an element of the form $(q,h)$ with $\alpha(h)q=q$. Furthermore, if $(q,h)$ and $(q',h')$ satisfy $\alpha(h)q=q$ and $\alpha(h')q'=q'$, then $(q,h)$ and $(q',h')$ are $G'$-conjugate if and only if they are $H$-conjugate.
\end{lemma}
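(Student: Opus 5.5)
The plan is to first put an arbitrary element of $G'$ into the normalized form by conjugating with elements of $Q$, and then compare $G'$-conjugacy with $H$-conjugacy using the coprimality of $|H|$ and $p$. Write the group law as $(q,h)(q',h')=(q+\alpha(h)q',hh')$, with $Q$ written additively. Conjugating $(q,h)$ by $(x,1)$ gives
\[
(x,1)(q,h)(-x,1)=(x+q-\alpha(h)x,\,h)=(q+(1-\alpha(h))x,\,h).
\]
So the $Q$-conjugacy class of $(q,h)$ is $\{(q+(1-\alpha(h))x,h)\}$, and the first claim reduces to showing that $q\equiv q_0 \pmod{\operatorname{im}(1-\alpha(h))}$ for some $q_0\in Q^{\langle h\rangle}$.

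For this step, let $m$ be the order of $h$; it is prime to $p$, so $m$ is invertible on $Q$. Let $e=m^{-1}\sum_{i=0}^{m-1}\alpha(h)^i$. This is an idempotent endomorphism of $Q$ whose image is $Q^{\langle h\rangle}$, and its kernel equals $\operatorname{im}(1-\alpha(h))$, because $1-e=(1-\alpha(h))\circ c$ for an explicit polynomial $c$ in $\alpha(h)$. This gives the decomposition $Q=Q^{\langle h\rangle}\oplus \operatorname{im}(1-\alpha(h))$. Taking $q_0=e(q)$ and choosing $x$ with $(1-\alpha(h))x=q_0-q$ completes the first claim.

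For the second claim, the direction ``$H$-conjugate implies $G'$-conjugate'' is trivial. Conversely, suppose $(q',h')=g(q,h)g^{-1}$ with $g=(x,1)(0,k)$. Conjugating by $(0,k)$ gives $(\alpha(k)q,\,khk^{-1})$, which is again normalized, since $\alpha(khk^{-1})\alpha(k)q=\alpha(k)q$. Replacing $(q,h)$ by this element, we may assume $g=(x,1)$. Then $h'=h$ and $q'=q+(1-\alpha(h))x$. Both $q$ and $q'$ lie in $Q^{\langle h\rangle}$, so their difference lies in $Q^{\langle h\rangle}\cap\operatorname{im}(1-\alpha(h))=0$ by the direct sum decomposition. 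Hence $q'=q$, and the original two elements are $H$-conjugate via $(0,k)$.

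The only real content is the decomposition $Q=Q^{\langle h\rangle}\oplus\operatorname{im}(1-\alpha(h))$, which uses $\gcd(m,p)=1$. I would verify it carefully via the averaging idempotent $e$, checking that $\ker e=\operatorname{im}(1-\alpha(h))$: the inclusion $\supseteq$ holds since $e(1-\alpha(h))=0$, and the reverse follows from the factorization of $1-e$ above. I expect this to be the main step, though it is routine.
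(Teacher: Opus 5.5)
Your proposal is correct and is essentially the paper's argument. Both use the averaging operator $\pi_h=\frac{1}{m}\sum_{i=0}^{m-1}\alpha(h)^i$ to move $q$ to an $h$-fixed element by $Q$-conjugation, and both prove the second claim by noting that an element of $\operatorname{im}(1-\alpha(h'))$ fixed by $h'$ is killed by $\pi_{h'}$ and hence is $0$. The paper handles a general conjugator $(a,g)$ in one step rather than factoring it as $(x,1)(0,k)$. Your existence argument for $x$ via $\ker e=\operatorname{im}(1-\alpha(h))$ is, if anything, safer than the paper's explicit conjugator: its sign must be flipped to $a=\frac{1}{m}\sum_{i=1}^{m-1}i\,\alpha(h)^i q$ to actually produce $(\pi_h(q),h)$.
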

\begin{proof}
We write the group structure on $Q$ additively and on $H$ multiplicatively. To ease notation, we write $\alpha(h)q$ simply as $hq$. Then $(a,g)(q,h)(a,g)^{-1}=(gq+(1-ghg^{-1})a,ghg^{-1})$. Let $m$ be the order of $h$, which is necessarily prime to $p$. Taking
\[
a = -\frac{1}{m} \sum_{i=1}^{m-1} ih^iq.
\]
and $g=1$, we see the above expression is $(\pi_h(q),h)$ where $\pi_h=\frac{1}{m}\sum_{i=0}^{m-1}h^i$. Note that $\pi_h(q)$ is fixed by $h$.

Next, let $(q,h)$ and $(q',h')$ satisfy $hq=q$ and $h'q'=q'$. Suppose
\[
(q',h')=(a,g)(q,h)(a,g)^{-1}=(gq+(1-ghg^{-1})a,ghg^{-1}).
\]
Note that $q'$ and $gq$ are both fixed by $h'=ghg^{-1}$, so $(1-ghg^{-1})a=(1-h')a$ must also be fixed by $h'$. Thus, $(1-h')a=\pi_h(1-h')a=0$. So, we see
\[
(q',h')=(gq,ghg^{-1})=(0,g)(q,h)(0,g)^{-1},
\]
i.e., $(q,h)$ and $(q',h')$ are in the same $H$-conjugacy class.
\end{proof}

\begin{lemma}\label{l:Clifford-theory-indep-k}
Let $A$ be a finite abelian group and $H$ a finite group. Let $\alpha\colon H\to\Aut(A)$ be a group map defining a group scheme $G=D_\bZ(A)\rtimes H$ over $\bZ$, where $D_S(A):=\uHom(A,\bG_{m,S})$ is the Cartier dual. Then, for every field $k$, a $G_k$-representation $V$ is irreducible if and only if it is of the form
\[
V=\bigoplus_{a\in Hb}V_a
\]
with $V_b$ an irreducible $H_b$-representation, and $V_a\subset V$ the subspace where $\Delta_k$ acts with weight $a$.

In particular, for every $k$ with $\cha(k)$ prime to $|H|$, the quantity
\[
\#\Irrep_k(G_k)
\]
is independent of $k$.
\end{lemma}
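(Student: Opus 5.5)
Our approach is Clifford theory, written so that it works over an arbitrary field $k$. Set $\Delta_k = D_k(A)$. A representation of $\Delta_k$ is the same thing as an $A$-graded vector space, so any $G_k$-representation $V$ breaks up as $V=\bigoplus_{a\in A}V_a$. An element $h\in H$ sends $V_a$ to $V_{\alpha(h)a}$; the exact direction depends on convention, and we would fix it by writing out the semidirect product action on $k[A]$. Consequently, for each $H$-orbit $O\subset A$, the sum $V_O=\bigoplus_{a\in O}V_a$ is a $G_k$-subrepresentation. If $V$ is irreducible, then only one orbit $O=Hb$ can contribute.

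Now fix $b$, and let $H_b$ be its stabilizer in $H$. The summand $V_b$ is stable under $\Delta_k\rtimes H_b$, and on it $\Delta_k$ acts through the single character $b$. We will verify the identification
\[
V\cong \operatorname{Ind}_{\Delta_k\rtimes H_b}^{G_k}V_b .
\]
Concretely, $V=\bigoplus_{hH_b\in H/H_b}hV_b$, and the $H$-translates of $V_b$ are exactly the weight spaces $V_a$ for $a\in Hb$. This is pure bookkeeping with gradings, and it never requires $H$ to be linearly reductive.

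From here the irreducibility criterion follows in both directions.
\begin{itemize}
\item Suppose $W\subset V_b$ is a nonzero $H_b$-subrepresentation. Then $\bigoplus_{hH_b}hW$ is a nonzero $G_k$-subrepresentation of $V$. So if $V$ is irreducible, $V_b$ must be an irreducible $H_b$-representation.
\item Conversely, let $U\subset V$ be a nonzero $G_k$-subrepresentation. It is $\Delta_k$-stable, hence graded, so $U=\bigoplus_a U_a$. Using transitivity of $H$ on the orbit, $U_b\neq 0$. Since $U_b$ is $H_b$-stable, irreducibility of $V_b$ forces $U_b=V_b$, and translating by $H$ gives $U=V$.
\end{itemize}
We should also note that every irreducible $H_b$-representation $W$ really occurs as some $V_b$. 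To see this, let $\Delta_k$ act on $W$ by the character $b$ and induce up to $G_k$; the criterion just proved shows the induced representation is irreducible.

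For the counting statement, the step above yields a bijection
\[
\Irrep_k(G_k)\;\longleftrightarrow\;\bigsqcup_{b\in A/H}\Irrep_k(H_b).
\]
For injectivity we need that the pair ($H$-orbit, isomorphism class of $V_b$) recovers $V$. Changing the representative $b$ to $hb$ replaces $H_b$ by a conjugate subgroup and $V_b$ by its conjugate, so the pairs should be taken up to this conjugation. Granting that, it remains to see that $\#\Irrep_k(H_b)$ does not depend on $k$ whenever $\cha(k)\nmid |H|$. The natural statement is that this number equals the number of conjugacy classes of $H_b$, but that holds only over splitting fields. Since the paper writes $\Irrep_k$ and later uses algebraic closures, we will take $k$ algebraically closed at this point, where semisimplicity together with the character count gives the number of conjugacy classes. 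If instead the statement is meant literally for arbitrary $k$, we would have to interpret $\Irrep_k$ as the absolutely irreducible representations. \textbf{This is the main obstacle}: in general the count depends on $k$ through the number of $k$-rational conjugacy classes, so we will state explicitly that $k$ is taken to be algebraically closed. The only other delicate point is checking that no gap opens in positive characteristic. Here linear reductivity of $\Delta$, which is automatic for a diagonalizable group, ensures the weight decomposition always exists, and $\cha(k)\nmid|H|$ ensures complete reducibility for each $H_b$.
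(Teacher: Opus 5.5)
Your proposal is correct and follows essentially the same Clifford-theory argument as the paper: the weight decomposition under $\Delta_k$ with $h(V_a)=V_{ha}$, support equal to a single $H$-orbit, and irreducibility of $V$ equivalent to irreducibility of $V_b$ as an $H_b$-representation, which yields $\#\Irrep_k(G_k)=\sum_{b\in A/H}\#\Irrep_k(H_b)$. Your extra care is justified on two points. First, the paper passes to this count without checking that every irreducible $H_b$-representation occurs and that $V$ is determined by $V_b$; your induction step supplies both. Second, the paper's final equality $\#\Irrep_k(H_b)=\#\Conj(H_b)$ does require $k$ to be a splitting field, for example algebraically closed, and that is exactly the case used in \autoref{prop:Euler-char-irreps}.
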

\begin{proof}
Let $\Delta=D_\bZ(A)$ and $k$ be a field. The characterization of irreducible $G_k$-representations is nearly the same as in \cite[Proposition 25]{Serre}. % and let $k(-b)$ denote the $1$-dimensional representation of $\Delta_k$ where $\Delta_k$ acts as weight $b$.

Let $V$ be a $G_k$-representation. We first characterize when $V$ is irreducible. Restricting $V$ to a $\Delta_k$-representation, we obtain a decomposition $V=\bigoplus_{a\in A}V_a$. Let $\Supp(V)$ denote the $a\in A$ for which $V_a\neq0$. For every $h\in H$, we have $h(V_a)=V_{ha}$ where $ha:=\alpha(h)a$; indeed, for any $R$-valued points $\delta\in\Delta(R)$ and $v\in V_a(R)$, we see $\delta hv=h(h^{-1}\delta h)v=a(h^{-1}\delta h)\cdot hv=(ha)(\delta)\cdot hv$; this shows $h(V_a)\subset V_{ha}$ but the action by $h$ is an automorphism of $V$. Next, for any $b\in A$, we see $\bigoplus_{a\in Hb}V_a$ is a $G$-subrepresentation of $V$, hence if $V$ is irreducible, $\Supp(V)$ must be a single $H$-orbit. Furthermore, we see that $V_b$ is %of the form $k(-b)\otimes W$, where $W$ is
a representation of $H_b:=\{h \in H\mid hb=b\}$. If $V$ is irreducible then $V_b$ must be irreducible; indeed, if $U\subset V_b$ were a non-trivial $H_b$-subrepresentation, then $\bigoplus_{h\in H/H_a}hU\subset V$ would be a non-trivial $G$-subrepresentation. Conversely, if $V$ is reducible and $\Supp(V)$ is a single $H$-orbit, then letting $V'\subset V$ be a non-trivial $G$-subrepresentation, we see $\Supp(V')=\Supp(V)$ and $V'=\bigoplus_{a\in Hb}(V'\cap V_b)$; indeed, some $V'_a$ is non-zero then acting by $H$ shows that all $V'_a\neq0$ for $a\in Hb$. Now if $V_b$ were an irreducible $H_b$-representation, then $V'\cap V_b=V_b$. However, again acting by $H$, we see $V_{hb}=h(V'\cap V_b)=hV'\cap V_{hb}$, so $V'=V$, a contradiction.

We have therefore shown that $V$ is irreducible if and only if it is of the form $V=\bigoplus_{a\in Hb}V_a$ with $V_b$ an irreducible $H_b$-representation. As a result,
\[
\#\Irrep_k(G_k)=\sum_{b\in A/H}\#\Irrep_k(H_b).
\]
Note that $A/H$ and $H_b$ depend only on the $H$-action on $A$, and not on the field $k$. If $\cha(k)$ is prime to $|H|$ (hence $|H_b|$ is as well), then $\#\Irrep_k(H_b)=\#\Conj(H_b)$ is also independent of $k$.
\end{proof}

\begin{proof}[{Proof of \autoref{prop:Euler-char-irreps}}]
When $\cha(k) = 0$, this is a standard fact, noting that $\Conj_\mu(G)$ is in bijection with the number of conjugacy classes in $G$ in this case. Thus we will assume that $\cha(k)=p$. By \cite[Lemma 2.11 and Proposition 2.13]{AOV}, we may write $G=\Delta\rtimes H$ with $\Delta$ the connected component of the the identity and $H$ the component group, and furthermore, $\Delta$ is a finite diagonalizable group scheme and $H$ is a tame constant group. Let $A$ be the Cartier dual of $\Delta$. Then let $\Delta^\C$ be the abelian $p$-group $\Hom(A,\C^*)=\Hom(A,\bQ/\bZ)$, and set $G^\C = \Delta^\C \rtimes H$, where the semi-direct product is given by the action of $H$ on $\Delta^\C$ induced in the obvious way by the action of $H$ on $\Delta$. By \autoref{l:p-group-normalized-form}, we see $\Conj(G^\C)$ is in bijection with the set
\[
\{(\phi,h)\in \Delta^\C \times H\mid h\phi=\phi\}/H
\]
where $H$ acts as $g(\phi,h)=(g\phi,ghg^{-1})$.

Next, let $r=p^em$ with $m$ prime to $p$, so that $\mu_{r,k}=\mu_{p^e,k}\times\mu_{m,k}$. As in the proof of \autoref{l:descending-mur->well-split}, giving a morphism $\varphi\colon\mu_{r,k}\to G$ is equivalent to giving morphisms $\varphi_1\colon\mu_{p^e,k}\to\Delta$ and $\varphi_2\colon\mu_{m,k}\to G_{\red}=H\subset G$ such that the images commute. Since $k$ is algebraically closed, we may fix a choice of isomorphism $\mu_{m,k}\simeq\bZ/m$; then giving $\varphi_2$ is equivalent to giving an order $m$ element of $H$. Giving $\varphi_1$ is equivalent to giving a surjective map $A\to\bZ/p^e$, i.e., an order $p^e$ element of $\Hom(A,\bQ/\bZ)=\Delta^\C$; note that since $A$ is a $p$-group, all elements of $\Delta^\C$ have order equal to a power of $p$. Thus, as we vary over $r$ (equivalently, vary over $e$ and $m$), we see
\[
\sum_r \#\uHom_k^{\grp, \inj}(\mu_r,G)(k)=\#\{(\phi,h)\in \Delta^\C \times H\mid h\phi=\phi\},
\]
where the condition $h\phi=\phi$ is equivalent to commutativity of $\varphi_1$ and $\varphi_2$. Since $k$ is algebraically closed,
\[
(\uHom_k^{\grp, \inj}(\mu_r,G)/G)(k)=\uHom_k^{\grp, \inj}(\mu_r,G)(k)/G(k)=\uHom_k^{\grp, \inj}(\mu_r,G)(k)/H.
\]
Thus,
\[
\#\Conj_\mu(G)=\sum_r \#(\uHom_k^{\grp, \inj}(\mu_r,G)/G)(k)=\#\{(\phi,h)\in \Delta^\C \times H\mid h\phi=\phi\}/H,
\]
where the $H$-action is given by $g(\phi,h)=(g\phi,ghg^{-1})$. We therefore find 
\[
	\# \Conj_\mu(G)=\#\Conj(G^\C)=\#\Irrep_{\C}(G^{\C}),
\] 
where the second equality follows from the characteristic 0 case. By \autoref{l:Clifford-theory-indep-k}, $\#\Irrep_\C(G^\C)=\#\Irrep_k(G)$, and we are done.
\end{proof}

We end this section by completing the proof of \autoref{maincorollaryEulerMcKay}.

\begin{proof}[Proof of \autoref{maincorollaryEulerMcKay}]
The theorem is immediate from \autoref{maintheoremMotivicMcKay} and \autoref{prop:Euler-char-irreps}.
\end{proof}

\section{Purity of crepant resolutions}

Recall that if $Y$ is a finite type scheme over $k$ and $k^s$ is a separable closure of $k$, then there exists a \emph{weight filtration} on each $H^i_{\mathrm{\acute{e}t,c}}(Y \otimes_k k^s, \Q_\ell)$ as follows. If $Y'$ is a finite type scheme over a finitely generated subfield $k'$ of $k$ such that $Y \cong Y' \otimes_{k'} k$, then the filtration on $H^i_{\mathrm{\acute{e}t,c}}(Y \otimes_k k^s, \Q_\ell)$ given by the canonical weight filtration on $H^i_{\mathrm{\acute{e}t,c}}(Y' \otimes_{k'} k'^s, \Q_\ell)$ is independent of the choice of $Y'$ and $k'$. We will say $H^i_{\mathrm{\acute{e}t,c}}(Y \otimes_k k^s, \Q_\ell)$ is \emph{pure of weight $i$} if for all $j \neq i$, we have $\gr^W_jH^i_{\mathrm{\acute{e}t,c}}(Y \otimes_k k^s, \Q_\ell)=0$. When $k$ is finitely generated, this coincides with the usual definition of purity. The main goal of this section is to prove the following.

\begin{theorem}\label{thm:crepant-res-VmodG->pure}
Let $k^s$ be a separable closure of $k$, and let $\ell$ be a prime number that is invertible in $k$. Let $V$ be a finite dimensional vector space over $k$, let $G$ be a finite linearly reductive subgroup scheme of $\SL(V)$, and let $Y \to V/G$ be a crepant resolution of singularities by a scheme $Y$. Then for all $i \in \Z_{\geq 0}$, we have $H_{\mathrm{\acute{e}t,c}}^i(Y \otimes_k k^s, \Q_\ell)$ is pure of weight $i$.
\end{theorem}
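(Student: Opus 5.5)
The plan is to deduce purity from the motivic McKay correspondence, \autoref{maintheoremMotivicMcKay}, by applying a weight (Hodge–Deligne type) realization to the equality $\e(Y)=\e_{\str}(V/G)$.

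First, reduce to $k$ finitely generated over its prime field. Spread out $V$, $G$, the embedding $G\hookrightarrow\SL(V)$ and the crepant resolution $Y\to V/G$ over a finitely generated subfield $k'\subset k$. Linear reductivity and crepancy should descend after enlarging $k'$, because they can be checked geometrically. The weight filtration is defined through such a model, so we may assume $k$ is finitely generated. Next, note that $\e(Y)=\e_{\str}(V/G)$ holds over $k$, since $Y$ is a crepant resolution. Base change to $\overline{k}$ and use \autoref{maintheoremMotivicMcKay} to get
\[
\e(Y\otimes_k\overline{k})=\sum_{\phi\in\Conj_\mu(G\otimes_k\overline{k})}\bL^{\age(\phi)}\,\e(BZ_G(\phi)).
\]

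Second, apply the weight polynomial realization. For finitely generated $k$, the map sending a variety $X$ to $\sum_{i,j}(-1)^i\dim\gr^W_j H^i_{\mathrm{\acute{e}t,c}}(X\otimes k^s,\Q_\ell)\,u^j$ is additive and multiplicative on the Grothendieck ring. It sends $\bL$ to $u^2$. It extends to $\widehat{\sM}_k$, after accounting for completion, with values in $\Z\llparenthesis u^{-1}\rrparenthesis$; the weight can be read off over a finite-type model after base change. We need the value on $\e(BZ)$ for $Z$ finite linearly reductive. I claim it is $1$. The reason is that $BZ\cong[T'/G']$ as in the proof of \autoref{prop:LBG}, with $G'=T\rtimes H$ and $T'$ a torus. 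Then $\e(BZ)=\e([T'/T]\text{-type data})$ can be computed as $\e(T')/\e(T)$ twisted by the finite tame group $H$. The characters of $M'\otimes\Q$ and $M\otimes\Q$ agree as $H$-representations, as shown there, so the $H$-invariant parts of the cohomologies of $T'$ and $T$ agree, weight by weight. Alternatively, use that $\e(B\Delta)=1$ for diagonalizable $\Delta$, since $B\mu_n$ is a quotient of $\bG_m$ by $\bG_m$. Then handle the tame $H$-part via $H$-invariants of cohomology, which preserve weights. Either way, the weight polynomial of $Y$ becomes $\sum_\phi u^{2\age(\phi)}$, a polynomial whose terms all have positive coefficients.

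Third, conclude purity. The cohomology of the smooth variety $Y$ has $\gr^W_jH^i_c=0$ for $j>i$, by Deligne's Weil II. Since $Y\to V/G$ is proper, $Y$ contains the proper fiber over the origin, and we need the reverse bound. The standard trick is the contracting $\bG_m$-action. The scaling action on $V$ commutes with $G$ and descends to $V/G$. I would try to lift it to $Y$, at least after replacing $Y$ by a $\bG_m$-equivariant crepant resolution; this is the delicate point hinted at in the acknowledgments. Alternatively, avoid the lifting altogether and use only the weight polynomial. Bigraded cancellation could hide impurity, so the weight polynomial alone is insufficient. I would therefore pass to the finer realization in Galois representations. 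In that realization the same identity shows that the semisimplified virtual representation $\sum_i(-1)^i[H^i_c]$ equals $\sum_\phi\Q_\ell(-\age(\phi))$, possibly twisted by finite-order characters. Combining this with the upper bound $j\le i$ and Poincaré duality for smooth $Y$ (which gives $j\ge 2i-2\dim Y$ type bounds), together with the retraction of $Y$ onto the proper central fiber $E$ via the contracting action, yields $H^i_c(Y)\cong H^{2d-i}(Y)^\vee(-d)$ and $H^\ast(Y)\cong H^\ast(E)$, where $E$ is proper of weight $\le$ degree. This forces purity. I expect the main obstacle to be producing this contraction, or equivalently the isomorphism $H^\ast(Y)\cong H^\ast(E)$, via proper base change and the homotopy property for a cone-like $\bG_m$-equivariant structure. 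I would first try to equip $Y$ with a $\bG_m$-action by an equivariant construction, and otherwise use the henselian proper base change comparison $H^\ast(Y)\cong H^\ast(Y\times_{V/G}\Spec\cO^h_{V/G,0})$ together with the conical structure of $V/G$.
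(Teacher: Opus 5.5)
There is a genuine gap. Everything in your third step rests on a $\bG_m$-action on the \emph{given} crepant resolution $Y$ that lifts the scaling action on $V/G$, or equivalently on the retraction isomorphism $H^*(Y)\cong H^*(E)$ with $E=\pi^{-1}(0)$. The proposal never supplies this. Your first two steps cannot replace it: as you note yourself, the McKay formula and any realization of it (weight polynomial or semisimplified virtual Galois representation) only see alternating sums, so impurity can cancel. Replacing $Y$ by some $\bG_m$-equivariant crepant resolution does not help either. The theorem concerns an arbitrary crepant resolution, and nothing available identifies the weight-filtered cohomology of two different crepant resolutions. The henselian route also stalls. Proper base change gives $H^*(Y\times_{V/G}\Spec\cO^h_{V/G,0})\cong H^*(E)$, but the remaining comparison amounts to $R\Gamma(V/G,R\pi_*\Q_\ell)\cong(R\pi_*\Q_\ell)_0$. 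That is a contraction lemma, and it needs $R\pi_*\Q_\ell$ to be $\bG_m$-equivariant (conic) on the cone $V/G$. The conical structure of $V/G$ alone says nothing about $R\pi_*\Q_\ell$ for a resolution not already known to be equivariant.

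The missing idea, which is the bulk of the paper's proof, is that an action of a connected smooth affine group on $X=V/G$ lifts to \emph{every} crepant resolution. The paper proves this in three steps.
\begin{enumerate}
\item Take a functorial resolution $Y'\to X$ (Bergh--Rydh). It commutes with smooth base change, so descent along the action and projection maps $\bG_m\times X\to X$ makes $Y'$ equivariant.
\item Because $Y$ is crepant, each exceptional divisor of $Y$ is a crepant valuation. A crepant valuation must appear as a divisor on every resolution; otherwise, by Koll\'ar--Mori, its discrepancy would be at least $1$. This gives a $\bG_m$-invariant open $U'\subset Y'$ and an open $W$ embedded in both $Y$ and $U'$, with complements of codimension at least $2$ in each.
\item Transport a very ample $\cL$ on $Y$ to a line bundle on $U'$ by reflexive extension, and linearize a power of it by Sumihiro's theorem. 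This gives a $\bG_m$-action on $\bigoplus_n H^0(Y,\cL^{\otimes n})$, hence on $Y=\Proj$, compatible with $\pi$.
\end{enumerate}
The paper then concludes with the Bia\l{}ynicki-Birula decomposition. The fixed components lie over the unique fixed point of $X$, so they are smooth and proper. The filterable stratification by affine fibrations over them then gives purity.

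Once the action is available, your own endgame would be a valid substitute for that last step. It combines weights $\le i$ on $H^i_c$ from Weil II, Poincar\'e duality, and $H^*(Y)\cong H^*(E)$ with $E$ proper.
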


\begin{remark}
In the special case where $k$ has characteristic 0, \autoref{thm:crepant-res-VmodG->pure} was proved in \cite[Theorem 8.4]{BatyrevNonArch}. Most of the work below is in proving \autoref{l:lift-alg-gp-actions-crepant-res}, which is a generalization to positive characteristic of \cite[Proposition 8.2]{BatyrevNonArch}. We note that we were unable to adapt the argument for \cite[Proposition 8.2]{BatyrevNonArch} in loc. cit. to positive characteristic, so our argument for \autoref{l:lift-alg-gp-actions-crepant-res} is quite different from the argument in loc. cit.
\end{remark}

Before we prove \autoref{thm:crepant-res-VmodG->pure}, we use it to prove \autoref{maintheoremCohomologicalMcKay}.

\begin{proof}[Proof of \autoref{maintheoremCohomologicalMcKay}]
By replacing $k$ with a finitely generated subfield over which $G$ and $X \to V/G$ are defined, we may assume that $k$ is a finitely generated field. Then by \autoref{thm:crepant-res-VmodG->pure},
\[
	\EP(X) = \sum_{i \in \Z_{\geq 0}} (-1)^i \dim_{\Q_\ell} H_{\mathrm{\acute{e}t,c}}^i(X \otimes_k k^s, \Q_\ell)  t^i,
\]
where $\EP(X)$ is the Euler-Poincar\'{e} polynomial of $X$. On the other hand, if $\overline{k}$ is an algebraic closure of $k$, then
\[
	\EP(X) = \EP(X \otimes_k \overline{k}) = \EP(\e(X \otimes_k \overline{k})) = \EP(\e_{\str}(V/G \otimes_k \overline{k})),
\]
where the first and second equalities are by, e.g., \cite[Chapter 2 Proposition 3.5.10]{ChambertLoirNicaiseSebag}). Noting that \cite[Proposition 3.1(iii)]{Ekedahl2} implies that $\EP(\e(BH)) = 1$ for any finite group scheme $H$, \autoref{maintheoremMotivicMcKay} gives that
\[
	\EP(\e_{\str}(V/G \otimes_k \overline{k})) = \sum_{\phi \in \Conj_{\mu}(G \otimes_k \overline{k})} t^{2 \age(\phi)}, 
\]
and we are done.
\end{proof}

The remainder of this section will now be used to prove \autoref{thm:crepant-res-VmodG->pure}.

\subsection{Extending actions of connected algebraic groups}

The following proposition shows that, under suitable conditions, actions of connected algebraic groups can be imported from other varieties which differ in codimension at least $2$.

\begin{proposition}\label{prop:T-action-extends-codim2}
Let $G$ be a connected smooth affine group scheme over an algebraically closed field $k$. Suppose $G$ acts on a reduced Noetherian affine scheme $X=\Spec A$ over $k$. Consider the following commutative diagram
\[
\xymatrix{
W\ar[r]^-{\jmath'}\ar[d]_-{\jmath} & U'\ar[d]^-{\pi'}\\
Y\ar[r]^-{\pi} & X
}
\]
with $\pi$ is projective and $\pi'$ is separated, $\jmath$ and $\jmath'$ are open immersions, $\codim(Y\setminus W)\geq2$, and $\codim(U'\setminus W)\geq2$. Assume $Y$ and $U'$ are smooth over $k$, and that $U'$ has a $G$-action making $\pi'$ equivariant. (Note that $W\subset U'$ is not assumed to be $G$-invariant.) Then there is a unique $G$-action on $Y$ such that the map $\pi$ is equivariant.

\end{proposition}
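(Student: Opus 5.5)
The approach is to build the action on $Y$ as a birational action first and then show it extends everywhere. Transporting the action from $U'$ to $W$ gives a rational action $a\colon G\times Y\dashrightarrow Y$. I would show $a$ is regular in codimension $1$ on the smooth variety $G\times Y$, then extend it over the rest using projectivity of $\pi$ and the fact that $X$ is affine.

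The construction goes as follows. Since $\jmath$ and $\jmath'$ are open immersions, $W$ is a common dense open of $Y$ and $U'$, and it is dense because both complements have codimension at least $2$. Define
\[
a\colon G\times W\to G\times U'\xrightarrow{\sigma'} U'.
\]
This is a morphism into $U'$, and $U'$ and $Y$ are birational via $W$. So $a$ defines a rational map $G\times Y\dashrightarrow Y$ over $X$: composing $a$ with $\pi'$ gives $(g,w)\mapsto g\cdot\pi(w)$, which is regular. Uniqueness follows at once. Any two actions making $\pi$ equivariant agree on a dense open, because $\pi$ is birational onto its image: it is an isomorphism over a dense open, as $\pi\jmath=\pi'\jmath'$ and $\pi'$ is separated. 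Since $Y$ is separated and $G\times Y$ is reduced, the two actions are equal.

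For existence, set $\Gamma\subset G\times Y\times Y$ to be the closure of the graph of $a$. Because $\pi$ is projective, and hence proper, and the composite to $X$ is the regular map $(g,y)\mapsto g\pi(y)$, the projection $\Gamma\to G\times Y$ is proper and birational. I must show it is an isomorphism. The key point is codimension $1$. Let $E\subset G\times Y$ be a prime divisor not meeting $G\times W$; then $E$ equals $G\times D$ with $D\subset Y\setminus W$ of codimension $\le1$. This is impossible, since $\codim(Y\setminus W)\ge2$. So $G\times W$ has complement of codimension at least $2$ in $G\times Y$, and $a$ is already defined there. Similarly, for $g\in G$, the translate $g\cdot W$ differs from $W$ in codimension at least $2$ inside $U'$. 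Then $a$ restricts to a birational self-map $\alpha_g$ of $Y$ over $X$ that is an isomorphism in codimension $1$: it maps $W\cap g^{-1}W$ isomorphically onto $gW\cap W$, and both complements have codimension at least $2$ in $Y$.

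The main obstacle is upgrading ``isomorphism in codimension $1$'' to a genuine morphism. A small birational map between smooth varieties projective over $X$ need not be regular; flops are the standard counterexample. So I would use the connectedness of $G$. Fix a relatively $\pi$-ample line bundle $\cL$ on $Y$. Since $\alpha_g$ is small, pullback gives a well-defined map on $\operatorname{Pic}(Y)=\operatorname{Cl}(Y)$ (using smoothness), and $Y=\Proj_X\bigoplus_m\pi_*\cL^m$. Next, consider $\mathcal{M}=a^*\cL$ on $G\times Y$, which is defined because $a$ is regular off codimension $2$ and $G\times Y$ is smooth. It is a family of line bundles on $Y$ parametrized by the connected $G$, equal to $\cL$ at $g=e$. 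The classes $\alpha_g^*[\cL]$ vary in a connected family of divisor classes, so they should all be relatively ample: the relative ample cone is open and the relevant Néron--Severi-type group relative to the affine base $X$ is discrete. Hence $\alpha_g^*\cL\cong\cL\otimes(\text{algebraically trivial})$, and $\alpha_g$ preserves the relative $\Proj$. Concretely, $G$ acts on the graded $A$-algebra $\bigoplus_m H^0(Y,\cL^m)$, possibly after passing to a central extension to linearize, and therefore acts on $Y=\Proj$ compatibly with $\pi$. I expect this discreteness and linearization step to need the most care. As a fallback I would use a Weil-type extension theorem: a rational map from a smooth variety to a variety proper over $X$ whose indeterminacy, if nonempty, would produce an exceptional divisor in $\Gamma$ mapping to codimension $\ge2$. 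I would then rule out such a divisor using $\alpha_g\circ\alpha_{g^{-1}}=\id$ together with the connectedness of $G$.
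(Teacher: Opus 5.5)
Your setup is sound. That covers the rational action, the codimension-two bookkeeping, and the pointwise conclusion that each $\alpha_g$, $g\in G(k)$, is an automorphism of $Y$ lying over $g$. On the bookkeeping: the composite $G\times W\to U'\dashrightarrow Y$ is a priori defined as a map to $Y$ only where $g\cdot w\in W$, but that locus still has complement of codimension $\geq 2$ because the action map is flat. On the pointwise step: intersection numbers of $\alpha_g^*\cL$ with subvarieties $V$ of fibers of $\pi$ are constant in $g$ by flatness of $G\times V\to G$, so $\alpha_g^*\cL$ is $\pi$-ample, and a small map pulling a relatively ample bundle back to a relatively ample one is an isomorphism.

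The gap is the sentence ``Concretely, $G$ acts on the graded $A$-algebra $\bigoplus_m H^0(Y,\cL^m)$.'' Nothing you have established gives this. Knowing $\alpha_g^*\cL\cong\cL\otimes P_g$ with $P_g$ algebraically trivial is not enough, since $\alpha_g^*$ carries $H^0(Y,\cL^m)$ to $H^0(Y,\cL^m\otimes P_g^{m})$. A central extension by $\bG_m$ only absorbs the scalar ambiguity of isomorphisms $\alpha_g^*\cL\cong\cL$ that must already exist. More seriously, automorphisms $\alpha_g$ for each $g\in G(k)$ give only an abstract action of the group $G(k)$. To get a morphism $G\times Y\to Y$ you need a coaction $\bigoplus_m H^0(Y,\cL^{nm})\to\cO(G)\otimes_k\bigoplus_m H^0(Y,\cL^{nm})$, i.e.\ an isomorphism $a^*\cL^{n}\cong p_2^*\cL^{n}$ on $G\times Y$ satisfying the cocycle condition---a $G$-linearization. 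That is the heart of the proposition. Neither the N\'eron--Severi discreteness heuristic nor the fallback (``rule out the exceptional divisor using $\alpha_g\circ\alpha_{g^{-1}}=\id$ and connectedness'') supplies it.

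The paper obtains the linearization where the action is honest and never analyzes the rational map. Since $Y$ and $U'$ are smooth and both complements of $W$ have codimension $\geq 2$, one has $H^0(Y,\cL^n)=H^0(W,\cL^n)=H^0(U',\cL'^{\otimes n})$, where $\cL'=\jmath'_*\jmath^*\cL$ is invertible (reflexive of rank one). By \cite[Theorem 1.6]{Sumihiro2}, some power of $\cL'$ is $G$-linearized on $U'$. So $G$ acts on the graded $A$-algebra $\bigoplus_n H^0(Y,\cL^n)$, hence on $Y=\Proj$. Moreover $\pi$ is equivariant because $A\to H^0(U',\cO_{U'})\to\bigoplus_n H^0(U',\cL'^{\otimes n})$ is.

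Alternatively, your ampleness idea does work if you run it in the family instead of pointwise:
\begin{enumerate}
\item $b=(p_1,a)$ is a small birational self-map of $G\times Y$ lying over the automorphism $(g,x)\mapsto(g,gx)$ of the affine scheme $G\times X$.
\item The line bundle $\mathcal{M}=b^*p_2^*\cL$ is ample on every closed fiber of $\id_G\times\pi$, by the Nakai--Moishezon argument above.
\item Hence $\mathcal{M}$ is relatively ample, by openness of the ample locus (EGA III, 4.7.1) plus the Jacobson property.
\item So $b$ extends to an isomorphism and $a=p_2\circ b$ is a morphism.
\end{enumerate}
The action axioms and equivariance then follow from reducedness of the source and separatedness of the target.
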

\begin{proof}
Let $\cL$ be a very ample line bundle on $Y$ such that $Y=\Proj\bigoplus_{n\geq0}H^0(Y,\cL^{\otimes n})$. Due to the codimension conditions,
\[
H^0(Y,\cL^{\otimes n})=H^0(W,\jmath^*\cL^{\otimes n})=H^0(U',\jmath'_*\jmath^*(\cL^{\otimes n})).
\]
Note that $\cL'_n:=\jmath'_*\jmath^*\cL^{\otimes n}$ is a line bundle by \cite[Proposition 1.9]{HartshorneReflexive} and $(\jmath')^*\cL'_n=(\jmath')^*((\cL')^{\otimes n})$ by \cite[Proposition 1.6]{HartshorneReflexive}, so $\cL'_n=(\cL')^{\otimes n}$. As a result,
\[
H^0(Y,\cL^{\otimes n})=H^0(U',(\cL')^{\otimes n}).
\]
Next, \cite[Theorem 1.6]{Sumihiro2} says that after replacing $\cL'$ (and hence $\cL$) by a higher tensor power, it has a $G$-linearization. In particular, this endows $H^0(Y,\cL^{\otimes n})$, and hence $Y$, with a $G$-action.

To see that $\pi$ is $G$-equivariant, since $U'\to X=\Spec A$ is equivariant, $A\to H^0(\cO_{U'})\to \bigoplus_n H^0(U',(\cL')^{\otimes n})$ are equivariant maps. Hence, $A\to \bigoplus_n H^0(Y,\cL^{\otimes n})$ is equivariant, and so $\pi$ is.

Lastly, we show the $G$-action on $Y$ is unique. Any $G$-action on $Y$ making $\pi$ equivariant forces the $G$-action on $Y$ to agree with the $G$-action on $X$ restricted to $X^{\sm}$. Furthermore, two choices of maps $G\times Y\to Y$ that agree on $G\times X^{\sm}$ must be equal because $G\times X^{\sm}$ is reduced and $Y$ is separated.
\end{proof}

The same method of proof used in \autoref{prop:T-action-extends-codim2} also shows \autoref{prop:T-action-extends-codim2-not-used} below. We do not use this next result but we state it since we were unable to find it in the literature.

\begin{proposition}\label{prop:T-action-extends-codim2-not-used}
Let $G$ be a connected smooth affine group scheme over an algebraically closed field $k$. Suppose $G$ acts on a reduced Noetherian affine scheme $X=\Spec A$ over $k$. Let $\pi\colon Y\to X$ be a projective morphism and assume $Y$ is normal and reduced. Let $\jmath\colon U\to Y$ be a dominant open immersion with $\codim(Y\setminus U)\geq2$, and let $G$ act on $U$ such that $\pi\jmath$ is equivariant. Then there is a unique $G$-action on $Y$ such that the maps $\pi$ and $\jmath$ are equivariant.
%Let $S$ be a scheme and $G\to S$ a surjective smooth affine group scheme with connected fibers. Let $Y\to X$ be a projective morphism of Noetherian schemes over $S$ and assume $Y$ is normal. Let $U\subset Y$ be a non-empty open dense subset with $\codim(Y\setminus U)\geq2$. Then every $G$-action on $U$ extends uniquely to a $G$-action on $Y$.
\end{proposition}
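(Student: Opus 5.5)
The plan is to imitate the proof of \autoref{prop:T-action-extends-codim2}, with $U$ itself in the role of both $W$ and $U'$. Since $Y$ is normal and $\codim(Y\setminus U)\geq2$, Hartogs-type extension applies to reflexive sheaves on $Y$. Fix a very ample line bundle $\cL$ on $Y$ relative to $\pi$, so that $Y=\Proj\bigoplus_{n\geq0}H^0(Y,\cL^{\otimes n})$; this uses that $X$ is affine. Set $\cL_U=\jmath^*\cL$. Because $\cL^{\otimes n}$ is a line bundle on a normal scheme, it is reflexive, and hence
\[
H^0(Y,\cL^{\otimes n})=H^0(U,\cL_U^{\otimes n})
\]
for all $n$, by \cite[Proposition 1.6]{HartshorneReflexive}. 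The task is therefore to produce a $G$-linearization of some power of $\cL_U$ on $U$ and transport it.

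\textbf{Linearization.} I would apply Sumihiro's theorem \cite[Theorem 1.6]{Sumihiro2} to $\cL_U$ on $U$. Its hypotheses are that $G$ is a connected smooth affine group and that the variety is normal; here $U$ is normal because it is open in $Y$. The conclusion is that a power $\cL_U^{\otimes m}$ admits a $G$-linearization. This is the step I expect to be the main obstacle. Sumihiro's linearization result is usually stated for normal varieties, possibly requiring quasi-projectivity or a torsion condition on $\mathrm{Pic}$, so I need to check that it applies to the non-complete, possibly non-quasi-projective $U$. If it does not apply directly, the fallback is as follows. The obstruction to linearizing a line bundle lies in $\mathrm{Pic}(G)$, which is finite for connected smooth affine $G$, and in characters of $G$. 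Passing to a power kills the $\mathrm{Pic}(G)$ part; the character part only affects the choice of linearization, not its existence. After replacing $\cL$ by $\cL^{\otimes m}$, which is still relatively very ample, the graded ring $R=\bigoplus_n H^0(U,\cL_U^{\otimes mn})$ becomes a $G$-equivariant graded $A$-algebra. The structure map $A\to H^0(U,\cO_U)\to R$ is equivariant because $\pi\jmath$ is equivariant.

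\textbf{Action on $Y$ and equivariance.} Since $G$ acts on $R$ by graded automorphisms (rationally, so the action is algebraic and $G$ acts on $\Spec R$ commuting with $\bG_m$), it acts on $Y=\Proj R$. The map $\pi\colon\Proj R\to\Spec A$ is equivariant by the equivariance of $A\to R$. To see that $\jmath$ is equivariant, note that $U\to\Proj R$ is the canonical morphism defined by the sections of $\cL_U^{\otimes m}$. The linearization makes this morphism equivariant, because the action of $G$ on sections is induced by the action on $U$.

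\textbf{Uniqueness.} Two actions $G\times Y\to Y$ making $\jmath$ equivariant agree on the dense open subscheme $G\times U$. Since $G\times U$ is reduced and $Y$ is separated, the two maps coincide on the closure of $G\times U$, which is $G\times Y$ because $G\times Y$ is reduced: $Y$ is normal and $G$ is smooth. This is the same argument as at the end of the proof of \autoref{prop:T-action-extends-codim2}.
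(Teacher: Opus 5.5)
Your proposal is correct and follows the paper's proof: you linearize a power of $\jmath^*\cL$ on $U$ by \cite[Theorem 1.6]{Sumihiro2}, transport the action through $H^0(U,\jmath^*\cL^{\otimes n})=H^0(Y,\cL^{\otimes n})$ to $Y=\Proj\bigoplus_n H^0(Y,\cL^{\otimes n})$, and get uniqueness because $G\times U$ is reduced and dense while $Y$ is separated; the paper applies Sumihiro's theorem to $U$ in the same way, so your worry about its hypotheses and your fallback sketch are not needed. The only cosmetic difference is that you deduce equivariance of $\pi$ from equivariance of $A\to R$ (as in the proof of \autoref{prop:T-action-extends-codim2}), whereas the paper here checks that the two maps $G\times Y\to X$ agree on $G\times U$ and uses that $X$ is separated.
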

\begin{proof}
Let $\cL$ be a very ample line bundle on $Y$ such that $Y=\Proj\bigoplus_{n\geq0}H^0(Y,\cL^{\otimes n})$. Then \cite[Theorem 1.6]{Sumihiro2} says that after replacing $\cL$ by a higher tensor power, we may assume there is a $G$-linearization on $\jmath^*\cL$. Thus, there is a $G$-action on $H^0(\jmath^*\cL^{\otimes n})=H^0(\cL^{\otimes n})$ for all $n>0$. This yields a $G$-action on $Y$ making $\jmath$ equivarant. To see that $\pi$ is $G$-equivariant, note that the two arrows in the diagram
\[
\xymatrix{
G\times Y\ar[d]_-{\id\times\pi}\ar[r] & Y\ar[d]^-{\pi}\\
G\times X\ar[r] & X
}
\]
agree on $G\times U$ as $U\to X$ is $G$-equivariant. Since $G\times U$ is reduced and $X$ is separated, we see the above diagram commutes. Lastly, to show the $G$-action on $Y$ is unique, any two choices of maps $G\times Y\to Y$ that agree on $G\times U$ must be equal because $G\times U$ is reduced and $Y$ is separated. 
\end{proof}

\subsection{Purity}

We prove the following result which says that in large generality, actions by connected smooth algebraic groups lift to crepant resolutions.

\begin{proposition}\label{l:lift-alg-gp-actions-crepant-res}
Let $G$ be a connected smooth geometrically irreducible affine group scheme over an algebraically closed field $k$. Let $X$ be a finitely presented scheme over $k$ with finite linearly reductive quotient singularities, and let $\pi\colon Y\to X$ be a crepant resolution of singularities. If $G$ acts on $X$, then $Y$ admits a $G$-action making $\pi$ equivariant.
\end{proposition}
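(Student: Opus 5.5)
The plan is to reduce to \autoref{prop:T-action-extends-codim2}, using the tame stack $\cX$ which is the natural "canonical" crepant model of $X$. Since $G$ is connected, an action on $Y$ lifting the one on $X$ is unique if it exists. Uniqueness follows as in the last paragraph of \autoref{prop:T-action-extends-codim2}: two actions agree over the smooth locus $X^{\sm}$, and $G\times\pi^{-1}(X^{\sm})$ is reduced and dense in $G\times Y$. Therefore, once local lifts are constructed, they glue. So I will work locally on $X$. I will take an affine $G$-stable open cover of $X$ if one exists, or else argue with $G$-translates of a cover and use uniqueness to glue. This reduces the problem to $X=\Spec A$ affine with quotient singularities $V_i/G_i$ étale or fppf locally. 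I will also need $\pi$ projective. A crepant resolution is only assumed proper, so I would first try to show that $\pi$ is projective over an affine base via the relative canonical model: since $K_Y=\pi^*K_X$, one can use a relatively ample divisor built from exceptional divisors. If this fails, I will adapt \autoref{prop:T-action-extends-codim2} to use $\pi$-ample line bundles on quasi-projective opens, and otherwise add projectivity as a hypothesis. This step is a genuine worry.

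The key construction is a smooth $G$-scheme $U'$ over $X$ that shares a common open $W$ with $Y$ and has complement of codimension at least $2$ in both. I will build it from the canonical smooth tame stack $\cX\to X$, which is glued from the local $[V_i/G_i]$ and has coarse space $X$. By \autoref{prop:generic-rep}, each $[V_i/G_i]\to V_i/G_i$ is an isomorphism away from codimension $2$, as in \autoref{prop:Q-Gorenstein}. Let $\cU\subset\cX$ be the open locus where $\cX\to X$ is an isomorphism, and let $U'$ be its image, which is the smooth locus of $X$ with complement of codimension at least $2$. The $G$-action on $X$ preserves the smooth locus, so $U'=X^{\sm}$ is a $G$-stable smooth scheme with an equivariant separated map to $X$.

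The main obstacle is finding $W\subset Y$ isomorphic to an open of $U'$ whose complement in $Y$ has codimension at least $2$. This means showing that $\pi$ has no exceptional divisors, i.e.\ that $\pi$ is small. Here crepancy enters. For a quotient singularity $V/G$ with $G$ linearly reductive, I will argue that every exceptional divisor $E$ of any resolution has discrepancy $a(E)\geq 1$. The route is the motivic change of variables: \autoref{corollaryMotivicIntegralCrepantResolution} applied to $[V/G]\to V/G$ expresses $\e_{\str}$ as an integral over twisted arcs of a smooth stack. The contribution of arcs through a crepant divisor would appear as a term $(\bL-1)\bL^{\cdot}$ of dimension equal to $\dim X$ in $\e_{\str}$ over the singular locus. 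In contrast, \autoref{propositionShiftOfInertiaComponentAge} shows that twisted sectors contribute $\bL^{\age}$ with $\age\geq1$ for nontrivial $\phi$, because of \autoref{corollaryAgeIsInteger} and the positivity of age. My worry is that this conclusion is false as stated: for $\dim V\geq 2$, crepant resolutions such as those of $A_n$ singularities do have crepant exceptional divisors of age $1$. So smallness cannot hold in general. I would then fall back to \autoref{prop:T-action-extends-codim2-not-used}-style arguments on each age-one divisor separately, or to an equivariant Sumihiro linearization on the normalization. I expect this step to be where the real work lies.
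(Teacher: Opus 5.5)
\textbf{There is a genuine gap, at exactly the step you flag, and the fallbacks you list do not close it.}

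With $U'=X^{\sm}$, the hypothesis $\codim(Y\setminus W)\geq 2$ of \autoref{prop:T-action-extends-codim2} forces $\pi$ to be small. This is all the canonical stack $\cX$ can give you, since $\cX\to X$ is an isomorphism only over the smooth locus. But $K_Y=\pi^*K_X$ means every $\pi$-exceptional divisor has discrepancy exactly $0$. So ``every exceptional divisor has discrepancy $\geq 1$'' just restates ``$\pi$ has no exceptional divisors''. That fails for the $A_n$ singularities you mention, and for the example at the end of the paper, whose crepant resolution is built by blow-ups.

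Your fallbacks do not repair this. A \autoref{prop:T-action-extends-codim2-not-used}-style argument presupposes a $G$-action on an open of $Y$ with complement of codimension $\geq 2$. That means an action near the generic points of the exceptional divisors of $Y$, which is precisely what you lack. No $G$-equivariant object built from $X$ or $\cX$ alone sees these divisors, and the motivic detour does not produce one.

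\textbf{The missing idea is an auxiliary $G$-equivariant resolution that does see these divisors.}

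The paper takes $\pi'\colon Y'\to X$ to be the functorial resolution of \cite[Theorem E]{BerghRydh}. It commutes with smooth base change, so its pullbacks along the action and projection maps $G\times X\to X$ both identify with $G\times Y'$. Hence the action lifts to $Y'$ by descent.

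\autoref{l:codim2-setup} then compares $Y'$ with $Y$ as follows.
\begin{enumerate}
\item Because $G$ is connected and geometrically irreducible, every irreducible component of the exceptional locus of $Y'$ is $G$-stable.
\item Deleting the divisors of positive discrepancy and the exceptional components of codimension $\geq 2$ therefore leaves a $G$-stable smooth open $U'\subset Y'$, whose remaining exceptional divisors are crepant.
\item A crepant valuation over $X$ has a divisorial center on every resolution (\cite[Lemma 2.45, Corollary 2.31]{KollarMori}). So the crepant divisors of $Y'$ and the exceptional divisors of $Y$ are the same valuations.
\item At their generic points the birational map from $U'$ to $Y$ is regular and \'etale. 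By Zariski's Main Theorem it is an open immersion on a neighbourhood $W$, whose complement has codimension $\geq 2$ in both $U'$ and $Y$.
\end{enumerate}
Only then is \autoref{prop:T-action-extends-codim2} applied.

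Your concerns about projectivity of $\pi$ and affineness of $X$ are secondary to this; the paper applies \autoref{prop:T-action-extends-codim2} directly at that point.
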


We prove this result after showing a technical lemma.

\begin{lemma}\label{l:codim2-setup}
Let $G$ be a connected smooth geometrically irreducible affine group scheme over a field $k$. Let $X$ be a finitely presented scheme over $k$ and let $\pi\colon Y\to X$ and $\pi'\colon Y'\to X$ be strong resolutions of singularities. Suppose further $\pi$ is crepant and that there is a $G$-action on $Y'$ making $\pi'$ equivariant. Then there is a $G$-invariant open subset $U'\subset Y'$ and a commutative diagram
\[
\xymatrix{
W\ar[r]^-{\jmath'}\ar[d]_-{\jmath} & U'\ar[d]^-{\pi'|_{U'}}\\
Y\ar[r]^-{\pi} & X
}
\]
with $\jmath$ and $\jmath'$ open immersions, $\codim(Y\setminus W)\geq2$, and $\codim(U'\setminus W)\geq2$.
\end{lemma}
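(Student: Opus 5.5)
The plan is to compare the two resolutions through discrepancies over $X$. Let $U'$ be the complement in $Y'$ of those $\pi'$-exceptional prime divisors whose discrepancy over $X$ is positive. The open set $W$ will be the locus of $U'$ on which the birational map $\varphi\colon Y'\dashrightarrow Y$, which is the identity over the smooth locus of $X$, is an open immersion.

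Throughout I use that $X$ is normal and $\Q$-Gorenstein; this is implicit in the existence of a crepant resolution, since $\pi_*\omega_Y$ computes $\omega_X$. Write $K_{Y'}=\pi'^*K_X+\sum_i a_iE_i$, where the $E_i$ are the $\pi'$-exceptional prime divisors.

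\emph{Step 1: the signs of the discrepancies.} Since $K_Y=\pi^*K_X$, for every divisorial valuation $F$ over $X$ we have $a(F,X)=a(F,Y)$. Because $Y$ is smooth, $a(F,Y)\geq 1$ whenever $F$ is exceptional over $Y$, and $a(F,Y)=0$ when $F$ is a divisor on $Y$. Hence every $a_i\geq 0$. Moreover, $a_i=0$ holds exactly when the center of $E_i$ on $Y$ is a divisor.

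Conversely, let $F$ be a $\pi$-exceptional prime divisor of $Y$, so that $a(F,X)=0$. Then
\[
a(F,Y')=a(F,X)-\ord_F\Bigl(\sum_i a_iE_i\Bigr)\leq 0 .
\]
Since $Y'$ is smooth, this forces $F$ to be a divisor on $Y'$ with $a_F=0$. Therefore the prime divisors of $U'$ correspond bijectively under $\varphi$ to the prime divisors of $Y$.

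\emph{Step 2: $G$-invariance of $U'$.} There are only finitely many exceptional divisors $E_i$. For each of them, the closure of the image of $G\times E_i\to Y'$ is irreducible because $G$ is geometrically irreducible, and it is again $\pi'$-exceptional. It is also a prime divisor, and it must coincide with $E_i$, since $G$ is connected and permutes this finite set of divisors. So each $E_i$ is $G$-stable, and $U'$ is $G$-invariant. (Discrepancy is intrinsic, so the condition $a_i>0$ needs no further check.)

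\emph{Step 3: constructing $W$.} Since $Y\to X$ is proper and $U'$ is normal, $\varphi$ is defined on an open set $V\subset U'$ whose complement has codimension at least $2$. The target $Y$ is smooth, hence locally factorial. By van der Waerden purity, the exceptional locus of the birational morphism $\varphi|_V$ is therefore of pure codimension $1$ in $V$. By Step 1 no prime divisor of $U'$ is contracted, so after shrinking $V$ by a closed set of codimension at least $2$, the map $\varphi|_V$ is quasi-finite and birational. By Zariski's main theorem, using that $Y$ is normal, it is then an open immersion; call its image $W$.

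We have $\codim(U'\setminus W)\geq 2$ by construction. Every prime divisor of $Y$ is hit by Step 1, so $\codim(Y\setminus W)\geq 2$ as well. The square commutes because $\varphi$ is compatible with $\pi$ and $\pi'$ over a dense open set and $X$ is separated.

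\emph{Main obstacle.} I expect the hardest step to be justifying Step 1 in arbitrary characteristic. This needs two facts for smooth varieties without resolution of singularities: the discrepancy of any exceptional divisorial valuation over a smooth variety is at least $1$, and the discrepancy comparison formula. I would prove the first directly by blowing up the center in a neighborhood of its generic point, which is smooth there, and computing the order of the Jacobian. This is characteristic-free, and avoids appealing to any log resolution.
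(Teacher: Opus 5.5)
Your proposal is correct and follows essentially the same route as the paper: a discrepancy comparison (resting on the Koll\'ar--Mori fact that exceptional valuations over a smooth variety have discrepancy at least $1$) shows that the crepant exceptional divisors are exactly the exceptional divisors common to $Y$ and $Y'$. Irreducibility of $G\times E_i$ then gives $G$-stability of the exceptional components, and Zariski's main theorem turns the birational map into an open immersion on a big open set.

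The differences are minor. You keep the codimension-$\geq 2$ exceptional components inside $U'$. You invoke van der Waerden purity, whereas the paper simply takes the \'etale locus around the generic points of the crepant divisors; purity is unnecessary, since the non-local-isomorphism locus already misses every codimension-one point. Your Step 1 also usefully makes explicit the inequality $K_{Y'/X}\geq 0$, which the paper's appeal to Koll\'ar--Mori leaves implicit.
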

\begin{proof}
Since $\pi'$ is $G$-equivariant and $X^{\sm}$ is $G$-invariant, it follows that the exceptional locus $Y'\setminus X^{\sm}$ is also $G$-invariant. If $Z'$ is an irreducible component of the exceptional locus then $G\times Z'$ is irreducible by \cite[Tag 038F]{stacks-project} and its image under the action map $G\times Z'\subset G\times Y'\to Y'$ is irreducible and contains $Z'$, hence equals $Z'$; as a result, every irreducible component of $Y'\setminus X^{\sm}$ is $G$-invariant.

Let $U'\subset Y'$ be the open subset whose complement is the union of all $\pi'$-discrepant divisors and all components of $Y'\setminus X^{\sm}$ with codimension at least $2$. Let $\eta'_1,\dots,\eta'_m$ be the generic points of all $\pi'$-crepant divisors $D'_1,\dots,D'_m$. Under the identification $k(U') = k(Y)$, each $\eta'_i$ yields a valuation $\eta_i$ on $k(Y)$ and the map $k(Y) \to k(U')$ restricts to an isomorphism $\cO_{Y, \eta_i} \xrightarrow{\simeq} \cO_{U', \eta'_i}$. It follows from, e.g., \cite[Proposition B.1]{RydhApproximation}, that there is an open subset $U'_i\subset U'$ containing $\eta'_i$ where the rational map $U'\dasharrow Y$ is regular. Let $V'=\bigcup_i U'_i$ and $f\colon V'\to Y$ be the induced regular map such that $\pi'|_V=\pi f$. %reduced + separated argument shows the maps agree on overlaps. is this necessary to say?

Since $f$ induces an isomorphism $\cO_{Y, \eta_i} \xrightarrow{\simeq} \cO_{U', \eta'_i}$, the equivalence of properties (1) and (6) from \cite[Tag 02GU]{stacks-project} shows that $f$ is \'etale at all $\eta'_i$. Thus, there is an open subset $W\subset V'$ containing all $\eta'_i$ such that $f|_W$ is \'etale, hence quasi-finite. Note that $f|_W$ is separated since $\pi$ and $\pi'|_W$ are. Since $f|_W$ is birational, by Zariski's Main Theorem,
%https://mathoverflow.net/questions/78696/is-there-an-intuitive-reason-for-zariskis-main-theorem
$f|_W$ is an open immersion.

It remains to prove $\codim(U'\setminus W)\geq2$ and $\codim(Y\setminus W)\geq2$. This follows from the general result that if $\eta$ is a crepant valuation on $X$ (i.e., a divisorial valuation of $k(X)$ with vanishing discrepancy), then for any resolution $Z\to X$, there exists a divisor $D\subset Z$ with generic point $z$ such that $\cO_{Z,z}\subset k(X)$ is the DVR associated to $\eta$. Indeed, \cite[Lemma 2.45]{KollarMori} says there exists some proper birational map $Z'\to Z$ from a normal variety $Z'$ with this property. Then \cite[Corollary 2.31]{KollarMori} implies that the discrepancy of $\eta$ is at least $1$, contradicting the fact that $\eta$ is a crepant valuation.
\end{proof}

\begin{proof}[{Proof of \autoref{l:lift-alg-gp-actions-crepant-res}}]
We begin by constructing an auxiliary resolution $\pi'\colon Y'\to X$ which is $G$-equivariant. Let $Y'\to X$ be the functorial resolution of \cite[Theorem E]{BerghRydh}. Since this resolution commutes with smooth base change, the pullback $(G\times X)\times_X Y'$ under either the projection or action map $G\times X\to X$ is given by $G\times Y'$. Thus, by descent, the $G$-action on $X$ lifts to a $G$-action on $Y'$ making $\pi'$ equivariant. Then combining \autoref{prop:T-action-extends-codim2} and \autoref{l:codim2-setup}, we obtain a $G$-action on $Y$ making $\pi\colon Y\to X$ equivariant.
\end{proof}

The remainder of the proof of \autoref{thm:crepant-res-VmodG->pure} essentially follows the proof of \cite[Theorem 8.4]{BatyrevNonArch}.

\begin{proof}[{Proof of \autoref{thm:crepant-res-VmodG->pure}}]
By our definition of purity, we reduce immediately to the case where $k$ is algebraically closed. Consider the diagonally embedded $\bG_m\subset\GL(V)$ which acts on $V$ and commutes with the action of $\bG_m$. Thus, the $\bG_m$-action descends to $X:=V/G$. By \autoref{l:lift-alg-gp-actions-crepant-res}, we obtain a $\bG_m$-action on $Y$ making $\pi\colon Y\to X$ equivariant. By \cite[Theorem 7.8.14 (2)--(4)]{AlperNotes}, the fixed locus $Y^{\bG_m}$ is smooth with connected components $F_i$, there is a filterable stratification $Y=\coprod_i Y_i$ with each $Y_i\subset Y$ locally closed (note we are using that since $Y$ is proper over $V/G$ and the so-called \emph{attractor locus} $(V/G)^+$ surjects onto $V/G$, we have that the attractor locus $Y^+$ surjects onto $Y$), and there are Zariski local affine fibrations $Y_i\to F_i$. Since $V^{\bG_m}=\{0\}$, we see $X^{\bG_m}$ is a point; since $Y^{\bG_m}\to X^{\bG_m}$ is proper, it follows that each $F_i$ is smooth and proper, hence $H_{\mathrm{\acute{e}t,c}}^j(F_i \otimes_k k^s, \Q_\ell)$ is pure of weight $j$. As a result, $H_{\mathrm{\acute{e}t,c}}^j(Y_i \otimes_k k^s, \Q_\ell)$ is also pure of weight $j$. Using that our stratification is filterable, an induction argument as in \cite[Lemma 8.3]{BatyrevNonArch} shows that $H_{\mathrm{\acute{e}t,c}}^j(Y \otimes_k k^s, \Q_\ell)$ is pure of weight $j$.
\end{proof}

\section{Example}

We end this paper with an explicit example.

\begin{example}

Let $k$ be an algebraically closed field of characteristic $3$, and let $G=\mu_3\rtimes\bZ/2$ where $\bZ/2$ acts on $\mu_3$ by inversion. Let $V=\bA^3$ and let $G\subset\SL(V)$ given by the following action. We let $\mu_3$ act on $V$ with weights $(1,-1,0)$, and let the non-trivial element $\tau\in\bZ/2$ act by
\[
\tau(x,y,z)=(y,x,-z).
\]
Then
\[
V/G=(\Spec k[x,y,z]^{\mu_3})^{\bZ/2} = \Spec k[x^3,y^3,xy,z]^{\bZ/2}.
\]
To calculate this invariant ring, note that $\tau$ acts by swapping $x^3$ and $y^3$, fixing $xy$, and negating $z$. Thus,
\[
V/G=\Spec k[x^3+y^3,xy,(x^3-y^3)z,z^2,(x^3-y^3)^2]
\]
Note that $(x^3-y^3)^2$ is expressible in terms of the other invariants. Thus, letting
\[
u=x^3+y^3,\quad v=z^2,\quad t=xy,\quad w=(x^3-y^3)z,
\]
we see
\[
V/G=\Spec k[u,v,w,t]/(w^2-v(u^2-t^3)).
\]

We show that $V/G$ has a crepant resolution $X\to V/G$ through explicit blow-ups. We see $V/G\subset\bA^4$ is a hypersurface. We first blow up along the singular line $L=V(u,t,w)\simeq\bA^1$; note that our hypersurface vanishes to order $2$ at $L$, and $L$ is codimension $2$ in $V/G$, hence, this first blow-up is crepant. One checks that the singular locus of the blow-up is an $\bA^1$-family of $A_1$-singularities.
%An explicit calculation shows that the $w$-chart is smooth, and the $t$- and $u$-charts have a common singular locus given by the product of an $A_1$-singularity with $\bA^1$.
Blowing up the singular locus is therefore another crepant map. Furthermore, from this description, we see
\[
\e(X)=\e(V/G)-\bL^1+(\bL+1)\bL - \bL + (\bL+1)\bL = \e(V/G)+2\bL^2.
\]
We claim that $\e(V/G)=\bL^3$. To see this, we project our hypersurface $V/G\subset\bA^4$ to $\bA^2_{u,t}$. Over the cuspidal curve $u^2=t^3$, we see $w=0$ and $v$ is is free. On the other hand, on the complement of the cuspidal curve, $w$ is free and $v$ is determined. Thus, $\e(V/G)=(\bL^2 - \bL)\bL + \bL^2 = \bL^3$. It follows that
\[
\e_{\str}(V/G)=\bL^3+2\bL^2.
\]

We now illustrate our main theorems for this example. For \autoref{maintheoremMotivicMcKay}, we first note that the only morphisms $\mu_r\to G$ are given by $r=1,2,3$. For $r=3$, there are two maps $\mu_3\to\mu_3\subset G$ given by $\zeta\mapsto\zeta^\pm$, but these are conjugate under the action of $\tau$; it is straightforward to calculate that the centralizer of this map is $\mu_3$. Since each $V_w$ is $1$-dimensional, the age of this map is given by $\frac{1}{3}(1+2+3)=2$. For $r=2$, up to conjugation, there is only the unique map $\mu_2\xrightarrow{\simeq}\bZ/2\subset G$. We see $\dim V_1=2$ and $\dim V_2=1$, so the age of this map is $\frac{1}{2}(2+2)=2$. Hence, \autoref{maintheoremMotivicMcKay} states
\[
\e_{\str}(V/G) =\bL^3 \e(BG)+\bL^2 \e(B\mu_3)+\bL^2 \e(B\bZ/2).
\]
We claim $\e(BG)=1$. To see this, consider the $2$-dimensional vector bundle $[\bA^2/G]\to BG$ where $\mu_3\subset G$ acts on $\bA^2$ with weights $(1,-1)$, and $\tau\in G$ acts by $\tau(x,y)=(y,x)$. Then decomposing we see
\[
\bL^2\e(BG)=\e([\bA^2/G])=\e(BG)+\e([((\bG_m\times0)\cup(0\times\bG_m))/G]+\e([\bG_m^2/G]).
\]
The middle term is given by $[\bG_m/\mu_3]=\bG_m$, so its class is $\bL-1$. To compute the last term, we see $k[x^\pm,y^\pm]^G=k[X^\pm,Y^\pm]^{\Z/2}$, where $\tau$ fixes $Y=xy$ and maps $X=x^{3}$ to $Y^3/X$. Note that there are $\Z/2$-stabilizers exactly on the curve $X^2-Y^3$ in $\bG_m^2=\bG_m^2/\mu_3$, and this curve has class $\bL-1$. Away from this curve, $\Z/2$ acts freely. We see $k[X^\pm,Y^\pm]^{\Z/2}=k[Y^\pm,(X^2-Y^3)/X]$. Putting these observations together, we have
\[
\e([\bG_m^2/G])=(\bL-1)+(\bL-1)^2,
\]
and hence
\[
(\bL^2-1)\e(BG)=(\bL-1)+(\bL-1)+(\bL-1)^2=\bL^2-1,
\]
proving $\e(BG)=1$. Therefore, \autoref{maintheoremMotivicMcKay} tells us
\[
\e_{\str}(V/G) =\bL^3+2\bL^2
\]
which agrees with our direct computation of $\e_{\str}(V/G)$ coming from the crepant resolution.

\autoref{maincorollaryEulerMcKay} %combined with \autoref{l:Clifford-theory-indep-k}
tells us
\[
\chi_{\str}(V/G)=\#\Irrep(G)=\#\Conj_\mu(G)=3.
\]

Lastly, \autoref{maintheoremCohomologicalMcKay} tells us the dimensions of the \'etale cohomology groups of our crepant resolution $X\to V/G$. We see
\[
\dim_{\bQ_\ell} H^4_{\mathrm{\acute{e}t,c}}(X,\bQ_\ell)=2,\quad\dim_{\bQ_\ell} H^6_{\mathrm{\acute{e}t,c}}(X,\bQ_\ell)=1,
\]
and all other cohomology groups vanish.

\end{example}

\bibliographystyle{alpha}
\bibliography{MCLRFGSPC}

\end{document}